\theoremstyle{plain}
\newtheorem{Th}{Theorem}[section]
\newtheorem{Lem}[Th]{Lemma}
\newtheorem{Cor}[Th]{Corollary}
\newtheorem{Prop}[Th]{Proposition}
\newtheorem{Def}[Th]{Definition}
\newtheorem{Conj}[Th]{Conjecture}
\newtheorem{Rem}[Th]{Remark}
\newcommand{\NN}{\mathbb{N}}
\newcommand{\KK}{\mathbb{K}}
\renewcommand{\AA}{\mathbb{A}}
\newcommand{\prox}{\mathrm{prox}}
\newcommand{\ie}{\textit{i.e.},\;}
\newcommand{\dist}{\mathrm{d}}
\newcommand{\ZZ}{\mathbb{Z}}
\newcommand{\PP}{\mathbb{P}}
\newcommand{\EE}{\mathbb{E}}
\newcommand{\CC}{\mathbb{C}}
\newcommand{\RR}{\mathbb{R}}
\newcommand{\rank}{\mathbf{rk}}
\newcommand{\sqz}{\mathrm{sqz}}
\newcommand{\Wedge}{\textstyle\bigwedge}
\newcommand{\eps}{\varepsilon}
\subjclass[2020]{Primary 60B20, 60F10, 37H15, 60B15, 60F15, 60F25}
\title[Pivoting technique for products of random matrices]{Limit theorems for a strongly irreducible product of independent random matrices under optimal moment assumptions}
\author{Axel P\'eneau}
\date{\today}
\begin{document}

	\begin{abstract}
		Let $\nu$ be a probability distribution over the semi-group of square matrices of size $d \ge 2$. We assume that $\nu$ is proximal, strongly irreducible and that $\nu^{*n}\{0\}=0$ for all integers $n\in\NN$. We consider the sequence $\overline\gamma_n:=\gamma_0\cdots\gamma_{n-1}$ for $(\gamma_k)_{k \in\NN}$ independent of distribution law $\nu$. 
		We denote by $\sqz(\overline{\gamma}_n)$ the logarithm of the ratio of the two top singular values of $\overline{\gamma}_n$.
		We show that $(\sqz(\overline{\gamma}_n))_{n\in\NN}$ escapes to infinity linearly and satisfies exponential large deviations inequalities below its escape rate. 
		We also show that the image of a generic line by $\overline{\gamma}_n$ as well as its eigenspace of its maximal eigenvalue both converge to the same random line $l^\infty$ at an exponential speed. 
		This is an extension of results by Guivarc'h and Raugi.
		
		If we moreover assume that the push-forward distribution $N_*\nu$ is $\mathrm{L}^p$ for $N: g\mapsto\log\left(\|g\|\|g^{-1}\|\right)$ and for some $p\ge 1$, then we show that $-\log\dist(l^\infty, H)$ is uniformly $\mathrm{L}^p$ for all proper subspace $H \subset \RR^d$. Moreover the logarithm of each coefficient of $\overline{\gamma}_n$ is almost surely equivalent to the logarithm of the norm. 
		This is an extension of results by Benoist and Quint which were themselves quantitative versions of results by Furstenberg and Kesten.

		To prove these results, we do not rely on the existence of the stationary measure nor on the existence of the Lyapunov exponent. Instead we describe an effective way to group the i.i.d. factors into i.i.d. random words that are aligned in the Cartan decomposition. We moreover have an explicit control over the moments. 
	\end{abstract}
	
	\maketitle
	

	\section{Introduction}
	
	\subsection{Preliminaries}
	
	Let $\KK = \RR$ be the field of real number endowed with the usual absolute value $|\cdot|$.
	Let $d \ge 2$ be an integer. Let $\mathrm{End}(\KK^d) \simeq \mathrm{Mat}_{d \times d}(\KK)$ be the set of square matrices which we identify with the semi-group of linear maps from $\KK^d$ to itself.
	Let $\nu$ be a probability distribution on $\mathrm{End}(\KK^d)$ and let $(\gamma_n)_{n\in\NN}\sim\nu^{\otimes\NN}$ be a random i.i.d\footnote{The abbreviation "i.i.d" is short for "independent and identically distributed" but we will prefer to write $\sim\nu^{\otimes\NN}$ as it allows us to specify the distribution in question as well as the set of indices of the sequence.} sequence of matrices. 
	We will denote by $(\overline\gamma_n)_{n\in\NN} := (\gamma_0 \cdots \gamma_{n-1})_{n\in\NN}$ the random walk of step $\nu$. 
	
	Given $g$ a square $d \times d$ matrix, let $\rho_1(g) \ge \rho_2(g) \ge \dots \ge \rho_d(g)$ be the moduli of its eigenvalues. 
	If $g$ is not nilpotent, we define the spectral gap or quantitative proximality of $g$ as:
	\begin{equation}\label{eqn:defprox}
		\prox(g) := \log\left(\frac{\rho_1(g)}{\rho_2(g)}\right).
	\end{equation}
	If $g$ is nilpotent, we define $\sqz(g)  = 0$.
	We will always consider measures that are strongly irreducible and proximal in the following sense.

	\begin{Def}[Proximality]\label{def:prox}
		Let $E$ be a vector space and let $\nu$ be a probability distribution on $\mathrm{End}(E)$. We say that $\nu$ is proximal if both of the following conditions are satisfied:
		\begin{gather}
			\exists n\in\NN,\; \nu^{*n}\{\gamma \in\mathrm{End}(E)\,|\,\prox(\gamma) > 0\} > 0. \\
			\forall n\in\NN,\; \nu^{*n}\{0\} = 0.
		\end{gather}
	\end{Def}
	
	\begin{Def}[Strong irreducibility]
		Let $E$ be a vector space and let $\nu$ be a probability distribution on $\mathrm{End}(E)$. 
		We say that $\nu$ is irreducible if:
		\begin{equation}
			\forall f \in E^*\setminus\{0\},\, \forall v \in E\setminus\{0\},\, \exists n \in\NN,\; \nu^{*n}\left\{\gamma\in\mathrm{End}(E)\,\middle|\,f \gamma v \neq 0\right\} > 0.
		\end{equation}
		We say that $\nu$ is strongly irreducible if:
		\begin{gather*}
			\forall N \in \NN,\, \forall (f_1, \dots, f_N) \in (E^*\setminus\{0\})^N,\, \forall v \in E\setminus\{0\},\, \exists n \in\NN,\; \nu^{*n}\left\{\gamma\in\mathrm{End}(E)\,\middle|\,\prod_{i = 1}^N f_i \gamma v \neq 0 \right\} > 0,\\
			\text{and} \quad
			\forall N \in \NN,\, \forall f \in E^*\setminus\{0\},\, \forall (v_1, \dots, v_N) \in (E\setminus\{0\})^N,\, \exists n \in\NN,\; \nu^{*n}\left\{\gamma\in\mathrm{End}(E)\,\middle|\,\prod_{j = 1}^N f \gamma v_j \neq 0 \right\} > 0.
		\end{gather*}
	\end{Def}
	
	We will work with real valued matrices but all the results still hold for complex valued matrices or for matrices with coefficients in a ultra-metric locally compact field with the same proofs. We simply need to replace the Euclidean norm with a Hermitian norm or a ultra-metric norm.
	
	Without making any moments assumptions, we will study the behaviour of the projective class $[\overline{\gamma}_n]$ for all $n \in \NN$ and not only asymptotically.
	
	All the following results are corollaries of Theorem \ref{th:pivot}, which is the main theorem of this article. 
	In fact Theorem \ref{th:pivot} follows from Lemma \ref{lem:schottky} and Theorem \ref{th:pivot-extract}.

	\subsection{Regularity results with optimal moment assumptions}
	
	Let $\KK = \RR$.
	Given two Euclidean spaces $(E, \|\cdot\|)$ and $(F,\|\cdot\|)$, we write $\mathrm{Hom}(E,F)$ for the vector space of linear maps from $E$ to $F$, we endow it with the operator norm $h \mapsto \|h\| := \max_{x\in E\setminus\{0\}}\frac{\|hx\|}{\|x\|}$. Given $E$ a Euclidean space, we define $E^* := \mathrm{Hom}(E, \KK)$ to be the dual space of $E$. 
	Given a matrix $h \in \mathrm{Hom}(E,F)$, we write $h^* \in \mathrm{Hom}(F^*, E^*)$ for the composition by $h$ on the right.
	Let $h \in\mathrm{End}(E) := \mathrm{Hom}(E, E)$.
	Denote by $\rho_1(h)$ the limit of  $\|h^n\|^{\frac{1}{n}}$, we call $\rho_1(h)$ the spectral radius of $h$.
	Note that it is the modulus of the maximal eigenvalue of $h$.
	We denote by $\mathrm{GL}(E)$ the group of isomorphisms of $E$. 
	Given $g \in \mathrm{GL}(E)$, we write $N(g) := \log\|g\| + \log\|g^{-1}\|$.
	
	Let $\nu$ be a probability measure on $\mathrm{GL}(E)$ and let $(\gamma_n)\sim \nu^{\otimes\NN}$.
	
	A long standing question is whether the sequence of rescaled entries $\left(\left|(\overline{\gamma}_n)_{i,j}\right|^{\frac{1}{n}}\right)_{n\in\NN}$ converges almost surely for all $1 \le i, j \le d$. 
	We know from~\cite{porm} that if $\EE(\log\|\gamma_0\|) < + \infty$, and without any other assumptions, then the sequence of norms $\left(\left\|\overline{\gamma}_n\right\|^{\frac{1}{n}}\right)_{n\in\NN}$ converges almost surely to a finite non-random limit that we denote by $\rho_1(\nu)$. 
	Furstenberg and Kesten also show that $\rho_1(\nu) > 1$ when $\nu$ is strongly irreducible and supported on the group $\mathrm{SL}(E)$.
	With the above assumptions and when moreover $\EE(N(\gamma_0)^2) < +\infty$ Xiao, Grama and Liu prove in~\cite{xiao2021limit} that the random sequence of rescaled entries $\left(\left|(\overline{\gamma}_n)_{i,j}\right|^{\frac{1}{n}}\right)_{n\in\NN}$ converges almost surely to $\rho_1(\nu)$ for all $i,j$.
	The following theorem allows ut to get rid of the assumption $\EE(N(\gamma_0)^2) < +\infty$.

	\begin{Th}[Strong law of large numbers for the coefficients and for the spectral radius]\label{th:slln}
		Let $E$ be a Euclidean vector space.
		Let $\nu$ be a strongly irreducible and proximal probability measure on $\mathrm{GL}(E)$. There exist constants $C, \beta > 0$ such that for all $f \in E^* \setminus \{0\}$, all $v \in E \setminus \{0\}$, for all $n\in\NN$ and for $\overline\gamma_n \sim \nu^{*n}$, we have for all $t \in \RR_{\ge 0}$:
		\begin{equation}\label{eq:dom-coef}
			\PP\left(\log\frac{\|f\|\|\overline{\gamma}_n\|\|v\|}{|f \overline{\gamma}_n v|} \ge t\right)\le C\exp(-\beta n) + \sum_{k=1}^{+\infty} C \exp(-\beta k) N_*\nu\left(\frac{t}{k}, +\infty\right).
		\end{equation}
		Moreover:
		\begin{equation}\label{eq:dom-radius}
			\forall t \ge 0, \; \PP\left(\log\frac{\|\overline{\gamma}_n\|}{\rho_1(\overline{\gamma}_n)}\ge t\right)\le \sum_{k=1}^{+\infty} C \exp(-\beta k) N_*\nu\left(\frac{t}{k}, +\infty\right).
		\end{equation}
	\end{Th}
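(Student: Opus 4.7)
The plan is to deduce both tail inequalities from Theorem~\ref{th:pivot}, which provides a decomposition of $\overline{\gamma}_n$ into independent, proximal, well-aligned random words whose word-lengths have exponential tails. I would proceed in three steps: a geometric reduction to angles and the squeeze, an application of the pivot decomposition to separate the dependence on $v, f$ from that on $\nu$, and finally a conversion of the residual tail into the series involving $N_*\nu$.

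First, the geometric reduction. For $g\in\emor(E)$ with singular value decomposition, let $V(g)\in\proj(E)$ and $U(g)\in\proj(E^*)$ denote the top right and left singular directions. A standard estimate gives
\[
\log \frac{\|f\|\,\|g\|\,\|v\|}{|fgv|}\;\le\; -\log|\cos\theta(v,V(g))| \,-\, \log|\cos\theta(f,U(g))| \,+\, O\!\bigl(e^{-\sqz(g)}\bigr),
\]
and similarly $\log(\|g\|/\rho_1(g)) \le -\log|\cos\theta(V(g),U(g))| + O(e^{-\sqz(g)})$, after transporting $U(g)$ to $E$ via the Euclidean structure. Both bounds thus reduce to controlling $\sqz(\overline{\gamma}_n)$ --- which by the results stated in the abstract grows linearly in $n$ with exponential large deviations --- together with angles between prescribed directions and the top singular directions of $\overline{\gamma}_n$.

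Next, I would apply Theorem~\ref{th:pivot} to factor $\overline{\gamma}_n = \alpha_L \cdot \beta \cdot \alpha_R$, where $\alpha_L, \alpha_R$ are independent proximal random words with exponential word-length tails, and such that $V(\overline{\gamma}_n)$ is exponentially close to $V(\alpha_R)$ while $U(\overline{\gamma}_n)$ is exponentially close to $U(\alpha_L)$. Since $v$ (resp.\ $f$) is independent of $\alpha_R$ (resp.\ $\alpha_L$), strong irreducibility passes from $\nu$ to the word law and forces each of $\cos\theta(v,V(\alpha_R))$, $\cos\theta(f,U(\alpha_L))$ and the self-angle $\cos\theta(V(\alpha_R),U(\alpha_L))$ to have a distribution with exponentially decaying log-tail, uniformly in $v$ and $f$. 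This handles the angle contribution without any moment assumption.

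Finally, to convert the residual word-level error into the series in $N_*\nu$: an excess of size $t$ coming from a pivot block must be explained by some constituent factor of $\alpha_L$ or $\alpha_R$ carrying an $N$-value of order $t/k$, where $k$ is the length of that block. Summing over $k$ and using the exponential word-length tail yields the bound $\sum_{k\ge 1} C e^{-\beta k} N_*\nu(t/k, +\infty)$. The term $Ce^{-\beta n}$ in~(\ref{eq:dom-coef}) absorbs the rare failure of the pivoting procedure to close off a valid block within the first $n$ steps; this term is not needed in~(\ref{eq:dom-radius}) because $\rho_1$ depends only on the internal alignment of $\overline{\gamma}_n$ rather than on external reference vectors. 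The main obstacle will be this last step: fitting the exponential word-length tail together with the tail of $N_*\nu$ so that the final bound is uniform in $v, f$, and in $n$ for the spectral radius. This requires the pivot construction to be carried out independently of $v$ and $f$, which is exactly the alignment in the Cartan decomposition guaranteed by Theorem~\ref{th:pivot}, so the bulk of the technical work is to make this alignment quantitative in a way compatible with arbitrarily weak moments on $\nu$.
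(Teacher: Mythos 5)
Your high-level strategy — factor $\overline{\gamma}_n$ via the pivot decomposition into a short prefix, a long aligned middle, and a short suffix, then control the prefix and suffix lengths exponentially — matches the paper. But there is a genuine gap in the middle step, and it is not a technicality: you claim the log-angle terms $-\log|\cos\theta(v,V(\alpha_R))|$, $-\log|\cos\theta(f,U(\alpha_L))|$, and $-\log|\cos\theta(V(\alpha_R),U(\alpha_L))|$ have exponentially decaying tails, ``uniformly in $v$ and $f$'', ``without any moment assumption''. This is false, and it is precisely the point of the theorem. The distribution of $U(\alpha_L)$ on $\mathbf{P}(E)$ has regularity governed by $N_*\nu$ (this is exactly Corollary~\ref{cor:regxi} and the discussion after it, where the paper exhibits a strongly irreducible proximal $\nu$ whose stationary measure has only the polynomial regularity of $N_*\nu$). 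If the log-angle tail were uniformly exponential, the whole right-hand side of~\eqref{eq:dom-coef} would be $Ce^{-\beta n}+Ce^{-\beta t}$ and the $N_*\nu$ series would be superfluous. You then attribute the $N_*\nu$ series to a ``residual word-level error'' of order $e^{-\sqz}$, but that residual decays like $e^{-\sqz(\overline\gamma_n)}$, i.e. exponentially in $n$, and would never produce a tail depending on $\nu$. You have the two contributions exactly backwards.

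The fix, and what the paper actually does, is to avoid the angle decomposition entirely and observe the elementary bound $\log\frac{\|f\|\,\|g\|}{\|fg\|}\le\log\|g\|+\log\|g^{-1}\|=N(g)$, which by sub-additivity gives $\sum_{k}N(\gamma_k)$ over the factors of the short prefix word $g$ (likewise for the suffix). The alignment from the pivoting technique (Lemma~\ref{lem:r-piv}) guarantees that after chopping off a random prefix and suffix of exponentially-light length, the middle contributes only a bounded constant, while the prefix and suffix contribute $\sum N(\gamma_k)$ over intervals whose lengths have exponential tails and whose entries have $N$-distribution dominated by $N_*\nu$ (this is point~\eqref{eq:densite} of Theorem~\ref{th:pivot-extract}). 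Summing with Lemma~\ref{lem:descrisum} gives the $\sum_k Ce^{-\beta k}N_*\nu(t/k,+\infty)$ tail. Crucially, the prefix and suffix are chosen \emph{adaptively} depending on $f$ and $v$ via the pivoting lemma, so one never needs any bound on the tail of the angle of a fixed line against the boundary measure — which would reintroduce the very regularity problem one is trying to solve. The same remark applies to your treatment of $\rho_1$: the paper uses the cyclic pivoting lemma (Lemma~\ref{lem:c-piv}) to conjugate to a self-aligned $h_n$ with $\rho_1(h_n)\ge\tfrac{\eps}{2}\|h_n\|$ from Lemma~\ref{lem:eigen-align}, and then bounds $\log(\|\overline\gamma_n\|/\|h_n\|)$ by a sum of $N(\gamma_k)$ over the two short cut intervals; no angle estimate is needed.
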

	
	We prove this result in Section \ref{sec:lln}
	Note that \eqref{eq:dom-coef} implies that for all non-random sequence $\alpha_n \to 0$ and for all $1 \le i,j \le \dim(E)$, the sequence $\left(\frac{\left|(\overline{\gamma}_n)_{i,j}\right|^{\alpha_n}}{\left\|\overline{\gamma}_n\right\|^{\alpha_n}}\right)_{n\in\NN}$ converges weakly in distribution to the Dirac measure at $1$, without any moment assumption.
	
	Given $C, \beta > 0$ and $\nu$ a probability measure on $\mathrm{GL}(E)$, we denote by $\zeta^{C,\beta}_\nu$ the probability distribution on $\RR_{\ge 0}$ characterized by:
	\begin{equation}\label{eq:def-zetanu}
		\forall t \ge 0,\, \zeta^{C,\beta}_\nu(t,+\infty) := \min\left\{1,\sum_{k=1}^{+\infty} C \exp(-\beta k)N_*{\nu}\left(\frac{t}{k}, +\infty\right)\right\}.
	\end{equation}

	\begin{Cor}[Almost sure convergence of the coefficients]\label{cor:lim-coef}
		Let $E$ be a Euclidean space and let $ \nu $ be a strongly irreducible and proximal probability measure on $\mathrm{GL}(E)$.
		Let $(\gamma_n)\sim \nu^{\otimes\NN}$.
		Assume that $\EE(\log\|\gamma_0\|)$ and $ \EE(\log\|\gamma_0^{-1}\|)$ are both finite. Then for all $f \in E^*\setminus\{0\}$ and all $v \in E \setminus\{0\}$, we have almost surely:
		\begin{equation*}
			\lim_{n\to\infty}\frac{\log|f \overline\gamma_n v|}{n} = \lim_{n\to\infty}\frac{\log\|\overline\gamma_n\|}{n} = \log(\rho_1(\nu)).
		\end{equation*}
	\end{Cor}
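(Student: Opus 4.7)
My plan is to bootstrap Theorem \ref{th:slln} to almost sure convergence via a Borel--Cantelli argument, combined with the Furstenberg--Kesten theorem already quoted from~\cite{porm} to identify the limit. Specifically, the hypothesis $\EE(\log\|\gamma_0\|) < \infty$ is exactly what Furstenberg--Kesten requires, so $n^{-1}\log\|\overline\gamma_n\| \to \log\rho_1(\nu)$ almost surely, which already gives the middle equality; submultiplicativity $|f\overline\gamma_n v| \le \|f\|\,\|v\|\,\|\overline\gamma_n\|$ then forces $\limsup_n n^{-1}\log|f\overline\gamma_n v| \le \log\rho_1(\nu)$ a.s. It therefore remains to prove the matching lower bound; equivalently, setting
\begin{equation*}
X_n := \log\frac{\|f\|\,\|\overline\gamma_n\|\,\|v\|}{|f\overline\gamma_n v|} \in [0,+\infty],
\end{equation*}
it suffices to show $n^{-1}X_n \to 0$ almost surely.

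Fix $\eps > 0$. I will apply \eqref{eq:dom-coef} with $t = n\eps$, giving
\begin{equation*}
\PP(X_n \ge n\eps) \le C e^{-\beta n} + \sum_{k=1}^\infty C e^{-\beta k}\, N_*\nu\!\left(\tfrac{n\eps}{k}, +\infty\right),
\end{equation*}
and sum this bound over $n \ge 1$. The first piece is trivially summable. For the double sum, Tonelli interchanges the two summations, and the elementary tail inequality
\begin{equation*}
\sum_{n=1}^\infty N_*\nu\!\left(\frac{n\eps}{k}, +\infty\right) \le \frac{k}{\eps}\,\EE(N(\gamma_0))
\end{equation*}
reduces matters to $\sum_{k\ge 1} k\,e^{-\beta k}$, which converges. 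The hypothesis $\EE(N(\gamma_0)) = \EE(\log\|\gamma_0\|) + \EE(\log\|\gamma_0^{-1}\|) < \infty$, which is exactly the joint hypothesis assumed in the corollary, is used here and only here. Borel--Cantelli then gives $X_n < n\eps$ eventually almost surely, and applying this for every rational $\eps > 0$ yields $n^{-1}X_n \to 0$ a.s., as required.

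The main conceptual point -- and the only non-routine observation -- is the moment bookkeeping in the double sum: the $L^1$ moment $\EE(N(\gamma_0)) < \infty$ is precisely what makes the inner tail sum grow at most linearly in $k$, so that the geometric decay $e^{-\beta k}$ already furnished by \eqref{eq:dom-coef} absorbs the resulting $k$ factor. This matching of decay rates is what makes the optimal $L^1$ hypothesis sufficient here, and is the improvement over \cite{xiao2021limit}, whose argument instead requires $\EE(N(\gamma_0)^2) < \infty$.
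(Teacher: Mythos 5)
Your proof is correct and follows essentially the same route as the paper's: apply \eqref{eq:dom-coef} at scale $t = n\eps$, verify summability in $n$ using the $\mathrm{L}^1$ hypothesis on $N_*\nu$, conclude by Borel--Cantelli, and identify the limit via the Furstenberg--Kesten theorem from~\cite{porm}. The only cosmetic difference is that the paper delegates the moment bookkeeping to Lemma~\ref{lem:sumlp} (which shows $M_1(\zeta_\nu^{C,\beta}) \lesssim M_1(N_*\nu)$), whereas you inline the same Tonelli interchange and tail-sum estimate $\sum_n N_*\nu(n\eps/k,+\infty) \le (k/\eps)\,\EE(N(\gamma_0))$ directly.
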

	
	\begin{proof}
		By Lemma \ref{lem:sumlp}, for all $p \in (0,+\infty)$, there exist a constant $D_p$ such that $M_p(\zeta^{C,\beta}_\nu) \le D_p M_p(N_*\nu)$, where $M_p$ is the $p$-th moment of a measure.
		Therefore, if we assume that $M_1(N_*\nu)= < +\infty$, then $M_1(\zeta_\nu^{C,\beta}) < +\infty$. 
		Then by Theorem \ref{th:slln}, for all $\eps > 0$, we have:
		\begin{align*}
			\sum_{n\in\NN} \PP\left(\log(\|f\|\|\overline{\gamma}_n\|\|v\|) - \log(|f \overline{\gamma}_n v|) \ge n \eps \right) & \le \sum_{n\in\NN} C \exp(-\beta  n) + \sum_{n\in\NN} \zeta_\nu^{(C, \beta)}(n \eps, + \infty) \\
			& \le \frac{C}{\beta} + \eps^{-1}M_1(\zeta_\nu^{C,\beta}) < +\infty.
		\end{align*}
		Then by Borel-Cantelli's Lemma, we have $n^{-1}\log\frac{\|f\|\|\overline{\gamma}_n\|\|v\|}{|f \overline{\gamma}_n v|} \to 0$ almost surely. Then we can apply~\cite[Theorem~1]{porm} which tells us that $n^{-1}\log{\|\overline{\gamma}_n\|} \to \log(\rho_1(\nu))$.
	\end{proof}
	
	The following Corollary is about the regularity of the stationary measure.
	The formulation \eqref{eq:regxislp} is analogous to the regularity result for the stationary measure on hyperbolic groups \cite[Proposition~5.1]{Benoist2016CentralLT}. 
	This is also an improvement of \cite[Proposition~4.5]{CLT16}.

	Let $E$ be a Euclidean space and $V \subset E$ be a proper subspace and let $0 < r \le 1$.
	We define $\mathcal{N}_r(V) := \{l \in\mathbf{P}(E)\,|\,\exists v\in V\setminus\{0\},\;\dist([v],l) < r\}$.
	The weak and strong polynomial moments are defined in Definition \ref{def:lpnorm}.
	
	\begin{Cor}[Regularity of the measure]\label{cor:regxi}
		Let $E$ be a Euclidean vector space.
		Let $\nu$ be a strongly irreducible and proximal probability measure on $\mathrm{GL}(E)$. Let $C, \beta$ be as in Theorem \ref{th:slln} and let $\xi_\nu^\infty$ be the $\nu$-stationary measure as in Theorem \ref{th:limit-uv}. Then we have:
		\begin{equation}\label{eq:regxi}
			\forall V \in\mathrm{Gr}(E) \setminus\{E\}, \forall 0 < r \le 1,\; \xi_\nu^\infty (\mathcal{N}_r(V)) \le \zeta_\nu^{C,\beta} (|\log(r)|, +\infty)
		\end{equation}
		Let $p >0$.
		If we assume that $N_*\nu$ has finite strong $\mathrm{L}^p$ moment, then there exists a constant $C'$ such that:
		\begin{equation}\label{eq:regxislp}
			\forall V \in\mathrm{Gr}(E) \setminus\{E\},\; \int_{l \in \mathbf{P}(E)}|\log\dist(\mathbf{P}(V), l)|^p d\xi_\nu^\infty(l) \le C'.
		\end{equation}
		If we assume that $N_*\nu$ has finite weak $\mathrm{L}^p$ moment, then there exists a constant $C'$ such that:
		\begin{equation}\label{eq:regxiwlp}
			\forall V \in\mathrm{Gr}(E) \setminus\{E\}, \forall 0 < r < 1,\; \xi_\nu^\infty (\mathcal{N}_r(V)) \le C'|\log(r)|^{-p}.
		\end{equation}
	\end{Cor}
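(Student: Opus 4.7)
The plan is to derive all three inequalities directly from the tail bound \eqref{eq:dom-coef} of Theorem \ref{th:slln}, together with the $\nu$-stationarity of $\xi_\nu^\infty$ provided by Theorem \ref{th:limit-uv}, after a cheap reduction from proper subspaces to hyperplanes.

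I would first reduce to the case where $V$ is a hyperplane. Any proper subspace of $E$ is contained in some hyperplane $H = \ker f$ for an $f \in E^* \setminus \{0\}$, and $\mathcal{N}_r(V) \subset \mathcal{N}_r(H)$. For a hyperplane, one checks that $\mathcal{N}_r(H) = \{l \in \mathbf{P}(E) : \dist(l, \mathbf{P}(H)) < r\}$, and for $l = [w]$ the standard projective distance satisfies $\dist(l, \mathbf{P}(H)) \ge |fw|/(\|f\|\,\|w\|)$. So bounding $\xi_\nu^\infty(\mathcal{N}_r(V))$ reduces to bounding the $\xi_\nu^\infty$-probability that the normalised coefficient $|fw|/(\|f\|\,\|w\|)$ is smaller than $r$.

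To prove \eqref{eq:regxi}, I would use the stationarity identity $\xi_\nu^\infty = \nu^{*n} * \xi_\nu^\infty$: if $L \sim \xi_\nu^\infty$ and $\overline\gamma_n \sim \nu^{*n}$ are independent, then $\overline\gamma_n L \sim \xi_\nu^\infty$. Conditioning on $L = [v]$ and applying \eqref{eq:dom-coef} at level $t = |\log r|$ for each fixed $v$, then integrating against $\xi_\nu^\infty$, yields
\begin{equation*}
\xi_\nu^\infty(\mathcal{N}_r(H)) \le C\exp(-\beta n) + \sum_{k=1}^{+\infty} C\exp(-\beta k)\,N_*\nu\!\left(\frac{|\log r|}{k},\,+\infty\right).
\end{equation*}
Letting $n \to \infty$ kills the transient term, and combining with the trivial bound $\xi_\nu^\infty(\mathcal{N}_r(V)) \le 1$ gives \eqref{eq:regxi} through the definition \eqref{eq:def-zetanu}.

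Finally, for \eqref{eq:regxislp}, the layer-cake formula and the fact that the projective distance is bounded by $1$ give
\begin{equation*}
\int_{\mathbf{P}(E)} |\log \dist(\mathbf{P}(V), l)|^p\, d\xi_\nu^\infty(l) = \int_0^{+\infty} p\,t^{p-1}\,\xi_\nu^\infty(\mathcal{N}_{e^{-t}}(V))\, dt \le M_p(\zeta_\nu^{C,\beta}),
\end{equation*}
and Lemma \ref{lem:sumlp} bounds this by a constant times $M_p(N_*\nu)$, which is finite by the strong $\mathrm{L}^p$ assumption. For \eqref{eq:regxiwlp}, the weak $\mathrm{L}^p$ hypothesis $N_*\nu(s, +\infty) \le C_0 s^{-p}$ plugs directly into the sum defining $\zeta_\nu^{C,\beta}$, and the convergence of $\sum_k k^p \exp(-\beta k)$ gives $\zeta_\nu^{C,\beta}(t, +\infty) \le C' t^{-p}$, which together with \eqref{eq:regxi} produces \eqref{eq:regxiwlp}. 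The only genuine input is Theorem \ref{th:slln}; the rest is essentially bookkeeping, so I do not expect a serious obstacle beyond writing the reduction to hyperplanes cleanly and checking the layer-cake computation.
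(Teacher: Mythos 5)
Your argument is correct, but for \eqref{eq:regxi} it follows a genuinely different route from the paper's. You invoke the stationarity identity $\xi^\infty_\nu = \nu^{*n} * \xi^\infty_\nu$ directly: drawing $L \sim \xi^\infty_\nu$ independently of $\overline\gamma_n$, you get the exact identity $\xi^\infty_\nu(\mathcal{N}_r(H)) = \PP\bigl(\dist(\overline\gamma_n L, \mathbf{P}(H)) < r\bigr)$ for every $n$, then apply \eqref{eq:dom-coef} conditionally on $L$ (valid because the conditional law of $\overline\gamma_n$ given $L$ is still $\nu^{*n}$ and the right-hand side of \eqref{eq:dom-coef} does not depend on $v$), and let $n \to \infty$. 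The paper's own proof (Section~5.5) instead fixes a non-random $v$, uses the almost sure convergence $[\overline\gamma_n v] \to l^\infty$ from Theorem~\ref{th:limit-uv}, proves $\PP(l^\infty \in \mathcal{N}_r(V)) \le \liminf_n \PP\bigl([\overline\gamma_n v] \in \mathcal{N}_r(V)\bigr)$ by a Fatou-type argument with a random almost surely finite threshold $n_0$, and only then applies the coefficient bound. Your route is arguably cleaner — no $\liminf$ bookkeeping, no discussion of the threshold $n_0$ — at the price of using the $\nu$-stationarity of $\xi^\infty_\nu$ (Corollary~\ref{cor:ergodic}) as a black box, whereas the paper works directly with the constructive $l^\infty(\gamma)$. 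The two proofs are otherwise of the same length and depth. For \eqref{eq:regxislp} and \eqref{eq:regxiwlp}, your layer-cake and tail-sum computations reproduce what the paper compresses into the remark that $\zeta^{C,\beta}_\nu$ and $N_*\nu$ lie in the same integrability class by Lemma~\ref{lem:sumlp}; those parts match. Two cosmetic notes: the layer-cake identity should carry a factor $p$ (giving $p\,M_p(\zeta^{C,\beta}_\nu)$), harmlessly absorbed into $C'$; and the passage to a hyperplane $H \supset V$ is not actually a reduction, since $\mathcal{N}_r(V) = \{l : \dist(l,\mathbf{P}(V)) < r\}$ for any proper subspace $V$ — choosing any $f$ with $f(V)=\{0\}$ already yields $\dist(l,\mathbf{P}(V)) \ge |fw|/(\|f\|\|w\|)$ directly.
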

	
	Note that by Lemma \ref{lem:sumlp}, the probability distribution $\zeta_\nu^{C,\beta}$ is in the same integrability class as $N_*\nu$.
	Inequalities \eqref{eq:regxislp} and \eqref{eq:regxiwlp} follow directly from that observation and from \eqref{eq:regxi}.
	We prove Theorem \ref{th:slln} and \eqref{eq:regxi} from Corollary \ref{cor:regxi} in Section \ref{sec:lln}.

	\subsection{Contraction results without moment assumptions}

	Let $E$ be a Euclidean vector space, and let $1 \le k \le \dim(E)$ be an integer.
	We denote by $\bigwedge^k E$ the $k$-th exterior product of $E$, \ie the minimal-up-to-isomorphism space that factorises all alternate $k$-linear maps. 
	It naturally comes with a $k$-linear alternate map $E^k \to \bigwedge^k E; (v_1, \dots, v_k) \mapsto v_1 \wedge \cdots \wedge v_k$.
	We endow $\bigwedge^k E$ with the canonical Euclidean metric, which is characterized by the fact that for all family $(v_1, \dots, v_k) \in E^k$, one has $\|v_1 \wedge \cdots \wedge v_k\| \le \|v_1\|\cdots\|v_k\|$, with equality when the family $(v_1, \dots, v_k)$ is orthogonal.
	
	Let $E$ and $F$ be Euclidean spaces and let $h \in\mathrm{Hom}(E,F)$. We define the squeeze coefficient or logarithmic singular gap of $h$ as follows:
	\begin{equation}\label{eqn:defsqz}
			\mathrm{sqz}(h) := \log\left(\frac{\|h\|\|h\|}{\|h\wedge h\|}\right).
	\end{equation}
	It is the logarithm of the ratio between the first and second largest singular values (counted with multiplicity).
	Note that then by the spectral theorem, for all square matrix $h$ which is not nilpotent, we have $\frac{\sqz(h^n)}{n} \to \prox(h)$.

	\begin{Th}[Quantitative estimate of the escape speed]\label{th:escspeed}
		Let $E$ be a Euclidean space and let $\nu$ be a proximal and strongly irreducible probability distribution on $\mathrm{End}(E)$. Let $(\gamma_n)\sim\nu^{\otimes\NN}$. Write $\overline\gamma_n := \gamma_0\cdots\gamma_{n-1}$ for all $n$.
		Then there exists a positive constant $\sigma(\nu)\in (0,+\infty]$ such that almost surely $\frac{\sqz(\overline\gamma_n)}{n} \to \sigma(\nu)$. Moreover, we have the following large deviations inequalities:
		\begin{equation}\label{eqn:ldsqz}
			\forall\alpha<\sigma(\nu),\;\exists C, \beta>0 ,\; \forall n\in\NN,\; \PP(\sqz(\overline\gamma_n)\le\alpha n \cup \prox(\overline\gamma_n)\le\alpha n) \le C \exp(-\beta n).
		\end{equation}
	\end{Th}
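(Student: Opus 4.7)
The plan is to derive Theorem~\ref{th:escspeed} directly from the pivoting decomposition provided by Theorem~\ref{th:pivot}, together with the Schottky alignment statement of Lemma~\ref{lem:schottky}. The key structural fact I would extract from those results is that, up to an exponentially small overhead, one can write $\overline\gamma_n = g_0 w_1 w_2 \cdots w_{T_n} g_n'$ where the $(w_i)_{i \ge 1}$ form an i.i.d.\ sequence of \emph{aligned pivotal words}, the boundary pieces $g_0, g_n'$ have exponentially integrable length (so contribute $o(n)$), and the number $T_n$ of complete blocks consumed in the first $n$ factors satisfies $T_n / n \to \tau > 0$ almost surely with exponential large deviations coming from the geometric tail of inter-pivot waiting times.

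Step one is the Cartan additivity estimate. If $w_1, \dots, w_k$ are pivotally aligned in the sense that at each junction the top singular direction of the prefix and the top dual singular direction of the next factor stay at a uniformly positive angle (the Schottky opening of Lemma~\ref{lem:schottky}), then a standard comparison of singular values through this opening yields
\[
\sqz(w_1 \cdots w_k) \ge \sum_{i=1}^k \sqz(w_i) - c_0 k,
\]
where $c_0$ depends only on the opening. A symmetric argument, using that aligned products admit a uniformly transverse attracting line / repelling hyperplane pair, gives $\prox(w_1 \cdots w_k) \ge \sqz(w_1 \cdots w_k) - c_1$, so the spectral-gap estimate will follow from the squeeze one.

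Step two is a Cramér--Chernoff application on the i.i.d.\ sequence $\sqz(w_i)$. Proximality of $\nu$ forces $m := \EE[\sqz(w_1)] > 0$ (with positive probability $w_1$ is proximal, hence has positive squeeze), while the geometric size of the pivotal window gives an exponential upper tail on $\sqz(w_1)$, so the classical Chernoff bound provides $S_k := \sum_{i \le k} \sqz(w_i) \ge m' k$ for any $m' < m$ outside of an event of probability $\le C e^{-\beta k}$. Combining with $T_n / n \to \tau$ (also with exponential large deviations) and the additive bound above yields $\sqz(\overline\gamma_n)/n \to \sigma(\nu) := \tau m$ almost surely, as well as the bound \eqref{eqn:ldsqz} for $\sqz$ with any $\alpha < \sigma(\nu)$. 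The bound on $\prox(\overline\gamma_n)$ then follows by the same inequality for every realization in the good event.

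The main obstacle I expect is bookkeeping: converting the pivotal extraction of Theorem~\ref{th:pivot} into an honestly i.i.d.\ block decomposition with exponentially integrable block size (so that Cramér applies with no further moment hypothesis on $\nu$), and absorbing the unaligned boundary pieces $g_0, g_n'$ and the linearly-many $-c_0$ alignment losses into the $\alpha < \sigma(\nu)$ slack. Nothing here requires a moment assumption on $\nu$, because all the integrability is supplied internally by the geometric structure of the pivot construction rather than by tails of $\gamma_0$.
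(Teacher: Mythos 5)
The core difficulty of Theorem~\ref{th:escspeed} is the \emph{upper} bound $\limsup_n \sqz(\overline\gamma_n)/n \le \sigma(\nu)$, and your proposal has no mechanism for it. Your pipeline — pivotal block decomposition, additivity $\sqz(w_1\cdots w_k)\ge\sum_i\sqz(w_i)-c_0k$, Chernoff on $\sum\sqz(w_i)$, renewal estimate $T_n/n\to\tau$ — produces only a \emph{lower} bound on $\sqz(\overline\gamma_n)$, hence the $\liminf$ direction and the large-deviations inequality \eqref{eqn:ldsqz}. But $\sqz$ is not sub-additive (the paper flags this explicitly just before Lemma~\ref{lem:conv}), so $\sqz(\overline\gamma_n)$ can greatly exceed $\sqz$ of the aligned prefix $w_1\cdots w_{T_n}$: a single trailing factor $g_n'$ can collapse the second singular direction of the aligned part and spike the singular gap arbitrarily, especially since $\nu$ has no moment assumption. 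The paper's proof has to work hard for the $\limsup$: Lemma~\ref{lem:left-ali-tjrs} uses strong irreducibility to show that, with probability bounded below uniformly, a given matrix can be ``sandwiched'' into an aligned chain on both sides; Lemma~\ref{lem:conv} then runs a proof-by-contradiction that transfers a hypothetical spike of $\sqz(\overline\gamma_n)$ into a violation of the Kingman limit for the honest aligned block chain. Nothing in your sketch replaces this.

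There is also an error in the identification of the limit. You set $\sigma(\nu):=\tau m$ with $m:=\EE[\sqz(w_1)]$, but the true constant is $\tau\,\sigma(\kappa)$ where $\sigma(\kappa)=\lim_N \frac{1}{N}\EE[\sqz(w_1\cdots w_N)]$ is the Kingman limit for the aligned chain (Lemma~\ref{lem:pre-speed}, obtained by applying Kingman to the genuinely sub-additive quantity $2|\log\eps|+4\log2-\sqz$). Because $\sqz$ is not additive, $\sigma(\kappa)$ is not $\EE[\sqz(w_1)]$ and is in general strictly larger than $m-c_0$; the additive inequality only pins it down from below. Finally, for the $\prox$ part your reduction $\prox(w_1\cdots w_k)\ge\sqz(w_1\cdots w_k)-c_1$ requires \emph{self-alignment} of the full product ($g\,\AA^\eps\,g$, Lemma~\ref{lem:eigen-align}), not merely alignment at consecutive junctions; the paper gets this by the cyclical pivoting technique of Lemma~\ref{lem:c-piv}, which you have not invoked, and the large-deviations inequality for $\prox(\overline\gamma_n)$ in the theorem refers to the unconjugated product, which is handled via $\prox(\overline\gamma_n)=\prox$ of a cyclic rotation, again requiring that cyclic machinery.
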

	
	Let $\nu$ be a probability measure on $\mathrm{GL}(E)$.
	Let $(\gamma_n)\sim \nu^{\otimes\NN}$. 
	Assume that $\EE(\log\|\gamma_0\|) < +\infty$. 
	Then we know from sub-additivity~\cite{porm} that $\frac{\log\|\overline{\gamma}_n\|}{n} \to \log(\rho_1(\nu))$. 
	Let $\log(\rho_2(\nu))$ be the second Lyapunov exponent of $\nu$. 
	Again by sub-additivity, $\frac{\log\|\overline{\gamma}_n \wedge \overline{\gamma}_n\|}{n} \to \log(\rho_1(\nu)) + \log(\rho_2(\nu))$. 
	Hence $\frac{\sqz(\overline{\gamma}_n)}{n} \to \log(\rho_1(\nu)) - \log(\rho_2(\nu))$, which is therefore equal to $\sigma(\nu)$ from Theorem \ref{th:escspeed}.
	A celebrated result by Guivarc'h and Raugi~\cite{GR86} asserts that this difference is positive when $\nu$ is strongly irreducible and proximal. 
	Without the first moment assumption, the Lyapunov coefficients $\rho_1(\nu)$ and $\rho_2(\nu)$ do not make sense and in general, the sequence $\|\overline{\gamma}_n\|^{\frac{1}{n}}$ does not converge almost surely.
	Still Theorem \ref{th:escspeed} above shows that the limit $\sigma(\nu) = \lim \frac{\sqz(\overline\gamma_n)}{n}$ still exists and is a positive constant.
	In that sense, the first part of the above theorem is an extension of Guivarc'h and Raugi's theorem to all strongly irreducible and proximal probability measures.
	Moreover, the quantitative estimates \eqref{eqn:ldsqz} are new even in the setting of~\cite{GR86}.
	In fact they are key to our approach. 
	We deduce the qualitative convergence from the strong quantitative estimates.
		
	We denote by $\mathbf{P}(E)$ the projective space associated to $E$ \ie the set of vector lines in $E$. Write $[\cdot] : E\setminus\{0\} \to \mathbf{P}(E)$ for the projection map. We endow $\mathbf{P}(E)$ with the metric:
	\begin{equation}
		\dist:([x],[y])\mapsto\frac{\|x\wedge y\|}{\|x\|\|y\|}.
	\end{equation} 
	Let $h$ be a square matrix such that $\prox(h) > 0$. 
	Then the top eigenvalue of $h$ is simple and real. 
	We write $E^+(h)$ for the associated eigenspace which is a real line.
	
	\begin{Th}[Quantitative convergence of the image]\label{th:cvspeed}
		Let $ \nu $ be a strongly irreducible and proximal probability distribution on $\mathrm{End}(E) $. Let $(\gamma_n) \sim \nu^{\otimes\NN}$. There exists a random line $l^\infty \in \mathbf{P}(E)$ such that for all $\alpha < \sigma(\nu)$, there exist constants $C,\beta > 0$, such that:
		\begin{gather}\label{eqn:erc}
			\forall v \in E \setminus \{0\},\,
			\forall n\in\NN,\; 
			\PP\left(\dist([\overline\gamma_n v],l^\infty) \ge \exp(-\alpha n)\,|\,\overline\gamma_n v \neq 0\right) \le C\exp(-\beta n),\\
			\forall n\in\NN,\; \PP\left(\prox(\overline{\gamma}_n) = 0 \cup \dist(E^+(\overline{\gamma}_n),l^\infty) \ge \exp(-\alpha n)\right) \le C\exp(-\beta n)\label{eqn:lim-E+}
		\end{gather}
	\end{Th}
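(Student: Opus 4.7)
The plan is to deduce both assertions from Theorem \ref{th:escspeed} together with the pivoting structure behind Theorem \ref{th:pivot}. For any $h \in \emor(E)$ that is not nilpotent, write $U^+(h), V^+(h) \in \mathbf{P}(E)$ for the projective classes of the top left and right singular directions of $h$, and $V^-(h) \subset \mathbf{P}(E)$ for the projectivization of the span of the remaining right singular vectors. The candidate random line will be $l^\infty := \lim_n U^+(\overline\gamma_n)$, and both \eqref{eqn:erc} and \eqref{eqn:lim-E+} will follow from quantitative convergence of this sequence.

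The workhorse is the elementary singular value bound
\begin{equation*}
\dist([hv], U^+(h)) \le \frac{\exp(-\sqz(h))}{\dist([v], V^-(h))},
\end{equation*}
valid for every unit vector $v$ outside $V^-(h)$. Applied to $h = \overline\gamma_n$ and combined with the lower bound $\sqz(\overline\gamma_n) \ge \alpha' n$ from Theorem \ref{th:escspeed} (valid with probability $1 - Ce^{-\beta n}$ for any fixed $\alpha' \in (\alpha, \sigma(\nu))$), this reduces \eqref{eqn:erc} to proving: (a) $(U^+(\overline\gamma_n))_n$ is Cauchy with exponential rate, and (b) for the fixed $v$, one has $\dist([v], V^-(\overline\gamma_n)) \ge \exp(-\delta n)$ off an event of exponentially small probability, for some $\delta$ strictly smaller than $\alpha' - \alpha$.

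Both (a) and (b) will be handled by the pivoting structure. For (a), split $\overline\gamma_{n+m} = \overline\gamma_n \cdot g$ with $g = \gamma_n \cdots \gamma_{n+m-1}$ independent of $\overline\gamma_n$. Since $U^+(\overline\gamma_{n+m})$ is the projective image under $\overline\gamma_n$ of $g V^+(\overline\gamma_{n+m})$, the singular value bound gives
\begin{equation*}
\dist(U^+(\overline\gamma_n), U^+(\overline\gamma_{n+m})) \le \frac{\exp(-\sqz(\overline\gamma_n))}{\dist([g V^+(\overline\gamma_{n+m})], V^-(\overline\gamma_n))}.
\end{equation*}
The principal obstacle, and the place where the pivoting technique is essential, is to bound the denominator below by $\exp(-\delta n)$ on an event of probability $1 - Ce^{-\beta n}$. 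The hyperplane $V^-(\overline\gamma_n)$ is independent of $g$, and by Theorem \ref{th:pivot} and the Schottky-type input of Lemma \ref{lem:schottky} one can arrange that an aligned pivot block inside $g$ sends any input direction, in particular $V^+(\overline\gamma_{n+m})$, to a direction uniformly transverse to any prescribed hyperplane. The analogous argument applied to the contragredient walk yields (b), since $V^-(\overline\gamma_n)$ is the orthogonal complement of the top left singular direction of $\overline\gamma_n^* = \gamma_{n-1}^* \cdots \gamma_0^*$, and $v$ plays now the role of the fixed input.

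Finally, \eqref{eqn:lim-E+} follows from \eqref{eqn:erc} and the standard perturbation estimate: whenever $\prox(h) > 0$, the eigenline $E^+(h)$ lies within projective distance $O(\exp(-\sqz(h)))$ of $U^+(h)$, as one reads off the block form of $h$ in its singular basis. Combining this with the exponential large deviations on $\prox(\overline\gamma_n)$ from Theorem \ref{th:escspeed} and the Cauchy rate $\dist(U^+(\overline\gamma_n), l^\infty) \le C' \exp(-\alpha' n)$ already obtained gives \eqref{eqn:lim-E+}, after possibly replacing $\alpha'$ by a slightly smaller value still larger than $\alpha$.
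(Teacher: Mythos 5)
Your proposal has two serious gaps, and also takes a genuinely different route that runs into trouble the paper's route is designed to avoid.

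\textbf{Gap in (b), the transversality estimate.} Your approach to \eqref{eqn:erc} reduces to showing that $\dist([v], V^-(\overline\gamma_n)) \ge \exp(-\delta n)$ off an event of exponentially small probability. This is a regularity estimate for the law of the repulsive hyperplane $V^-(\overline\gamma_n)$ near the fixed direction $[v]$. But Theorem \ref{th:cvspeed} carries \emph{no} moment assumption, and in the absence of moments such regularity is not available at exponential scale: by the same mechanism as in Corollary \ref{cor:regxi}, the probability that the top singular direction lies within $\exp(-\delta n)$ of a fixed hyperplane is controlled by $\zeta_\nu^{C,\beta}(\delta n, +\infty)$, which decays only as fast as $N_*\nu$ allows (e.g.\ polynomially if $N_*\nu$ is merely $\mathrm{L}^p$, and slower if it has no finite moment). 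The paper's proof (Theorem \ref{th:limit-uv}) avoids transversality entirely: it appends $v$ to the ping-pong sequence and applies the pivoting lemma (Lemma \ref{lem:r-piv}) to find a pivot block aligned with the \emph{tail including $v$} at a \emph{fixed} alignment level $\eps$; the distance bound then comes from Lemmas \ref{lem:herali}, \ref{lem:c-prod} and Corollary \ref{cor:limit-line}, with no need to lower-bound $\dist([v], V^-(\overline\gamma_n))$ at an exponential rate. Alignment at a fixed level $\eps$ is strictly weaker than the transversality you invoke and is exactly what the Schottky structure gives for free.

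\textbf{The perturbation estimate for \eqref{eqn:lim-E+} is false as stated.} You claim that $\prox(h) > 0$ alone implies $\dist(E^+(h), U^+(h)) = O(\exp(-\sqz(h)))$. Take
$h = \begin{pmatrix} 1 & t \\ 0 & 1/2 \end{pmatrix}$
with $t$ large. Then $E^+(h) = [e_1]$, while $U^+(h)$ is approximately $[(1, \tfrac{1}{2t})]$, so $\dist(E^+(h), U^+(h)) \approx \tfrac{1}{2t}$; meanwhile $\exp(-\sqz(h)) \approx \tfrac{1}{2t^2}$, which is an order of magnitude smaller. Lemma \ref{lem:eigen-align} gives the bound you want only under the additional self-alignment hypothesis $h \AA^\eps h$ with $\eps$ a fixed constant. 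Establishing this for (a cyclic conjugate of) $\overline\gamma_n$ is precisely what the cyclical pivoting technique (Lemma \ref{lem:c-piv}) is for, and it is the core of the paper's proof of \eqref{eqn:lim-E+}. Your sketch omits this entirely, and without it \eqref{eqn:lim-E+} does not follow.

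\textbf{Secondary issue in (a).} In the decomposition $\overline\gamma_{n+m} = \overline\gamma_n \cdot g$, the quantity $V^+(\overline\gamma_{n+m})$ is \emph{not} independent of $g$ — it depends on the whole word including $g$ — so one cannot treat $g V^+(\overline\gamma_{n+m})$ as a fixed input to which the pivot's Schottky property applies; the conditional independence the pivoting technique relies on is broken. The paper sidesteps a Cauchy estimate along all times: it builds $l^\infty$ along the pivotal subsequence (Corollary \ref{cor:limit-line}, where Proposition \ref{prop:pivot-is-aligned} guarantees a fully aligned chain), and then reconnects an arbitrary time $n$ to that subsequence with a single application of Lemma \ref{lem:r-piv}.
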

	
	We moreover show in Proposition \ref{prop:essker} that the set of vectors $v \in E$ such that $\sup_{n\in\NN} \PP(\overline\gamma_n v = 0) > 0$ is a countable union of proper subspaces of $E$. 
	We denote this set by $\underline{\ker}(\nu)$.
	In this proposition, we also show that $\PP(\overline\gamma_n v = 0)$ is bounded away from $1$, uniformly in $n \in \NN$ and $v \in E \setminus\{0\}$.

	Note that if two random lines $l^\infty$ and $l'^\infty$ satisfy \eqref{eqn:erc} then we have $l^\infty = l'^\infty$ almost surely.  
	We define $\xi_\nu^\infty$ to be the distribution of $l^\infty$. 
	Then $\xi_\nu^\infty$ is the only $\nu$-stationary measure on $\mathbf{P}(E)$ in the sense that $\nu * \xi^\infty_\nu = \xi^\infty_\nu$.
	Moreover, we have the following exponential mixing property.
	
	\begin{Cor}[Proximality implies exponential mixing]\label{cor:ergodic}
		Let $ \nu $ be any strongly irreducible and proximal distribution on $\mathrm{End}(E)$. There is a unique $\nu$-stationary probability distribution $\xi_\nu^\infty$ on $\mathbf{P}(E)$. Moreover, there exist constants $C, \beta$ such that for all probability distribution $\xi$ on $ \mathbf{P}(E)\setminus\underline{\ker}(\nu) $ and for all Lipschitz function $ f: \mathbf{P}(E) \to \RR$ with Lipschitz constant $\lambda(f)$, we have:
		\begin{equation}
			\forall n\in\NN,\;\left|\int_{\mathbf{P}(E)}f\mathrm{d}\xi_\nu^\infty-\int_{\mathbf{P}(E)}f\mathrm{d}\nu^{*n}*\xi\right|\le \lambda(f) C\exp(-\beta n).
		\end{equation}
	\end{Cor}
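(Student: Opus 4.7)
The plan is to take $\xi_\nu^\infty$ to be the distribution of the random line $l^\infty$ produced by Theorem \ref{th:cvspeed} and to deduce stationarity, uniqueness and the mixing bound from the exponential estimate \eqref{eqn:erc}. For stationarity I would apply Theorem \ref{th:cvspeed} twice: once to the original sequence $(\gamma_n)_{n\in\NN}$, producing $l^\infty$, and once to the shifted sequence $(\gamma_{n+1})_{n\in\NN}$, producing a random line $l^{\prime\infty}\sim\xi_\nu^\infty$ that is independent of $\gamma_0$. For any fixed $v\in E\setminus\underline{\ker}(\nu)$, both $[\overline\gamma_{n+1}v]\to l^\infty$ and $[\gamma_1\cdots\gamma_n v]\to l^{\prime\infty}$ hold almost surely. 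Since \eqref{eq:regxi} of Corollary \ref{cor:regxi} (letting $r\to 0$) implies $\xi_\nu^\infty(\mathbf{P}(W))=0$ for every proper subspace $W\subsetneq E$, the random line $l^{\prime\infty}$ almost surely avoids $\mathbf{P}(\ker\gamma_0)$, so $\gamma_0$ is continuous at $l^{\prime\infty}$ and passing to the limit yields $l^\infty=\gamma_0 l^{\prime\infty}$ almost surely; this exhibits $\xi_\nu^\infty$ as $\nu$-stationary.

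For the quantitative mixing bound, fix $\alpha\in(0,\sigma(\nu))$ and let $C,\beta>0$ be the constants furnished by \eqref{eqn:erc}. For $l\in\mathbf{P}(E)\setminus\underline{\ker}(\nu)$ with unit representative $v_l$ (chosen via a Borel section), one has $\overline\gamma_n v_l\neq 0$ almost surely, so \eqref{eqn:erc} applies without conditioning. Since the diameter of $\mathbf{P}(E)$ is at most $1$, the Lipschitz assumption on $f$ gives
\begin{equation*}
	\bigl|f([\overline\gamma_n v_l])-f(l^\infty)\bigr|\le\lambda(f)\min\bigl(1,\dist([\overline\gamma_n v_l],l^\infty)\bigr),
\end{equation*}
and splitting according to the event in \eqref{eqn:erc}, its expectation is at most $\lambda(f)(\exp(-\alpha n)+C\exp(-\beta n))$. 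This bound is uniform in $l$, so using Fubini to write $\int f\,d\nu^{*n}*\xi=\int\EE[f([\overline\gamma_n v_l])]\,d\xi(l)$ and $\int f\,d\xi_\nu^\infty=\EE[f(l^\infty)]$, integration against $\xi$ gives the desired $\lambda(f)C'\exp(-\beta'n)$ bound with $\beta'=\min(\alpha,\beta)$.

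Uniqueness then comes essentially for free. Any $\nu$-stationary probability measure $\xi$ on $\mathbf{P}(E)$ satisfies $\xi=\nu^{*n}*\xi$ as honest probability measures, so no mass may be lost under the random action at time $n$ for $\xi$-almost every $l$. Equivalently, $\PP(\overline\gamma_n v_l=0)=0$ for $\xi$-a.e.\ $l$ and every $n\in\NN$, which by the very definition of $\underline{\ker}(\nu)$ forces $\xi(\mathbf{P}(\underline{\ker}(\nu)))=0$. The mixing estimate just established then applies to $\xi$ itself and yields $\int f\,d\xi=\int f\,d\xi_\nu^\infty$ for every Lipschitz $f$ on $\mathbf{P}(E)$, whence $\xi=\xi_\nu^\infty$.

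The only mild difficulty is organizational: one must fix a Borel section $l\mapsto v_l$ and carefully handle the $\xi_\nu^\infty$-negligible exceptional set $\mathbf{P}(\underline{\ker}(\nu))$ in the shift argument. Both are routine once Theorem \ref{th:cvspeed} and Proposition \ref{prop:essker} are in hand; the real content of the corollary is the quantitative contraction \eqref{eqn:erc}, and the corollary is essentially a Lipschitz test-function reformulation of it.
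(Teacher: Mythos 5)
Your overall architecture matches the paper's: define $\xi_\nu^\infty$ as the law of $l^\infty$, deduce stationarity from shift equivariance, get mixing from the contraction estimate, and bootstrap uniqueness. The mixing estimate is carried out correctly and is essentially the paper's computation: bound $|f(l^\infty)-f(\overline\gamma_n l)|$ by $\lambda(f)\min(1,\dist)$, split at threshold $\exp(-\alpha n)$, and apply \eqref{eqn:erc} (equivalently \eqref{eq:limit-v}). The uniqueness paragraph is a welcome addition: the paper asserts uniqueness in the statement but does not write out the deduction, and your argument — that $\nu^{*n}*\xi=\xi$ forces $\xi$-a.e.\ $[v]$ to have $\nu^{*n}\{g:gv=0\}=0$ for all $n$, hence $\xi(\mathbf{P}(\underline{\ker}(\nu)))=0$, hence the just-proved mixing bound applies to $\xi$ — is exactly the right way to close the loop.

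There is, however, a real gap in the stationarity step. You justify the key claim ``$l^{\prime\infty}$ almost surely avoids $\mathbf{P}(\ker\gamma_0)$'' by invoking \eqref{eq:regxi} of Corollary \ref{cor:regxi} with $r\to 0$ to conclude $\xi_\nu^\infty(\mathbf{P}(W))=0$ for every proper subspace $W$. That corollary is stated \emph{only} for $\nu$ supported on $\mathrm{GL}(E)$, and its conclusion is false in the $\mathrm{End}(E)$ setting of Corollary \ref{cor:ergodic}: the paper explicitly warns, in the paragraph directly after the statement, that $\xi_\nu^\infty$ \emph{can} charge hyperplanes (the mixture of Haar on $\mathrm{O}(E)$ with a Dirac at a projection $\pi$ puts an atom at $[\mathrm{im}\,\pi]$). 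Indeed, the quantity $\zeta_\nu^{C,\beta}(|\log r|,+\infty)$ does not tend to $0$ as $r\to 0$ when $N_*\nu$ charges $+\infty$, which is the case as soon as $\nu$ has singular matrices. Note also that the statement ``$l^{\prime\infty}\not\subset\ker\gamma_0$ almost surely'' is essentially \emph{equivalent} to stationarity (a stationary law on $\mathbf{P}(E)$ cannot leak mass to $\{[0]\}$), so deriving stationarity from it without an independent proof is close to circular. The paper sidesteps this by having Theorem \ref{th:limit-uv} directly establish that $\gamma\in\Omega(E)$ almost surely, where $\Omega(E)$ already encodes the $T$-equivariance $l^\infty(\gamma)=\gamma_0 l^\infty(T\gamma)$; this equivariance is obtained from the alignment structure of the pivotal extraction (the prefix $\gamma_0\cdots\gamma_{w_0-1}$ stays coarsely aligned with the tail, keeping the limit direction away from a degenerate cone of $\gamma_0$), not from any regularity of $\xi_\nu^\infty$. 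To repair your argument without importing the whole of Theorem \ref{th:limit-uv}, you should either cite the $T$-equivariance as part of the input, or restrict your regularity-based justification to the $\mathrm{GL}(E)$ case and handle the general $\mathrm{End}(E)$ case via the extraction.
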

	
	Note that saying that $\xi$ is supported on $\mathbf{P}(E)\setminus\underline{\ker}(\nu)$ is not very restrictive because any measure that gives measure $0$ to all hyperplanes would satisfy that condition. 
	However, $\xi_\nu^\infty$ itself may give positive measure to some hyperplanes. 
	For example if $\nu$ is the barycentre of the Haar measure on the group of isometries and a Dirac mass $\delta_\pi$ at a projection endomorphism $\pi$, then $\xi^\infty_\nu$ is the average of the isometry-invariant measure and of the Dirac mass on the image of $\pi$. In particular $\xi^\infty_\nu$ gives positive measure to any hyperplane that contains the image of $\pi$. 
	
	Note that if $\nu$ is supported on $\mathrm{GL}(E)$, then $\underline{\ker}(\nu) = \{0\}$. 
	The existence and uniqueness of the stationary measure are well known in this case. 
	This was in fact the first step towards the formalization of boundary theory by Furstenberg~\cite{furstenberg1973boundary}. 
	Even in this case, with the pivoting technique, we get regularity results for the stationary measure which are better that the ones obtained using ergodic theory.
	
	Let $p \in (0, +\infty)$ and let $\eta$ be a probability measure on $\RR_{\ge 0}$. 
	We say that $\eta$ is strongly $\mathrm{L}^p$ if $M_p(\eta) := \int_{t=0}^{+\infty} t^{p-1}\eta(t, +\infty) dt < + \infty$ and we say that $\eta$ is weakly $\mathrm{L}^p$ if $W_p(\eta) := \sup_{t \ge 0}t^{p}\eta(t, +\infty) < +\infty$.
	
	Given $E$ a vector space and $k \le \dim(E)$, we denote by $\mathrm{Gr}_k(E)$ the set of $k$-dimensional subspaces of $E$.

\subsection{Alignment and pivotal extraction}\label{sec:intro-pivot}

	An important tool that we will use is the notion of alignment of matrices that we define as follows:
	
	\begin{Def}[Coarse alignment of matrices]\label{def:ali}
		Let $g,h$ be two matrices whose product is well defined. Let $0 < \eps \le 1$, we say that $g$ is $\eps$-coarsely aligned to $h$ and we write $g\AA^\eps h$ if we have:
		\begin{equation}\label{eqn:def-ali}
			\|g h\| \ge \eps \|g\| \|h\|.
		\end{equation} 
	\end{Def}
	
	An important observation is that \eqref{eqn:def-ali} (together with the sub-multiplicativity of the norm on $\bigwedge^2 E$) implies that $\sqz(g h) \ge \sqz(g) + \sqz(h) - 2 |\log(\eps)|$ (see Lemma \ref{lem:c-prod}). 
	
	Using the pivoting technique, we will prove theorem \ref{th:pivot} below. To give a precise statement we need to introduce some notations. 
	
	Let $\Gamma = \mathrm{GL}(E)$ or $\Gamma = \mathrm{End}(E)$. We will write $\widetilde\Gamma$ for the semi-group of words with letters in $\Gamma$ \ie the set of all tuples $\bigsqcup_{l\in\NN}\Gamma^l$, (where $\Gamma^l$ is identified with $\Gamma^{\{0, \dots, l-1\}}$ and endowed with the product $\sigma$-algebra for all $l \in\NN$) that we endow with the concatenation product
	\begin{equation*}
		\begin{array}{crcl}
			\odot : & \widetilde\Gamma \times\widetilde\Gamma & \longrightarrow & \widetilde\Gamma \\
			&((\gamma_0, \dots,\gamma_{k-1}),(\gamma'_0,\dots,\gamma'_{l-1})) \in\Gamma^k \times\Gamma^l & \longmapsto & (\gamma_0, \dots,\gamma_{k-1},\gamma'_0,\dots,\gamma'_{l-1})\in\Gamma^{k+l}.
		\end{array}
	\end{equation*}
	We also define the length functor:
	\begin{equation*}
		L :\; \widetilde\Gamma \longrightarrow \NN ; \; (\gamma_0, \dots,\gamma_{k-1}) \longmapsto k,
	\end{equation*}
	and the product functor:
	\begin{equation*}
		\Pi : \; \widetilde\Gamma \longrightarrow \Gamma ;\; (\gamma_0, \dots,\gamma_{k-1}) \longmapsto \gamma_0\cdots\gamma_{k-1}.
	\end{equation*}
	Moreover, for all $0 \le k < l$, we define $\chi_k^l : \Gamma^l \to \Gamma$ to be the $k$-th coordinate projection.
	
	Let $I$ be a countable set, let $(\zeta_i)_{i\in I}$ be a family of probability distributions on $\RR_{\ge 0}$. Let $\eta$ be a probability distribution on $\RR_{\ge 0}$.
	We say that $\eta$ dominates the family $(\zeta_i)_{i\in I}$ if there exists a constant $ C $ such that $\zeta_i(t,+\infty)\le C\eta(t/C-C,+\infty)$ for all $t \in\RR_{\ge 0}$ and all $i\in I$. 
	
	Let $(\eta_i)_{i\in I}$ be a family of probability distribution on $\RR_{\ge 0}$. We say that $(\eta_i)$ has a bounded exponential moment if there exist constants $C, \beta> 0$ such that $\eta_i(t, +\infty) \le C\exp(-\beta t)$ for all $t \in \RR$ and all $i \in I$. 
	Note that saying that a family $(\eta_i)_{i\in I}$ has a bounded exponential moment is not the same as saying that each $\eta_i$ has a finite exponential moment because the exponent $\beta$ and the constant $C$ may depend on the index $i \in I$.
	We say that a family of random variables has a bounded exponential moment if the family of their distributions have.
	
	Given $A$ a measurable event \ie a measurable subset of a measurable space $X$, we write $\mathds{1}_A$ for the indicator function of $A$, it is the measurable function that takes value $1$ on $A$ and value $0$ on $X \setminus A$.

	\begin{Th}[Pivotal extraction]\label{th:pivot}
		Let $E$ be a Euclidean vector space. Let $\Gamma \in \{\mathrm{End}(E),\mathrm{GL}(E)\}$ and let $N$ be a continuous map defined on $\Gamma$.
 		Let $\nu$ be a strongly irreducible and proximal probability distribution over $ \Gamma$. Let $\rho < 1$ and let $K \in\NN$. There exist $0< \eps \le 1$ and three probability distributions $(\tilde{\kappa}_0,\tilde{\kappa}_1,\tilde{\kappa}_2)$ supported on $\widetilde{\Gamma}$ that satisfy conditions \eqref{1} to \eqref{6}. For all $i\in\{0,1,2\}$, we write $\kappa_i := \Pi_*\tilde\kappa_i$. 
		\begin{enumerate}
			\item We have $\tilde{\kappa}_0\odot(\tilde\kappa_1\odot\tilde\kappa_2)^{\odot\NN} = \nu^{\otimes\NN}$, we say that $\tilde{\kappa}_0\otimes(\tilde\kappa_1\otimes\tilde\kappa_2)^{\otimes\NN}$ is an extraction of $\nu^{\otimes\NN}$.\label{1}
			\item The push-forward measures $L_*\tilde\kappa_0$ and $L_*\tilde\kappa_2$ have a bounded exponential moment and $L_*\tilde\kappa_1 = \delta_m$ is the Dirac mass at a positive integer denoted by $m$.\label{2}
			\item The measure $\tilde\kappa_1$ has compact support in $\tilde\Gamma$ and ${\kappa}_1\{\gamma\in\Gamma\,|\,\sqz(\gamma)\ge K |\log(\eps)| + K\log(2)\} = 1$. \label{3}
			\item Given $(g_n)_{n\in\NN}\sim\kappa_0\otimes(\kappa_1\otimes\kappa_2)^{\otimes\NN}$, and $0\le i < j < k \in\NN$, we have $g_i\cdots g_{j-1} \AA^{\frac{\eps}{4}} g_{j} \cdots g_{k-1}$ almost surely.\label{4}
			\item For all $g \in \Gamma$, we have $ \kappa_1\{\gamma\in\Gamma\,|\,g \AA^\eps \gamma\}\ge 1-\rho$ and $ \kappa_1 \{\gamma\in\Gamma\,|\, \gamma \AA^\eps g\}\ge 1-\rho$.\label{5}
			\item Let $i\in\{0,2\}$ and let $k < l$ be integers such that $L_*\tilde{\kappa}_i\{l\} > 0$. Let:
			\begin{equation*}
				\zeta_{i,k,l} := N_*(\chi_k^l)_*\frac{(\mathds{1}_{L=l})\tilde\kappa_i}{L_*\tilde\kappa_i\{l\}}
			\end{equation*}
			be the push-forward by $N$ of the conditional distribution of the $k$-th marginal of $\tilde{\kappa}_i$ relatively to the event $L(\tilde{g}) = l$. Then the family $(\zeta_{i,k,l})$ is dominated by the push-forward measure $N_*\nu$.\label{6}
		\end{enumerate}
	\end{Th}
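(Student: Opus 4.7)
The plan is to follow the decomposition already indicated in the excerpt: prove a Schottky-type \textbf{Lemma \ref{lem:schottky}} building a distribution $\tilde\kappa_1$ supported on words whose product is both strongly squeezing and universally aligning, and then prove the pivotal \textbf{Theorem \ref{th:pivot-extract}} exposing such pivots inside a typical sample from $\nu^{\otimes\NN}$ by a Bernoulli coupling.

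For the Schottky step, I would fix $\rho$ and $K$ and construct, for a small enough $\eps > 0$ and a large enough integer $m$, a finitely supported probability measure $\tilde\kappa_1$ on $\Gamma^m$, absolutely continuous with respect to $\nu^{\otimes m}$ with bounded density, having two features. First, every $\tilde g$ in its support is very proximal: $\sqz(\Pi(\tilde g)) \ge K|\log(\eps)| + K\log(2)$, giving \eqref{3}. Second, for every fixed $g \in \Gamma$, at least a $(1-\rho)$-fraction of $\tilde\kappa_1$ is $\eps$-aligned with $g$ from both sides, giving \eqref{5}. The construction is the classical Schottky/ping-pong: using proximality, one finds a proximal element $\Pi(\tilde h)$ in the support of $\nu^{*m}$; using strong irreducibility one enriches the family to a collection of words whose products have attracting lines $v_i^+$ and repelling hyperplanes $H_i^-$ spread out in general position; one then takes $\tilde\kappa_1$ to be the uniform average over a sufficiently large such collection, so that for any external matrix $g$ the badly placed configurations account for at most a $\rho$-fraction.

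For the extraction step, introduce i.i.d.\ uniform variables $(U_k)_{k\in\NN}$ in $[0,1]$, independent of $(\gamma_n)$. Let $\alpha := d\tilde\kappa_1 / d\nu^{\otimes m}$, bounded by some constant $A$, and set $p := 1/A$. Declare the $k$-th block $(\gamma_{km}, \dots, \gamma_{(k+1)m-1})$ a pivot when $U_k \le p\,\alpha(\gamma_{km}, \dots, \gamma_{(k+1)m-1})$. Conditionally on being a pivot, the block has law $\tilde\kappa_1$; conditionally on not being a pivot, it has law $\frac{1-p\alpha}{1-p}\nu^{\otimes m}$, with density at most $(1-p)^{-1}$ with respect to $\nu^{\otimes m}$. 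The pivot indicators form an i.i.d.\ Bernoulli$(p)$ sequence, independent of the non-pivot block contents. Let $\tilde\kappa_0$ (resp.\ $\tilde\kappa_2$) be the law of the concatenation of non-pivot blocks before the first pivot (resp.\ between two consecutive pivots); both have lengths of the form $jm$ with $j$ geometric, giving \eqref{2}. Condition \eqref{1} is the tautological reassembly of the coupling back into $\nu^{\otimes\NN}$. Condition \eqref{4} follows by iterating Lemma \ref{lem:c-prod} across the pivots, each pivot being $\eps$-aligned with its neighbours by \eqref{5}, and the factor $\eps/4$ absorbing the loss at the two endpoints of a multi-pivot product.

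The main obstacle is condition \eqref{6}: the $k$-th letter of a length-$l$ block of $\tilde\kappa_0$ or $\tilde\kappa_2$ must have an $N$-tail dominated by $N_*\nu$ uniformly in $(i,k,l)$. A naive first-return extraction would correlate long waiting times with atypically large norms and would fail this. The Bernoulli coupling above avoids this precisely because, conditional on the entire pattern of pivot indicators (and hence on $l$), each non-pivot block is independent with density at most $(1-p)^{-1}$ with respect to $\nu^{\otimes m}$. Projecting to the $k$-th coordinate and pushing by $N$ gives a measure whose tail is at most $(1-p)^{-1}$ times the tail of $N_*\nu$, yielding the required uniform domination. The subtlety that makes this work is the decoupling between \emph{when} a pivot arrives and \emph{what} the non-pivot letters look like; this is the whole point of using a Bernoulli extraction rather than a stopping-time argument.
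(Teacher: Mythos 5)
Your Schottky step is on the right track and essentially matches Corollary~\ref{cor:schottky} of the paper, but the extraction step has a genuine gap that the naive Bernoulli coupling cannot bridge.

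The problem is with condition \eqref{4}, which requires that the blocks be $\eps/4$-aligned \emph{almost surely}. Your argument invokes \eqref{5} to claim that ``each pivot is $\eps$-aligned with its neighbours.'' But \eqref{5} is a statement about the measure $\kappa_1$: for any fixed $g$, at least a $(1-\rho)$-fraction of $\kappa_1$ aligns with $g$. It does not say that a particular sample from $\tilde\kappa_1$ aligns with the particular random products appearing on either side of it. In the Bernoulli coupling you describe, the pivot indicators are independent of the block contents, so at each declared pivot the event of failing to align with the left neighbour, or with the right neighbour, has probability up to $\rho$ each. Since $\rho > 0$, a positive fraction of pivots fail, and condition \eqref{4} is false for the decomposition you produce. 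The Schottky property only provides a probabilistic bound; it never provides almost sure alignment for free.

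This is precisely why the paper's extraction is substantially more involved. It proceeds through a \emph{pivoting algorithm} (Definition~\ref{def:pivot} and Lemma~\ref{lem:m-to-infty}): candidate pivots come from a ping-pong extraction like yours (Lemma~\ref{lem:ping-pong-pas-sqz}), but whenever the fresh Schottky word fails to align with the running product on the left or with the next block on the right, the algorithm \emph{backtracks}, absorbing the current block into a larger one and retreating to an earlier candidate pivot. The penalty functions $P_{\nu_s}$ and $P'_{\nu_s}$ ensure that the conditional law of the surviving pivot word, given the past, remains a rescaled restriction of $\nu_s$ on a set of mass at least $1-2\rho$, so the backtracking depth has a geometric tail. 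A second consequence you did not anticipate: after backtracking, the decomposition is no longer i.i.d.\ as $\tilde\kappa_0 \otimes (\tilde\kappa_1 \otimes \tilde\kappa_2)^{\otimes\NN}$. The paper therefore factorizes the pivoted sequence over a finite-state Markov chain (Lemma~\ref{lem:pivot-factor}, which is where the finite description of the alignment relation from Lemma~\ref{lem:discrete} is needed) and then selects \emph{return times to a fixed transition $(i,j)$} of this chain to restore the i.i.d.\ structure demanded by \eqref{1}. Your instinct for \eqref{6} — decouple the pivot indicators from the non-pivot contents — is correct in spirit and does underlie the paper's \eqref{eq:densite}, but the argument must be carried through the pivoting/backtracking structure and the final Markov-chain sub-sampling, not through the bare Bernoulli coupling.
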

	
	Only points \eqref{1} to \eqref{5} are used in the proofs of Theorems \ref{th:escspeed} and \ref{th:cvspeed} and point \eqref{6} is more technical and is only used in the proof of Theorem \ref{th:slln}. 
	
	Note that if we moreover assume that $N$ is sub-additive, then points \eqref{6} and \eqref{2} imply that for $i \in \{0,2\}$ the distribution $N_*\kappa_i$ is virtually dominated by $N_*\nu$, 
	in the sense that there exist constants $C, \beta > 0$ such that $N_* \kappa_i(t,+\infty) \le \sum_{k = 1}^{+\infty}C\exp(-\beta k)N_*\nu(t/(C k)-C,+\infty)$ for all $t \in\RR_{\ge 0}$.
	This is a consequence of Lemmas \ref{lem:descrisum} and Lemma \ref{lem:simplification}. 
	Then by Lemma \ref{lem:sumlp}, it means that if $N_*\nu$ has finite $p$-th moment, then $N_*\kappa_i$ also has.
	
	Note also that if $N_*\nu$ has a finite exponential moment, then $N_*\kappa_i$ also has for all $i \in \{0,1,2\}$. 
	However, this is not a consequence of \eqref{6} but a consequence of \eqref{2} and of Lemma \ref{lem:sumexp}.

	\subsection{Background}
	
	The study or products of random matrices bloomed with the eponym article~\cite{porm} where Furstenberg and Kesten construct an escape speed for the logarithm of the norm using the sub-additivity. This proof was generalized by Kingman's sub-additive ergodic Theorem~\cite{K68}. This article followed the works of Bellman~\cite{Bellman} who showed the almost sure convergence of the rescaled logarithms of coefficients as well as a central limit theorem for one specific example.
	In~\cite{porm} Furstenberg and Kesten show that we have a law of large numbers for the norm under a strong $\mathrm{L}^1$ moment condition for $\log\|\cdot\|$.
	For matrices that have positive entries and under an $\mathrm{L}^\infty$ moment condition, they show that moreover, we have a law of large numbers for the coefficients (entries) and under an additional $\mathrm{L}^{2+\delta}$ moment assumption, they show that we have a central limit Theorem.
	These works on matrices inspired the theory of measurable boundary theory for random walks on groups~\cite{furstenberg1973boundary}. In~\cite{livrebl}, Bougerol and Lacroix give an overview of the field of study with applications to quantum physics. 
	
	In~\cite{GR86}, Guivarc'h and Raugi show a qualitative version of Theorem \ref{th:escspeed}: in the case when $\nu$ is proximal and strongly irreducible, the two top Lyapunov exponents are distinct. In~\cite{GR89} the same authors show that we have almost sure convergence of the limit flag for totally strongly irreducible distributions. 
	In~\cite{Goldsheid1989LyapunovIO} Goldsheid and Margulis show that the distribution $\nu$ is proximal and totally strongly irreducible when the support of $\nu$ generates a Zariski-dense sub-group of $\mathrm{SL}(E)$. 
	
	In~\cite{livreBQ} Yves Benoist and Jean-François Quint give an extensive state of the art overview of the field of study with an emphasis on the algebraic properties of semi-groups. Later, in~\cite{xiao2021limit} Xiao, Grama and Liu use~\cite{CLT16} to show that coefficients satisfy a law of large numbers under some technical $ \mathrm{L}^2 $ moment assumption. 
	We can also mention~\cite{grama2020zeroone} and~\cite{xiao2022edgeworth} that give other probabilistic estimates for the distribution of the coefficients. 
	The strong law of large numbers and central limit-theorem for the spectral radius were proven by Aoun and Sert in~\cite{aoun2020central} and in~\cite{aoun2021law} under an $\mathrm{L}^2$ moment assumption. 
	
	The importance of alignment of matrices was first noted in~\cite{AMS-esmigroup} along with the importance of Schottky sets. Those notions were then used by Aoun in~\cite{RA2011} where he uses it to show that independent draws of an irreducible random walk that has finite exponential moment generate a free group outside of an exponentially rare event (note that the pivoting technique allows us to drop the finite exponential moment assumption). In~\cite{cuny2016limit} and~\cite{cuny2017komlos}, Cuny, Dedecker, Jan and Merlevède give KMT estimates for the behaviour of $\left(\log\|\overline\gamma_n\|\right)_{n\in\NN}$ under $\mathrm{L}^p$ moment assumptions for $p > 2$.
	
	The main difference between these previous works and this paper is that the measure $\nu$ has to be supported on the General Linear group $\mathrm{GL}(E)$ for the above methods to work.
	Indeed, they rely of the existence of the stationary measure $\xi_\nu^\infty$ on $\mathbf{P}(E)$, which is a consequence of the fact that $\mathrm{GL}(E)$ acts continuously on $\mathbf{P}(E)$, which is compact.
	Some work has been done to study non-invertible matrices in the specific case of matrices that have real positive coefficients. In~\cite{porm}, Furstenberg and Kesten show limit laws for the coefficients under an $\mathrm{L}^\infty$ moment assumption, in~\cite{AM87} and~\cite{KS87_semigroup} Mukherjea, Kesten and Spitzer show some limit theorems for matrices with non-negative entries that are later improved by Hennion in~\cite{HH97} and more recently improved by Cuny, Dedecker and Merlevède in~\cite{cuny2023limit}. 
	
	In~\cite{lepage82}, Le Page shows the exponential mixing property by exhibiting a spectral gap for the action of $ \nu $ on the projective space under some moments assumptions on $ \nu $.
	The large deviations inequalities were already known for the norm in the specific case of distributions having finite exponential moment by the works of Sert~\cite{sert2018large}.

	\subsection{Method used}\label{toy-model}
	
	To prove the results, we use Markovian extractions. The idea is to adapt the following "toy model" construction to the case of matrices.

	Let $ G=\langle a,b,c|a^2=b^2=c^2=\mathbf{1}_\Gamma\rangle $ be the free right angle Coxeter group with $ 3 $ generators. One can see the elements of $G$ as reduced words in $\{a,b,c\}$, \ie finite sequences of letters of type $(x_1, \dots, x_n)$ without double letters in the sense that $x_i \neq x_{i+1}$ for all $1 \le i < n$. 
	We write $\Sigma := \{a,b,c\}^{(\NN)}$ for the set of words in the alphabet $\{a,b,c\}$.
	We write $\mathbf{1}_\Sigma$ for the empty word, which is the identity element of $\Sigma$. 
	We write $ \odot $ for the concatenation product on $\Sigma$ and $\Pi: \Sigma \to G$ the word reduction map which is a monoid morphism.
	
	We consider the simple random walk on the $3$-tree, seen as the Cayley graph of $G$. Draw a random independent uniformly distributed sequence of letters $(l_n)_{n\in\NN} \in \{a,b,c\}^\NN$. Then for every $ n\in\NN $, write $g_n := l_0 \cdots l_{n-1} \in G$ for the position of the random walk at step $n$ and $\tilde{g}_n := (l_0, \dots, l_{n-1})$ the word encoding the trajectory of the random walk up to step $ n $. Then we know that $ (g_n) $ almost surely escapes to a point in $\partial G$, the set of infinite simple words. To prove it, we can show, using Markov's inequality, that $ \PP(g_n=\mathbf{1}_G) \le \left(\frac{8}{9}\right)^{n/2} $.
	Indeed, given $n \in\NN$, if $|g_n|\ge 1$, then $|g_{n+1}| = |g_n| + 1$ with probability $\frac{2}{3}$ and $|g_{n+1}| = |g_n| - 1$ with probability $\frac{1}{3}$ and if $|g_n| = 0$, then $|g_{n+1}| = |g_n| + 1$ with probability $1$. 
	It implies that:
	\begin{equation}
		\forall n\in\NN,\;\EE\left(\sqrt{2}^{-|g_{n+1}|}\right) \le \frac{2\sqrt{2}}{3} \EE\left(\sqrt{2}^{-|g_{n}|}\right). \quad \text{Hence} \quad \forall n\in\NN,\; \EE\left(\sqrt{2}^{-|g_{n}|}\right) \le \left(\frac{8}{9}\right)^{n/2}.\label{eq:markov-exp}
	\end{equation}
	Therefore $(g_n)$ visits $\mathbf{1}_G$ only finitely many times. After that it gets trapped in a branch (the set of simple words starting with a given letter $ x_1 \in \{a,b,c\}$). Then using the same argument, $(g_n)$ visits the first node of this branch only finitely many times and then escapes along the branch starting with $x_1x_2$ for some $x_2 \neq x_1$ and by induction, one can show that $(g_n)$ escapes along a branch $ (x_1, x_2, \dots) $ (\ie an infinite reduced word). 
	
	By symmetry, one can show that for all $k > 1$, the distribution of the letter $x_k$ knowing $x_1, \dots, x_{k-1}$ is the uniform distribution on $\{a,b,c\} \setminus \{x_{k-1}\}$.
	For all $k \ge 1$, we define the $k$-th pivotal time of the sequence $(l_n)$ as $t_k := \min\{t \in \NN\,|\, \forall j \ge t,\, |g_j| \ge k\}$.
	For example $t_0 = 0$ and $t_1$ is the first time after the last visit in $\mathbf{1}_G$.
	Then for $k \ge 2$, the time $t_k$ follows the time of last visit in the closed ball of radius $k-1$.
	An interesting observation is that for all $k \ge 1$, we have $x_k = l_{t_k - 1} = l_{t_{k-1}} l_{t_{k-1}+1} \cdots l_{t_k-1}$.
	
	Then instead of drawing the sequence $(l_n)_{n\in\NN}$ of letters, we can draw the limit $(x_n)_{n\in\NN}$ first and then the letters $(l_n)_{n\in\NN}$ as follows.
	
	Write $X = \{a,b,c,s\}$, ($s$ like "start") and endow $X$ with a transition kernel $p$ such that $p(i,j) = \frac{1}{2} $ for all $i \neq j \in \{a,b,c\}$ and $p(s,i) = \frac{1}{3}$ for $i \in \{a,b,c\}$.
	\begin{equation*}
		\xymatrix{& s\ar[rd]^{\frac{1}{3}}\ar@/^1pc/[rrd]^{\frac{1}{3}}\ar@/_1pc/[rdd]_{\frac{1}{3}}&&\\
			(X,p)=& &a\ar@{<->}[r]^{\frac{1}{2}}\ar@{<->}[d]_{\frac{1}{2}}&b\ar@/^1pc/@{<->}[ld]^{\frac{1}{2}}\\
			& & c &}
	\end{equation*}
	Let $x_0 = s$ and draw a Markov chain $(x_n)_{n\in\NN}$ in $(X,p)$. It means that we have:
	\begin{equation*}
		\forall n\in\NN,\forall l\in X, \PP(x_{n+1}=l\,|\,x_0,\dots,x_{n}) = p(x_{n},l).
	\end{equation*}
	Then the sequence $(x_k)_{k\ge 1}$ has the same distribution as the sequence $ l_{t_k-1} $ defined above. Moreover, the distribution of the word $(l_{t_k}, \dots, l_{t_{k+1}-1})$ only depends on $x_k$ and $x_{k+1}$ and not on the time $k \ge 1$. Write $\tilde\nu_{a,b}$ for the distribution of $(l_{t_k}, \dots, l_{t_{k+1}-1})$ knowing that $l_{t_k-1} = a$ and $l_{t_{k+1}-1} = b$ and write $\tilde\nu_{s,a}$ for the distribution of the word $(l_0, \dots, l_{t_1-1})$ knowing that $l_{t_1-1} = a$. Both are probability distributions on $ \Sigma$. In the same fashion, we define the whole decoration:
	\begin{equation*}
		\xymatrix{
			& s \ar[rd]^{\tilde\nu_{s,a}} \ar@/^1pc/[rrd]^{\tilde\nu_{s,b}} \ar@/_1pc/[rdd]_{\tilde\nu_{s,c}} &&\\
			(X,p,\tilde\nu)= & & a \ar@{<->}[r]^{\tilde\nu_{a,b}}_{\tilde\nu_{b,a}} \ar@{<->}[d]^{\tilde\nu_{a,c}}_{\tilde\nu_{c,a}} & b \ar@/^2pc/@{<->}[ld]^{\tilde\nu_{b,c}}_{\tilde\nu_{c,b}}\\
			& & c &}
	\end{equation*}
	Then instead of drawing the $(l_n)$ 's uniformly and independently, one can simply draw a random sequence of words $(\tilde{w}_k)$ with distribution $\bigotimes\tilde\nu_{x_k,x_{k+1}}$ relatively to $(x_n)$. 
	Then for every $ k\in\NN $, the random word $\tilde{w}_k$ has the distribution of $(l_{t_k},\dots,l_{t_{k+1}-1})$ and the infinite word $W = \bigodot_{k=0}^\infty \tilde{w}_k\in\{a,b,c\}^\NN $ has the distribution of the infinite word $L = (l_0,l_1,l_2,\dots)$. 
	Note also that for all $k\in\NN$, one has $\Pi\tilde{w}_k = x_{k+1}$ and $\tilde{w}_k$ has no prefix whose product is $x_k$.
	
	Now, we consider a filtration $(\mathcal{F}_k)_{k\ge 0}$ such that $ x_k $ and $\tilde{w}_{k-1}$ are $ \mathcal{F}_k $-measurable for all $k \ge 1$, the distribution of $ x_{k+1} $ knowing $ \mathcal{F}_k $ is $ p(x_k,\cdot) $ and the distribution of $\tilde{w}_k$ knowing $ \mathcal{F}_k $ and $x_{k+1}$ is $ \tilde\nu_{x_k,x_{k+1}} $. Now the fact that a time $t$ is pivotal or not is decided as soon as $\tilde{w}_0 \odot \cdots \odot \tilde{w}_{k-1}$ has length at least $t$. In particular the event ($ t $ is a pivotal time) is $\mathcal{F}_t$-measurable. However, given $ (\mathcal{C}_n)_{n\in\NN} $ the cylinder filtration associated to the random sequence $(l_n)_{n\in\NN}$, the event ($ t $ is a pivotal time) is never $\mathcal{C}_n$-measurable whatever the choice of $ n,t\in\NN $.
	
	This construction gives a proof of the exponential large deviations inequalities for the random walk $(g_n)$. This is not the simplest proof but it shows how and why we want to use the setting of Markovian extractions.

	\begin{equation}\label{eqn:ldevtree}
		\exists\sigma>0,\; \forall \eps>0,\; \exists C,\beta > 0, \;\forall n\in\NN,\PP\left(\left||g_n| - n\sigma\right| \ge \eps n\right)\le C\exp(-\beta n).
	\end{equation}

	\begin{proof}
		Let $(l_0, l_1, l_2, \dots) = \tilde{w}_0 \odot \tilde{w}_1 \odot \tilde{w}_2 \odot \cdots$ be as above. 
		We associate to every integer $n \in \NN$ a pair of indices $k \in \NN$, $r \in \{0, \dots, |\tilde{w}_k|-1\}$ such that $n = |\tilde{w}_0| + \dots + |\tilde{w}_{k-1}| + r$. 
		This means that $l_{n-1}$ is the $r$-th letter of $ \tilde{w}_k $ and then by triangular inequality, we have $k - r \le |g_n| \le k + r$ because $k = |x_1\cdots x_k|$ and $r \ge |l_{n-r} \cdots l_{n-1}|$. 
		
		Then note that the lengths $(|\tilde{w}_k|)_{k\ge 1}$ are independent, identically distributed random variables that are independent of $|\tilde{w}_0|$.
		Moreover, they all have a finite exponential moment by \eqref{eq:markov-exp}. 
		By Lemma \ref{lem:curstep}, $r$ also has an exponential moment which is uniformly bounded in $n$.
		
		Let $\sigma := \frac{1}{\mathbb{E}(|\tilde{w}_1|)} = \frac{1}{3}$ and let $ \eps>0 $. Then by the classical large deviations inequalities (see Lemma \ref{lem:proba:ldev} and Lemma \ref{lem:proba:ldevcompo} \eqref{compo:rec}), we have:
		$ \PP(|k - n\sigma |\ge n\eps /2)\le C\exp(-\beta' n) $ 
		for some $ C,\beta'>0 $ and for all $ n $. 
		Now note that $||g_n| - n \sigma | \le |k - n \sigma| + r$ so we have \eqref{eqn:ldevtree} by Lemma \ref{lem:proba:ldevcompo} \eqref{compo:shift}.
	\end{proof}

	\subsection{About the pivoting technique}
	
	In the second part of this article we mainly use the tools introduced in~\cite{pivot}, some of them having been introduced or used in former works like~\cite{bmss20} where Adrien Boulanger, Pierre Mathieu, Cagri Sert and Alessandro Sisto state large deviations inequalities from below for random walks in discrete hyperbolic groups or~\cite{devin} where Mathieu and Sisto show some bi-lateral large deviations inequalities in the context of distributions that have a finite exponential moment. 
	In~\cite{pivot} Sébastien Gouëzel uses the pivoting technique in the setting of hyperbolic groups to get large deviations estimates bellow the escape speed and to show the continuity of the escape speed. 
	For us, the most interesting part of Gouëzel's work is the "toy model" described in section 2. 
	In~\cite{choi} Inhyeok Choi applies the pivoting technique to show results that are analogous to the ones of Gouëzel for the mapping class group of an hyperbolic surface. 
	In~\cite{CFFT22}, Chawla, Forghani, Frisch and Tiozzo use another view of the pivoting technique and the results of~\cite{pivot} to show that the Poisson boundary of random walk with finite entropy on a group that has an acylindrical action on an hyperbolic space is in fact the Gromov Boundary of said space. 
	I believe that similar method can be used to describe the Poisson boundary of a totally strongly irreducible random walk that has finite entropy, in the sense of Conjecture \ref{conj:poisson}.

	\subsection{Structure of this paper}

	In Section \ref{kak} of this article, we state some local-to-global properties for alignment of matrices. 
	In Section \ref{markov}, we state some preliminary results about random products of non-invertible matrices. 
	In section \ref{sec:pivot} Theorem \ref{th:pivot-extract}, we state an abstract version of the construction of the pivoting extraction using the pivoting technique as in~\cite[section~2]{pivot} and prove Theorem \ref{th:pivot} as a corollary of that statement. 
	Then in Section \ref{results} we give complete proofs of Theorems \ref{th:escspeed}, \ref{th:cvspeed} and \ref{th:llnc} using the pivoting technique and Theorem \ref{th:pivot}.
	Section \ref{anex:prob} is an appendix where we prove classical results for real valued random variables, we state these lemmas in a convenient way to be able to use them through this paper.

	\section{Local-to-global properties for the alignment of matrices}\label{kak}
	
	In this section, we describe the geometry of the monoid $\Gamma := \mathrm{End}(E)$ for $E$ a Euclidean space. 
	We can think of $\KK = \RR$ but all the proofs work the same when $\KK = \CC$ or when $\KK$ is a ultra-metric field. 
	
	Given $E$ a $\KK$-vector space, we will identify $E$ with $\mathrm{Hom}(\KK, E)$.
	Note that up to choosing a canonical basis for all Euclidean spaces, linear maps between Euclidean spaces can be seen as matrices.
	Moreover, vectors and linear form can also be seen as matrices.

	We want to translate ideas of hyperbolic geometry into the language of products of endomorphisms. 
	The idea is to exhibit a local-to global property in the same fashion as~\cite[Theorem~4]{Cannon1984}. 
	That way we can adapt the arguments of~\cite{pivot} to the setting of products of random matrices.

	\subsection{Alignment and squeezing coefficients}\label{sec:ali-sqz}
	
	We remind the definition of the singular gap and of the distance in the projective space. Note that given $x,y$ two vectors, we have the characterization $\|x \wedge y\| =  \min_{a\in\KK} \|x - y a\|\|y\|$. 
	Therefore, given $h\in\mathrm{Hom}(E,F)$, we have $\|h\wedge h\| = \max_{x,y}\min_{a\in\KK} \frac{\|h(x - y a)\|\|h(y)\|}{\|x\|\|y\|}$.
	
	\begin{Def}[Singular gap]\label{def:sqz}
		Let $E,F$ be Euclidean vector spaces and $h\in\mathrm{Hom}(E,F)\setminus\{0\}$. We define the first (logarithmic) singular gap, or squeeze coefficient of $h$ as:
		\begin{equation*}
			\sqz(h) := \log\left(\frac{\|h\|^2}{\| h \wedge h\|}\right) \in [0, + \infty].
		\end{equation*}
	\end{Def}
	
	\begin{Def}[Distance between projective classes]\label{def:distance}
		Let $E$ be a Euclidean space. We denote by $\mathbf{P}(E)$ the projective space of $E$ \ie the set of lines in $E$, endowed with the distance map $\dist$ which is characterized by:
		\begin{equation}\label{distanceprojective}
			\forall x, y \in E \setminus \{0\},\;\dist([x],[y]) = \frac{\|x\wedge y\|}{\|x\|\|y\|} = \min_{a \in \KK}\frac{\|x - y a\|}{\|x\|}.
		\end{equation}
	\end{Def}
	
	\begin{Lem}[Lipschitz property for the norm cocycle]\label{lem:product-is-lipschitz}
		Let $E$ and $F$ be Euclidean spaces and let $f \in\mathrm{Hom}(E,F)\setminus\{0\}$. Let $x, y \in E \setminus\{0\}$, we have:
		\begin{equation}
			\left|\frac{\|fx\|}{\|f\|\|x\|}- \frac{\|fy\|}{\|f\|\|y\|}\right| \le \dist([x],[y]).
		\end{equation}
	\end{Lem}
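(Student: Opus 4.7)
The plan is to apply Definition \ref{def:distance} in its $\min$-form: fix the scalar $a \in \KK$ such that $ya$ is the orthogonal projection of $x$ onto the line $\KK y$. Then $a = \langle x, y\rangle/\|y\|^2$, so in particular $|a|\,\|y\| = |\langle x, y\rangle|/\|y\| \le \|x\|$ by Cauchy-Schwarz, and by definition of orthogonal projection one has $\|x - ya\| = \|x\|\,\dist([x],[y])$.

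Applying $f$ to the decomposition $x = ya + (x - ya)$ and using the reverse triangle inequality in $F$, I would first write
\begin{equation*}
    \bigl|\|fx\| - |a|\,\|fy\|\bigr| = \bigl|\|fx\| - \|f(ya)\|\bigr| \le \|f(x - ya)\| \le \|f\|\,\|x\|\,\dist([x],[y]).
\end{equation*}
Dividing by $\|f\|\|x\|$ yields $\bigl|\|fx\|/(\|f\|\|x\|) - |a|\|fy\|/(\|f\|\|x\|)\bigr| \le \dist([x],[y])$. The bound $|a|\,\|y\| \le \|x\|$ rearranges to $|a|\,\|fy\|/(\|f\|\|x\|) \le \|fy\|/(\|f\|\|y\|)$, and combining the two estimates gives at once
\begin{equation*}
    \frac{\|fx\|}{\|f\|\|x\|} - \frac{\|fy\|}{\|f\|\|y\|} \le \dist([x],[y]).
\end{equation*}
The reverse inequality would follow from running the same argument with the roles of $x$ and $y$ swapped, using $b := \langle y, x\rangle/\|x\|^2$ in place of $a$, and combining both directions produces the absolute-value bound claimed.

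The only real subtlety is that the $\min$-formula for $\dist$ is not manifestly symmetric in $x$ and $y$: a direct triangle inequality naturally controls $|a|\,\|fy\|/\|x\|$ rather than $\|fy\|/\|y\|$, and the crucial point is that $|a|$ is precisely the projection coefficient $\langle x, y\rangle/\|y\|^2$, for which Cauchy-Schwarz gives $|a|\,\|y\| \le \|x\|$ with exactly the direction of inequality one needs. A more naive approach using the Euclidean triangle inequality on the normalized representatives $x/\|x\|$, $y/\|y\|$ would only yield a constant close to $2$ (the Euclidean distance on the unit sphere is larger than the projective distance by a factor $1/\cos(\theta/2)$), so passing through the asymmetric $\min$-formula twice is what produces the sharp Lipschitz constant $1$.
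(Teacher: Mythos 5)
Your proof is correct and follows essentially the same route as the paper's: identify $a$ as the orthogonal projection coefficient realizing the minimum in Definition \ref{def:distance}, bound $\|f(x-ya)\|$ by $\|f\|\|x - ya\|$, and use $|a|\|y\| \le \|x\|$ (the paper phrases this as $|c|\le 1$ after normalizing $\|x\|=\|y\|=1$) to pass from $|a|\|fy\|/\|x\|$ to $\|fy\|/\|y\|$, then conclude by symmetry. The only cosmetic difference is that you invoke the reverse triangle inequality before restricting to one side, while the paper writes the one-sided triangle inequality directly.
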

	
	\begin{proof}
		Let $f \in\mathrm{Hom}(E,F)$ and let $x,y \in E$ be unit \ie $\|f\| = \|x\| = \|y\| =1$. 
		We show that $\|fx\| \le \|fy\| + \dist([x],[y])$ and conclude by homogeneity and by symmetry.
		Let $c \in\KK$ be such that $\|x - y c\| = \min_{a \in \KK} \|x - y a\|$.
		Then by Definition \ref{def:distance}, we have $\dist([x],[y]) = \|x - y c\|$. 
		Moreover $|c| \le 1$ by property of the orthogonal projection.
		By triangular inequality and by definition of the norm, we have:
		\begin{equation*}
			\|fx\|  \le \|fyc\| + \|f(x-yc)\| 
			\le \|fy\||c| + \|f\|\|x - y c\| 
			\le \|fy\| + \dist([x],[y]).\qedhere
		\end{equation*}
	\end{proof}
	
	We remind that given $g$ and $h$ two matrices such that the product $gh$ is well defined and given $0< \eps \le 1$, we write $g \AA^\eps h$ when $\|gh\| \ge \eps \|g\|\|h\|$. We also remind that given $h \in\mathrm{Hom}(E, F)$, we write $h^*\in\mathrm{Hom}(F^*, E^*)$ for the map $f \mapsto fh$, we call it the transpose of $h$.

	\begin{Def}\label{def:dom}
		Let $E,F$ be Euclidean spaces and $h\in\mathrm{Hom}(E,F)\setminus\{0\}$. Let $ 0 < \eps \le 1$. We define $V^\eps(h) := \{x\in E;\|h x\| \ge \eps \|h\|\|x\|\}$ and $U^\eps(h):=h(V^\eps(h))$ and $W^\eps(h) := U^\eps(h^*)$.
	\end{Def}
	
	Note that the families $\left(V^\eps(h)\right)_{0 < \eps \le 1}$, $\left(U^\eps(h)\right)_{0 < \eps \le 1}$ and $\left(W^\eps(h)\right)_{0 < \eps \le 1}$, are decreasing for the inclusion order. 
	
	Note also that for $h$ an endomorphism of rank one, and for all $0 < \eps \le 1$, the cone $U^\eps(h)$ is the image of $h$ so it has diameter $0$ in the projective space. 
	
	The idea to have in mind is that given $h$ a matrix that has a large singular gap $U^\eps(h)$ will have a small diameter in the following sense.
	
	\begin{Lem}\label{lem:cont-prop}
		Let $E,F$ be Euclidean spaces, let $h\in\mathrm{Hom}(E,F)\setminus\{0\}$ and let $0 < \eps \le 1$. Let $u \in U^1(h)\setminus\{0\}$ and let $u' \in U^\eps(h)\setminus\{0\}$. Then we have:
		\begin{equation}\label{eq:cont-prop}
			\dist([u], [u']) \le \frac{\exp(-\sqz(h))}{\eps}.
		\end{equation}
	\end{Lem}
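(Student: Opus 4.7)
The plan is a direct computation using the definitions of $U^\eps(h)$, $\sqz(h)$, and the projective distance.

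First, I would unpack the hypotheses. By definition of $U^1(h)$ and $U^\eps(h)$, I can write $u = hx$ and $u' = hx'$ for some $x \in V^1(h)$ and $x' \in V^\eps(h)$. This gives me the two norm estimates I will need downstream:
\begin{equation*}
\|hx\| = \|h\|\|x\| \qquad \text{and} \qquad \|hx'\| \ge \eps \|h\|\|x'\|,
\end{equation*}
where the first is actually an equality because $\|hx\| \le \|h\|\|x\|$ always holds.

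Next I would expand the projective distance using the characterization $\dist([u],[u']) = \|u \wedge u'\|/(\|u\|\|u'\|)$ from Definition~\ref{def:distance}. The key observation is that the exterior product interacts cleanly with $h$: one has $hx \wedge hx' = (h \wedge h)(x \wedge x')$ in $\bigwedge^2 F$, and hence
\begin{equation*}
\|hx \wedge hx'\| \le \|h \wedge h\|\,\|x \wedge x'\| \le \|h \wedge h\|\,\|x\|\,\|x'\|,
\end{equation*}
using the submultiplicativity of the norm on $\bigwedge^2 E$ from the paragraph defining it.

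Finally I would combine these bounds:
\begin{equation*}
\dist([u],[u']) = \frac{\|hx \wedge hx'\|}{\|hx\|\,\|hx'\|} \le \frac{\|h \wedge h\|\,\|x\|\,\|x'\|}{\|h\|\|x\| \cdot \eps\|h\|\|x'\|} = \frac{1}{\eps}\cdot\frac{\|h\wedge h\|}{\|h\|^2} = \frac{\exp(-\sqz(h))}{\eps},
\end{equation*}
where the last equality is the definition of $\sqz(h)$ from Definition~\ref{def:sqz}. There is no real obstacle here; the only small care needed is justifying $\|hx \wedge hx'\| \le \|h\wedge h\|\,\|x\|\,\|x'\|$, which follows from $h\wedge h$ being (by definition) the operator on $\bigwedge^2 E$ with norm $\|h\wedge h\|$ acting on the simple bivector $x\wedge x'$, whose norm is at most $\|x\|\|x'\|$.
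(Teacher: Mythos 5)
Your proof is correct and takes essentially the same route as the paper: pull $u,u'$ back through $h$ to $V^1(h)$ and $V^\eps(h)$, use the submultiplicativity of $\|\cdot\|$ on $\bigwedge^2$, and divide by the lower bound $\|u\|\|u'\| \ge \eps\|h\|^2\|x\|\|x'\|$. The only cosmetic difference is that the paper keeps the factor $\|v\wedge v'\|/(\|v\|\|v'\|)$ and bounds it by $1$ at the end, whereas you fold that bound in one step earlier.
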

	
	\begin{proof}
		Let $v \in V^1(h)$ and let $v'\in V^\eps(h)$ be such that $u = hv$ and $u' = hv'$ Then we have $u\wedge u' = \bigwedge^2 h(v \wedge v')$ so:
		\begin{equation*}
			\|u\wedge u'\|\le \left\|\Wedge^2 h\right\| \|v \wedge v'\|.
		\end{equation*}
		Now saying that $v \in V^1(h)$ and $v'\in V^\eps(h)$, means that $\|u\| = \|h\| \|v\|$ and $\|u'\| \ge \eps \|h\| \|v'\|$. Hence:
		\begin{equation*}
			\|u\| \|u'\| \ge \eps \|h\|^2\|v\|\|v'\|.
		\end{equation*}
		Then by taking the quotient, we have:
		\begin{equation*}
			\frac{\|u\wedge u'\|}{\|u\| \|u'\|} \le \frac{\left\|\bigwedge^2 h\right\|}{\eps \|h\|^2}\frac{\|v\wedge v'\|}{\|v\| \|v'\|}\le \frac{\left\|\bigwedge^2 h\right\|}{\eps \|h\|^2}.
		\end{equation*}
		By definition, the term on the left is $\dist([u], [u'])$ and the term on the right is $\frac{\exp(-\sqz(h))}{\eps}$.
	\end{proof}
	
	Lemma \ref{lem:cont-prop} tells us that the projective image of $U^\eps(h)$ has diameter at most $\eps$ as long as $\sqz(h) \ge 2|\log(\eps)| + \log(2)$. With the toy model analogy, the condition $\sqz(h) \ge 2|\log(\eps)| + \log(2)$ will play the role of the condition for  word to be non-trivial. 
	We will extensively use the following simple remarks.
	
	\begin{Lem}
		Let $g$ and $h$ be non-zero matrices such that the product $gh$ is well defined and let $0 < \eps \le 1$. We have $g\AA^\eps h$ if and only if $h^*\AA^\eps g^*$. Moreover $\sqz(h^*)=\sqz(h)$.
	\end{Lem}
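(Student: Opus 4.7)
The plan is to unfold both claims directly from the elementary identities relating a matrix to its transpose, specifically $\|h^*\| = \|h\|$ and $(gh)^* = h^* g^*$.

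For the equivalence $g \AA^\eps h \iff h^* \AA^\eps g^*$, the idea is as follows. By definition, $g \AA^\eps h$ means $\|gh\| \ge \eps \|g\|\|h\|$. First I would note that since the operator norm of a linear map coincides with that of its adjoint (this is standard for the Euclidean operator norm, via $\|h\| = \sup_{\|x\|=\|f\|=1} |f h x|$ which is manifestly symmetric in $h$ and $h^*$), we have $\|g^*\| = \|g\|$, $\|h^*\| = \|h\|$, and $\|(gh)^*\| = \|gh\|$. Combined with the contravariance $(gh)^* = h^* g^*$, the inequality $\|gh\| \ge \eps \|g\|\|h\|$ rewrites as $\|h^* g^*\| \ge \eps \|h^*\|\|g^*\|$, which is exactly $h^* \AA^\eps g^*$. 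The converse direction is symmetric, or follows from applying the same argument to the biduals using $h^{**} = h$.

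For the identity $\sqz(h^*) = \sqz(h)$, I would use Definition \ref{def:sqz}: $\sqz(h) = \log(\|h\|^2/\|h \wedge h\|)$. Again $\|h^*\| = \|h\|$, so the only point left is to check that $\|h^* \wedge h^*\| = \|h \wedge h\|$. This follows from the functorial identity $(h \wedge h)^* = h^* \wedge h^*$ on the exterior square, together with the fact that the operator norm on $\mathrm{Hom}(\bigwedge^2 E, \bigwedge^2 F)$ with respect to the canonical Euclidean metric on the exterior square also satisfies $\|T^*\| = \|T\|$. Substituting into the definition then yields $\sqz(h^*) = \sqz(h)$.

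Neither claim is an obstacle in the usual sense; the only point where one must be slightly careful is checking that the canonical Euclidean metric on $\bigwedge^2 E$ (which the paper has fixed so that $\|v_1 \wedge v_2\| \le \|v_1\|\|v_2\|$ with equality for orthogonal pairs) is precisely the metric induced from the Hermitian product on $E$, so that taking adjoints is compatible with taking the second exterior power. Once this is verified, the lemma reduces to a line of computation in each case.
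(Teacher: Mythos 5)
Your proof uses exactly the same three ingredients as the paper: the symmetric characterization $\|h\|=\|h^*\|$ of the operator norm, the contravariance $(gh)^*=h^*g^*$, and the functorial identity $(h\wedge h)^*=h^*\wedge h^*$ combined with $\|T^*\|=\|T\|$ on the exterior square. This matches the paper's argument line for line, so there is nothing to flag.
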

	
	\begin{proof}
		This is a consequence of three well known facts. One is that we have $\|h\| = \|h^*\|$ for all homomorphism $h$. One way of seeing that is to notice that the operator norm admits the following (obviously symmetric) characterization:
		\begin{equation*}
			\forall E,\,\forall F,\,\forall h\in\mathrm{Hom}(E,F),\,\|h\| = \max_{\substack{f \in F^* \setminus\{0\} \\ v \in E\setminus\{0\}}} \frac{|f h v|}{\|f\|\|v\|}.
		\end{equation*}
		The second fact is that $(gh)^* = h^* g^*$. It implies that for all non trivial $g,h$, we have $\frac{\|gh\|}{\|g\|\|h\|} = \frac{\|h^* g^*\|}{\|h^*\|\|g^*\|}$. The third fact is that $h^*\wedge h^* = (h\wedge h)^*$. It implies that $\sqz(h^*)=\sqz(h)$.
	\end{proof}

	\begin{Lem}\label{lem:sharp-ali}
		Let $g$ and $h$ be non-zero matrices such that the product $gh$ is well defined and let $0 < \eps \le 1$. 
		If there exist $u \in U^1(h)\setminus\{0\}$ and $w \in W^1(g)\setminus\{0\}$ such that $\frac{|wu|}{\|u\|\|w\|} \ge \eps$, then $g\AA^\eps h$.
		If $g\AA^\eps h$, then there exist $u \in U^\eps(h)\setminus\{0\}$ and $w \in W^\eps(g)\setminus\{0\}$ such that $\frac{|wu|}{\|w\|\|u\|} \ge \eps$ and $\frac{\|gu\|}{\|g\|\|u\|} \ge \eps$ and $\frac{\|wh\|}{\|w\|\|h\|} \ge \eps$.
	\end{Lem}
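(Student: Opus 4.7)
The plan is to unpack the definitions of $U^\eps(h)$, $W^\eps(g)$, and $g\AA^\eps h$, and to exploit the variational characterization $\|gh\| = \max_{f \ne 0,\, v \ne 0} |fghv|/(\|f\|\|v\|)$ in order to convert an alignment inequality into the existence of a ``test vector'' in the image cone and a ``test form'' in the coimage cone, and conversely.

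For the first implication, I write the hypothesised $u \in U^1(h)$ and $w \in W^1(g)$ as $u = hv$ and $w = fg$ with $\|u\| = \|h\|\|v\|$ and $\|w\| = \|g\|\|f\|$; this is immediate from the definitions, since membership in $U^1$ and $W^1$ forces equality in the operator-norm inequalities $\|hv\|\le\|h\|\|v\|$ and $\|fg\|\le\|g\|\|f\|$. The hypothesis $|wu| \ge \eps \|u\|\|w\|$ then rewrites as $|fghv| \ge \eps \|f\|\|v\|\|g\|\|h\|$, and combining with $|fghv|\le \|f\|\|v\|\|gh\|$ yields $\|gh\|\ge\eps\|g\|\|h\|$, i.e. $g \AA^\eps h$.

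For the converse, assume $g \AA^\eps h$. By compactness of the unit sphere I pick a unit $v \in E$ with $\|ghv\| = \|gh\|$, then a unit form $f \in F^*$ with $|f(ghv)| = \|ghv\|$, and set $u := hv$, $w := fg$. The key step is then to verify the membership $u \in U^\eps(h)$: this is precisely what forces me to choose $v$ as a top singular vector of the \emph{product} $gh$ rather than of $h$ alone, since then $\|hv\| \ge \|ghv\|/\|g\| = \|gh\|/\|g\| \ge \eps \|h\|$, which gives $v \in V^\eps(h)$ and hence $u \in U^\eps(h)$. A dual argument applied to the transposes (using $\|fgh\|\ge|fghv|/\|v\|=\|gh\|$) gives $w \in W^\eps(g)$.

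The three remaining inequalities $|wu|/(\|w\|\|u\|) \ge \eps$, $\|gu\|/(\|g\|\|u\|) \ge \eps$ and $\|wh\|/(\|w\|\|h\|) \ge \eps$ are then a direct matter of combining the identities $|fghv| = \|gu\|=\|gh\|$ and $\|fgh\| \ge \|gh\|$ with the trivial bounds $\|u\|\le \|h\|$, $\|w\|\le \|g\|$ and the hypothesis $\|gh\|\ge \eps\|g\|\|h\|$. I do not anticipate any real obstacle in the argument; the only conceptually non-routine point is that, in the converse direction, the requirement $u \in U^\eps(h)$ (and not merely an arbitrary $u$) is what dictates choosing $v$ as a top singular direction of $gh$, not of $h$ on its own.
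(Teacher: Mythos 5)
Your proof is correct and takes essentially the same approach as the paper: pick $v,f$ extremizing $|fghv|/(\|f\|\|v\|)$, set $u=hv$, $w=fg$, and verify the memberships and inequalities directly from the definitions (the paper packages the three norm inequalities as a single factorization of $\eps \le \frac{|fghv|}{\|f\|\|g\|\|h\|\|v\|}$ into a product of three factors in $[0,1]$, but that is only a cosmetic difference). One minor narrative slip worth noting: it is not the membership $u\in U^\eps(h)$ that dictates choosing $v$ as a top singular vector of $gh$ rather than of $h$ --- a top singular vector of $h$ would trivially give $u\in U^1(h)\subset U^\eps(h)$ --- it is rather the inequalities $\|gu\|/(\|g\|\|u\|)\ge\eps$ and $|wu|/(\|w\|\|u\|)\ge\eps$, which force $\|gu\|=\|gh\|$ and hence $v$ to maximize $\|ghx\|/\|x\|$.
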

	
	\begin{proof}
		Let $u \in U^1(h)\setminus\{0\}$ and $w \in W^1(g)\setminus\{0\}$.
		Assume that $\frac{|wu|}{\|u\|\|w\|} \ge \eps$.
		Let $f \in V^1(g^*)$ and let $v \in V^1(h)$ be such that $w = fg$ and $u = hv$.
		We have $\frac{|fghv|}{\|fg\|\|hv\|} \ge \eps$, therefore $\frac{|fghv|}{\|f\|\|g\|\|h\|\|v\|} \ge \eps$, so $\frac{\|gh\|}{\|g\|\|h\|}\ge \eps$, which means that $g \AA^\eps h$. 
		This proves the first implication of Lemma \ref{lem:sharp-ali}.
		
		Now assume that $g \AA^\eps h$.
		Let $f \in E^*\setminus\{0\}$ and let $v \in E\setminus\{0\}$ be such that $|fghv| = \|f\|\|gh\|\|v\|$. 
		Then $\frac{|fghv|}{\|f\|\|g\|\|h\|\|v\|} \ge \eps$. 
		Let $u := hv$ and let $w := fg$. 
		Then we have:
		\begin{equation*}
			\eps \le \frac{|fghv|}{\|f\|\|g\|\|h\|\|v\|} = \frac{|wu|}{\|w\|\|u\|} \frac{\|fg\|}{\|f\|\|g\|} \frac{\|hv\|}{\|h\|\|v\|} = \frac{|fgu|}{\|f\|\|g\|\|u\|}\frac{\|hv\|}{\|h\|\|v\|}= \frac{\|fg\|}{\|f\|\|g\|} \frac{|whv|}{\|w\|\|h\|\|v\|}.
		\end{equation*}
		All factors are in $[0,1]$ so $\frac{|wu|}{\|w\|\|u\|} \ge \eps$ and $\frac{\|gu\|}{\|g\|\|u\|} \ge \frac{\|fgu\|}{\|f\|\|g\|\|u\|} \ge \eps$ and $\frac{\|wh\|}{\|w\|\|h\|} \ge  \frac{|whv|}{\|w\|\|h\|\|v\|}\ge \eps$.
		Moreover $\frac{\|hv\|}{\|h\|\|v\|} \ge \eps$ so $v \in V^\eps(h)$ and therefore $u \in U^\eps(h)$. 
		We also have $\frac{\|fg\|}{\|f\|\|g\|} \ge \eps$ so $w \in W^\eps(g)$. 
		This proves the second implication  of Lemma \ref{lem:sharp-ali}.
	\end{proof}

	\begin{Lem}\label{lem:c-prod}
		Let $g$ and $h$ be non-zero matrices such that the product $gh$ is well defined and let $0 < \eps \le 1$. Assume that $g\AA^\eps h$. Then one has:
		\begin{align}
			\sqz(gh) & \ge\sqz(g) + \sqz(h) - 2|\log(\eps)|.\label{eqn:lenali}
		\end{align}
		Moreover, for every non-zero vectors $u\in U^1(g)\setminus\{0\}$, and $u'\in U^1(gh)\setminus\{0\}$, we have:
		\begin{equation}\label{eqn:limbd}
			\dist([u],[u']) \le \frac{1}{\eps}\exp(-\sqz(g)).
		\end{equation}
	\end{Lem}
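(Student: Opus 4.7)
My plan is to prove the two inequalities separately, each by reducing to facts already established in the section.

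For \eqref{eqn:lenali}, I will simply unpack the definition of $\sqz$. Since $\bigwedge^2$ is a functor of monoids, we have $(gh)\wedge(gh)=(g\wedge g)(h\wedge h)$, and the operator norm on $\bigwedge^2$ is sub-multiplicative, giving $\|gh\wedge gh\|\le\|g\wedge g\|\|h\wedge h\|$. Combining this with the alignment hypothesis $\|gh\|\ge\eps\|g\|\|h\|$ yields
\begin{equation*}
\sqz(gh)=\log\frac{\|gh\|^2}{\|gh\wedge gh\|}\ge\log\frac{\eps^2\|g\|^2\|h\|^2}{\|g\wedge g\|\|h\wedge h\|}=\sqz(g)+\sqz(h)-2|\log(\eps)|.
\end{equation*}

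For \eqref{eqn:limbd}, the key step is to observe that the alignment hypothesis forces $U^1(gh)\setminus\{0\}\subset U^\eps(g)$, after which Lemma \ref{lem:cont-prop} applied to $g$ finishes the job. To see the inclusion, take any $u'\in U^1(gh)\setminus\{0\}$ and write $u'=ghv$ for some $v\in V^1(gh)$, i.e. $\|ghv\|=\|gh\|\|v\|$. Set $w:=hv$, so that $u'=gw$. Since $\|w\|=\|hv\|\le\|h\|\|v\|$, the alignment gives
\begin{equation*}
\|gw\|=\|ghv\|=\|gh\|\|v\|\ge\eps\|g\|\|h\|\|v\|\ge\eps\|g\|\|w\|,
\end{equation*}
so $w\in V^\eps(g)$ and therefore $u'=gw\in U^\eps(g)$. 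Now Lemma \ref{lem:cont-prop} applied to $g$, with $u\in U^1(g)\setminus\{0\}$ and $u'\in U^\eps(g)\setminus\{0\}$, gives the required bound $\dist([u],[u'])\le\eps^{-1}\exp(-\sqz(g))$.

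Neither step presents a real obstacle: the first is pure manipulation of definitions, and the second is a one-line reduction to the already-proven Lemma \ref{lem:cont-prop}. The only subtle point worth stating carefully is the inclusion $U^1(gh)\subset U^\eps(g)$, which is the precise way in which alignment transfers information from the composite to the left factor.
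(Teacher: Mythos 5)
Your proof is correct and follows essentially the same route as the paper: for \eqref{eqn:lenali}, both arguments combine the alignment hypothesis with the sub-multiplicativity of the operator norm on $\bigwedge^2$ (via functoriality of $\bigwedge^2$); for \eqref{eqn:limbd}, both establish the inclusion $U^1(gh)\subset U^\eps(g)$ and then invoke Lemma~\ref{lem:cont-prop}. Your write-up is a bit more explicit about why $\bigwedge^2$ is sub-multiplicative and about introducing $w=hv$, but the underlying argument is identical.
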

	
	\begin{proof}
		Note that the norm of the $\wedge$ product is sub-multiplicative because it is a norm so:
		\begin{equation}\label{eqn:m2}
			\|gh \wedge gh\|\le \|g \wedge g\|\|h \wedge h\|.
		\end{equation} 
		So if we do $2\log\eqref{eqn:def-ali} -\log\eqref{eqn:m2}$ we find \eqref{eqn:lenali}.
		
		Now to prove \eqref{eqn:limbd}, we only need to show that $U^1(gh) \subset U^\eps(g)$ and use \eqref{eq:cont-prop} from Lemma \ref{lem:cont-prop}. Indeed, consider $v \in V^1(gh)$, then one has $\|ghv\| \ge \eps \|g\|\|h\| \|v\| \ge \eps \|g\|\|h v\|$ which means that $hv \in V^\eps(g)$, therefore $ghv \in U^\eps(g)$ and we can apply Lemma \ref{lem:cont-prop}.
	\end{proof}

	\begin{Lem}\label{lem:triple-ali}
		Let $f$, $g$ and $h$ be non-zero matrices such that the product $fgh$ is well defined and let $0 < \eps \le 1$. Assume that $f \AA^\eps g \AA^\eps h$ and that $\sqz(g) \ge 2|\log(\eps)| + 2\log(2)$. Then $\sqz(fgh)\ge \sqz(g) - 4|\log(\eps)|-2\log(2)$.
	\end{Lem}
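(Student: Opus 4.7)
The plan is to reduce the inequality to two applications of Lemma \ref{lem:c-prod}, with the technical work concentrated in establishing the intermediate alignment $f \AA^{\eps/2} (gh)$. First, applying Lemma \ref{lem:c-prod} to the alignment $g \AA^\eps h$ yields $\sqz(gh) \ge \sqz(g) + \sqz(h) - 2|\log \eps|$, hence $\sqz(gh) \ge \sqz(g) - 2|\log \eps|$ since $\sqz(h) \ge 0$. Once $f \AA^{\eps/2} (gh)$ is established, a second application of Lemma \ref{lem:c-prod} gives $\sqz(fgh) \ge \sqz(f) + \sqz(gh) - 2|\log(\eps/2)| \ge \sqz(gh) - 2|\log \eps| - 2\log 2$, and combining the two estimates produces the target lower bound $\sqz(g) - 4|\log \eps| - 2\log 2$.

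The heart of the argument is showing $f \AA^{\eps/2}(gh)$. The idea is that, since $\sqz(g) \ge 2|\log\eps| + 2\log 2$, the projective image of $U^\eps(g)$ is small: by Lemma \ref{lem:cont-prop} applied to any reference direction in $U^1(g) \subset U^\eps(g)$ and the triangle inequality, the projective diameter of $U^\eps(g)$ is at most $2\exp(-\sqz(g))/\eps \le \eps/2$. Now pick $v \in V^1(gh)$ realizing $\|gh\|\|v\| = \|ghv\|$; the argument inside the proof of \eqref{eqn:limbd} shows $hv \in V^\eps(g)$, so the direction $ghv$ lies in $U^\eps(g)$. On the other hand, by Lemma \ref{lem:sharp-ali} applied to $f \AA^\eps g$, there is some $u_0 \in U^\eps(g)$ with $\|fu_0\|/(\|f\|\|u_0\|) \ge \eps$.

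Since both $[u_0]$ and $[ghv]$ lie in the projective image of $U^\eps(g)$, we have $\dist([u_0],[ghv]) \le \eps/2$. The Lipschitz estimate of Lemma \ref{lem:product-is-lipschitz}, applied to the matrix $f$, then yields
\begin{equation*}
\frac{\|f(ghv)\|}{\|f\|\|ghv\|} \;\ge\; \frac{\|fu_0\|}{\|f\|\|u_0\|} - \dist([u_0],[ghv]) \;\ge\; \eps - \tfrac{\eps}{2} \;=\; \tfrac{\eps}{2}.
\end{equation*}
Multiplying by $\|f\|\|ghv\| = \|f\|\|gh\|\|v\|$ and using $\|fgh\|\|v\| \ge \|fghv\|$ gives exactly $\|fgh\| \ge (\eps/2)\|f\|\|gh\|$, i.e.\ $f \AA^{\eps/2}(gh)$, completing the plan.

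The main obstacle is the bookkeeping of which cone lives where: one must correctly observe that $V^1(gh) \subset h^{-1}V^\eps(g)$ so that the optimizer for $\|gh\|$ lands inside the narrow cone $U^\eps(g)$, and simultaneously extract from $f \AA^\eps g$ a witness $u_0$ in the same cone via Lemma \ref{lem:sharp-ali}. After that, all the losses are controlled explicitly: a factor $\eps$ from going from $U^1(g)$ to $U^\eps(g)$ twice, and a final $\log 2$ coming from relaxing $\eps$ to $\eps/2$ in the second alignment.
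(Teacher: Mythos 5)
Your proof is correct and takes essentially the same route as the paper's: establish that the relevant directions (a witness from $f \AA^\eps g$ and the optimizer for $\|gh\|$) both land inside the narrow cone $U^\eps(g)$, use the diameter bound of Lemma \ref{lem:cont-prop} plus the Lipschitz estimate of Lemma \ref{lem:product-is-lipschitz} to transfer the alignment, and then convert norm inequalities into a singular-gap bound. The numerology checks out: with $\sqz(g)\ge 2|\log\eps|+2\log 2$ you get $\exp(-\sqz(g))\le\eps^2/4$, hence projective diameter of $U^\eps(g)$ at most $\eps/2$, hence $f\AA^{\eps/2}(gh)$, and the two applications of \eqref{eqn:lenali} with $|\log(\eps/2)|=|\log\eps|+\log 2$ give exactly $\sqz(fgh)\ge\sqz(g)-4|\log\eps|-2\log 2$.

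The only differences from the paper's proof are cosmetic. The paper produces the witness direction by taking $v\in V^1(fg)$ and observing $gv\in U^\eps(g)$, where you instead invoke Lemma \ref{lem:sharp-ali}; these are the same computation in different clothing. The paper also finalizes by going straight to $\|fgh\|\ge\tfrac{\eps^2}{2}\|f\|\|g\|\|h\|$ and plugging into $\sqz(\gamma)=\log(\|\gamma\|^2/\|\gamma\wedge\gamma\|)$ with sub-multiplicativity, which yields the (slightly stronger, and also implicit in your argument) bound $\sqz(fgh)\ge\sqz(f)+\sqz(g)+\sqz(h)-4|\log\eps|-2\log 2$; you instead package the final step as two applications of \eqref{eqn:lenali}, which is the same sub-multiplicativity argument. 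Your variant of keeping both directions inside $U^\eps(g)$ and bounding its diameter directly, rather than triangulating through $U^1(g)$ and $U^1(gh)$ separately as the paper does, is if anything a touch cleaner.
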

	
	\begin{proof}
		Let $u \in U^\eps(g)\setminus\{0\}$, let $u'\in U^1(gh)\setminus\{0\}$ and let $u'' \in U^1(g)\setminus\{0\}$ be non-trivial vectors. By Lemma \ref{lem:cont-prop}, we have $\dist([u],[u''])\le \frac{\eps}{4}$. By Lemma \ref{lem:c-prod}, we have $\dist([u''],[u'])\le \frac{\eps}{4}$. 
		So by triangular inequality, we have $\dist([u],[u'])\le \frac{\eps}{2}$. 
		
		Let $v \in V^{1}(fg)\setminus\{0\}$. 
		We have $\|fgv\| = \|fg\|\|v\| \ge \eps \|f\|\|g\|\|v\|$. 
		Hence $v\in V^\eps(g)\setminus\{0\}$ therefore $gv \in U^\eps(g)\setminus\{0\}$. 
		Let $v'\in V^1(gh)\setminus\{0\}$.
		Assume that $u = gv$ and that $u' = gh v'$. 
		Then by Lemma \ref{lem:product-is-lipschitz}, we have $\frac{\|fu'\|}{\|f\|\|u'\|} \ge \frac{\|fu\|}{\|f\|\|u\|} - \dist([u],[u']) \ge \frac{\eps}{2}$. Therefore, we have $\|fghv'\|\ge \|f\|\|gh\|\|v'\|\frac{\eps}{2}$ so $\|fgh\|\ge \frac{\eps}{2}\|f\|\|gh\|$. Moreover, we have $g \AA^\eps h$, therefore $\|fgh\| \ge \frac{\eps^2}{2}\|f\|\|g\|\|h\|$. Now using the formula $\sqz(\gamma) = \log\left(\frac{\|\gamma\|^2}{\|\gamma\wedge\gamma\|}\right)$, we get:
		\begin{equation*}
			\sqz(fgh) \ge \sqz(f)+\sqz(g)+\sqz(h) - 4 |\log(\eps)| - 2\log(2).\qedhere
		\end{equation*}
	\end{proof}
	
	\begin{Lem}[Heredity of the alignment]\label{lem:herali}
		Let $f,g,h$ be non-zero matrices such that the product $fgh$ is well defined and let $0 < \eps \le 1$. Assume that $\sqz(g) \ge 2 |\log(\eps)| + 3\log(2)$ and that $f \AA^{\eps} g \AA^{\frac{\eps}{2}} h$. Then $f \AA^{\frac{\eps}{2}} gh$.
	\end{Lem}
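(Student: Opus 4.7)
The plan is to mimic the argument of Lemma \ref{lem:triple-ali}, with the appropriate tracking of the two different alignment constants $\eps$ and $\eps/2$. The goal is to show $\|fgh\| \ge \tfrac{\eps}{2}\|f\|\|gh\|$.

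First, I would pick three reference unit directions in the image of $g$: a vector $u \in U^{\eps/2}(g)\setminus\{0\}$, a vector $u' \in U^1(gh)\setminus\{0\}$, and a vector $u'' \in U^1(g)\setminus\{0\}$. Using Lemma \ref{lem:cont-prop} on the pair $(u, u'')$ with parameter $\eps/2$, I get $\dist([u],[u'']) \le \tfrac{2}{\eps}\exp(-\sqz(g))$, and using Lemma \ref{lem:c-prod} (inequality \eqref{eqn:limbd}) applied to the product $gh$ with the alignment constant $\eps/2$, I similarly get $\dist([u''],[u']) \le \tfrac{2}{\eps}\exp(-\sqz(g))$. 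The hypothesis $\sqz(g) \ge 2|\log\eps|+3\log 2$ gives $\exp(-\sqz(g)) \le \eps^2/8$, so each of these distances is at most $\eps/4$, and by the triangle inequality $\dist([u],[u']) \le \eps/2$.

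Next, I would specialize $u$ and $u'$ to get a concrete estimate on $\|fgh\|$. Take any $v \in V^1(fg)\setminus\{0\}$; then $\|fgv\|=\|fg\|\|v\|\ge \eps\|f\|\|g\|\|v\|$, so $v\in V^\eps(g)\subset V^{\eps/2}(g)$ and hence $u:=gv$ lies in $U^{\eps/2}(g)$, with the explicit bound $\tfrac{\|fu\|}{\|f\|\|u\|}\ge\eps$. Take $v'\in V^1(gh)\setminus\{0\}$ and set $u':=ghv'$. By Lemma \ref{lem:product-is-lipschitz} applied to $f$ and the directions $[u],[u']$, one has $\tfrac{\|fu'\|}{\|f\|\|u'\|}\ge\tfrac{\|fu\|}{\|f\|\|u\|}-\dist([u],[u'])\ge \eps-\eps/2=\eps/2$. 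Multiplying by $\|f\|\|u'\|=\|f\|\|gh\|\|v'\|$ and dividing by $\|v'\|$ yields $\|fgh\|\ge \tfrac{\eps}{2}\|f\|\|gh\|$, which is the claim.

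There is essentially no hard step here: the proof is a quantitative refinement of Lemma \ref{lem:triple-ali}, and the only subtle point is the book-keeping of the constants so that the squeeze hypothesis $\sqz(g)\ge 2|\log\eps|+3\log 2$ is exactly what is needed to make both projective bounds fit inside $\eps/4$, leaving a margin of $\eps/2$ after the triangle inequality. I would double-check the constants at that precise step, since replacing $\eps$ by $\eps/2$ in one of the alignment hypotheses exactly doubles the relevant $\exp(-\sqz(g))/\eps$ term; the choice $3\log 2$ in the squeeze hypothesis absorbs this doubling and leaves the final alignment ratio at $\eps/2$.
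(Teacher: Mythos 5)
Your proof is correct and follows essentially the same route as the paper: pick three reference directions (one in $U^1(g)$, one in a small cone $U^\cdot(g)$, one in $U^1(gh)$), bound the pairwise projective distances by Lemma \ref{lem:cont-prop} and Lemma \ref{lem:c-prod}, chain with the triangle inequality, and conclude via the Lipschitz property of the norm cocycle (Lemma \ref{lem:product-is-lipschitz}). The only cosmetic difference is that you take the middle cone to be $U^{\eps/2}(g)$ whereas the paper uses $U^\eps(g)$; the paper's choice gives the slightly sharper bound $\dist \le \eps/8 + \eps/4 = 3\eps/8$ versus your $\eps/4+\eps/4=\eps/2$, but both fit within the $\eps/2$ margin needed, and your constant bookkeeping ($\exp(-\sqz(g))\le \eps^2/8$) is exactly right.
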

	
	\begin{proof}
		Let $u\in U^1(gh)\setminus\{0\}$, let $u' \in U^{\eps}(g)\setminus\{0\}$ and let $u''\in U^1(g)\setminus\{0\}$. By Lemma \ref{lem:cont-prop}, we have $\dist([u'],[u''])\le \frac{\eps}{8}$ and by Lemma \ref{lem:c-prod}, we have $\dist([u], [u''])\le \frac{\eps}{4}$. 
		Then by triangular inequality, we have $\dist([u],[u'])\le \frac{\eps}{2}$.
		
		Now let $v \in V^1(fg)\setminus\{0\}$. Then we have $\|fgv\|\ge \|f\|\|g\|\|v\|\eps$ so  $gv \in U^{\eps}(g)\setminus\{0\}$ and by the above argument, we have $\dist([u],[gv])\le \frac{\eps}{2}$. Then by Lemma \ref{lem:product-is-lipschitz}, we have $\frac{\|fu\|}{\|f\|\|u\|}\ge \frac{\|fgv\|}{\|f\|\|gv\|}- \frac{\eps}{2}$. Moreover $\|f\|\|gv\| \le \|f\|\|g\|\|v\| \le \|fgv\| / \eps$, therefore $\frac{\|fu\|}{\|f\|\|u\|}\ge \frac{\eps}{2}$ and $u \in U^1(gh)$ hence $f \AA^{\frac{\eps}{2}} gh$.
	\end{proof}
	
	\begin{Rem}[The ultra-metric case is easier]
		Let $0 < \eps \le 1$, let $\KK$ be a ultra-metric locally compact field and let $f,g,h$ be matrices with entries in $\KK$ such that th product $f,g,h$ is well defined. If we assume that $f \AA^\eps g \AA^\eps h$, and that $\sqz(g) > 2|\log(\eps)|$, then $f \AA^\eps gh$. Therefore, in the ultra-metric case, we get rid of all the $+ k \log(2)$ constants.
	\end{Rem}
	
	\begin{Lem}[Contraction property for aligned chains]\label{lem:c-chain}
		Let $E$ be a Euclidean vector space, let $0 < \eps \le 1$ let $n\in\NN$.
		Let $g_0, \dots, g_n$ be non-zero matrices such that the product $g_0\cdots g_n$ is well defined. Assume that for all $k \in\{0, \dots, n-1\}$, we have $\sqz(g_k) \ge 2|\log(\eps)| + 3\log(2)$ and $g_{k} \AA^\eps g_{k+1}$. Then one has:
		\begin{gather}
			\| g_0\cdots g_n \| \ge \left(\frac{\eps}{2}\right)^n\prod_{j = 0}^n \|g_j\| \label{eqn:alinorm-chaine}\\
			\sqz(g_0\cdots g_n) \ge \sum_{j = 0}^n \sqz(g_j) - 2n(|\log(\eps)| + \log(2)).\label{eqn:lenali-chaine}
		\end{gather}
		Moreover, for every non-zero vectors $u\in U^1(g_0)\setminus\{0\}$, and $u'\in U^1(g_0\cdots g_n)\setminus\{0\}$, we have:
		\begin{equation}\label{eqn:limbd-chaine}
			\dist([u],[u'])\le\frac{2}{\eps} \exp(-\sqz(g_0)).
		\end{equation}
	\end{Lem}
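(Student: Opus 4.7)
The plan is to reduce everything to a single \emph{alignment of blocks} statement proved by backward induction, and then read off the three conclusions from the one-step estimates of Lemma~\ref{lem:c-prod}. Set $P_k := g_k g_{k+1} \cdots g_n$ for $k \in \{0, \dots, n\}$, so that $P_0$ is the full product. The key claim is:
\begin{equation*}
    \forall k \in \{1, \dots, n\}, \quad g_{k-1} \AA^{\eps/2} P_k.
\end{equation*}
The base case $k = n$ is immediate because $g_{n-1} \AA^\eps g_n$ implies $g_{n-1} \AA^{\eps/2} g_n = g_{n-1} \AA^{\eps/2} P_n$. For the inductive step, assuming $g_{k-1} \AA^{\eps/2} P_k$ and using $g_{k-2} \AA^\eps g_{k-1}$ together with $\sqz(g_{k-1}) \ge 2|\log(\eps)| + 3\log(2)$, I apply the heredity Lemma~\ref{lem:herali} with $f = g_{k-2}$, $g = g_{k-1}$, $h = P_k$ to conclude $g_{k-2} \AA^{\eps/2} (g_{k-1} P_k) = P_{k-1}$. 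Non-triviality of $P_k$ is preserved along the way since each alignment forces $\|P_{k-1}\| \ge \tfrac{\eps}{2} \|g_{k-1}\| \|P_k\| > 0$.

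Once this chain of block alignments is established, all three conclusions follow mechanically. For \eqref{eqn:alinorm-chaine}, I iterate $\|P_{k-1}\| \ge \tfrac{\eps}{2} \|g_{k-1}\| \|P_k\|$ for $k = n, n-1, \dots, 1$ starting from $\|P_n\| = \|g_n\|$, accumulating exactly $n$ factors of $\eps/2$. For \eqref{eqn:lenali-chaine}, I apply \eqref{eqn:lenali} of Lemma~\ref{lem:c-prod} to each alignment $g_{k-1} \AA^{\eps/2} P_k$, obtaining $\sqz(P_{k-1}) \ge \sqz(g_{k-1}) + \sqz(P_k) - 2|\log(\eps/2)|$; summing the $n$ contributions $-2(|\log(\eps)| + \log(2))$ yields the claimed bound. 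For \eqref{eqn:limbd-chaine}, a single application of \eqref{eqn:limbd} of Lemma~\ref{lem:c-prod} to the alignment $g_0 \AA^{\eps/2} P_1$ with $g = g_0$, $h = P_1$ gives $\dist([u],[u']) \le \tfrac{2}{\eps} \exp(-\sqz(g_0))$ for $u \in U^1(g_0)$ and $u' \in U^1(P_0)$.

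The only substantive point is the backward induction itself: one needs the alignment quality to \emph{stop degrading} after the first halving from $\eps$ to $\eps/2$. This is precisely the asymmetric shape of the hypothesis of Lemma~\ref{lem:herali} (input $\AA^\eps$ on the left, $\AA^{\eps/2}$ on the right, output $\AA^{\eps/2}$), which is tailor-made to be iterated. All other steps are routine applications of inequalities already proved in this section.
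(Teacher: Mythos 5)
Your proposal is correct and follows essentially the same route as the paper: a backward induction establishing $g_{k-1}\AA^{\eps/2}(g_k\cdots g_n)$ via Lemma~\ref{lem:herali}, then reading off the three conclusions from the one-step estimates of Lemma~\ref{lem:c-prod}. The only difference is cosmetic re-indexing (your $P_k$ notation versus the paper's explicit products); the uses of $\sqz(g_{k-1})\ge 2|\log(\eps)|+3\log(2)$ are all within the hypothesis range $k-1\in\{0,\dots,n-1\}$, as required.
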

	
	\begin{proof}
		The lemma is trivial when $n = 0$. Assume $n \ge 1$.
		We claim that for all $0 \le k < n$, we have $g_k \AA^{\frac{\eps}{2}} g_{k+1}\cdots g_n$. For $k = n-1$, we assumed $g_{n-1}\AA^\eps g_n$ so $g_{n-1} \AA^{\frac{\eps}{2}} g_n$. Let $0 < k < n$ and assume that $g_k \AA^{\frac{\eps}{2}} g_{k+1}\cdots g_n$. Then by Lemma \ref{lem:herali} with $f = g_{k-1}$, $g := g_k$ and $h := g_{k+1}\cdots g_n$, we have $g_{k-1} \AA^{\frac{\eps}{2}} g_{k} \cdots g_n$. Hence, we have $g_0 \AA^{\frac{\eps}{2}} g_{1}\cdots g_n$ so by \eqref{eqn:limbd} in Lemma \ref{lem:c-prod}, we have \eqref{eqn:limbd-chaine}. 
		 
		For all $0< k <n$, we have $\|g_k\cdots g_n\|\ge \frac{\eps}{2}\|g_k\|\|g_{k+1}\cdots g_n\|$ by definition of $\AA^{\frac{\eps}{2}}$. 
		Then by induction on $k$, we have $\|g_k\cdots g_n\|\ge \left(\frac{\eps}{2}\right)^{n-k}\|g_k\|\|g_{k+1}\|\cdots \|g_n\|$, for $k =0$, we have \eqref{eqn:alinorm-chaine}.
		
		Now by \eqref{eqn:lenali}, we have $\sqz(g_k \cdots g_n)\ge \sqz(g_k) + \sqz(g_{k+1} \cdots g_n) - 2(|\log(\eps)| + \log(2))$ for all $0 \le k < n$.
		Then by induction, we have $\sqz(g_k \cdots g_n)\ge \sum_{j = k}^n \sqz(g_j)-2(n-k)(|\log(\eps)| + \log(2))$ for all $0\le k <n$, therefore we have \eqref{eqn:lenali-chaine}.
	\end{proof}

	\begin{Lem}[Alignment of partial products]\label{lem:alipart}
		Let $g_0,\dots,g_n$ be non-zero matrices such that the product $g_0\cdots g_n$ is well defined. 
		Let $0 < \eps \le 1$. 
		Assume that for every $k \in \{1,\dots,n-1\}$ we have $\sqz(g_i) \ge 2|\log(\eps)| + 4\log(2)$.
		Assume also that $g_0\AA^\eps g_1 \AA^\eps \cdots \AA^\eps g_n$ \ie for all $k \in \{0, \dots, n-1\}$, we have $g_k \AA^\eps g_{k+1}$. 
		Then for all $k \in \{1, \dots, n\}$, we have $(g_0 \cdots g_{k-1}) \AA^\frac{\eps}{2} (g_{k} \cdots g_{n})$.
	\end{Lem}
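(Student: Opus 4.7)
The plan is to reduce the statement to two one-sided heredity claims established by induction from Lemma \ref{lem:herali}, and then to glue these via a geometric argument using Lemmas \ref{lem:sharp-ali}, \ref{lem:cont-prop} and \ref{lem:product-is-lipschitz}. I first prove by reverse induction on $j$ that $g_{j-1} \AA^{\eps/2} (g_j \cdots g_n)$ for every $j \in \{1, \dots, n\}$: the base case $j = n$ is immediate from $g_{n-1} \AA^\eps g_n$, and the inductive step applies Lemma \ref{lem:herali} with $f = g_{j-1}$, $g = g_j$, $h = g_{j+1} \cdots g_n$, using that $\sqz(g_j) \ge 2|\log\eps| + 4\log(2) \ge 2|\log\eps| + 3\log(2)$ for $j \in \{1,\dots,n-1\}$. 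Transposing (via $g \AA^\eps h \iff h^* \AA^\eps g^*$ and $\sqz(g^*) = \sqz(g)$), the dual induction yields $(g_0 \cdots g_{j-1}) \AA^{\eps/2} g_j$ for every $j \in \{1, \dots, n\}$. The cases $k = 1$ and $k = n$ of the lemma are contained in these two families of statements.

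For the main case $2 \le k \le n-1$, set $L := g_0 \cdots g_{k-1}$ and $R := g_k \cdots g_n$; by the backward implication of Lemma \ref{lem:sharp-ali} it suffices to produce $u \in U^1(R)$ and $w \in W^1(L)$ with $|wu|/(\|w\|\|u\|) \ge \eps/2$. From the one-sided claims and the geometric observation used at the end of the proof of Lemma \ref{lem:c-prod} (if $g \AA^\delta h$ then $U^1(gh) \subset U^\delta(g)$), one has $U^1(R) \subset U^{\eps/2}(g_k)$ and, dually, $W^1(L) \subset W^{\eps/2}(g_{k-1})$. Applying the forward implication of Lemma \ref{lem:sharp-ali} to $g_{k-1} \AA^\eps g_k$ furnishes reference vectors $u^* \in U^\eps(g_k)$ and $w^* \in W^\eps(g_{k-1})$ satisfying $|w^* u^*|/(\|w^*\|\|u^*\|) \ge \eps$. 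The squeeze bound $\sqz(g_k) \ge 2|\log\eps| + 4\log(2)$ rewrites as $\exp(-\sqz(g_k)) \le \eps^2/16$, so Lemma \ref{lem:cont-prop} yields $\dist([u],[u'']) \le \eps/8$ and $\dist([u^*],[u'']) \le \eps/16$ for any $u'' \in U^1(g_k)$; the triangle inequality then gives $\dist([u],[u^*]) \le 3\eps/16$, and the identical argument on the dual side delivers $\dist([w],[w^*]) \le 3\eps/16$.

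Two applications of Lemma \ref{lem:product-is-lipschitz}, once in $u$ and once in $w$ via the canonical duality $E \simeq (E^*)^*$, then give
\begin{equation*}
\frac{|wu|}{\|w\|\|u\|} \ge \frac{|w^* u^*|}{\|w^*\|\|u^*\|} - \dist([u],[u^*]) - \dist([w],[w^*]) \ge \eps - \frac{3\eps}{16} - \frac{3\eps}{16} = \frac{5\eps}{8},
\end{equation*}
which exceeds $\eps/2$ and concludes. The most delicate point is the bookkeeping of constants: the strengthened squeeze hypothesis $2|\log\eps| + 4\log(2)$ (one extra $\log(2)$ beyond what Lemma \ref{lem:herali} requires) is precisely what forces the two projective diameters to sum to less than $\eps/2$, thereby preserving the factor $\eps/2$ in the conclusion rather than a weaker constant like $\eps/4$.
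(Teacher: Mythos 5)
Your proof is correct and follows essentially the same strategy as the paper's: isolate the pivot factors $g_{k-1}$ and $g_k$, produce reference vectors $u^*\in U^\eps(g_k)$, $w^*\in W^\eps(g_{k-1})$ with $|w^*u^*|/\|w^*\|\|u^*\|\ge\eps$ via Lemma~\ref{lem:sharp-ali}, control the projective distances $\dist([u],[u^*])$ and $\dist([w],[w^*])$ via Lemma~\ref{lem:cont-prop} and the contraction of aligned chains, and finish with two applications of Lemma~\ref{lem:product-is-lipschitz}. Where the paper cites Lemma~\ref{lem:c-chain} directly to obtain $\dist([u],[u''])\le\frac{2}{\eps}\exp(-\sqz(g_k))$ and the inclusion $U^1(R)\subset U^{\eps/2}(g_k)$, you re-derive the underlying one-sided heredity $g_{j-1}\AA^{\eps/2}(g_j\cdots g_n)$ by reverse induction on Lemma~\ref{lem:herali}; this is the same induction that appears inside the proof of Lemma~\ref{lem:c-chain}, so it is an inlining rather than a new route. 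Your constants are slightly tighter ($3\eps/16$ rather than the paper's rounded $\eps/4$, yielding a final bound $5\eps/8\ge\eps/2$). One small terminological slip: you refer to the ``backward implication'' of Lemma~\ref{lem:sharp-ali} when invoking the direction (existence of $u,w$ with $|wu|/\|w\|\|u\|\ge\eps$ implies $g\AA^\eps h$), and to the ``forward implication'' for the converse, which is the reverse of the natural reading of that lemma's statement order; the mathematical usage is nonetheless correct.
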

	
	\begin{proof}
		Let $k \in \{2, \dots, n-1\}$. Let $u \in U^1(g_{k} \cdots g_{n})\setminus\{0\}$, let $u' \in U^\eps(g_k)\setminus\{0\}$, let $w \in W^1(g_0\cdots g_{k-1})\setminus\{0\}$ and let $w'\in W^\eps(g_{k-1})\setminus\{0\}$. By Lemma \ref{lem:c-chain} applied to the sequence $g_k, \dots, g_n$, and by Lemma \ref{lem:cont-prop} applied to $g_k$ and by triangular inequality, we have $\dist([u],[u']) \le \frac{\eps}{8} + \frac{\eps}{16} \le \frac{\eps}{4}$. 
		By Lemma \ref{lem:c-chain} applied to the sequence $g_{k-1}^*, \dots, g_0^*$ and by the above argument, we have $\dist([w],[w']) \le \frac{\eps}{4}$. 
		
		Now since $g_{k-1} \AA^\eps g_{k}$ and by Lemma \ref{lem:sharp-ali}, there exist $w'\in W^\eps(g_{k-1})\setminus\{0\}$ and $u' \in U^\eps(g_k)\setminus\{0\}$ such that $\frac{|w'u'|}{\|w'\|\|u'\|}\ge \eps$. 
		Assume that $\frac{|w'u'|}{\|w'\|\|u'\|}\ge \eps$.
		Then by Lemma \ref{lem:product-is-lipschitz}, we have $\frac{|w'u|}{\|w'\|\|u\|}\ge \frac{3\eps}{4}$ and by duality, we have $\frac{|wu|}{\|w\|\|u\|}\ge \frac{\eps}{2}$, hence $(g_0 \cdots g_{k-1}) \AA^\frac{\eps}{2} (g_{k} \cdots g_{n})$. 
	\end{proof}

	\begin{Cor}\label{cor:limit-line}
		Let $0 < \eps \le 1$, let $E$ be a Euclidean space and let $(\gamma_n)_{n\in\NN}$ be a sequence in $\mathrm{End}(E)$.
		Assume that for all $n \in \NN$, one has $\gamma_n \AA^\eps \gamma_{n+1}$ and $\sqz(\gamma_{n+1}) \ge 2|\log(\eps)|+ 3\log(2)$.
		Then there is a limit line $l^\infty \in \mathbf{P}(E)$ such that:
		\begin{equation}\label{eqn:limit-line}
			\forall n\in\NN,\; \forall u_n\in U^1(\gamma_0\cdots\gamma_{n-1})\setminus\{0\},\; \dist([u_n],l^\infty) \le \frac{2}{\eps} \exp(-\sqz(\gamma_0\cdots\gamma_{n-1})).
		\end{equation}
	\end{Cor}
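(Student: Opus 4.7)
Let $A_n := \gamma_0 \cdots \gamma_{n-1}$ for each $n \ge 0$ (so $A_0 = \identity$). The plan is to select some $u_n \in U^1(A_n) \setminus \{0\}$ for each $n$, show that $([u_n])$ is Cauchy in $\mathbf{P}(E)$ with the explicit rate $\dist([u_n], [u_m]) \le \tfrac{2}{\eps} \exp(-\sqz(A_n))$ for $m \ge n$, and define $l^\infty$ as its projective limit. Passing $m \to \infty$ in the pairwise bound, and noting that it holds uniformly in the choice of $u_n \in U^1(A_n)$, then yields \eqref{eqn:limit-line}.

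The first step is to establish $\sqz(A_n) \to +\infty$. I apply the inequality \eqref{eqn:lenali-chaine} of Lemma \ref{lem:c-chain} to the chain $\gamma_0, \gamma_1, \ldots, \gamma_{n-1}$. An inspection of that lemma's proof shows the squeeze hypothesis is only genuinely used on the middle factors $\gamma_1, \ldots, \gamma_{n-2}$ (where the heredity Lemma \ref{lem:herali} is invoked); our assumption supplies precisely this. Consequently $\sqz(A_n) \ge \sum_{k=0}^{n-1} \sqz(\gamma_k) - 2(n-1)(|\log \eps| + \log 2) \ge (n-1)\log 2$, which grows linearly.

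The second step is the contraction bound. For $n \le m$, write $A_m = A_n \cdot B$ with $B := \gamma_n \cdots \gamma_{m-1}$, and apply Lemma \ref{lem:alipart} to the chain $\gamma_0, \ldots, \gamma_{m-1}$ with splitting index $k = n$ to obtain the partial-product alignment $A_n \AA^{\eps/2} B$. Then \eqref{eqn:limbd} of Lemma \ref{lem:c-prod}, invoked with $g = A_n$, $h = B$, and the parameter $\eps$ of that lemma taken to be $\eps/2$, gives $\dist([u_n], [u_m]) \le \tfrac{2}{\eps}\exp(-\sqz(A_n))$ for any choice of $u_n \in U^1(A_n)$ and $u_m \in U^1(A_m)$. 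Combined with $\sqz(A_n) \to \infty$, this Cauchy bound produces the limit $l^\infty$; passing $m \to \infty$ yields \eqref{eqn:limit-line}. The $n = 0$ case is automatic, since $\sqz(\identity) = 0$ while $\tfrac{2}{\eps} \ge 2$ exceeds the diameter of $\mathbf{P}(E)$ under our metric.

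The main obstacle is a technical gap between the squeeze hypothesis of Lemma \ref{lem:alipart} (which asks for $2|\log\eps| + 4\log 2$) and that of our corollary ($2|\log\eps| + 3\log 2$). I would address this by tightening the triangle inequalities in the proof of Lemma \ref{lem:alipart}, which carry just enough slack in the distance estimates coming from Lemma \ref{lem:cont-prop} and Lemma \ref{lem:c-chain} to absorb the missing $\log 2$ factor while still yielding the $\AA^{\eps/2}$ conclusion. Alternatively, one can bypass Lemma \ref{lem:alipart} entirely by a direct iteration of Lemma \ref{lem:herali} and its dual (obtained by transposition, exchanging the roles of prefix and suffix alignment), propagating the alignment through $A_n$ and $B$ in parallel.
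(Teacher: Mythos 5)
Your argument tracks the paper's proof closely: apply Lemma~\ref{lem:alipart} to the chain $\gamma_0,\dots,\gamma_{m-1}$ with split at $n$ to get $A_n \AA^{\eps/2}(\gamma_n\cdots\gamma_{m-1})$, invoke \eqref{eqn:limbd} of Lemma~\ref{lem:c-prod} with parameter $\eps/2$ to obtain the Cauchy estimate $\dist([u_n],[u_m]) \le \frac{2}{\eps}\exp(-\sqz(A_n))$, establish linear growth of $\sqz(A_n)$ via Lemma~\ref{lem:c-chain} (and your observation that the squeeze hypothesis is never imposed on the leading factor in that lemma's proof is correct and necessary, since $\gamma_0$ carries no squeeze bound), and pass to the limit in $m$. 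Up to a typo in the paper's own write-up ($m \le n$ there should read $n \le m$), these are the same steps in the same order, so the approach is essentially the paper's.

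You also correctly diagnosed the discrepancy between the corollary's hypothesis ($\sqz(\gamma_{n+1}) \ge 2|\log\eps| + 3\log 2$) and Lemma~\ref{lem:alipart}'s hypothesis ($\ge 2|\log\eps| + 4\log 2$), and the paper does apply Lemma~\ref{lem:alipart} here without addressing it — so this is a flaw inherited from the paper, not one you introduced. However, your proposed resolution does not survive the arithmetic. Under $\sqz(g_k) \ge 2|\log\eps| + 3\log 2$ one has $\exp(-\sqz(g_k)) \le \eps^2/8$, so in the proof of Lemma~\ref{lem:alipart} the bound from Lemma~\ref{lem:c-chain} becomes $\frac{2}{\eps}\cdot\frac{\eps^2}{8} = \frac{\eps}{4}$ (rather than $\frac{\eps}{8}$) and that from Lemma~\ref{lem:cont-prop} becomes $\frac{\eps}{8}$ (rather than $\frac{\eps}{16}$), yielding $\dist([u],[u']) \le \frac{3\eps}{8}$, and likewise $\dist([w],[w']) \le \frac{3\eps}{8}$; the final Lipschitz step then gives only $\eps - 2\cdot\frac{3\eps}{8} = \frac{\eps}{4}$, so the conclusion of Lemma~\ref{lem:alipart} degrades to $\AA^{\eps/4}$. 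The slack you point to is $\frac{\eps}{4} - \frac{3\eps}{16} = \frac{\eps}{16}$, while dropping from $4\log 2$ to $3\log 2$ doubles both summands — the slack is not nearly enough. Your alternative via iterating Lemma~\ref{lem:herali} and its transpose hits the same wall: by induction on the transposed heredity one can deduce $A_n \AA^{\eps/2} \gamma_n$, but Lemma~\ref{lem:herali} requires $\AA^\eps$ on the left and $\AA^{\eps/2}$ on the right, and starting from two $\AA^{\eps/2}$'s costs another factor of two. The practical consequence is that, under the stated hypothesis, the Cauchy constant one actually reaches is $\frac{4}{\eps}$, not $\frac{2}{\eps}$. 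To make the corollary internally consistent one must either strengthen the squeeze hypothesis to $2|\log\eps| + 4\log 2$ or relax the conclusion to $\frac{4}{\eps}\exp(-\sqz(A_n))$; the specific claim that tightening triangle inequalities rescues the $\frac{2}{\eps}$ constant should be withdrawn.
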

	
	\begin{proof}
		Let $m \le n$ be integers and let $u_n\in U^1(\gamma_0\cdots\gamma_{n-1}) \setminus\{0\}$ and $u_m\in U^1(\gamma_0\cdots\gamma_{m-1}) \setminus\{0\}$. By Lemma \ref{lem:alipart}, we have $(\gamma_0\cdots\gamma_{n-1}) \AA^\frac{\eps}{2} (\gamma_n \cdots \gamma_{m-1})$, then by Lemma \ref{lem:c-prod}, we have:
		\begin{equation}\label{eqn:limite-mn}
			\dist([u_n],[u_m]) \le \frac{2}{\eps}\exp(-\sqz(\gamma_0\cdots\gamma_{n-1})).
		\end{equation}  
		By Lemma \ref{lem:c-chain}, we have $\sqz(\gamma_1\cdots\gamma_{n-1}) \ge (n-1) \log(2)+ 2|\log(\eps)|+ 3\log(2)$ and by Lemma \ref{lem:c-prod}, we have $\sqz(\gamma_1\cdots\gamma_{n-1}) \ge n \log(2)$.
		So for any sequence $(u_n)_{n\in\NN} \in \prod_{n = 0}^{+ \infty} (U^1(\gamma_0\cdots\gamma_{n-1}) \setminus\{0\})$, the sequence $([u_n])$ is a Cauchy sequence in $\mathbf{P}(E)$, therefore it has a limit. 
		Moreover, the diameter of $\mathbf{P}U^1(\gamma_0\cdots\gamma_{n-1})$ goes to $0$ by the above argument so the limit $l^\infty$ does not depend on the choice of the $u_n$'s.
		
		Now we take the limit of \eqref{eqn:limite-mn} for $m\to + \infty$ and we get \eqref{eqn:limit-line}.
	\end{proof}

	\begin{Lem}\label{lem:zouli-alignemont}
		Let $0< \eps \le 1$ and let $n\in \NN$. 
		Let $h$ and $g_0, \dots, g_n$ be matrices such that the product $h g_0 \cdots g_n$ is well defined. 
		Assume that for all $i \in\{0, \dots, n\}$, we have $\sqz(g_i) \ge 2|\log(\eps)| + 4 \log(2)$. 
		Assume also that we have $h \AA^{\eps} g_0$ and that $g_i\AA^\frac{\eps}{2} g_{i+1}$ for all $i \in\{0, \dots, n-1\}$. Then we have $h \AA^\frac{\eps}{2} (g_0\cdots g_n)$.
	\end{Lem}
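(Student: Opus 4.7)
The plan is to mimic the argument of Lemma \ref{lem:herali}, extending it from a single matrix in the middle to the chain $g_0, \ldots, g_n$ on the right of $h$. The key geometric observation is that, under the hypotheses, the image cone $U^1(g_0 \cdots g_n)$ is concentrated in a small projective neighbourhood of $U^1(g_0)$; so from $h$'s point of view, the product $g_0 \cdots g_n$ behaves essentially like the single matrix $g_0$, for which we have the strong alignment $h \AA^\eps g_0$.

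Concretely, I would first establish the intermediate geometric claim that $\dist([u],[u'']) \le \eps/4$ for every $u \in U^1(g_0 \cdots g_n)$ and every $u'' \in U^1(g_0)$. This can be obtained by adapting the chain argument of Lemma \ref{lem:c-chain} to the sub-chain $g_0, g_1, \ldots, g_n$: the alignments $g_i \AA^{\eps/2} g_{i+1}$ together with the strengthened squeeze hypothesis $\sqz(g_i) \ge 2|\log(\eps)| + 4\log(2)$ yield $g_0 \AA^{\eps/2}(g_1 \cdots g_n)$, whence $U^1(g_0\cdots g_n) \subset U^{\eps/2}(g_0)$ (as in the proof of Lemma \ref{lem:c-prod}). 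Lemma \ref{lem:cont-prop} applied to $g_0$ then bounds the diameter of $U^{\eps/2}(g_0)$ by $\exp(-\sqz(g_0))/(\eps/2) \le \eps/8$, giving the claim.

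Next, I would pick $v \in V^1(hg_0)\setminus\{0\}$ realizing the norm of $hg_0$. Since $h \AA^\eps g_0$, one has $\|hg_0 v\| = \|hg_0\|\|v\| \ge \eps\|h\|\|g_0\|\|v\|$, which forces $g_0 v \in U^\eps(g_0)$; Lemma \ref{lem:cont-prop} then gives $\dist([g_0 v], U^1(g_0)) \le \exp(-\sqz(g_0))/\eps \le \eps/16$. By the triangular inequality, $\dist([u],[g_0 v]) \le \eps/4 + \eps/16 < \eps/2$ for every $u \in U^1(g_0\cdots g_n)$. Applying Lemma \ref{lem:product-is-lipschitz} to $h$ then yields
\begin{equation*}
\frac{\|hu\|}{\|h\|\|u\|} \ge \frac{\|hg_0 v\|}{\|h\|\|g_0 v\|} - \dist([u],[g_0 v]) \ge \eps - \frac{\eps}{2} = \frac{\eps}{2},
\end{equation*}
where $\|hg_0 v\|/(\|h\|\|g_0 v\|) \ge \|hg_0\|/(\|h\|\|g_0\|) \ge \eps$ follows from $\|g_0 v\| \le \|g_0\|\|v\|$; since $u \in U^1(g_0\cdots g_n)$ was arbitrary, this unwinds to $h \AA^{\eps/2}(g_0\cdots g_n)$.

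The main obstacle is the intermediate geometric claim, because the consecutive alignments in the chain are only $g_i \AA^{\eps/2} g_{i+1}$, a factor of two weaker than what Lemma \ref{lem:c-chain} directly assumes. The strengthened squeeze hypothesis $\sqz(g_i) \ge 2|\log(\eps)| + 4\log(2)$, one $\log(2)$ above the analogous assumption in Lemma \ref{lem:c-chain}, is present precisely to absorb the loss from propagating the weaker alignment along the chain while still concluding with $\AA^{\eps/2}$ rather than a degraded alignment like $\AA^{\eps/4}$.
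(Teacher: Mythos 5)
Your proof is correct and follows essentially the same route as the paper: bound the projective distance from $U^1(g_0\cdots g_n)$ to $U^1(g_0)$ by Lemma \ref{lem:c-chain} (with parameter $\eps/2$), pick an anchor vector in $U^\eps(g_0)$ on which $h$ acts with a ratio at least $\eps$, and combine Lemma \ref{lem:cont-prop}, the triangle inequality and Lemma \ref{lem:product-is-lipschitz}. The only differences are cosmetic — you re-derive the anchor vector by hand (taking $v\in V^1(hg_0)$ and considering $g_0v$, which is precisely the proof of Lemma \ref{lem:sharp-ali}) where the paper simply invokes that lemma, and your intermediate claim $g_0\AA^{\eps/2}(g_1\cdots g_n)$ is slightly over-stated (the chain argument at alignment parameter $\eps/2$ only yields $g_0\AA^{\eps/4}(g_1\cdots g_n)$), but this does not matter since the resulting diameter bound $\le\eps/4$ still suffices for the rest of the computation.
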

	
	\begin{proof}
		By Lemma \ref{lem:c-chain} \eqref{eqn:lenali-chaine}, we have $\sqz(g_0\cdots g_n) \ge  \sqz(g_0) + \sum_{j =1}^n (\sqz (g_j) - 2|\log(\eps)| - 2\log(2)) \ge \sqz(g_0)$. 
		Let $u \in U^1(g_0)\setminus\{0\}$ and let $u' \in U^1(g_0\cdots g_n)\setminus\{0\}$. By \eqref{eqn:limbd-chaine} in Lemma \ref{lem:c-chain}, we have $\dist([u],[u']) \le \frac{4\eps}{16}$. 
		Let $v \in V^\eps(h) \cap U^{\eps} (g_0)\setminus\{0\}$, which is not empty by Lemma \ref{lem:sharp-ali}. 
		Then by Lemma \ref{lem:cont-prop}, we have $\dist([v], [u]) \le \frac{\eps}{16}$. Hence $\dist([u'],[v]) \le \frac{5\eps}{16}$ so by Lemma \ref{lem:product-is-lipschitz}, we have $\frac{\|hu'\|}{\|h\|\|u'\|} \ge \frac{11\eps}{16} \ge \frac{\eps}{2}$. 
		Hence, we have $h \AA^\frac{\eps}{2} (g_0\cdots g_n)$.
	\end{proof}

	Now we prove a tricky lemma that is essential for the pivoting technique.
	
	\begin{Lem}\label{lem:Atilde}
		Let $0< \eps \le 1$ and let $n\in \NN_1$.
		Let $\gamma_{-1},\gamma_0, \gamma_1, \dots, \gamma_{2n}$ be non-zero matrices and assume that the product $\gamma_{-1}\cdots\gamma_{2n}$ is well defined. Assume that for all $i \in \{0,1,3,5, \dots, 2n-1\}$, we have $\sqz(\gamma_i) \ge 4|\log(\eps)|+ 7\log(2)$ and that for all $0 \le i < n$, we have:
		\begin{equation}\label{eq:hyp-lem-atilde}
			(\gamma_0\cdots\gamma_{2i})\AA^\eps \gamma_{2i + 1} \AA^\eps\gamma_{2i + 2}
		\end{equation}
		and that $\gamma_{-1}\AA^\eps\gamma_0$. Then $\gamma_{-1}\AA^\frac{\eps}{2}(\gamma_0\cdots\gamma_{2n})$.
	\end{Lem}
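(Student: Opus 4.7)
The plan is to show that the top singular direction $U^1(P_n)$ of $P_n := \gamma_0 \cdots \gamma_{2n}$ stays in a very small projective neighbourhood of $U^1(\gamma_0)$, and then deduce the desired alignment $\gamma_{-1} \AA^{\eps/2} P_n$ from the Lipschitz property of the norm cocycle (Lemma \ref{lem:product-is-lipschitz}) together with the hypothesis $\gamma_{-1} \AA^\eps \gamma_0$.

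First, I would run a straightforward induction to extract from the interleaved alignment hypothesis the single-step alignment between the partial product and the next pair of matrices. Writing $P_i := \gamma_0 \cdots \gamma_{2i}$, the hypothesis gives $P_i \AA^\eps \gamma_{2i+1} \AA^\eps \gamma_{2i+2}$, and since $\sqz(\gamma_{2i+1}) \ge 4|\log(\eps)|+7\log(2) \ge 2|\log(\eps)|+3\log(2)$, the heredity Lemma \ref{lem:herali} yields $P_i \AA^{\eps/2} (\gamma_{2i+1}\gamma_{2i+2})$ for every $0 \le i < n$. This is the identity that drives the induction.

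Next I would estimate $\sqz(P_i)$. Applying Lemma \ref{lem:c-prod} to $P_{i+1} = P_i \cdot (\gamma_{2i+1}\gamma_{2i+2})$ with parameter $\eps/2$, then re-applying it to $\gamma_{2i+1}\gamma_{2i+2}$ with parameter $\eps$, one gets
\[
\sqz(P_{i+1}) \ge \sqz(P_i) + \sqz(\gamma_{2i+1}) + \sqz(\gamma_{2i+2}) - 4|\log(\eps)| - 2\log(2) \ge \sqz(P_i) + 5\log(2),
\]
so by induction $\sqz(P_i) \ge 4|\log(\eps)| + (7+5i)\log(2)$. Picking a unit vector $u_i \in U^1(P_i)$, the quantitative part of Lemma \ref{lem:c-prod} gives the contraction $\dist([u_i],[u_{i+1}]) \le (2/\eps)\exp(-\sqz(P_i))$, and summing the resulting geometric series
\[
\dist([u_0],[u_n]) \le \sum_{i=0}^{n-1} \frac{2}{\eps}\exp(-\sqz(P_i)) \le \frac{2}{\eps}\cdot \eps^4 \cdot 2^{-7}\sum_{i\ge 0}2^{-5i} \le \frac{\eps^3}{60}.
\]

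Finally I would combine this with the hypothesis on $\gamma_{-1}$. By Lemma \ref{lem:sharp-ali} there is $u \in U^\eps(\gamma_0)$ with $\|\gamma_{-1}u\| \ge \eps\|\gamma_{-1}\|\|u\|$, and Lemma \ref{lem:cont-prop} bounds $\dist([u],[u_0]) \le \exp(-\sqz(\gamma_0))/\eps \le \eps^3/128$. The triangle inequality yields $\dist([u],[u_n]) < \eps/2$ (using $\eps \le 1$), so Lemma \ref{lem:product-is-lipschitz} gives $\|\gamma_{-1}u_n\| \ge (\eps/2)\|\gamma_{-1}\|\|u_n\|$; choosing $v_n \in V^1(P_n)$ with $P_n v_n = u_n$ scaled to unit norm produces the inequality $\|\gamma_{-1}P_n\| \ge (\eps/2)\|\gamma_{-1}\|\|P_n\|$, which is precisely $\gamma_{-1} \AA^{\eps/2} P_n$.

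The main delicate point is bookkeeping of the constants: the factor $4|\log(\eps)|$ (rather than $2|\log(\eps)|$) in the squeeze hypothesis is exactly what forces the first term of the series to be of size $\eps^3$ rather than only $\eps^{-1}$, and the additional $+7\log(2)$ provides the geometric contraction ratio of $2^{-5}$ per step. Without both, the triangle-inequality estimate would not fit inside the gap $\eps - \eps/2$ that the Lipschitz step requires.
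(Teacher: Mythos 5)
Your proof is correct, and the strategy differs genuinely from the paper's. Both arguments start the same way, applying Lemma \ref{lem:herali} to the hypothesis $(\gamma_0\cdots\gamma_{2i})\AA^\eps\gamma_{2i+1}\AA^\eps\gamma_{2i+2}$ to obtain $P_i\AA^{\eps/2}(\gamma_{2i+1}\gamma_{2i+2})$, and both end the same way, via Lemmas \ref{lem:sharp-ali}, \ref{lem:cont-prop} and \ref{lem:product-is-lipschitz} once the top singular direction of $P_n$ is trapped in an $O(\eps^3)$ neighbourhood of that of $\gamma_0$. The middle, however, diverges. The paper first establishes the intermediate alignment $(\gamma_{2i-1}\gamma_{2i})\AA^{\eps/4}(\gamma_{2i+1}\gamma_{2i+2})$ between consecutive paired blocks via a somewhat delicate $W$-cone estimate, and then hands the resulting chain $\gamma_0\AA^{\eps/4}(\gamma_1\gamma_2)\AA^{\eps/4}\cdots\AA^{\eps/4}(\gamma_{2n-1}\gamma_{2n})$ to Lemma \ref{lem:c-chain} \eqref{eqn:limbd-chaine}, which internally re-runs a heredity induction before applying \eqref{eqn:limbd}. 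You instead bypass the pairwise-block alignment entirely: you track the squeeze coefficient of the partial products, showing $\sqz(P_{i+1})\ge\sqz(P_i)+5\log 2$ by two applications of \eqref{eqn:lenali}, and then sum the geometric series $\sum_i \frac{2}{\eps}\exp(-\sqz(P_i))$ coming from \eqref{eqn:limbd} directly. This gives a slightly cleaner bookkeeping of the constants (the $4|\log\eps|$ term feeding the $\eps^4$ prefactor and the $7\log 2$ providing the contraction ratio $2^{-5}$), and avoids the paper's extra $W$-side argument, at essentially no cost elsewhere. Your arithmetic checks out: the series evaluates to $\eps^3/62$, your stated $\eps^3/60$ is a valid coarse upper bound, and the final triangle inequality comfortably lands under $\eps/2$.
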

	
	\begin{proof}
		Let $i \in\{0, \dots, n\}$. By Lemma \ref{lem:herali} applied to $f = \gamma_0\cdots \gamma_{2i}$, $g = \gamma_{2i+1}$ and $h = \gamma_{2i+2}$, we have $(\gamma_0\cdots \gamma_{2i}) \AA^\frac{\eps}{2} (\gamma_{2i +1}\gamma_{2i +2})$ and by \eqref{eqn:lenali} in Lemma \ref{lem:c-prod}, we have $\sqz(\gamma_{2i +1}\gamma_{2i +2}) \ge 2|\log(\eps)|+ 7\log(2)$. 
		We moreover claim that for all $i \in\{1, \dots, n-1\}$, we have $(\gamma_{2i-1}\gamma_{2i}) \AA^\frac{\eps}{4} (\gamma_{2i +1}\gamma_{2i +2})$.
		Let $i \in \{2, \dots, n-1\}$, let $w \in W^{1}(\gamma_{2i-1}\gamma_{2i}) \setminus\{0\}$ and let $w' \in W^{\frac{\eps}{2}}(\gamma_{0}\cdots\gamma_{2i}) \setminus\{0\}$ and let $w'' \in W^{1}(\gamma_{0}\cdots\gamma_{2i}) \setminus\{0\}$. 
		We have $\sqz(\gamma_{2i +1}\gamma_{2i +2}) \ge 2 |\log(\eps)| + 7 \log(2)$ so by Lemma \ref{lem:cont-prop}, we have $\dist([w'],[w'']) \le \frac{\eps}{64}$.
		Moreover, $\gamma_{0}\cdots\gamma_{2i-2}\AA^{\eps/2}(\gamma_{2i-1}\gamma_{2i})$ so by \ref{lem:c-prod}, we have $\dist([w],[w'']) \le \frac{\eps}{64}$.
		Then by triangular inequality, we have $\dist([w],[w']) \le \frac{\eps}{4}$ so by Lemma \ref{lem:product-is-lipschitz}, we have:
		\begin{equation*}
			\frac{\|w \gamma_{2i +1}\gamma_{2i +2}\|}{\|w\|\|\gamma_{2i +1}\gamma_{2i +2}\|} \ge \frac{\|w' \gamma_{2i +1}\gamma_{2i +2}\|}{\|w'\|\|\gamma_{2i +1}\gamma_{2i +2}\|} -\frac{\eps}{4}
		\end{equation*}
		Moreover, we have $(\gamma_0\cdots \gamma_{2i}) \AA^\frac{\eps}{2} (\gamma_{2i +1}\gamma_{2i +2})$ so there exists a linear form $w' \in W^{\frac{\eps}{2}}(\gamma_{0}\cdots\gamma_{2i})\setminus\{0\}$ such that $\frac{\|w' \gamma_{2i +1}\gamma_{2i +2}\|}{\|w'\|\|\gamma_{2i +1}\gamma_{2i +2}\|} \ge\frac{\eps}{2}$. 
		Hence we have $\frac{\|w \gamma_{2i +1}\gamma_{2i +2}\|}{\|w\|\|\gamma_{2i +1}\gamma_{2i +2}\|} \ge \frac{\eps}{4}$, which proves the claim.
		
		Now we have $\gamma_0 \AA^\frac{\eps}{4} (\gamma_1\gamma_2) \AA^\frac{\eps}{4} \cdots \AA^\frac{\eps}{4} (\gamma_{2i-n}\gamma_{2n})$. Let $u \in U^{1}(\gamma_0\cdots\gamma_{2n})\setminus\{0\}$ and let $u' \in U^{\eps}(\gamma_0)\setminus\{0\}$ and let $u'' \in U^{1}(\gamma_0)\setminus\{0\}$. By Lemma \ref{lem:c-chain} applied to $g_0 = \gamma_0$ and $g_i = \gamma_{2i-1}\gamma_{2i}$ for all $i \in\{1, \dots, n\}$ and $\eps' = \frac{\eps}{4}$, we have $\dist([u],[u'']) \le \frac{\eps}{16}$. Moreover, by Lemma \ref{lem:cont-prop}, we have $\dist([u'],[u'']) \le \frac{\eps^3}{128}$. Then by triangular inequality, we have $\dist([u],[u']) \le \frac{\eps}{2}$. Now we may assume that $\frac{\|\gamma_{-1} u'\|}{\|\gamma_{-1}\|\| u'\|} \ge \eps$ because $\gamma_{-1}\AA^\eps\gamma_0$. Then by Lemma \ref{lem:product-is-lipschitz}, we have $\frac{\|\gamma_{-1} u\|}{\|\gamma_{-1}\|\| u\|} \ge \eps$ so $\gamma_{-1}\AA^\frac{\eps}{2}(\gamma_0\cdots\gamma_{2n})$. 
	\end{proof}

	\subsection{Link between singular values and eigenvalues}
	
	In this short section, we prove the following lemma. 
	We will use the following notations. Let $g$ be an endomorphism such that $\prox(g) > 0$. 
	We write $E^+(g)$ for the eigenspace associated to the maximal eigenvalue of $g$, it is a line because $\prox(g) > 0$.
	A basic fact is that $g(E^+(g)) = E^+(g)$ and there is a $g$-stable supplementary $E^-(g)$ \ie such that $g(E^-(g)) \subset E^-(g)$ and $E = E^+(g) \oplus E^-(g)$. 
	
	\begin{Lem}\label{lem:eigen-align}
		Let $E$ be a Euclidean space and let $0 < \eps \le 1$. Let $g$ be an endomorphism such that $\sqz(g) \ge 2|\log(\eps)|+ 4\log(2)$ and $g \AA^\eps g$. Then $g$ is proximal and we have the following:
		\begin{gather}
			\rho_1(g) \ge \frac{\eps}{2} \|h\|\label{eq:ev-sv} \\
			\prox(g) \ge \sqz(g) - 2|\log(\eps)| - 2 \log(2)\label{eq:sqz-prox} \\
			\forall u \in U^1(g), \dist([u], E^+(g)) \le \frac{2}{\eps} \exp(-\sqz(g))\label{eq:dom-eigen}
		\end{gather}
	\end{Lem}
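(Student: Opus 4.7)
The plan is to apply the iteration machinery of Section \ref{sec:ali-sqz} to the constant chain $g_0 = \cdots = g_{n-1} = g$, read off the asymptotic behaviour of $\|g^n\|$ and $\sqz(g^n)$, and identify the resulting limit line with $E^+(g)$.

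First I would apply Lemma \ref{lem:c-chain} to the constant chain. The hypotheses $g \AA^\eps g$ and $\sqz(g) \ge 2|\log(\eps)| + 3\log(2)$ are met, so for every $n \ge 1$,
$$\|g^n\| \ge (\eps/2)^{n-1}\|g\|^n, \qquad \sqz(g^n) \ge n\,\sqz(g) - 2(n-1)(|\log(\eps)| + \log(2)).$$
Taking $n$-th roots of the first inequality and letting $n \to \infty$ yields $\rho_1(g) \ge (\eps/2)\|g\|$, which is \eqref{eq:ev-sv}. For \eqref{eq:sqz-prox}, I would use the defining identity $\|g^n \wedge g^n\| = \|g^n\|^2 \exp(-\sqz(g^n))$ together with the standard fact $\rho_1(g\wedge g) = \rho_1(g)\rho_2(g)$: taking $n$-th roots in the second inequality and passing to the limit gives $\rho_1(g)\rho_2(g) \le \rho_1(g)^2 \exp(-\sqz(g) + 2|\log(\eps)| + 2\log(2))$, hence $\prox(g) \ge \sqz(g) - 2|\log(\eps)| - 2\log(2)$. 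Since $\sqz(g) \ge 2|\log(\eps)| + 4\log(2)$, this lower bound is at least $2\log(2) > 0$, so $g$ is proximal and the eigenline $E^+(g)$ is well-defined.

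For \eqref{eq:dom-eigen}, Corollary \ref{cor:limit-line} applied to the constant sequence $\gamma_n = g$ produces a limit line $l^\infty$ with $\dist([u], l^\infty) \le (2/\eps)\exp(-\sqz(g))$ for every non-zero $u \in U^1(g)$; this is exactly the claimed bound provided $l^\infty = E^+(g)$. To identify them I would pick a unit eigenvector $v^+$ spanning $E^+(g)$ with eigenvalue $\lambda$ of modulus $\rho_1(g)$; writing $v^+ = g^n(v^+/\lambda^n)$ shows that $v^+ \in U^{c_n}(g^n)$ with $c_n := \rho_1(g)^n/\|g^n\| \in (0,1]$. Lemma \ref{lem:cont-prop} then gives, for any non-zero $u_n \in U^1(g^n)$,
$$\dist([u_n], E^+(g)) \le \exp(-\sqz(g^n))/c_n = \|g^n \wedge g^n\|/(\|g^n\|\,\rho_1(g)^n),$$
whose $n$-th root tends to $\rho_2(g)/\rho_1(g) < 1$, so the right hand side tends to $0$. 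Since Corollary \ref{cor:limit-line} also yields $[u_n] \to l^\infty$, both limits must coincide, so $l^\infty = E^+(g)$.

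The only non-routine point is the identification $l^\infty = E^+(g)$: the bounds \eqref{eq:ev-sv} and \eqref{eq:sqz-prox} are routine $n$-th root limits of the iteration estimates, but one must verify that the geometric limit line built by Corollary \ref{cor:limit-line} really coincides with the spectral object $E^+(g)$. The argument above reduces this to a comparison of exponential rates, using only \eqref{eq:sqz-prox} and the trivial inclusion $E^+(g) \in \mathbf{P}(U^{c_n}(g^n))$.
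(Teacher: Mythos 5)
Your proof is correct and follows essentially the same route as the paper: apply the constant-chain estimates (Lemma~\ref{lem:c-chain}, which is the right reference here; the paper cites Lemma~\ref{lem:alipart}, but the numerical bound it then quotes is the one from Lemma~\ref{lem:c-chain}), take $n$-th roots to get \eqref{eq:ev-sv} and \eqref{eq:sqz-prox}, and then identify the limit line from Corollary~\ref{cor:limit-line} with $E^+(g)$ by showing the top eigenvector lies in all the cones $U^{c_n}(g^n)$. The one place you diverge slightly is the identification step: the paper bounds the membership coefficient by a fixed $\eps/2$ (which implicitly requires reapplying \eqref{eq:ev-sv} to $g^n$, with small constant drift), whereas you keep the exact coefficient $c_n = \rho_1(g)^n/\|g^n\|$ and show directly that $\exp(-\sqz(g^n))/c_n = \|g^n\wedge g^n\|/(\|g^n\|\rho_1(g)^n)$ has $n$-th root tending to $\rho_2(g)/\rho_1(g) < 1$. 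This is arguably cleaner since it avoids re-invoking the lemma being proved (for $g^n$) and never needs a uniform lower bound on $c_n$; the trade-off is that it relies explicitly on the Gelfand formula for $g\wedge g$, which the paper sidesteps by invoking $\sqz(g^n)/n \to \prox(g)$ directly.
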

	
	\begin{proof}
		Consider $(g_k)_{k\ge 0}$ to be the sequence of copies of $g$.
		First, we apply Lemma \ref{lem:alipart} and we get that $\sqz(g^n) \ge n (\sqz(g) - 2|\log(\eps)| - 2 \log(2))$, then going to the limit $n \to +\infty$, we get that $\liminf\frac{\sqz(g^n)}{n}\ge \sqz(g) - 2|\log(\eps)| - 2 \log(2)$. Moreover, we know that this inferior limit is in fact an honest limit and it is $\prox(g)$, which proves \eqref{eq:sqz-prox}. The proof of \eqref{eq:ev-sv} goes the same but using \eqref{eqn:alinorm-chaine}. Note that \eqref{eq:sqz-prox} implies that $\prox(g) > 0$ so $E^+(g)$ is a line.
		
		To get \eqref{eq:dom-eigen}, we apply Corollary \ref{cor:limit-line}. We get a line $l^\infty$ such that for any $u_n \in U^1(g^n)\setminus\{0\}$, we have $[u_n] \to l^\infty$. 
		Moreover, we have $\dist([u], l^\infty) \le \frac{2 \exp(-\sqz(g))}{\eps}$. Now we only need to show that $l^\infty = E^+(g)$. 
		Let $e \in E^+(g)$. 
		Then we have $\|ge\| = \rho_1(g) \|e\|$ so by \eqref{eq:ev-sv}, $e \in V^\frac{\eps}{2}(g)$. 
		Moreover, $e$ is an eigenvector associated to a simple eigenvalue so $e \in\KK ge$ and as a consequence $e \in U^\frac{\eps}{2}(g)$. Now this reasoning holds for all positive power of $g$ so we have $e \in U^\frac{\eps}{2}(g^n)$. Moreover, $\sqz(g^n) \to +\infty$ by\eqref{eq:sqz-prox} so by Lemma \ref{lem:cont-prop}, the projective diameter of $U^\frac{\eps}{2}(g^n)$ goes to zero so $[u_n] \to [e]$, so $l^\infty = [e]$.
	\end{proof}

	\subsection{Finite description of the alignment}
	
	In this section, on construct a finitely described alignment relation that allows us to use the tools described in the toy model of paragraph~\ref{toy-model}, even though we are not working on locally finite groups. 
	
	\begin{Def}\label{def:discrete}
		Let $\AA$ be a measurable binary relation on a measurable set $\Gamma$ \ie an $\mathcal{A}_\Gamma\otimes\mathcal{A}_\Gamma$ measurable subset of $\Gamma\times\Gamma$. We say that $\AA$ is finitely described if there exist an integer $M \in\NN_{\ge 1}$ and two families of measurable subsets $(L_i)_{1\le i \le M}\in\mathcal{A}_\Gamma^M$ and $(R_j)_{1\le j \le M}\in\mathcal{A}_\Gamma^M$ such that:
		\begin{equation*}
			\Gamma=\bigsqcup_{i = 1}^{M} L_i=\bigsqcup_{j=1}^{M} R_j
		\end{equation*}
		and a subset $A\subset\{1,\dots,M\}^2$ such that:
		\begin{equation*}
			\AA = \bigsqcup_{(i,j)\in A} L_i\times R_j.
		\end{equation*}
	\end{Def}

	\begin{Lem}[Discrete descriptions of alignment relations]\label{lem:discrete}
		Let $E$ be an Euclidean vector space and $0 < \eps_1 < \eps_2 \le 1$. There exists a discrete binary relation $\AA$ on $\mathrm{End}(E)$ that satisfies the inclusions $\AA^{\eps_2}\subset\AA\subset\AA^{\eps_1}$ \ie for any given $g,h \in \mathrm{End}(E)$, we have $g\AA^{\eps_2}h\Rightarrow g\AA h\Rightarrow g\AA^{\eps_1}h$.
	\end{Lem}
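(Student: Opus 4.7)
The approach is to reduce the alignment test to a finite decision problem by exploiting uniform continuity on a compact projectivization. Consider the function $\phi(g,h) := \|gh\|/(\|g\|\|h\|)$ on $(\mathrm{End}(E)\setminus\{0\})^2$. It is continuous and invariant under rescaling either factor by a non-zero scalar, hence descends to a continuous function $\bar\phi$ on $\mathbf{P}(\mathrm{End}(E))^2$. Since $\mathbf{P}(\mathrm{End}(E))$ is compact and metric, $\bar\phi$ is uniformly continuous.

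Choose $\eta > 0$ small enough that any two pairs in $\mathbf{P}(\mathrm{End}(E))^2$ within $\eta$ in each coordinate have $\bar\phi$-values differing by less than $\eps_2 - \eps_1$, and fix a Borel partition $\{C_1, \ldots, C_{M-1}\}$ of $\mathbf{P}(\mathrm{End}(E))$ with each piece of diameter at most $\eta$. Lift to a partition of $\mathrm{End}(E)$ by setting $L_i := R_i := \pi^{-1}(C_i)$ for $1 \le i \le M-1$, where $\pi$ denotes the canonical projection $\mathrm{End}(E)\setminus\{0\}\to\mathbf{P}(\mathrm{End}(E))$, and adding the extra class $L_M := R_M := \{0\}$. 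Declare $(i,j) \in A$ if either one of the indices is $M$, or if $i, j < M$ and $\inf_{C_i \times C_j} \bar\phi > \eps_1$; set $\AA := \bigsqcup_{(i,j) \in A} L_i \times R_j$, which is finitely described in the sense of Definition~\ref{def:discrete}.

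The verification rests on the dichotomy that on each piece $C_i \times C_j$ with $i, j < M$, the oscillation of $\bar\phi$ is strictly less than $\eps_2 - \eps_1$, forcing either $\sup \bar\phi < \eps_2$ or $\inf \bar\phi > \eps_1$. If $g \AA^{\eps_2} h$ with $g, h \neq 0$, the supremum on the corresponding piece reaches at least $\eps_2$, so the infimum exceeds $\eps_1$, placing $(i(g), j(h))$ in $A$ and giving $g \AA h$. Conversely, $g \AA h$ with $g, h \neq 0$ directly forces $\bar\phi([g],[h]) > \eps_1$, i.e.\ $g\AA^{\eps_1}h$. The degenerate cases with $g = 0$ or $h = 0$ are handled by the zero class: the defining inequality of $\AA^\eps$ becomes the trivial $0 \ge 0$, so those pairs are rightly included in $A$.

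The main ingredient is simply the uniform continuity of $\bar\phi$ on the compact quotient; there is no substantial obstacle once one observes that $\phi$ is scale invariant in both arguments and therefore lives on a compact product of real projective spaces. The construction of a Borel partition of prescribed diameter is routine (cover $\mathbf{P}(\mathrm{End}(E))$ by finitely many balls of radius $\eta/2$ from an $\eta/2$-net and disjointify).
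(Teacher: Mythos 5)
Your proof is correct, and it takes a genuinely different route from the paper's. The paper works "from inside": it builds finite $\eps$-nets $(u_i)$ in $\mathbf{P}(E)$ and $(w_i)$ in $\mathbf{P}(E^*)$, encodes each matrix $g$ (resp.\ $h$) by the finite-valued data of which $w_i$ lie in the cones $W^{n\eps}(g)$ (resp.\ which $u_j$ lie in $U^{n\eps}(h)$) for $n = 1,\dots,N$, and then defines $\AA$ combinatorially by testing whether some quantity $\frac{|w_i u_j|}{\|w_i\|\|u_j\|}n_1 n_2 \eps^2$ clears $\eps_1$; the sandwich $\AA^{\eps_2}\subset\AA\subset\AA^{\eps_1}$ is then verified by hand via Lemma~\ref{lem:product-is-lipschitz}. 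You instead observe that the alignment quantity $\|gh\|/(\|g\|\|h\|)$ is scale invariant in both arguments, hence descends to a continuous (so uniformly continuous) function on the compact $\mathbf{P}(\mathrm{End}(E))^2$, and then partition by a modulus-of-continuity argument; the dichotomy on each piece ($\sup<\eps_2$ or $\inf>\eps_1$) is automatic from oscillation $<\eps_2-\eps_1$. Your argument is shorter and more conceptual, and extends just as easily to the rectangular case $\mathrm{Hom}(E,F)\times\mathrm{Hom}(H,E)$ noted by the paper. The paper's version carries a bit more geometric information about the relation (it explicitly ties $\AA$ to the $U^\eps/W^\eps$ cones, which is the vocabulary used elsewhere in Section~\ref{kak}) and works with nets in the lower-dimensional $\mathbf{P}(E)$ and $\mathbf{P}(E^*)$ rather than the $(d^2-1)$-dimensional $\mathbf{P}(\mathrm{End}(E))$, so it produces a quantitatively smaller $M$; but since the lemma is purely existential, neither of those matters for correctness. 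One trivial point worth being pedantic about: uniform continuity gives the implication for $d(x,x')<\eta_0$ with strict inequality, so you should either take the pieces to have diameter $<\eta_0$ or first shrink $\eta$ to, say, $\eta_0/2$ before covering; as written ("within $\eta$") the boundary case is glossed over, but the fix is one word.
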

	
	\begin{proof}
		Let $\eps := \frac{\eps_2-\eps_1}{4}$. Let $N := \left\lfloor\frac{1}{\eps}\right\rfloor$. Let $k \in\NN$ and let $(u_1, \dots u_k)\subset E\setminus\{0\}$ be such that:
		\begin{equation*}
			\forall v \in E \setminus\{0\},\; \exists i \in \{1, \dots, k\},\;\dist([v],[u_i])\le \eps.
 		\end{equation*}
 		Such a family exists because $\mathbf{P}(E)$ is compact. Now let $(w_1, \dots w_k)\subset E^*\setminus\{0\}$ be such that:
 		\begin{equation*}
 			\forall f \in E^* \setminus\{0\},\; \exists i \in \{1, \dots, k\},\;\dist([f],[w_i])\le \eps.
 		\end{equation*}
 		Such a family exists because $E^*$ is isometric to $E$.
 		Now let $h \in \mathrm{End}(E) \setminus\{0\}$ and let $n \in \{1,\dots, N\}$, we define:
 		\begin{equation}
 			\phi_n(h) := \{i \in \{1, \dots,k\}\,|\, w_i \in W^{n\eps}(h)\} \quad \text{and}\quad \psi_n(h) := \{i \in \{1, \dots,k\}\,|\,u_i \in U^{n\eps}(h)\}.
 		\end{equation}
 		Now let:
 		\begin{multline*}
 			\AA := \left\{(g,h) \in \mathrm{End}(E) \setminus\{0\}\,\middle|\,\exists n_1, n_2\in\{1,\dots, N\}, \exists i \in \phi_{n_1}(g), \exists j \in \psi_{n_2}(h), \frac{|w_i u_j|}{\|w_i\|\|u_j\|} n_1n_2\eps^2 \ge \eps_1\right\} \\
 			\sqcup \left((\mathrm{End}(E) \setminus\{0\})\times \{0\}\right) \sqcup \left(\{0\} \times(\mathrm{End}(E) \setminus\{0\})\right) \sqcup \left(\{0\}\times\{0\}\right).
 		\end{multline*}
 		First we claim that $\AA \subset \AA^{\eps_1}$. Let $g \AA h$. If $g = 0$ or $h = 0$, then we have $g \AA^{\eps_1} h$ trivially. 
 		Assume that $g \neq 0$ and $h \neq 0$. 
 		Let $n_1, n_2\in\{1,\dots, N\}$, let $i \in \phi_{n_1}(g)$ and let $j \in \psi_{n_2}(h)$ be such that $\frac{|w_i u_j|}{\|w_i\|\|u_j\|} n_1 n_2\eps^2 \ge \eps_1$. 
 		Let $f \in V^{n_1 \eps}(g^*)$ be such that $f g = w_i$ and let $v \in V^{n_2 \eps}(h)$ be such that $hv = u_j$. Such $f, v$ exist because $w_i \in W^{n_1\eps}(g)$ and $u_j \in U^{n_2 \eps}(h)$, moreover, they are not trivial. 
 		Then, we have $\frac{|f g h v|}{\|fg\|\|hv\|} = \frac{|w_i u_j|}{\|w_i\|\|u_j\|}$ and $\frac{\|fg\|}{\|f\|\|g\|}\ge n_1 \eps$ and $\frac{\|hv\|}{\|h\|\|v\|}\ge n_2 \eps$. Hence, we have $\frac{|f g h v|}{\|f\|\|g\|\|h\|\|v\|} \ge \eps_1$ so $\|gh\|\ge \eps_1\|g\|\|h\|$, which proves the claim.
 		
 		Now we claim that $\AA^{\eps_2} \subset \AA$. Let $g \AA^{\eps_2} h$. Assume that $g \neq 0$ and $h \neq 0$. Let $f \in E^*\setminus\{0\}$ and $v \in E \setminus\{0\}$ be such that $|fghv| = \|f\|\|gh\|\|v\| \ge \eps_2 \|f\|\|g\|\|h\|\|v\|$. let $n_1 := \left\lfloor\frac{\|fg\|}{\|f\|\|g\|\eps}\right\rfloor$ and let $n_2 := \left\lfloor\frac{\|hv\|}{\|h\|\|v\|\eps}\right\rfloor$. Then we have $n_1 \eps \ge \frac{\|fg\|}{\|f\|\|g\|}-\eps$ and $n_2\eps \ge \frac{\|hv\|}{\|h\|\|v\|}-{\eps}$. Let $i,j\in\{1, \dots, k\}$ be such that $\dist([w_i],[fg])\le \eps$ and $\dist([u_j],[hv]) \le \eps$. Then by Lemma \ref{lem:product-is-lipschitz}, we have $\frac{|w_i u_j|}{\|w_i\|\|u_j\|} \ge \frac{|fghv|}{\|fg\|\|hv\|} - 2 \eps$. Hence:
 		\begin{equation*}
 			\frac{|w_i u_j|}{\|w_i\|\|u_j\|} n_1 n_2 \eps^2 \ge \left(\frac{|fghv|}{\|fg\|\|hv\|} - 2 \eps\right) \left(\frac{\|fg\|}{\|f\|\|g\|}-\eps\right) \left(\frac{\|hv\|}{\|h\|\|v\|}-{\eps}\right) \\
 		\end{equation*}
 		Moreover all three factors are in $[0,1]$ so if we develop, we get:
 		\begin{align*}
 			\frac{|w_i u_j|}{\|w_i\|\|u_j\|} n_1 n_2 \eps^2 & \ge \left(\frac{|fghv|}{\|fg\|\|hv\|}\right) \left(\frac{\|fg\|}{\|f\|\|g\|}- \eps \right) \left(\frac{\|hv\|}{\|h\|\|v\|} - \eps\right) - 2\eps \\
 			& \ge \left(\frac{|fghv|}{\|fg\|\|hv\|}\right) \left(\frac{\|fg\|}{\|f\|\|g\|}\right) \left(\frac{\|hv\|}{\|h\|\|v\|}\right) - 4\eps \\
 			 & \ge \frac{|fghv|}{\|f\|\|g\|\|h\|\|v\|}-4\eps \ge \eps_2 - 4 \eps = \eps_1.
 		\end{align*}
 		Therefore, we have $g \AA h$, which proves the claim.
 		
 		Now we claim that $\AA$ is discrete. 
 		This follows directly from the fact that given $g$ and $h$ two matrices, the condition $g \AA h$ is expressed in terms of $(\phi_1(g), \dots, \phi_N(g))$ and $(\psi_1(h), \dots, \psi_N(h))$, which take only finitely many values.
	\end{proof}
	
	Note that the same proof may be used to construct discrete alignment relations on $\mathrm{Hom}(E,F) \times \mathrm{Hom}(H,E)$ for $E$, $F$ and $H$, three given Euclidean spaces.

	\section{Random products and extractions}\label{markov}

	\subsection{Notations for extractions}
	
	In the two following sections, we will denote by $\Gamma$ an abstract measurable semi-group \ie a second countable measurable space endowed with an associative and measurable composition map $\cdot :\Gamma \times \Gamma \to \Gamma$. 
	We will assume that $\Gamma$ has an identity element that we denote by $\mathbf{1}_\Gamma$.
	The measurable semi-group $\Gamma$ can be $(\NN, +)$, $\mathrm{End}(E)$ or a semi-group of words.
	The semi-group $\NN$ will always be endowed with the addition map.
	
	Let us recall the notations introduced in Paragraph \ref{sec:intro-pivot}.
	We write $\widetilde\Gamma$ for the semi-group of words with letters in $\Gamma$ \ie the set of all tuples $\bigsqcup_{l\in\NN}\Gamma^l$, (where $\Gamma^l$ is identified with $\Gamma^{\{0, \dots, l-1\}}$ and endowed with the product $\sigma$-algebra for all $l \in\NN$) that we endow with the concatenation product
	\begin{equation*}
		\begin{array}{crcl}
			\odot : & \widetilde\Gamma \times\widetilde\Gamma & \longrightarrow & \widetilde\Gamma \\
			&((\gamma_0, \dots,\gamma_{k-1}),(\gamma'_0,\dots,\gamma'_{l-1})) \in\Gamma^k \times\Gamma^l & \longmapsto & (\gamma_0, \dots,\gamma_{k-1},\gamma'_0,\dots,\gamma'_{l-1})\in\Gamma^{k+l}.
		\end{array}
	\end{equation*}
	We also define the length functor:
	\begin{equation*}
		L :\; \widetilde\Gamma \longrightarrow \NN ; \; (\gamma_0, \dots,\gamma_{k-1}) \longmapsto k,
	\end{equation*}
	and the product functor:
	\begin{equation*}
		\Pi : \; \widetilde\Gamma \longrightarrow \Gamma ;\; (\gamma_0, \dots,\gamma_{k-1}) \longmapsto \gamma_0\cdots\gamma_{k-1}.
	\end{equation*}

	Given $(\tilde\gamma_n)_{n\in\NN}\in\widetilde\Gamma^{\underline{\NN}}$, we write $\bigodot_{n = 0}^{+\infty} \tilde\gamma_n \in\Gamma^\NN$ for the left to right concatenation of all the $\tilde\gamma_k$'s and we write $\bigodot^\infty : \widetilde\Gamma^\NN \to \Gamma^\NN ; (\tilde\gamma_n) \mapsto \bigodot_{n = 0}^{+\infty} \tilde\gamma_n$. 
	In other words, for all $n \in\NN$ and all $0 \le k < L(\tilde\gamma_n)$, and for $m := L(\tilde\gamma_0\odot \cdots\odot\tilde\gamma_{n-1}) + k$, the $m$-th element, \ie the projection on the $m$-indexed coordinate, of the sequence $\bigodot_{n = 0}^{+\infty} \tilde\gamma_n$ is the $k$-th letter of $\tilde\gamma_n$, \ie the projection on the $k$-indexed coordinate. Note that all the above defined maps $\odot$, $L$, $\Pi$ and $\bigodot^\infty$ are measurable.
	
	\begin{Def}[Grouping of factors]\label{def:grouping}
		Let $\Gamma$ be a semi-group. Let $\gamma := (\gamma_n)_{n\in\NN}\in\Gamma^\NN$ and let $w := (w_n)_{n\in\NN}\in\NN^{\NN}$ be non-random sequences. For all $n \in\NN$, define $\overline{w}_n := w_0 + \dots + w_{n-1}$. We denote by $\widetilde\gamma^w \in\widetilde{\Gamma}^\NN$ the sequence of $w$-groups of $\gamma$ which we define by:
		\begin{equation*}
			\forall n\in\NN,\;\widetilde\gamma^w_n := (\gamma_{\overline{w}_n +k})_{0 \le k < w_n} = (\gamma_{\overline{w}_n},\dots, \gamma_{\overline{w}_{n+1}-1}).
		\end{equation*}
		We denote by $\gamma^w$ the sequence of $w$-products of $\gamma$ defined as $\gamma^w := \Pi \circ \widetilde\gamma^w \in\Gamma^\NN$ \ie $\gamma^w_n = \gamma_{\overline{w}_n} \cdots \gamma_{\overline{w}_{n+1}-1}$ for all $n\in\NN$. 
		
		We denote by $\overline{\gamma} \in\Gamma^\NN$ the left to right product associated to $\gamma$, defined as $\overline{\gamma}_n := \gamma_0 \cdots \gamma_{n-1}$ for all $n\in\NN$ and we denote by $\overline{\gamma}^w\in\Gamma^\NN$ the left to-right product associated to $\gamma^w$ defined as $\overline{\gamma}^w_n := \gamma^w_0 \cdots \gamma^w_{n-1} = \overline\gamma_{\overline{w}_n}$ for all $n \in \NN$.
	\end{Def}

	Let $(\tilde{g}_n) \in \widetilde\Gamma^{\NN}$ be a sequence which is not stationary to the trivial word, note that the map $\left(\bigodot^\infty, L^{\otimes\NN}\right)$, that sends $(\tilde{g}_n)$ to the pair $\left(\bigodot_{n = 0}^{+\infty}\tilde{g}_n,(L(\tilde{g}_n))_{n\in\NN}\right)\in \Gamma^{\NN}\times \NN^{\NN}$, is one-to-one. 
	Indeed, to get back to the sequence $\tilde{g}$, write $\gamma := \bigodot_{n = 0}^{+\infty}\tilde{g}_n$ and $w_n := (L(\tilde{g}_n))_{n\in\NN}$, then $\tilde{g} = \widetilde{\gamma}^w$.
	Given $\tilde\mu$ a probability distribution on $\widetilde\Gamma^\NN$, we will write $(\widetilde\gamma^w_n) \sim \tilde\mu$ to introduce a random sequence $(\gamma_n)\in\Gamma^\NN$ and a random sequence $(w_n)\in\NN^\NN$, defined on the same probability space and such that $(\widetilde\gamma^w_n) \sim \tilde\mu$.

	Given $\tilde\eta$ and $\tilde\kappa$ two probability measures on $\widetilde\Gamma$, we write $\tilde\eta \odot \tilde\kappa := \odot_*(\tilde\eta \otimes \tilde\kappa)$ for the convolution of $\tilde\eta$ and $\tilde\kappa$ \ie the push-forward by $\odot$ of the product measure $\tilde\eta \otimes \tilde\kappa$. 
	Then $\tilde\eta \odot \tilde\kappa$ is the distribution of the concatenation of two independent random words of respective distribution $\tilde\eta$ and $\tilde\kappa$.
	Given $(\tilde\eta_n)_{n\in\NN}$ a sequence of probability measures, we write $\bigodot_{n=0}^{+\infty} \tilde\eta_n$ for the push forward of $\bigodot^\infty_*\left(\bigotimes_{n=0}^{+\infty} \tilde\eta_n\right)$. Given $\tilde\eta$ a probability measure on $\widetilde\Gamma$, we write $\tilde\eta^{\odot\NN}$ for $\bigodot_{n=0}^{+\infty} \tilde\eta$.

	\begin{Def}[Extraction]\label{def:extract}
		Let $\Gamma$ be a semi-group, let $\mu$ be a probability measure on $\Gamma^\NN$ and let $\tilde\mu$ be a probability measure on $\widetilde\Gamma^\NN$. 
		We say that $\tilde\mu$ is an extraction of $\mu$ if $\mu = \bigodot^\infty_* \tilde\mu$ and if there exist constants $C, \beta >0$ such that for $(\tilde{g}_n)_{n\in\NN} \sim \tilde\mu$ and for all $n \in\NN$, we have almost surely:
		\begin{equation*}
			\EE\left(\exp(\beta L(\tilde g_n))\,\middle|\, (\tilde{g}_k)_{k < n}\right) \le C.
		\end{equation*}
	\end{Def}
	
	\subsection{Rank and essential kernel of a probability distribution}\label{sec:kernel}
	In this section, we describe the probabilistic behaviour of the kernel of a product of i.i.d. random matrices.
	For that we do not use the tools from Section \ref{sec:ali-sqz}, in particular, we do not care about the Euclidean structure of the space.
	Given $h$ a linear map, we denote by $\rank(h)$ the rank of $h$ \ie the dimension of the image of $h$. 
	We say that a probability measure $\nu$ is supported on a set $S$ if $\nu(S) =1$, it is weaker than saying that $S$ is the support of $\nu$ because we do not assume that $S$ is closed nor minimal. 
	Note that given $E$ a vector space a measure $\nu$ may be supported on $\mathrm{GL}(E)$

	\begin{Def}[Rank of a distribution]\label{def:rank}
		Let $E$ be a vector space and let $\nu$ be a step distribution on $\mathrm{End}(E)$. We define the eventual rank of $\nu$ as the largest integer $\underline{\rank}(\nu)$ such that:
		\begin{equation}
			\forall n\ge 0,\,\nu^{*n}\left\{\gamma \in \mathrm{End}(E) \;\middle|\; \rank(\gamma) < \underline{\rank}(\nu)\right\}=0.
		\end{equation}
	\end{Def}

	\begin{Lem}[Eventual rank of a distribution]\label{lem:rank}
		Let $E$ be a Euclidean space and let $\Gamma := \mathrm{End}(E)$. Let $\nu$ be a probability measure on $\Gamma$. There is a probability measure $\tilde\kappa$ on $\widetilde\Gamma$ such that $\tilde\kappa^{\otimes\NN}$ is an extraction of $\nu^{\otimes\NN}$ and $\Pi_*\tilde\kappa$ is supported on the set of endomorphisms of rank $\underline{\rank}(\nu)$.
	\end{Lem}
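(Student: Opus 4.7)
The plan is to group the i.i.d.\ factors into words whose product has rank exactly $r := \underline{\rank}(\nu)$, using a standard block/renewal construction with geometric waiting times. The first step is to find an integer $n_0 \ge 1$ and a constant $p > 0$ such that $\nu^{*n_0}\{\gamma \in \Gamma : \rank(\gamma) = r\} \ge p$. When $r = \dim(E)$ any $n_0$ works with $p = 1$; when $r < \dim(E)$ the maximality in Definition \ref{def:rank} guarantees some $n_0$ with $\nu^{*n_0}\{\rank < r+1\} > 0$, and since $\nu^{*n_0}$-a.s.\ $\rank \ge r$, this forces $\nu^{*n_0}\{\rank = r\} > 0$.

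Given $(\gamma_n) \sim \nu^{\otimes\NN}$, I would chop the sequence into blocks $B_j := (\gamma_{(j-1)n_0}, \ldots, \gamma_{j n_0 - 1})$ for $j \ge 1$, call $B_j$ \emph{successful} when $\rank(\Pi(B_j)) = r$, and set $N := \inf\{j \ge 1 : B_j \text{ is successful}\}$ and $\tilde g := B_1 \odot \cdots \odot B_N$. Define $\tilde\kappa$ as the law of $\tilde g$ on $\widetilde\Gamma$. The events $\{B_j \text{ is successful}\}_{j \ge 1}$ are i.i.d.\ with probability $\ge p$, so $N$ is stochastically dominated by a geometric variable, and hence $L(\tilde g) = N n_0$ has a finite exponential moment.

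Three properties then need to be verified. For the identity $\bigodot^\infty_* \tilde\kappa^{\otimes\NN} = \nu^{\otimes\NN}$ and the i.i.d.\ structure of successive words, observe that the stopping times $T_k := L(\tilde g_0) + \cdots + L(\tilde g_{k-1})$ are multiples of $n_0$ and depend only on $\gamma_0, \ldots, \gamma_{T_k - 1}$; by the strong Markov property of the i.i.d.\ sequence, $(\gamma_{T_k + i})_{i \ge 0}$ is again i.i.d.\ with law $\nu$ and independent of the past, so that the sequence of extracted words has law $\tilde\kappa^{\otimes\NN}$ and re-concatenates to $(\gamma_n)$. For the rank of $\Pi(\tilde g)$: on $\{N = j\}$ one has $\Pi(\tilde g) = \Pi(B_1 \odot \cdots \odot B_{j-1}) \cdot \Pi(B_j)$, and since $\rank(AB) \le \rank(B)$, this gives $\rank(\Pi(\tilde g)) \le r$. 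For the matching lower bound,
$\PP(\rank(\Pi(\tilde g)) < r,\, N = j) \le \PP(\rank(\overline\gamma_{j n_0}) < r) = \nu^{*j n_0}\{\rank < r\} = 0$
by Definition \ref{def:rank}, and summing over $j$ gives $\rank(\Pi(\tilde g)) = r$ almost surely. The conditional exponential moment condition in Definition \ref{def:extract} is then automatic from the i.i.d.\ structure of the $\tilde g_n$'s together with the exponential moment of $L(\tilde g_0)$.

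I do not foresee a real obstacle: the argument is essentially the combination of the monotonicity $\rank(AB) \le \min(\rank A, \rank B)$ (for the upper bound) with the defining property of $\underline{\rank}$ (for the lower bound), wrapped in a standard renewal extraction with geometric waiting times. The only point that requires mild care is that the stopping rule must depend only on the past coordinates, which is why I use rigid $n_0$-blocks rather than a more refined stopping time.
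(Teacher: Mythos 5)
Your proposal is correct, and while it shares the same core idea with the paper's proof (block independence yields geometric tails for the waiting time), the construction of $\tilde\kappa$ is genuinely different. The paper stops at the intrinsic time $n_\gamma := \min\{n' : \rank(\overline\gamma_n) = r_\gamma\ \text{for all } n \ge n'\}$, where $r_\gamma := \lim_n \rank(\overline\gamma_n)$ is a priori random; this forces the paper to prove, as an intermediate step, that $r_\gamma$ equals $\underline{\rank}(\nu)$ almost surely before the construction can be validated. You instead fix the block length $n_0$ and the target rank $r := \underline{\rank}(\nu)$ up front, and stop at the first $n_0$-block whose own product has rank exactly $r$; the existence of such an $n_0$ is exactly what the maximality in Definition \ref{def:rank} provides. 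This sidesteps the constancy-of-$r_\gamma$ step entirely: your upper bound on $\rank(\Pi(\tilde g))$ comes from $\rank(AB) \le \rank(B)$ applied to the last (successful) block, and your lower bound is immediate from $\nu^{*n}\{\rank < r\} = 0$. The renewal structure also delivers the geometric domination of $N$ more transparently than the paper's Markov-type estimate on $\PP(\rank(\overline\gamma_{kn_0}) > r_0)$. Your stopping time may be larger than the paper's, but this is irrelevant for the lemma: both yield a valid extraction. The only point worth emphasizing, which you do handle correctly, is that the stopping rule reads only the past coordinates, so the strong Markov property of the i.i.d.\ sequence applies.
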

	
	\begin{proof} 
		Given a non-random sequence $\gamma = (\gamma_n)_{n\in\NN}$, the sequence $(\rank(\overline\gamma_n))_{n\in\NN}$ is a non-increasing sequence of non-negative integers so it is stationary.
		Write $r_\gamma$ for the limit of $(\rank(\overline\gamma_n))_{n\in\NN}$.
		Then there is an integer $n' \ge 1$ such that $\rank(\overline\gamma_n) = r_\gamma$ for all $n \ge n'$.
		Write $n_\gamma$ for the minimal such $n'$. 
		Note that $\gamma\mapsto r_\gamma$ and $\gamma \mapsto n_\gamma$ are measurable maps.
		
		Now let $(\gamma_n)_{n\in\NN}\sim\nu^{\otimes\NN}$ be a random sequence. 
		We define $\tilde\kappa$ to be the distribution of $(\gamma_0, \dots, \gamma_{n_\gamma - 1})$. 
		Note that $n_\gamma$ is a stopping time for $(\overline\gamma_n)_{n\in\NN}$ so the conditional distribution of $(\gamma_{n + n_{\gamma}})_{n\in\NN}$ relatively to $(\gamma_0, \dots, \gamma_{n_\gamma - 1})$ is $\nu^\otimes\NN$.
		Hence, we have $\tilde\kappa \odot \nu^{\otimes\NN} = \nu^\otimes\NN$, so $\tilde\kappa^{\odot k} \odot \nu^{\otimes\NN} = \nu^\otimes\NN$ for all $k \in\NN$ and by construction $\tilde\kappa$ is non-trivial so $\tilde\kappa^{\odot \NN} = \nu^\otimes\NN$.
		Therefore, the measure $\tilde\kappa^{\otimes\NN}$ is an extraction of $\nu^{\otimes\NN}$.
		
		Moreover $\Pi_*\tilde\kappa$ is the distribution of $\overline\gamma_{n_\gamma}$ which has rank $r_\gamma$ and $L_*\tilde\kappa$ is the distribution of $n_\gamma$.	
		Therefore, we only need to show that $r_\gamma$ is almost surely constant and that $n_\gamma$ has finite exponential moment. 
		
		Let $r_0$ be the essential lower bound of $r_\gamma$ \ie the largest integer such that $\PP(r_\gamma \ge r_0) =1$. 
		Let $n_0 \in\NN$ be such that $\PP(\rank(\overline\gamma_{n_0}) = r_0) > 0$ and write $\alpha := \PP(\rank(\overline\gamma_{n_0}) = r_0)$. 
		We claim that such an integer $n_0$ exists. 
		Indeed, by minimality, we have $\PP(r_\gamma > r_0) < 1$ so $\PP(r_\gamma = r_0) > 0$, which means that $\PP(\rank(\overline\gamma_{n_\gamma}) = r_0) > 0$.
		Let $n_0$ to be such that $\PP(n_\gamma \le n_0 \cap r_\gamma = r_0) > 0$. 
		Such an $n_0$ exists, otherwise $n_0$ would be almost surely infinite, which is absurd.
		
		Now since the sequence $(\gamma_n)$ is i.i.d, we have $\PP(\rank(\gamma_{kn_0}\cdots \gamma_{(k+1)n_0-1}) = r_0) > 0$ for all $k \in\NN$. 
		Moreover these events are independents so for all $k \in\NN$, we have:
		\begin{align*}
			\PP(\forall k' < k, \rank(\gamma_{k'n_0}\cdots\gamma_{(k'+1)n_0-1}) > r_0 ) & = (1-\alpha)^k.
		\end{align*}
		Now note that the rank of a product is bounded above by the rank on each of its factor so:
		\begin{align*}
			\forall k\in\NN,\;\PP(\rank(\overline\gamma_{kn_0}) > r_0) & \le (1-\alpha)^k \\
			\forall n\in\NN, \; \PP\left(\rank(\overline\gamma_{\lfloor\frac{n}{n_0}\rfloor n_0}) > r_0\right) & \le (1-\alpha)^{\lfloor\frac{n}{n_0}\rfloor} 
		\end{align*}
		Now note that for all $n\in\NN$, we have $\lfloor\frac{n}{n_0}\rfloor \ge \frac{n}{n_0}-1$ and $\lfloor\frac{n}{n_0}\rfloor n_0 \le n$ so:
		\begin{align*}
			\forall n\in\NN, \; \PP\left(\rank(\overline\gamma_{n}) > r_0\right) & \le (1-\alpha)^{\frac{n}{n_0}-1}.
		\end{align*}
		Let $C = \frac{1}{1-\alpha}$ and $\beta = \frac{-\log{(1-\alpha)}}{n_0 }> 0$. 
		Then we have $\PP\left(\rank(\overline\gamma_{n}) > r_0\right) \le C \exp(-\beta n)$ for all $n \in\NN$ and $\beta >0$. 
		Note that for all $n \in\NN$, we have $ \PP\left(\rank(\overline\gamma_{n}) > r_0\right) \ge \PP(r_\gamma > r_0)$ so $\PP(r_\gamma > r_0) =0$, which means that $r_0 = r_\gamma$. 
		Hence $\PP\left(\rank(\overline\gamma_{n}) > r_0\right) = \PP( n < n_\gamma)$ for all $n\in\NN$, so $\PP(n_\gamma > n)\le C \exp(-\beta n)$, which means that $n_\gamma$ has finite exponential moment.
		
	\end{proof}

	\begin{Def}[Essential kernel]\label{def:essker}
		Let $E$ be a Euclidean space of dimension $d\ge 2$ and let $\nu$ be a probability distribution on $\mathrm{End}(E)$. We define the essential kernel of $\nu$ as:
		\begin{equation}\label{essker}
			\underline{\ker}(\nu) := \left\{v \in E \,\middle|\,\exists n\in \NN, \nu^{*n}\{h\in\mathrm{End}(E)\,|\,hv =0\} > 0 \right\}.
		\end{equation}
	\end{Def}
	
	\begin{Lem}\label{lem:descri-ker}
		Let $E$ be a Euclidean space of dimension $d\ge 2$ and let $\nu$ be a probability distribution on $\mathrm{End}(E)$. There is a probability distribution $\kappa$ on $\mathrm{End}(E)$ which is supported on the set of rank $\underline{\rank}(\nu)$ endomorphisms and such that:
		\begin{gather}\label{eq:essker}
			\underline{\ker}(\nu) = \underline{\ker}(\kappa) = \left\{v \in E \,\middle|\, \kappa\{h\in\mathrm{End}(E)\,|\,hv = 0\} > 0 \right\} \\
			\forall v\in E, \; \lim_{n \to +\infty}\nu^{*n}\{h \,|\,hv = 0\} = \sup_{n \in \NN}\nu^{*n}\{h\,|\,hv = 0\} = \kappa\{h\,|\,hv = 0\}.\label{eq:limker}
		\end{gather}
		Moreover, there exists a constant $\alpha < 1$ such that:
		\begin{equation}\label{eq:max-ker}
			\forall v\in E, \,\sup_{n\in\NN}\nu^{*n}\{h\in\mathrm{End}(E)\,|\,hv = 0\} \in [0, \alpha]\cup\{1\}.
		\end{equation}
		Moreover, the set:
		\begin{equation}\label{eq:ker-is-inv}
			\ker(\nu) := \left\{v \in E\,\middle|\, \sup_{n\in\NN}\nu^{*n}\{h\in\mathrm{End}(E)\,|\,hv = 0\}  = 1\right\}
		\end{equation}
		is a subspace of $E$ which is $\nu$-almost surely invariant.
	\end{Lem}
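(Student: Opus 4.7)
The plan is to identify, for each $v \in E$, the common value $\lim_n \nu^{*n}\{h: hv=0\}$ as a probabilistic hitting probability, and then to construct $\kappa$ so that this quantity is realised as a one-step mass under $\kappa$.

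\textbf{Monotonicity and $p(v)$.} Writing $f_n(v) := \nu^{*n}\{h: hv=0\}$, conditioning on the last factor of $\overline{\gamma}_{n+1} = \overline{\gamma}_n \gamma_n$ gives the recurrence $f_{n+1}(v) = \int f_n(\gamma v)\,d\nu(\gamma)$, and induction from $f_0(v) = \mathds{1}_{v=0}$ yields $f_n \le f_{n+1}$; thus $\lim_n f_n = \sup_n f_n$, and we set $p(v) := \lim_n f_n(v)$. By exchangeability of the i.i.d.\ sequence, $\overline{\gamma}_n v \stackrel{d}{=} W_n := \gamma_{n-1}\cdots\gamma_0 v$, but the reverse chain $(W_n)$ is Markov with $0$ absorbing, so $\{W_n = 0\}$ is increasing in $n$ and $p(v) = \PP(\exists n,\, W_n = 0) = \PP(v \in L_\infty)$, where $L_\infty := \bigcup_n \ker(\gamma_n \cdots \gamma_0)$ is an a.s.\ increasing union of subspaces. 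Since $\rank(\gamma_n\cdots\gamma_0) \stackrel{d}{=} \rank(\overline{\gamma}_{n+1})$ stabilises at $\underline{\rank}(\nu)$ by Lemma~\ref{lem:rank}, $L_\infty$ is almost surely a subspace of dimension $d - \underline{\rank}(\nu)$.

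\textbf{Construction of $\kappa$.} Denote by $\mu$ the law of $L_\infty$ on the compact Grassmannian $\mathrm{Gr}_{d-\underline{\rank}(\nu)}(E)$, fix any measurable section $\sigma$ of the kernel map, e.g.\ $\sigma(Z) :=$ orthogonal projection onto $Z^\perp$, and set $\kappa := \sigma_*\mu$. Then $\kappa$ is supported on rank-$\underline{\rank}(\nu)$ endomorphisms and $\kappa\{h: hv = 0\} = \mu\{Z: v \in Z\} = \PP(v \in L_\infty) = p(v)$, which is \eqref{eq:limker}. The outer equalities in \eqref{eq:essker} are immediate from $\kappa\{h: hv=0\} = p(v)$, and for $\underline{\ker}(\kappa)$ one checks that for $(g_i) \sim \kappa^{\otimes\NN}$ the transversality $\ker(g_0) \cap \im(g_1) = \{0\}$ holds $\mu \otimes \mu$-a.s., so $\ker(\overline{g}_k) = \ker(g_{k-1})$ for $k \ge 1$ and $\kappa^{*k}\{h: hv = 0\} = \kappa\{h: hv = 0\}$ is constant in $k \ge 1$.

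\textbf{Gap and subspace structure.} Let $K := \{v : p(v) = 1\}$. For $(g_i) \sim \kappa^{\otimes\NN}$, the decreasing family $\bigcap_{i=0}^{N-1} \ker(g_i)$ contains $K$; its a.s.\ stable value $K'$ satisfies $K' = \ker(g_0) \cap K'_0$ with $K'_0 \stackrel{d}{=} K'$ independent of $g_0$, so equality of dimensions forces $K'_0 \subseteq \ker(g_0)$ $\kappa$-a.s., hence $K'_0 \subseteq K$, hence $K' = K$ almost surely. Choosing $N$ large enough that $\PP(\bigcap_{i<N}\ker(g_i) = K) \ge 1/2$, the bound $1 - p(v)^N \ge 1/2$ on $\{v \notin K\}$ yields $p(v) \le (1/2)^{1/N} =: \alpha < 1$, i.e.\ \eqref{eq:max-ker}. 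Finally, $\ker(\nu) = K$ is closed under addition because driving the chains by the same $\gamma_i$'s yields $W_n^{u+v} = W_n^u + W_n^v$, which vanishes eventually a.s.\ when $u, v \in K$; and $K$ is $\nu$-a.s.\ invariant because the Markov identity $p(v) = \int p(\gamma v)\,d\nu(\gamma)$ forces $p(\gamma v) = 1$ $\nu$-a.s.\ whenever $p(v) = 1$.

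The hard part will be the construction of $\kappa$: the naïve candidate $\Pi_*\tilde{\kappa}$ from Lemma~\ref{lem:rank} does not generally satisfy \eqref{eq:limker} (small $2 \times 2$ examples combining the identity, a coordinate projection, and an irrational rotation produce a strict inequality between $\Pi_*\tilde\kappa\{h: hv = 0\}$ and $p(v)$), so one must pass instead through the distribution of the reverse-product kernel $L_\infty$. Verifying the $\mu \otimes \mu$-a.s.\ transversality that yields $\ker(\overline{g}_k) = \ker(g_{k-1})$ is the main technical point behind \eqref{eq:essker}.
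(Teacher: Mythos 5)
Your first paragraph is exactly the right idea and matches the engine of the paper's proof: the identity $p(v)=\PP(v\in L_\infty)$ via the reverse chain $W_n=\gamma_{n-1}\cdots\gamma_0 v$ with absorption at $0$, the monotonicity giving $\lim=\sup$, and the stabilisation $\dim L_\infty = d-\underline{\rank}(\nu)$ from Lemma~\ref{lem:rank}. Your treatment of \eqref{eq:max-ker} (tail argument forcing $K'=K$ a.s., then $p(v)^N\le 1/2$ off $K$) and of the subspace/invariance claims also goes through once a correct $\kappa$ is in hand.

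The gap is the construction of $\kappa$ itself: taking $\kappa:=\sigma_*\mu$ with $\sigma(Z)$ the orthogonal projection onto $Z^\perp$ fixes the kernel law but throws away all control on the image, and the transversality $\ker(g_0)\cap\im(g_1)=\{0\}$ you assert ``one checks'' is false in general. Take $E=\RR^3$, $A=e_1(e_1+e_2)^*$, $B=e_1(e_1-e_2)^*$, and $\nu=\frac12(\delta_A+\delta_B)$. Then every $\gamma_i$ fixes $e_1$ and has image $\langle e_1\rangle$, so $\gamma_1\gamma_0=\gamma_0$, $\underline{\rank}(\nu)=1$, and $L_\infty=\ker(\gamma_0)\in\{(e_1+e_2)^\perp,(e_1-e_2)^\perp\}$ uniformly. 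Your $\kappa$ is then uniform on the two orthogonal rank-one projections $Q_A,Q_B$ onto $\langle e_1+e_2\rangle$ and $\langle e_1-e_2\rangle$; since $\langle e_1+e_2\rangle\perp\langle e_1-e_2\rangle$ one has $Q_AQ_B=Q_BQ_A=0$, so $\kappa^{*2}\{0\}=1/2$ and $\underline{\ker}(\kappa)=E$, while $e_1\notin\underline{\ker}(\nu)$. So the middle equality of \eqref{eq:essker} fails. The fix is the paper's: let $n_0$ be the first time $\rank(\gamma_{n-1}\cdots\gamma_0)=\underline{\rank}(\nu)$ and take $\kappa$ to be the law of $g:=\gamma_{n_0-1}\cdots\gamma_0$ itself. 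Then $\ker(g)=L_\infty$, so $\kappa\{h:hv=0\}=p(v)$ as in your argument, but now $\kappa$ lives in the semigroup and the strong Markov property at the stopping time shows that an independent product $g^{(0)}\cdots g^{(k-1)}$ is again of the form $\beta_{N-1}\cdots\beta_0$ for an i.i.d.\ $\nu$-sequence $\beta$ and a stopping time $N\ge n_0(\beta)$; since the rank has already stabilised at time $n_0(\beta)$, $\ker(g^{(0)}\cdots g^{(k-1)})=L_\infty(\beta)$ and $\kappa^{*k}\{h:hv=0\}=\kappa\{h:hv=0\}$ for all $k\ge 1$. This is exactly the stability you need and it cannot be reproduced by an arbitrary measurable section of the kernel map.
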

	
	\begin{proof}
		Let $(\gamma_n) \sim \nu^{\otimes\NN}$. We define the random integer:
		\begin{equation*}
			n_0 := \min \left\{n \in\NN\,\middle|\,\rank(\gamma_{n-1} \cdots \gamma_0) = \underline{\rank}(\nu) \right\}.
		\end{equation*} 
		Let $g := \gamma_{n_0-1} \cdots \gamma_0$ and let $\kappa$ be the distribution of $g$.
		Then by Lemma \ref{lem:rank} applied to the transpose of $\nu$, the random integer $n_0$ has finite exponential moment. Now let $v \in E$, and let $n \in\NN$, one has:
		\begin{equation}\label{eq:ker-n}
			\nu^{*n}\{h\in\mathrm{End}(E)\,|\,hv = 0\} = \PP(\gamma_{n-1} \cdots \gamma_0 v = 0) \le \PP(g v = 0)
		\end{equation}
		Indeed if $n \le n_0$ and $\gamma_{n-1} \cdots \gamma_0 = 0$, then $gv = 0$ and if $n \ge n_0$ then $\rank(\gamma_{n-1} \cdots \gamma_0) = \rank(g)$ therefore $\ker (\gamma_{n-1} \cdots \gamma_{n_0}) \cap \mathrm{im}(g) = \{0\}$.
		So $\gamma_{n-1} \cdots \gamma_0 = 0 \Rightarrow gv = 0$. The inequality \eqref{eq:ker-n} is true for all $n$, this implies that:
		\begin{equation}\label{eq:ker-m}
			\underline{\ker}(\nu) \subset \left\{v \in E \,\middle|\, \kappa\{h\in\mathrm{End}(E)\,|\,hv = 0\} > 0 \right\}.
		\end{equation}
		Now let $v$ be such that $\PP(g v = 0) > 0$, then for all $n \in\NN$, we have:
		\begin{equation*}
			\PP(\gamma_{n-1} \cdots \gamma_0 v = 0) \ge \PP(g v = 0) - \PP(n_0 > n).
		\end{equation*}
		Moreover $\PP(n_0 > n) \to 0$, so we have \eqref{eq:limker} by \eqref{eq:ker-n} and \eqref{eq:ker-m}.
		Therefore, there exists an integer $n \in\NN$ such that $\PP(\gamma_{n-1} \cdots \gamma_0 v = 0) > 0$. This proves \eqref{eq:essker} by double inclusion.
		
		Let us prove \eqref{eq:max-ker}.
		Let $V$ be the largest subspace of $E$ such that  $g(V) =\{0\}$ almost surely. Let
		\begin{equation}
			\alpha := \sup_{n\in\NN, v \in E \setminus V}\nu^{*n}\{h\in\mathrm{End}(E)\,|\,hv = 0\}.
		\end{equation}
		Assume by contradiction that $\alpha = 1$.
		Let $(v_n)$ be a non-random sequence in $E \setminus V$ such that $\PP(g v_n = 0)\ge 1 - 2^{-n}$. 
		Then we have $\sum_{n = 0}^{+\infty} \PP\left(g v_n \neq 0\right) < +\infty$. 
		Therefore, by Borel-Cantelli's Lemma, the set $\{n \in\NN\,|\,g v_n \neq 0\}$ is almost surely finite.
		Let $V' := \bigcap_{m \to +\infty} \langle(v_n)_{n\ge m}\rangle$. 
		Then since $E$ is finite dimensional, there is an integer $m \in\NN$ such that $v_n \in V'$ for all $n \ge m$.
		Moreover $g(V') = \{0\}$ almost surely, so $V' \subset V$, which is absurd. This proves \eqref{eq:max-ker} by contradiction.
		
		Let us prove \eqref{eq:ker-is-inv}.
		Assume by contradiction that $\PP(\gamma_0(V) = V) \neq 1$. 
		Let $v \in V$ be such that $\PP(\gamma_0 v \notin V) > 0$. Then for all $n\in\NN$, we have:
		\begin{equation*}
			\PP(\gamma_{n}\cdots \gamma_0 v \neq 0) \ge \PP(\gamma_0 v \notin V) \PP(\gamma_n\cdots \gamma_1 \gamma_0 v \neq 0\,|\,\gamma_0 v \notin V) \ge \PP(\gamma_0 v \notin V) (1-\alpha)> 0,
		\end{equation*}
		which is absurd because $\PP(\gamma_{n} \cdots \gamma_0 v \neq 0) \to 0$.
	\end{proof}

	\begin{Prop}\label{prop:essker}
		Let $\nu$ be a probability distribution on $\mathrm{End}(E)$. The set $\underline{\ker}(\nu)$ is a countable union of subspaces of $E$ that each have dimension at most $\dim(E) - \underline{\rank}(\nu)$. 
	\end{Prop}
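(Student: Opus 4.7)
The plan is to first reduce the statement to a measure-theoretic question on the Grassmannian, and then exploit the linear structure of subspaces to extract countability.

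By Lemma~\ref{lem:descri-ker}, there exists a probability measure $\kappa$ on $\mathrm{End}(E)$, supported on matrices of rank $r:=\underline{\rank}(\nu)$, such that $\underline{\ker}(\nu)=\{v\in E\mid\kappa\{h:hv=0\}>0\}$. Let $\rho$ be the pushforward of $\kappa$ by the map $h\mapsto\ker(h)$. Then $\rho$ is a probability measure supported on $\mathrm{Gr}_{d-r}(E)$, where $d=\dim E$, and $\underline{\ker}(\nu)=\{v\in E\mid \rho\{V:v\in V\}>0\}$.

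Call a subspace $W\subset E$ \emph{heavy} if $\rho\{V:W\subset V\}>0$. Any heavy $W$ is contained in some $V$ in the support of $\rho$ with $\dim V=d-r$, and hence $\dim W\le d-r$. Since $E$ is finite-dimensional, any ascending chain of heavy subspaces stabilises, so every heavy subspace is contained in a \emph{maximal} heavy subspace (one not strictly contained in any other heavy subspace). Let $\mathcal{H}^{\max}$ denote the collection of maximal heavy subspaces. We claim that $\underline{\ker}(\nu)=\bigcup_{W\in\mathcal{H}^{\max}}W$. Indeed, for $v\in\underline{\ker}(\nu)$ the line $\RR v$ is heavy, hence contained in some element of $\mathcal{H}^{\max}$; conversely, for $W\in\mathcal{H}^{\max}$ and $v\in W$ one has $\rho\{V:v\in V\}\ge\rho\{V:W\subset V\}>0$.

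The main step is to show $\mathcal{H}^{\max}$ is countable. The crucial observation is that, since the $V$'s are linear subspaces, one has the identity
\begin{equation*}
\{V:W_1\subset V\}\cap\{V:W_2\subset V\}=\{V:W_1+W_2\subset V\}.
\end{equation*}
If $W_1\neq W_2$ are two distinct elements of $\mathcal{H}^{\max}$, then $W_1+W_2$ strictly contains both, so by maximality it is not heavy, and the intersection above is $\rho$-null. Therefore the family $(\{V:W\subset V\})_{W\in\mathcal{H}^{\max}}$ consists of pairwise almost-disjoint events of positive $\rho$-measure. A standard pigeonhole argument (for each $n\ge 1$ there can be at most $n$ maximal heavy subspaces of $\rho$-measure $\ge 1/n$, otherwise their essentially disjoint union would have total mass greater than $1$) forces $\mathcal{H}^{\max}$ to be at most countable, completing the proof.

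The main obstacle is the countability of $\mathcal{H}^{\max}$. Once one recognises that $W\mapsto\{V:W\subset V\}$ sends maximal heavy subspaces to an almost-disjoint family, the rest is routine measure theory. The essential algebraic input is the equivalence $(V\supset W_1)\wedge(V\supset W_2)\Leftrightarrow(V\supset W_1+W_2)$, which holds precisely because $V$ is a linear subspace; without this, almost-disjointness of the events could fail and the pigeonhole step would break down.
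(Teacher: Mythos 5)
Your proof is correct, and it takes a genuinely different route from the paper's.

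Both arguments start from Lemma~\ref{lem:descri-ker}, reducing to a single measure $\kappa$ on endomorphisms of rank $r = \underline{\rank}(\nu)$. From there the paper proceeds combinatorially: it introduces the level sets $K_\alpha = \{x : \kappa\{h : hx = 0\} \ge \alpha\}$, proves via a \emph{general position} counting bound that any family in $K_\alpha$ in general position up to $d'$ has at most $d'/\alpha$ elements, deduces a finite cover of $K_\alpha$ by $d'$-planes, and then upgrades from a covering to an actual equality by a recursive intersection argument of depth $d' + 1$. You instead push $\kappa$ forward to the Grassmannian, introduce the notion of a heavy subspace, and identify $\underline{\ker}(\nu)$ outright as the union of the \emph{maximal} heavy subspaces. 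Countability then drops out of the single algebraic identity $\{V : W_1 \subseteq V\} \cap \{V : W_2 \subseteq V\} = \{V : W_1 + W_2 \subseteq V\}$, which makes the family of events $(\{V : W \subseteq V\})_{W \in \mathcal{H}^{\max}}$ pairwise $\rho$-null-intersecting, whence the pigeonhole bound. Your argument is shorter and more conceptual: it avoids both the general-position combinatorics and the recursive refinement needed to turn containment into equality, because the maximal heavy subspaces are already exactly the right family. What the paper's route buys, in exchange for the extra length, is explicit quantitative control --- a bound on the number of $d'$-planes needed to cover each level set $K_{2^{-m}}$ --- though this extra information is not used elsewhere in the paper. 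One tiny point of style: for $v = 0$ the set $\RR v$ is not a line; it is cleaner to note that $\{0\}$ is heavy and thus contained in every element of the (nonempty) set $\mathcal{H}^{\max}$, or simply to restrict attention to $v \neq 0$ since $0$ lies in every subspace.
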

	
	\begin{proof}
		Let $d' := \dim(E) - \underline{\rank}(\nu)$. For all $k \in\{0, \dots, \dim(E)\}$, we denote by $\mathrm{Gr}_{k}(E)$ the set of subspaces of $E$ of dimension $k$. Let $\kappa$ be as in Lemma \ref{lem:descri-ker}
		First we show that $\underline{\ker}(\nu)$ is included in a countable union of subspaces of dimension exactly $d'$.
		Given $n \in \NN$ and $\alpha > 0$, we define:
		\begin{equation*}
			K_\alpha := \{x\in E\,|\,\kappa\{h\in\mathrm{End}(E)\,|\,h x =0\} \ge \alpha\}
		\end{equation*}
		Note that we have:
		\begin{equation*}
			\underline{\ker}(\nu) = \bigcup_{m \in\NN} K_{2^{-m}},
		\end{equation*}
		so we only need to show that $K_{2^{-m}}$ is included in a countable union of subspaces for all $m \in \NN$.
		Let $m \in\NN$, we claim that $K_{2^{-m}}$ is included in a union of at most $\binom{d'2^m}{d'}$ subspaces of $E$ of dimension $d'$. Let $g \sim \kappa$, write $\alpha := 2^{-m}$ and assume that $K_\alpha \neq \{0\}$.
		
		Let $N$ be an integer and let $(x_1, \dots, x_N) \in K_\alpha$. 
		Assume that for all $1 \le i_1 < \dots < i_k \le N$ with $k \le d'+1$, the space $\langle x_{i_1}, \dots, x_{i_k} \rangle$ has dimension exactly $k$.
		In this case, we say that the family $(x_i)_{1\le i\le N}$ is in general position up to $d'$. 
		We claim that in this case:
		\begin{equation}
			N \le \frac{d'}{\alpha}. \label{eq:n-le-d'/alpha}
		\end{equation}
		To all index $i \in \{1, \dots, N\}$, we associate a random integer variable $a_i := \mathds{1}_{g x_i = 0} \in \{0,1\}$ \ie such that $a_i = 1$ when $g(x_i) = 0$ and $a_i = 0$ otherwise.
		Note that $\ker(g)$ has dimension at most $d'$ almost surely. 
		As a consequence, for all $\le i_1 < \dots < i_{d'+1} \le N$, we have $\dim\langle x_{i_j} \rangle_{1\le j \le d'+1} > \dim(\ker(g))$ almost surely. Hence $\PP\left(\langle x_{i_j} \rangle \subset \ker(g)\right) = 0$ so $g x_{i_j} \neq 0$ for at least one index $j$.
		This means that, with probability $1$, the random set of indices $\{1\le i \le N\,|\,g x_i =0\}$ does not admit any subset of size $d'+1$ so it has cardinal at most $d'$. 
		In other words, $\sum_{i=1}^N a_i\le d'$ almost surely.
		Now, note that by definition of $\KK_\alpha$, we have $\EE(a_i) = \PP(g x_i = 0) \ge \alpha$ for all $i \in\{1,\dots, N\}$. Hence $ N \alpha \le \sum_{i=1}^N E(a_i) \le d'$, which proves \eqref{eq:n-le-d'/alpha}.
		
		Now we want to construct a family $(x_1, \dots, x_N) \in K_\alpha$ that is in general position up to $d'$ and such that:
		\begin{equation}\label{eq:k-alpha}
			K_\alpha \subset \bigcup_{1\le i_1 < \dots < i_{d'} \le N} \langle x_{i_j} \rangle_{1\le j \le d'}.
		\end{equation}
		We do it by induction. Since we assumed that $K_\alpha \neq \{0\}$, there is a non-zero vector $x_1 \in K_\alpha$. Now let $j \in\NN_{\ge 1}$. Assume that we have constructed a sequence $(x_1, \dots, x_j) \in K_\alpha$ that is in general position up to $d'$. If we have:
		\begin{equation*}
			K_\alpha \subset \bigcup_{1\le i_1 < \dots < i_{d'} \le j} \langle x_{i_j} \rangle_{1\le j \le d'},
		\end{equation*}
		then we write $N := j$ and the algorithm ends as \eqref{eq:k-alpha} is satisfied.
		Otherwise, we take:
		\begin{equation*}
			x_{j+1} \in K_\alpha \setminus\left(\bigcup_{1\le i_1 < \dots < i_{d'} \le j} \langle x_{i_1},\dots, x_{i_{d'}} \rangle\right).
		\end{equation*}
		Then we have constructed a family $(x_1, \dots, x_{j+1}) \in K_\alpha$ that is in general position up to $d'$. 
		This process terminates after at most $\left\lfloor\frac{d'}{\alpha}\right\rfloor$ steps by \eqref{eq:n-le-d'/alpha}.
		Then we conclude by noting that for all $N \in\NN$, the set of multi-indices $\{1\le i_1 < \dots < i_{d'} \le N\}$ has cardinality $\binom{N}{d'}$. 
		
		Now for all $m$ we choose a family $\left(V^{m}_1, \dots, V^{m}_{\binom{d'2^m}{d'}}\right) \in \mathrm{Gr}_{d'}(E)^{\binom{d'2^m}{d'}}$ such that $K_{2^{-m}} \subset \bigcup_{j=1}^{\binom{d'2^m}{d'}} V^{m,n}_j$ and we have:
		\begin{equation*}
			\underline{\ker}(\nu) \subset \bigcup_{\substack{m \in\NN, j\in\NN, \\ 1 \le j \le \binom{d'2^m}{d'}}} V^{m}_j.
		\end{equation*}
		This proves that $\underline{\ker}(\nu)$ is included in a countable union of subspaces of dimension exactly $d'$.
		
		Now we will show that $\underline{\ker}(\nu)$ is in fact equal to a countable union of subspaces. 	
		Let $g \sim \kappa$ and let $K := \underline{\ker}(\nu)$. Let $(V_k)_{k\in\NN} \in \mathrm{Gr}_{d'}(E)^{\NN}$ be such that $K \subset \bigcup V_k$. 
		We will construct a family $\left(V_{k_0, \dots, k_{j}}\right)_{\substack{0 \le j \le d', \\ (k_0, \dots, k_j) \in \NN^{j+1}}}$ such that $(V_{k_0})_{k_0 \in\NN} = (V_k)_{k\in\NN}$, and such that for all $j \in \{1, \dots,d'\}$, we have:
		\begin{equation}
			K \subset \bigcup_{(k_0, \dots, k_j) \in \NN^{j+1}} V_{k_0, \dots, k_{j}},
		\end{equation}
		and for all multi-index $(k_0, \dots, k_{j}) \in \NN^{j+1}$, we have $V_{k_0, \dots, k_{j}} \subset V_{k_0, \dots, k_{j-1}}$, with equality if and only if $V_{k_0, \dots, k_{j-1}} \subset K$. 
		Then we have:
		\begin{equation}
			K = \bigcup_{(k_0, \dots, k_{d'}) \in \NN^{d'+1}} V_{k_0, \dots, k_{d'+1}}.
		\end{equation}
		Indeed, for all $(k_0, \dots, k_{d'}) \in \NN^{d+1}$, we either have $V_{k_0, \dots, k_{d'}} \subset K$ or $V_{k_0} \supsetneq V_{k_0, k_1} \supsetneq \dots \supsetneq V_{k_0, \dots, k_{d'}}$. 
		In the second case, we have $\dim(V_{k_0, \dots, k_{d'}}) \le \dim(V_{k_0}) - d'$ so $V_{k_1, \dots, k_{d'}} = \{0\}$, which is a contradiction because $0 \in K$ by definition.
		
		We do it by induction. Let $0 \le c \le d'$. Assume that we have constructed a family $\left(V_{k_0, \dots, k_{j}}\right)_{\substack{0 \le j \le c, \\ (k_0, \dots, k_j) \in \NN^{j+1}}}$, such that for all $j \in \{0,\dots, c\}$, we have:
		\begin{equation}
			K \subset \bigcup_{(k_0, \dots, k_j)\in\NN^j} V_{k_1, \dots, k_{j}},
		\end{equation}
		and such that for all $j\in\{1, c\}$ and all $(k_0, \dots, k_j)\in\NN^{j+1}$, we have $V_{k_0, \dots, k_{j}} \subset V_{k_0, \dots, k_{j-1}}$, with equality if and only if $V_{k_0, \dots, k_{j-1}} \subset K$.
		
		Let $(k_0, \dots, k_c) \in \NN^{c+1}$ be a multi-index such that $V_{k_1, \dots, k_{c}} \not \subset K$. 
		Then we have almost surely $g(V_{k_0, \dots, k_{c}}) \neq \{0\}$ so the restriction of $h$ to $V_{k_0, \dots, k_{c}}$ has rank at least $1$ almost surely. 
		By the previous argument, the set:
		\begin{equation*}
			K \cap V_{k_0, \dots, k_{c}} = \{x\in V_{k_0, \dots, k_{c}}\,|\, \PP(h x = 0) > 0\}
		\end{equation*}
		is included in the union of a countable family of subspaces of $V_{k_0, \dots, k_{c}}$ that have dimension $\dim\left(V_{k_0, \dots, k_{c}}\right) - 1$. 
		For all multi-index $(k_0, \dots, k_c)\in\NN^{c+1}$ such that $V_{k_1, \dots, k_{c}} \not\subset K$, we define $\left( V_{k_0, \dots, k_{c+1}} \right)_{k_{c+1} \in \NN}$ to be such a family.
		For every other multi-index $(k_0, \dots, k_c) \in \NN^{c+1}$, we define $V_{k_0, \dots, k_{c+1}} := V_{k_0, \dots, k_{c}}$ for all $k_c \in \NN$.
	\end{proof}

	\begin{Rem}\label{rem:ker}
		Let $\nu$ be a probability distribution over $\mathrm{End}(E)$ and $(\gamma_n)$ be a random sequence of distribution $\nu^{\otimes\NN}$. Then for every $x\in E$, the sequence $\PP(\overline{\gamma}_n x = 0) $ is non-decreasing and its limit is positive if and only if $x \in \underline\ker(\nu)$.
	\end{Rem}

	\subsection{Rank one boundary of a semi-group}
	
	Given a subset $A$ of a topological space $X$, we denote by $\mathbf{cl}_X(A)$ the closure of $A$ in $X$. Note that saying that an endomorphism has rank one is equivalent to saying that it is the product of a non-trivial vector on the left by a non-trivial linear form on the right.
	
	Given a probability measure $\nu$ on a topological space $X$, we denote by $\mathbf{supp}_X(\nu)$ or simply $\mathbf{supp}(\nu)$ the smallest closed subset of $X$ on which $\nu$ is supported. Then $\mathbf{supp}(\nu)$ is characterized by the fact that it is closed and for all open $\mathcal{U}\subset X$, we have $\nu(\mathcal{U}) > 0$ if and only if $\mathcal{U} \cap \mathbf{supp}(\nu) \neq \emptyset$. 
	
	We remind that given $E$ a vector space and $u\in E\setminus\{0\}$, we denote by $[u]$ the projective class of $u$ and we denote by $\mathbf{P}(E)$ the projective space of $E$. Given $X \subset  \mathbf{P}(E)$, we will write "Let $[u] \in X$" for "Let $u$ be a non-zero vector such that $[u] \in X$".

	\begin{Def}[Rank one boundary]
		Let $E$ be a Euclidean space and let $\Gamma < \mathrm{End}(E)$ be a sub-semi-group. Let $\overline\Gamma := \mathbf{cl}_{\mathrm{End}(E)}(\KK \Gamma)$. We denote by $\partial\Gamma$ the rank-one boundary of $\Gamma$, defined as:
		\begin{equation}
			\partial\Gamma :=  \{[\gamma]\,|\,\gamma\in\overline\Gamma,\; \rank(\gamma) = 1\} 
		\end{equation}
		We define the left and right boundaries of $\Gamma$ as:
		\begin{align*}
			\partial_u\Gamma & := \left\{[hv]\,\middle|\,[h] \in\partial \Gamma,\; v \in E \setminus \ker(h)\right\} \subset \mathbf{P}(E) \\
			\partial_w\Gamma & := \left\{[fh]\,\middle|\,[h] \in\partial \Gamma,\; f \in E^* \setminus \ker(h^*)\right\} \subset \mathbf{P}(E^*).
		\end{align*}
	\end{Def}

	\begin{Def}[Range and boundary of a distribution]
		Let $E$ be a Euclidean space and let $\nu$ be a probability measure on $\mathrm{End}(E)$. We denote by $\Gamma_\nu$ the range of $\nu$ defined as the smallest closed sub-semi-group of $\mathrm{End}(E)$ that has measure $1$ for $\nu$. We define $\partial\nu := \partial\Gamma_\nu$ and $\partial_u\nu := \partial_u\Gamma_\nu$ and $\partial_w\nu := \partial_w\Gamma_\nu$.
	\end{Def}
	
	\begin{Lem}[Invariant subspaces and irreducibility]\label{lem:carac-irred}
		Let $E$ be a Euclidean space.
		Let $\nu$ be a probability measure on $\mathrm{End}(E)$.
		Let $S := \mathbf{supp}(\nu)$. 
		Let $V \subset E$ be a proper non-trivial subspace.
		If $S V \subset V$ then $\nu$ is not irreducible.
		
		Let $N \ge 1$ and let $V_1, \cdots, V_n \subset E$ be proper non-trivial subspaces.
		If $\bigcup_{i =1}^N S \cdot V_i \subset \bigcup_{i =1}^N V_i$ then $\Gamma$ is not strongly irreducible.
	\end{Lem}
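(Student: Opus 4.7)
The plan is to prove the contrapositive of each statement by exhibiting explicit witnesses to the failure of (strong) irreducibility, using the fact that invariance of subspaces by $S$ propagates to $\nu^{*n}$-almost sure invariance for every $n$.

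For the first statement, assume $SV \subset V$. I would first observe that the set $I_V := \{g \in \mathrm{End}(E) \mid gV \subset V\}$ is a closed sub-semigroup of $\mathrm{End}(E)$: closed because $V$ is finite-dimensional hence closed, and a semigroup because $g_1(g_2 V) \subset g_1 V \subset V$ whenever $g_1, g_2 \in I_V$. Since $S \subset I_V$ and $S$ has full $\nu$-measure, we get $\nu(I_V) = 1$, and since $I_V$ is semigroup-closed, a straightforward Fubini/induction argument gives $\nu^{*n}(I_V) = 1$ for every $n \ge 1$. Because $V$ is a proper subspace, its annihilator $V^{\circ} := \{f \in E^* \mid f|_V = 0\}$ is non-trivial; pick any non-zero $f \in V^\circ$ and any non-zero $v \in V$. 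Then for every $n$ and $\nu^{*n}$-almost every $\gamma$, we have $\gamma v \in V$, hence $f\gamma v = 0$, so $\nu^{*n}\{\gamma \mid f\gamma v \neq 0\} = 0$, contradicting irreducibility.

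For the second statement, the key observation is that if one sets $W := V_1 \cup \cdots \cup V_N$, then $\bigcup_i S V_i = S W$, so the hypothesis $\bigcup S V_i \subset \bigcup V_i$ reads exactly $SW \subset W$. The set $I_W := \{g \mid gW \subset W\}$ is again closed (as $W$ is closed) and stable under composition, so by the same argument as before, $\nu^{*n}(I_W) = 1$ for every $n$. Pick any $v \in V_1 \setminus \{0\}$, and for each $i \in \{1,\dots,N\}$ pick a non-zero $f_i \in E^*$ with $V_i \subset \ker f_i$ (which exists since $V_i$ is proper). Then for every $n$ and $\nu^{*n}$-almost every $\gamma$, we have $\gamma v \in W \subset \bigcup_i \ker f_i$, so there is some $i$ with $f_i \gamma v = 0$, and therefore $\prod_{i=1}^N f_i \gamma v = 0$. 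This means $\nu^{*n}\{\gamma \mid \prod_i f_i \gamma v \neq 0\} = 0$ for all $n$, which contradicts strong irreducibility (in its first, left form).

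The proof is essentially bookkeeping, and I expect no real obstacle; the only mild subtlety is verifying that the invariance condition is closed under both products and limits, which is what lets the single hypothesis on $S$ propagate to all convolution powers $\nu^{*n}$. The choice of $v \in V_{i_0}$ and of $f_i$ annihilating $V_i$ uses only that each $V_i$ is non-trivial and proper, matching the hypotheses.
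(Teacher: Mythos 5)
Your proof is correct and follows essentially the same route as the paper: both pick $v$ in an invariant subspace and linear forms $f$ (or $f_i$) annihilating the invariant subspace(s), then observe that $\nu^{*n}$-almost every $\gamma$ sends $v$ into the invariant set, killing the relevant products for all $n$. The paper phrases the propagation to $\nu^{*n}$ via the generated semi-group $\Gamma = \bigcup_n S^{\cdot n}$ rather than your stabilizer $I_V$, but this is purely a matter of bookkeeping.
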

	
	\begin{proof}
		Let $\Gamma = \bigcup_{n\in\NN} S^{\cdot n} = \bigcup_{n\in\NN} \Pi(S^{n})$ be the semi-group generated by $S$.
		Note that for all $n\in\NN$, one has $\nu^{*n}(\Gamma) = 1$.
		Let $V \subset E$ be a proper non-trivial subspace such that $SV \subset V$. 
		The fact that $V$ is a proper subspace implies that there is a linear form $f \in E^*\setminus \{0\}$ such that $V \subset \ker(f)$.
		The fact that $V$ is not trivial implies that there is a vector $v \in V \setminus\{0\}$.
		Let $f$ and $v$ be as above.
		We have $\Gamma \cdot v \subset V$, hence $f \gamma v = 0$ for all $\gamma \in\Gamma$ so $\nu$ is not irreducible.
		
		Let $N \ge 1$ and let $V_1, \cdots, V_n \subset E$ be a family of proper non-trivial subspaces such that $\bigcup_{i =1}^N S \cdot V_i \subset \bigcup_{i =1}^N V_i$.
		Let $f_1, \dots, f_n \in E^{*}\setminus \{0\}$ be such that $V_i \subset \ker(f_i)$ for all $i\in\{1, \dots, N\}$.
		Let $v \in V_1 \setminus \{0\}$.
		Then one has $\Gamma \cdot v \subset \bigcup V_i$, hence $\prod_{i = 1}^N f_i \gamma v = 0$ for all $\gamma \in\Gamma$ so $\nu$ is not strongly irreducible.
	\end{proof}
	
	We call irreducible semi-group a semi-group $\Gamma \subset \mathrm{End}(E)$ such that for all $v \in E\setminus\{0\}$ and all $f \in E^*\setminus\{0\}$, there is an element $\gamma \in \Gamma$ such that $f \gamma v \neq 0$.

	\begin{Lem}\label{lem:boundary}
		Let $E$ be a Euclidean space and let $\Gamma < \mathrm{End}(E)$ be an irreducible semi-group. Then we have a factorization:
		\begin{equation}
			\partial\Gamma = \left\{ [uw] \,\middle|\, [u] \in\partial_u\Gamma, [w] \in\partial_w\Gamma\right\}.   
		\end{equation}
	\end{Lem}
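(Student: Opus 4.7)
The plan is to establish the two inclusions separately, and in both cases the argument reduces to deciding when a rank-one matrix $uw$ (with $u \in E\setminus\{0\}$, $w \in E^*\setminus\{0\}$) belongs to $\KK\overline\Gamma$, since by definition $\partial\Gamma$ is exactly the set of projective classes of rank-one elements of $\overline\Gamma$.

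For the inclusion $\partial\Gamma \subset \{[uw] : [u] \in \partial_u\Gamma,\, [w] \in \partial_w\Gamma\}$, I would take $[\gamma] \in \partial\Gamma$ and factor $\gamma$ as $u_0 w_0$ with $u_0 \in E\setminus\{0\}$ and $w_0 \in E^*\setminus\{0\}$. Since $w_0 \neq 0$ and $u_0 \neq 0$, one may pick $v \notin \ker(\gamma) = \ker(w_0)$ and $f \notin \ker(\gamma^*) = \ker(u_0)$; then $\gamma v = (w_0 v)\, u_0$ and $f\gamma = (fu_0)\, w_0$ are non-zero scalar multiples of $u_0$ and $w_0$ respectively, so $[u_0] = [\gamma v] \in \partial_u\Gamma$ and $[w_0] = [f\gamma] \in \partial_w\Gamma$, which exhibits $[\gamma]$ in the desired form. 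This direction is purely formal and does not use irreducibility.

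The reverse inclusion is where irreducibility plays its role. Given $[u] \in \partial_u\Gamma$ and $[w] \in \partial_w\Gamma$, unpacking the definitions produces rank-one elements $h_1, h_2 \in \overline\Gamma$ admitting factorizations $h_i = u_i w_i$, together with a vector $v \notin \ker(h_1)$ and a form $f \notin \ker(h_2^*)$, such that $u \in \KK^* u_1$ and $w \in \KK^* w_2$. Hence $uw$ is a non-zero scalar multiple of $u_1 w_2$, and the task reduces to showing $u_1 w_2 \in \overline\Gamma$. To bridge the two rank-one pieces I would insert an element of $\Gamma$ between $h_1$ and $h_2$: by irreducibility applied to the vector $u_2$ and the form $w_1$, there exists $\gamma \in \Gamma$ with $w_1 \gamma u_2 \neq 0$, and a direct rank-one computation gives
\begin{equation*}
h_1 \gamma h_2 \;=\; (u_1 w_1)\,\gamma\,(u_2 w_2) \;=\; (w_1 \gamma u_2)\, u_1 w_2.
\end{equation*}
Dividing by the non-zero scalar $w_1 \gamma u_2$ expresses $u_1 w_2$ as an element of $\overline\Gamma$, so $uw \in \overline\Gamma$, and since $uw$ has rank one we conclude $[uw] \in \partial\Gamma$.

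The one subtle point, which I would dispatch at the start, is that $\overline\Gamma$ is stable both under multiplication and under multiplication by $\KK^*$: if $g = \lim \lambda_n \gamma_n$ and $g' = \lim \mu_n \gamma_n'$ with $\gamma_n, \gamma_n' \in \Gamma$, then $gg' = \lim (\lambda_n \mu_n)\,(\gamma_n \gamma_n')$ and $\gamma_n \gamma_n' \in \Gamma$, so $gg' \in \overline\Gamma$; scalar stability is obtained likewise. With this in hand the product $h_1 \gamma h_2$ lies in $\overline\Gamma$ and the argument above goes through. The essential content of the lemma is therefore that irreducibility is exactly what is needed to connect an arbitrary ``left'' rank-one direction to an arbitrary ``right'' rank-one direction through a single element of the semi-group, and this is the only non-formal step in the proof.
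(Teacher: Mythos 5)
Your proof is correct and takes essentially the same route as the paper: both inclusions are handled the way the paper handles (or implicitly considers trivial) them, and the key step for the non-trivial inclusion $\supseteq$ is identical --- use irreducibility to produce $\gamma\in\Gamma$ with $w_1\gamma u_2 \neq 0$, compute $h_1\gamma h_2 = (w_1\gamma u_2)\,u_1 w_2$, and conclude that $[u_1 w_2]\in\partial\Gamma$ since $\overline\Gamma$ is a closed sub-semi-group stable under $\KK^*$-scaling. You are in fact slightly more careful than the paper in unpacking why $[u]\in\partial_u\Gamma$ yields a rank-one $h_1 = u_1 w_1\in\overline\Gamma$ with $[u]=[u_1]$ (and dually for $[w]$), and in writing out the $\subseteq$ inclusion explicitly, which the paper leaves implicit.
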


	\begin{proof}
		Note that the space $\mathbf{P}(\mathrm{End}(E))$ is metrizable so the closure is characterized by sequences. Let $\pi$ be a rank one endomorphism. Saying that $[\pi] \in \partial \Gamma$ is equivalent to saying that there is a sequence $(\gamma_n) \in\left(\Gamma \setminus \{0\}\right)^\NN$ such that $[\gamma_n] \to [\pi]$. Note also that the product map is continuous so $\overline{\Gamma}$ is a semi-group. Therefore, for all $[\pi_1], [\pi_2] \in \partial \Gamma$, and for all $\gamma \in \Gamma$ such that $\pi_1 \gamma \pi_2 \neq 0$, we have $[\pi_1 \gamma \pi_2] \in\partial \Gamma$.
		
		Let $v_1 \in\partial_u\Gamma$ and let $f_2\in\partial_v \Gamma$. Let $v_2 \in E$ and $f_1 \in E^*$ be such that $[v_1 f_1] \in \partial\Gamma$ and $[v_2 f_2] \in \partial \Gamma$. By definition of the irreducibility, there is an element $\gamma \in \Gamma$ such that $f_1 \gamma v_2 \neq 0$. 
		Let $\gamma$ be such an element.
		Then we have $v_1 f_1 \gamma v_2 f_2 \neq 0$ hence $[v_1 f_1 \gamma v_2 f_2] \in \partial \Gamma$.
		Moreover $[v_1 f_1 \gamma v_2 f_2 ] = [v_1 f_2]$ because $f_1 \gamma v_2 $ is a scalar, therefore $[v_1 f_2] \in \partial \Gamma$. 
	\end{proof}
	
	\begin{Lem}[Characterisation of proximality]\label{lem:exiprox}
		Let $E$ be a Euclidean space and let $\nu$ be a probability measure on $\mathrm{End}(E)$. 
		Let $(\gamma_n)\sim \nu^{\otimes\NN}$. 
		Assume that $\nu$ is irreducible and that $\underline{\rank}(\nu) \ge 1$. 
		Then the following assertions are equivalent:
		\begin{enumerate}
			\item\label{exiprox:noprox} $\nu$ is not proximal (in the sense of Definition \ref{def:prox}).
			\item\label{exiprox:sqz} There is a constant $B$ such that $\sqz(\overline\gamma_n)\le B$ almost surely and for all $n\in\NN$.
			\item\label{exiprox:empty} $\partial\nu = \emptyset$.
		\end{enumerate}
	\end{Lem}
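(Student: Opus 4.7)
The plan is to prove each of (\ref{exiprox:noprox}) and (\ref{exiprox:sqz}) equivalent to (\ref{exiprox:empty}). First I record a general observation: the assumption $\underline{\rank}(\nu) \ge 1$ is precisely $\nu^{*n}\{0\} = 0$ for all $n$, so by Definition \ref{def:prox} the measure $\nu$ is proximal iff some $\nu^{*n}$ assigns positive mass to $\{\prox > 0\}$. The sets $\{\prox > 0\} = \{\rho_1 > \rho_2\}$ and $\{\sqz > B\}$ are open and scale-invariant in $\mathrm{End}(E)\setminus\{0\}$, and $\Gamma_\nu = \overline{\bigcup_m \mathbf{supp}(\nu^{*m})}$. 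Consequently, any open scale-invariant $U \subset \mathrm{End}(E)\setminus\{0\}$ meets $\overline{\KK\Gamma_\nu}$ iff it meets $\Gamma_\nu$ iff some $\nu^{*m}$ assigns it positive mass.

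For ``$\nu$ proximal $\Leftrightarrow \partial\nu \ne \emptyset$'': if $\nu$ is proximal I pick $\gamma \in \mathbf{supp}(\nu^{*n})$ with $\prox(\gamma) > 0$; the top eigenvalue is then simple and dominant, so $\gamma^k/\|\gamma^k\|$ projectively converges to the rank-one spectral projection $P$ onto its eigenline, giving $[P] \in \partial\nu$. Conversely, given $[\pi] \in \partial\nu$, the semi-group $\Gamma_\nu$ is irreducible (an immediate consequence of the measure irreducibility of $\nu$), so Lemma \ref{lem:boundary} factors $\pi = uw$ with $[u] \in \partial_u\Gamma_\nu$, $[w] \in \partial_w\Gamma_\nu$, and irreducibility produces $\gamma \in \Gamma_\nu$ with $w\gamma u \ne 0$. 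Then $\pi\gamma$ is rank-one with nonzero trace $w\gamma u$. Writing $s_n \gamma_n \to \pi$ with $\gamma_n \in \Gamma_\nu$, we have $s_n \gamma_n \gamma \to \pi\gamma$, and by continuity of eigenvalues $\rho_1(s_n\gamma_n\gamma) \to |w\gamma u|$ while $\rho_2(s_n\gamma_n\gamma) \to 0$; by scale invariance $\prox(\gamma_n\gamma) = \prox(s_n\gamma_n\gamma) \to +\infty$. For $n$ large this places $\gamma_n\gamma$ in $\Gamma_\nu \cap \{\prox > 0\}$, which with the preliminary observation yields proximality.

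For ``$\sqz(\overline\gamma_n)$ bounded $\Leftrightarrow \partial\nu = \emptyset$'', equivalently ``$\sqz(\overline\gamma_n)$ unbounded $\Leftrightarrow \partial\nu \ne \emptyset$'': the map $g \mapsto \|g\wedge g\|/\|g\|^2$ is continuous and scale-invariant on $\mathrm{End}(E)\setminus\{0\}$ and vanishes exactly on the rank-one cone, so every rank-one matrix lies in the open scale-invariant set $\{\sqz > B\}$ for every $B$. Forward: given $[\pi] \in \partial\nu$, this set meets $\overline{\KK\Gamma_\nu}$, hence has positive $\nu^{*m}$-mass for some $m$, breaking any uniform bound on $\sqz(\overline\gamma_m)$. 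Converse: if $\sqz(\overline\gamma_n)$ is unbounded, I pick $\gamma^{(k)} \in \mathbf{supp}(\nu^{*n_k})$ with $\sqz(\gamma^{(k)}) \to +\infty$, normalize, and extract a convergent subsequence on the compact unit sphere of $\mathrm{End}(E)$; the limit $\pi$ has $\|\pi\| = 1$ and $\|\pi\wedge\pi\| = 0$, hence is rank-one and belongs to $\overline{\KK\Gamma_\nu}$, so $[\pi] \in \partial\nu$.

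The hard part will be the direction ``$\partial\nu \ne \emptyset \Rightarrow \nu$ proximal'': an arbitrary rank-one element of $\overline{\KK\Gamma_\nu}$ may itself be nilpotent (zero trace), in which case naive approximation produces matrices whose top two eigenvalues both collapse to zero rather than separating. The factorization of Lemma \ref{lem:boundary} combined with the full strength of semi-group irreducibility is exactly what lets us multiply by an auxiliary $\gamma \in \Gamma_\nu$ forcing a non-zero trace on the rank-one piece, after which continuity of eigenvalues delivers a genuinely proximal element in $\Gamma_\nu$.
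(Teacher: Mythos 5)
Your proof is correct, and the overall strategy mirrors the paper's: both proofs hinge on interpreting $\partial\nu$ through limits in $\overline{\KK\Gamma_\nu}$, both use scale-invariance and compactness to relate unbounded $\sqz$ to the existence of rank-one limits, and both prove the crucial direction ``$\partial\nu\neq\emptyset\Rightarrow\nu$ proximal'' by manufacturing a rank-one matrix with non-zero trace and then approximating it from $\Gamma_\nu$. There are two local differences worth noting, though neither changes the character of the argument. First, your implication graph is $(1)\Leftrightarrow(3)$ and $(2)\Leftrightarrow(3)$, whereas the paper proves $(2)\Leftrightarrow(3)$ together with the cycle $\neg(3)\Rightarrow\neg(1)\Rightarrow\neg(2)$; in particular, for ``proximal $\Rightarrow\partial\nu\neq\emptyset$'' you go directly via the spectral projection $[\gamma^k]\to[P]$, while the paper instead deduces $\sqz(h^m)\to+\infty$ and then invokes $(2)\Leftrightarrow(3)$. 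Second, in the key direction you fix an arbitrary $\pi=uw\in\partial\nu$ and right-multiply by a $\gamma\in\Gamma_\nu$ with $w\gamma u\neq 0$ (obtained from irreducibility of $\Gamma_\nu$), then conclude via continuity of the eigenvalue moduli along $s_n\gamma_n\gamma\to\pi\gamma$. The paper instead replaces $u$ by another $u'\in\partial_u\nu$ with $wu'\neq 0$ (using $\Gamma_\nu$-invariance of $\partial_u\nu$ and Lemma~\ref{lem:carac-irred}) to get $[u'w]\in\partial\nu$ with $(u'w)^2\neq 0$, and then uses lower semi-continuity of $\prox$ near proximal matrices to find a neighbourhood meeting $\mathbf{supp}(\nu^{*n})$. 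Your ``multiply by $\gamma$'' variant has the small advantage of not needing to verify that $\partial_u\nu$ spans a $\Gamma_\nu$-invariant set; you only need the raw definition of irreducibility. One cosmetic remark: the forward factorization $\pi=uw$ with $[u]\in\partial_u\nu$, $[w]\in\partial_w\nu$ is immediate from the definitions of $\partial_u$ and $\partial_w$ (take $h=\pi$); Lemma~\ref{lem:boundary} is really needed for the reverse inclusion, which your argument does not actually use.
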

	
	\begin{proof}
		We assume \eqref{exiprox:sqz} and we claim that we have \eqref{exiprox:empty}. Note that $\sqz$ is a continuous map over $\mathrm{End}(E) \setminus\{0\}$ and it only depends on the projective class so for $B$ as in \eqref{exiprox:sqz}, we have $\sqz(h) \le B$ for all $h \in\overline{\Gamma}_\nu\setminus\{0\}$. It implies that there is no rank one endomorphism in $\overline{\Gamma}_\nu$. Therefore $\partial\nu = \emptyset$.
		
		Now we prove the converse by contraposition. Assume that $(\sqz(\overline\gamma_n))_{n\in\NN}$ is not almost surely bounded. It means that for all $B$, there is an integer $n$ such that $\PP(\sqz(\overline\gamma_n) > B) > 0$ or equivalently, there is a matrix $h \in \mathbf{supp}(\nu^{*n})$ such that $\sqz(h) > B$. Then there is a sequence $(h_k)\in\Gamma_\nu^\NN$ such that $\sqz(h_k) \to +\infty$. The space $\mathbf{P}(\mathrm{End}(E))$ is compact so the sequence $[h_k]$ has a limit point. Let $[\pi]$ be such a limit point, then we have $\sqz(\pi) = +\infty$ so $\pi$ has rank one, hence $[\pi]\in\partial \nu$.
		
		We assume that $\nu$ is proximal and show that \eqref{exiprox:sqz} is false. 
		The map $\prox$ is not continuous on $\mathrm{End}(E) \setminus \{0\}$ but it is on the set of proximal matrices.
		That means that given a matrix $h$ such that $\prox(h) > 0$, there is a neighbourhood $\mathcal{N}$ of $h$ such that $\prox(h') \ge \frac{1}{2}\prox(h)$ for all $h'\in \mathcal{N}$. 
		Let $n$ be an integer such that $\PP(\prox(\overline{\gamma}_n) > 0) > 0$, then there is a matrix $h \in\mathbf{supp}(\nu^{*n})$ such that $\prox(h) > 0$, it means that $\sqz(h^m) \underset{m\to \infty}{\longrightarrow} +\infty$. 
		Moreover, for all $m \in \NN$, we have $h^m \in \mathbf{supp}(\nu^{*nm})$, hence $\PP(\sqz(\overline{\gamma}_{nm}) \ge \sqz(h^m)-1) > 0$, which contradicts \eqref{exiprox:sqz}.
		
		Now we assume that $\partial \nu \neq \emptyset$ and show that $\nu$ is proximal.
		First we prove that there is $[\pi] \in \partial \nu$ such that $\prox(\pi) = +\infty$, which simply means that $\pi^2 \neq 0$. Let $[w] \in\partial_w\nu$. Let $[u] \in\partial_u\nu$ be such that $wu \neq 0$. 
		Such a $u$ exists because $\overline{\Gamma}_\nu$ is invariant by left multiplication by $\Gamma_\nu$.
		Therefore the set $\{0\}\cup\{u\in E\,|\,[u]\in\partial_u\nu\}$ is invariant by the action of $\Gamma_\nu$, which is irreducible, hence it is not included in $\ker(w)$ by Lemma \ref{lem:carac-irred}. 
		Then by Lemma \ref{lem:boundary}, we have $[uw] \in \partial \nu$ and since $wu\neq 0$, we have $(uw)^2\neq 0$. 
		Let $\mathcal{N}$ be an open neighbourhood of $uw$ such that $\prox(h') \ge 1$ for all $h'\in \mathcal{N}$. 
		This neighbourhood intersects $\overline\Gamma_\nu$ so $\KK\mathcal{N}$ intersects $\mathbf{supp}(\nu^{*n})$ for some $n$, which means that $\nu$ is proximal.
	\end{proof}
	
\subsection{Construction of the Schottky measure}

Remind that a measurable binary relation over a measurable space $\Gamma$ is a subset $\AA \subset \Gamma\times\Gamma$ that is measurable for the product $\sigma$-algebra. Given $g,h \in\Gamma$, we write $g \AA h$ to say that $(g,h) \in \AA$. 
Given $S, T \subset \Gamma$, we write $S \AA T$ to say that $S \times T \subset \AA$.
Given $n \in\NN$ and $(g_0, \dots, g_n )\in\Gamma^{n+1}$, we write $g_0 \AA \dots\AA g_n$ to say that $g_i \AA g_{i+1}$ for all $i \in\{0, \dots, n-1\}$.

	\begin{Def}
		Let $\Gamma$ be a measurable space, let $\AA$ be a measurable binary relation on $\Gamma$ and let $0 \le \rho < 1$. 
		Let $\nu_s$ be a probability measure on $\Gamma$. 
		We say that $\nu_s$ is $\rho$-Schottky for $\AA$ if:
		\begin{gather*}
			\forall h \in \Gamma, \nu_s\{\gamma \in \Gamma \,|\, h \AA \gamma\} \ge 1 - \rho \quad\text{and}\quad \nu_s\{\gamma \in \Gamma \,|\, \gamma \AA h\} \ge 1 - \rho.
		\end{gather*}
	\end{Def}
	
	\begin{Rem}
		Let $\Gamma$ be a measurable space, let $\AA \subset \AA'$ be measurable binary relation on $\Gamma$ and let $0 \le \rho \le \rho' < 1$. Let $\nu$ be a probability measure on $\Gamma$ that is $\rho$-Schottky for $\AA$. Then $\nu_s$ is also $\rho'$-Schottky for $\AA'$.
	\end{Rem}
	
	We recall that the alignment $\AA^\eps$ has been defined in Definition \ref{def:ali}.
	Given $0 < \eps \le 1$ and $g,h$ two matrices, we write $g\AA^\eps h$ when $\|gh\|\ge \eps \|g\|\|h\|$.
	
	\begin{Lem}\label{lem:pi-schottky}
		Let $E$ be a Euclidean space, let $\Gamma < \mathrm{End}(E)$ be a strongly irreducible semi-group and let $0 < \rho \le 1$. There exist an integer $N\in\NN$, a constant $\eps > 0$ and a family $([\pi_1], \dots, [\pi_N])\in\partial\Gamma^N$ such that:
		\begin{equation*}
			\forall h \in\mathrm{End}(E)\setminus\{0\},\; \#\{k\,|\,\pi_k\AA^\eps h\} \ge (1-\rho) N
		\text{ and }\#\{k\,|\,h\AA^\eps \pi_k\} \ge (1-\rho) N.
		\end{equation*}
	\end{Lem}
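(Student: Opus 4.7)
The plan is to translate the alignment condition $\pi\AA^\eps h$ for a rank-one $\pi$ into a quantitative general-position condition on two families of projective points, one in $\mathbf{P}(E)$ and one in $\mathbf{P}(E^*)$, and to produce such families inside $\partial_u\Gamma$ and $\partial_w\Gamma$ using strong irreducibility. (If $\partial\Gamma=\emptyset$ one may take $N=0$, so assume $\partial\Gamma\neq\emptyset$.) By Lemma \ref{lem:boundary} each $[\pi]\in\partial\Gamma$ factors as $[uw]$ with $[u]\in\partial_u\Gamma$, $[w]\in\partial_w\Gamma$ and $wu\neq 0$. For any nonzero $h$ the identities $\|\pi h\|=\|u\|\|wh\|$, $\|h\pi\|=\|hu\|\|w\|$, $\|\pi\|=\|u\|\|w\|$ give $\pi\AA^\eps h \Leftrightarrow w\in V^\eps(h^*)$ and $h\AA^\eps\pi \Leftrightarrow u\in V^\eps(h)$. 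A short SVD computation (using $\|hv\|^2\ge\sigma_1(h)^2\langle v,u_1(h)\rangle^2$) shows that the complement of $V^\eps(h)$ in $\mathbf{P}(E)$ is contained in the projective $\eps$-neighborhood of the hyperplane $u_1(h)^\perp$, where $u_1(h)$ is a unit top singular vector of $h$, and dually for $V^\eps(h^*)\subset E^*$. It therefore suffices to choose $[u_k]\in\partial_u\Gamma$ and $[w_k]\in\partial_w\Gamma$ with $w_ku_k\neq 0$ such that every projective $\eps$-neighborhood of a hyperplane in $\mathbf{P}(E)$ contains at most $\rho N$ of the $[u_k]$'s, and similarly in $\mathbf{P}(E^*)$ for the $[w_k]$'s.

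To produce such families I use strong irreducibility together with the semigroup invariance of the rank-one boundary. Since $\overline{\KK\Gamma}$ is closed under left multiplication by $\Gamma$, the map $[u]\mapsto[\gamma u]$ (when $\gamma u\neq 0$) sends $\partial_u\Gamma$ to itself. Applied to a fixed $[u_0]\in\partial_u\Gamma$ against the linear forms defining any finite collection $V_1,\dots,V_m$ of proper subspaces of $E$, strong irreducibility yields $\gamma\in\Gamma$ with $[\gamma u_0]\in\partial_u\Gamma\setminus\bigcup V_i$, so $\partial_u\Gamma$ is not contained in any finite union of proper subspaces, and dually for $\partial_w\Gamma$. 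I then pick $[u_1],\dots,[u_N]\in\partial_u\Gamma$ iteratively so that any $d:=\dim E$ of them are linearly independent, by requiring each $[u_{k+1}]$ to avoid the finitely many hyperplanes spanned by $(d-1)$-subsets of $\{u_1,\dots,u_k\}$; then $[w_1],\dots,[w_N]\in\partial_w\Gamma$ in the same general-position manner, additionally requiring $w_k(u_k)\neq 0$ at step $k$. Setting $\pi_k:=u_kw_k$ gives $[\pi_k]\in\partial\Gamma$ by Lemma \ref{lem:boundary}.

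The family being finite and strictly generic, compactness yields $\delta>0$ with $\|u_{i_1}\wedge\cdots\wedge u_{i_d}\|\ge\delta\|u_{i_1}\|\cdots\|u_{i_d}\|$ for every $d$-subset (and likewise for the $w_k$'s). A standard multilinearity estimate shows that if $d$ of the $[u_{i_j}]$'s lie within projective distance $\eps$ of a common hyperplane $H$, writing $u_{i_j}/\|u_{i_j}\|=\lambda_j v_j+r_j$ with $v_j\in H$ unit and $\|r_j\|<\eps$ and expanding $\bigwedge(\lambda_j v_j+r_j)$ (whose principal term $\prod\lambda_j\,v_1\wedge\cdots\wedge v_d$ vanishes) yields $\|u_{i_1}\wedge\cdots\wedge u_{i_d}\|\le C_d\eps\|u_{i_1}\|\cdots\|u_{i_d}\|$ for some constant $C_d$ depending only on $d$. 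Choosing $\eps<\delta/C_d$ rules out $d$ simultaneous $\eps$-approximations to any one hyperplane, so at most $d-1$ of the $[u_k]$'s lie in any such projective neighborhood, and the same for the $[w_k]$'s; taking $N\ge(d-1)/\rho$ then yields both required cardinality bounds. The main obstacle is to manage both sides simultaneously, since quantitative general position is needed in both $\mathbf{P}(E)$ and $\mathbf{P}(E^*)$ together with the pairing condition $w_k(u_k)\neq 0$ linking them; but each individual step is a single avoidance of a finite union of proper subspaces, which strong irreducibility (and the semigroup invariance of $\partial_u\Gamma$ and $\partial_w\Gamma$) handles directly.
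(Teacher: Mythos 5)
Your proof is correct, and it follows the same overall plan as the paper: build general-position families in $\partial_u\Gamma$ and $\partial_w\Gamma$ by repeated avoidance of finite unions of proper subspaces (using strong irreducibility and the $\Gamma$-invariance of the boundary sets), form rank-one maps from them, and extract a uniform $\eps$. Two of your choices are worth comparing to the paper's. First, you form only the $N$ diagonal products $\pi_k = u_k w_k$, whereas the paper takes $M = \lceil (d-1)/\rho\rceil$ vectors and forms all $M^2$ products $\pi_{Mi+j} = u_i w_j$; the diagonal indexing is more economical and works because the two cardinality bounds in the conclusion are evaluated independently, with $\pi_k \AA^\eps h$ depending only on $w_k$ and $h\AA^\eps\pi_k$ only on $u_k$. (The extra requirement $w_ku_k\neq 0$ you impose is harmless but not needed: Lemma \ref{lem:boundary} already gives $[u_kw_k]\in\partial\Gamma$ for any $[u_k]\in\partial_u\Gamma$, $[w_k]\in\partial_w\Gamma$.) Second, the mechanism producing the uniform $\eps$ is genuinely different. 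The paper defines a max-min function $\psi(h)$ over subsets of $\{1,\dots,M\}$, checks it is positive and continuous on $\mathbf{P}(\mathrm{End}(E))$, and invokes compactness of that projective space to get $\eps = \inf\psi > 0$. You instead reduce the alignment condition to the concrete geometric statement that $\mathbf{P}(E)\setminus V^\eps(h)$ lies in the projective $\eps$-neighborhood of the single hyperplane $u_1(h)^\perp$ (via the SVD inequality $\|hv\|^2 \ge \sigma_1(h)^2\langle v,u_1(h)\rangle^2$), and then bound the number of $u_k$'s in any such neighborhood by the wedge-product general-position estimate, where the only compactness used is the trivial positivity of the minimum over finitely many $d$-subsets. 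Your route yields a more explicit $\eps$ (essentially $\delta/(2^d-1)$ with $\delta$ the quantitative genericity constant of the chosen finite family) at the cost of slightly more hands-on linear algebra; the paper's is shorter but leaves $\eps$ implicit. Both are valid, and the calculations you quoted check out.
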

	
	\begin{proof}
		Let $d := \dim(E)$. Let $m \in \NN$. Assume that we have constructed a family $([u_1], \dots, [u_m]) \in \partial_u\Gamma^{m}$ that is in general position in the sense that for all $k \le d$, and for all $1 \le i_1 < \dots < i_k \le M$, we have $\dim\left(\langle u_{i_1}, \dots, u_{i_k}\rangle \right) = k$. Let:
		\begin{equation*}
			u_{m+1} \in \{u\in E\,|\,[u]\in\partial_u\Gamma\}\setminus \bigcup_{\substack{k \le d-1,\\ 1\le i_1<\dots <i_k\le m}} \langle u_{i_j}\rangle_{1 \le j\le k}.
		\end{equation*}
		Such a $u_{m+1}$ exists because $\{u\in E\,|\,[u]\in\partial_u\Gamma\}\cup\{0\}$ is $\Gamma$-invariant and it is not $\{0\}$ by Lemma \ref{lem:exiprox}. 
		Hence $\{u\in E\,|\,[u] \in \partial_u \Gamma\}$ can not be included in $\bigcup_{\substack{i_1<\dots <i_k}} \langle u_{i_j}\rangle_{1 \le j\le k}$, which is a finite union of hyperplanes. 
		Then $[u_{m+1}] \in\partial \Gamma$ and we can easily check that $([u_1], \dots, [u_{m+1}])$ is in general position. 
		
		let $M := \left\lceil \frac{d-1}{\rho} \right\rceil$.
		Let $([u_1], \dots, [u_{M}]) \in \partial_u\Gamma^{M}$ be in general position. We can construct such a family by induction using the above argument. Let $([w_1], \dots, [w_{M}]) \in \partial_w\Gamma^{M}$ be in general position. By the above argument applied to $\Gamma^* := \{\gamma^*\,|\,\gamma\in\Gamma\} \subset \mathrm{End}(E^*)$, which is also strongly irreducible, we can construct such a family.
		
		Let $N := M^2$. Given $i,j \in\{1, \dots,M\}$, we define $\pi_{Mi + j} := {u_i w_j}$. Then $([\pi_1], \dots, [\pi_N])\in\partial\Gamma^N$ by Lemma \ref{lem:boundary}. Note also that given $g,h \in\mathrm{End}(E)$, saying that $g\AA^\eps h$ is equivalent to saying that $\frac{\|gh\|}{\|g\|\|h\|}\ge \eps$. 
		
		Let $h \in\mathrm{End}(E) \setminus\{0\}$. Let $I^h := \{i\,|\, u_i \in\ker(h)\}$. Since the family $([u_1], \dots, [u_{M}])$ is in general position and $\langle u_i\rangle_{i\in I} \subset \ker(h)$, he have $\# I \le d-1$. Let $J^h := \{j\,|\, w_j \in\ker(h^*)\}$. By the same argument, we have $\#J^h \le d-1$. Now let:
		\begin{equation*}
			\psi(h) := \max_{\substack{I, J \subset \{1, \dots, M\},\\\#I\le d-1, \#J \le d-1}} \min\left\{\min_{i \notin I} \frac{\|h u_i\|}{\|h\|\|u_i\|},\, \min_{j \notin J} \frac{\|w_j h\|}{\|w_j\|\|h\|}\right\}.
		\end{equation*}
		By the previous argument, one has $\psi(h) > 0$ for all $h$. Moreover the maps $h\mapsto \frac{\|h v\|}{\|h\|\|v\|}$ and $\frac{\|f h\|}{\|f\|\|h\|}$ are continuous for all $v \in E\setminus\{0\}$ and all $f\in E^* \setminus\{0\}$. Hence $\psi$ is continuous. Moreover, $\psi$ is invariant by scalar multiplication so there is a continuous map $\phi : \mathbf{P}(\mathrm{End}(E)) \to (0,1]$ such that $\phi([h]) = \psi(h)$ for all $h \in\mathrm{End}(E) \setminus\{0\}$. The projective space $\mathbf{P}(\mathrm{End}(E))$ is compact so $\phi (\mathbf{P}(\mathrm{End}(E)))$ is compact. Let $\eps$ be its lower bound.
		
		Now let $h \in\mathrm{End}(E)\setminus\{0\}$. The set of indices $\{k \,|\, \pi_k\AA^{\eps} h\}$ is $\{M i + j\,|\, i,j \in\{1, \dots,M\},\,w_j \AA^{\eps} h\}$, which has cardinality at least $M (M-d+1)$. Indeed, since $\psi(h) \ge \eps$, a sufficient condition to have $w_j \AA^{\eps} h$ is for $j$ not to be included in a set $J$ that realises the maximum in the definition of $\psi(h)$. Moreover we have $M \rho \ge d - 1$ so $M (M-d+1) \ge (1 - \rho) N$. By the same argument $\#\{k \,|\, h\AA^{\eps} \pi_k\} \ge (1 - \rho) N$.
	\end{proof}

	\begin{Cor}\label{cor:S'-schottky} 
		Let $E$ be a Euclidean space, let $\nu$ be a strongly irreducible and proximal probability measure on $\mathrm{End}(E)$, let $0 <\rho \le 1$ and let $K \ge 8$. There exist an integer $N$, two constants $\alpha', \eps \in (0,1)$, a family $(n_{k})_{1\le k \le N} \in\NN^{N}$ and a family $(S'_{k})_{1\le k \le N}$ of measurable subsets of $\Gamma$ such that $\nu^{*n_{k}} (S'_{k}) \ge \alpha'$ for all $1\le k \le N$ and such that:
		\begin{gather}
			\forall h \in\mathrm{End}(E),\; \#\{k\,|\,S'_k\AA^{2\eps} h\} \ge (1-\rho) N
			\text{ and }\#\{k\,|\,h\AA^{2\eps} S'_k\} \ge (1-\rho) N,\label{item:align}\\
			\forall j \in\{1, \dots, N\},\; \#\{k\,|\,S'_k\AA^{\eps} S'_j\} \ge (1-\rho) N
			\text{ and }\#\{k\,|\,S'_j\AA^{\eps} S'_k\} \ge (1-\rho) N,\label{item:self-align}\\
			\forall h \in \bigcup_{k = 1}^N S'_k, \; \sqz(h) \ge  K |\log(\eps)| + K \log(2).
		\end{gather}
	\end{Cor}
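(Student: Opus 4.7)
The plan is to apply Lemma~\ref{lem:pi-schottky} to the strongly irreducible semi-group $\Gamma_\nu$ to get an integer $N$, a constant $\eps_0>0$, and a family $([\pi_1],\dots,[\pi_N])\in\partial\Gamma_\nu^N$ of rank-one boundary points that are $\eps_0$-aligned with every $h$ except for at most $\rho N$ exceptions (on each side). Each $S'_k$ will then be constructed as a small open neighbourhood of an element $g_k\in\mathrm{supp}(\nu^{*n_k})$ whose projective class lies very close to $[\pi_k]$, and the alignment and squeeze properties will be transferred from the $\pi_k$'s to the $S'_k$'s by continuity.

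First I would fix $\eps\in(0,\eps_0/4)$ and choose a projective tolerance $\eta>0$, depending on $\eps_0$ and $\eps$, small enough that two perturbation statements hold. One-sided: if $\dist([g],[\pi_k])\le\eta$ and $\pi_k\AA^{\eps_0}h$, then $g\AA^{2\eps}h$ uniformly in $h\in\mathrm{End}(E)\setminus\{0\}$; similarly with the roles swapped. Two-sided: if $\dist([g],[\pi_k])\le\eta$ and $\dist([g'],[\pi_j])\le\eta$ and $\pi_k\AA^{\eps_0}\pi_j$, then $g\AA^{\eps}g'$. Normalising $g$ and $\pi_k$ to have the same norm, the projective distance controls $\|g-\pi_k\|/\|\pi_k\|$, and then the bounds $\|gh\|\ge\|\pi_k h\|-\|g-\pi_k\|\,\|h\|$ and $\|g\|\le(1+\eta)\|\pi_k\|$ give such an $\eta$; crucially, the estimate is uniform in $h$ because $h$ enters only through a ratio. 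By shrinking $\eta$ further I would also ensure, using the finiteness of $\{[\pi_1],\dots,[\pi_N]\}$ and the fact that $\sqz([g])\to+\infty$ as $[g]$ approaches the rank-one locus, that every $g$ with $\dist([g],[\pi_k])\le\eta$ satisfies $\sqz(g)\ge K|\log\eps|+K\log 2$.

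Next, since $[\pi_k]\in\partial\Gamma_\nu$ and $\Gamma_\nu$ coincides with the closure of $\bigcup_{n\ge 1}\mathrm{supp}(\nu^{*n})$ (the closure of the sub-semi-group generated by $\mathbf{supp}(\nu)$), there exist $n_k\in\NN$ and $g_k\in\mathrm{supp}(\nu^{*n_k})$ with $\dist([g_k],[\pi_k])<\eta/2$. I would then define $S'_k$ to be a small open ball around $g_k$ whose projective image is contained in $\{[g]\,|\,\dist([g],[\pi_k])<\eta\}$. Since $g_k$ lies in the support of $\nu^{*n_k}$, we have $\nu^{*n_k}(S'_k)>0$, and one sets $\alpha':=\min_{1\le k\le N}\nu^{*n_k}(S'_k)>0$.

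Verification is then immediate: condition \eqref{item:align} follows by applying Lemma~\ref{lem:pi-schottky} to each $h$ and using the one-sided perturbation estimate; condition \eqref{item:self-align} follows by applying Lemma~\ref{lem:pi-schottky} with $h$ replaced by each $[\pi_j]$ in turn and using the two-sided perturbation estimate (with the slightly worse constant $\eps$, which is why \eqref{item:align} uses $2\eps$ but \eqref{item:self-align} uses $\eps$); the lower bound on $\sqz$ is built into the choice of $\eta$. The main technical obstacle is the quantitative uniform-in-$h$ perturbation estimate upgrading $\pi_k\AA^{\eps_0}h$ to $g\AA^{2\eps}h$ for every $g$ close to $[\pi_k]$, but this is a short direct calculation once $g$ and $\pi_k$ are rescaled to the same norm.
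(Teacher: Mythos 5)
Your proposal follows essentially the same route as the paper: apply Lemma~\ref{lem:pi-schottky} to get boundary points $[\pi_1],\dots,[\pi_N]$ that are Schottky at some tolerance, then transfer the alignment and squeeze properties to small open neighbourhoods by a continuity/perturbation estimate (right-multiplication is Lipschitz), and find the $n_k$ by observing that the $[\pi_k]$ lie in the closure of the projectivized supports of the powers $\nu^{*n}$. The only difference is cosmetic: the paper takes $S'_k$ to be the \emph{cone} $\{h\neq 0: \|h/\|h\|-\pi_k/\|\pi_k\|\|<\eps,\ \sqz(h)>K|\log\eps|+K\log 2\}$, which is open and scale-invariant and therefore meets $\mathbf{supp}(\nu^{*n_k})$ in positive measure as soon as it meets $\KK\,\mathbf{supp}(\nu^{*n_k})$, whereas you first pick an explicit $g_k\in\mathbf{supp}(\nu^{*n_k})$ near $[\pi_k]$ and then take a small ball around it; both give $\nu^{*n_k}(S'_k)>0$, and in both versions the squeeze lower bound is forced by taking the neighbourhood small enough (you use that $\sqz\to+\infty$ near the rank-one locus, the paper simply intersects with the set $\{\sqz>\cdots\}$). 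The calibration $3\eps\to 2\eps\to\eps$ in the paper corresponds to your $\eps_0\to 2\eps\to\eps$; the arguments are equivalent.
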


	\begin{proof}
		Let $N \in \NN$, let $\eps > 0$ and let $([\pi_1], \dots, [\pi_N])\in\partial\nu^N$ be such that:
		\begin{equation*}
			\forall h \in\mathrm{End}(E)\setminus\{0\},\; \#\{k\,|\,\pi_k\AA^{3\eps} h\} \ge (1-\rho) N
			\text{ and }\#\{k\,|\,h\AA^{3\eps} \pi_k\} \ge (1-\rho) N.
		\end{equation*}
		Such a family exists by Lemma \ref{lem:pi-schottky}. To all $k \in\{1, \dots, N\}$, we associate the open set:
		\begin{equation*}
			S'_k := \left\{ h\in\mathrm{End}(E)\setminus\{0\}\,\middle|\,\eps > \left\|\frac{h}{\|h\|}-\frac{\pi_k}{\|\pi_k\|}\right\|,\;\sqz\left(h\right) > K |\log(\eps)| + K \log(2)\right\}.
		\end{equation*}
		Now let $k \in\{1, \dots, N\}$ and $h\in\mathrm{End}(E) \setminus\{0\}$. Assume that $h \AA^{3\eps} \pi_k$, then we claim that for all $h' \in S'_k$, we have $h \AA^{2\eps} h'$. Note that the right multiplication by $h'$ is $\|h'\|$-Lipschitz so $\left\|\frac{hh'}{\|h\|}- \frac{\pi_kh'}{\|\pi_k\|}\right\|\le \eps\|h'\|$. We assumed $h \AA^{3\eps} \pi_k$, which means that $\left\|\frac{\pi_kh'}{\|\pi_k\|}\right\|\ge 3\eps\|h'\|$, hence by triangular inequality, we have $\left\|\frac{hh'}{\|h\|}\right\|\ge 2 \eps \|h'\|$. The same reasoning works the same for the left alignment and we have:
		\begin{equation*}
			\forall h \in\mathrm{End}(E)\setminus\{0\},\; \#\{k\,|\,S'_k\AA^{2\eps} h\} \ge (1-\rho) N
			\text{ and }\#\{k\,|\,h\AA^{2\eps} S'_k\} \ge (1-\rho) N.
		\end{equation*}
		
		Now let $j,k \in\{1, \dots, N\}$ be such that $\pi_j \AA^{3\eps}\pi_k$ and let $h \in S'_j$ and $h' \in S'_k$. Then by the above argument, we have $h \AA^{2\eps} \pi_k$ and by the same argument, we have $h\AA^\eps h'$. Hence:
		\begin{equation*}
			\forall j \in\{1, \dots, N\},\; \#\{k\,|\,S'_k \AA^{\eps} S'_j\} \ge (1-\rho) N
			\quad\text{and}\quad\#\{k\,|\,S'_j \AA^{\eps} S'_k\} \ge (1-\rho) N.
		\end{equation*}
		
		Let $k\in\{1, \dots, N\}$. The interior of $S'_k$ contain $\pi_k$. It means that the interior of $S'_k$ intersects $\overline\Gamma_\nu = \mathbf{cl}\left(\bigcup_{n=0}^\infty \KK\mathbf{supp}(\nu^{*n})\right)$. Hence, there is an integer $n_k$ such that the interior of $S'_k$ intersects $\RR \mathbf{supp}(\nu^{*n_k})$ and then $S'_k$ intersects $\mathbf{supp}(\nu^{*n_k})$ because it is a cone and $\nu^{n_k}(S'_k) > 0$ by characterization of the support. 
		Let $(n_k)\in\NN^N$ be such that $\nu^{*n_k}(S'_k) > 0$ for all $k\in\{1, \dots, N\}$. Let $\alpha' := \min_k \nu^{*n_k}(S'_k)$. 
	\end{proof}
	
	\begin{Lem}\label{lem:schottky}
		Let $E$ be a Euclidean space, let $\nu$ be a strongly irreducible and proximal probability measure on $\mathrm{End}(E)$, let $0 < \rho < 1$ and let $K \ge 8$. There exist an integer $N$, two constants $\alpha, \eps \in (0,1)$, an integer $m$ and a family $(S_{k})_{1\le k \le N}$ of measurable subsets of $\mathrm{End}(E)$ such that $\nu^{*m} (S_{k}) \ge \alpha$ for all $1\le k \le N$ and such that:
		\begin{gather}
			\forall h \in\mathrm{End}(E),\; \#\{k\,|\,S_k\AA^{\eps} h\} \ge (1-\rho) N
			\text{ and }\#\{k\,|\,h\AA^{\eps} S_k\} \ge (1-\rho) N,\label{item:Schottky}\\
			\forall h \in \bigcup_{k = 1}^N S_k, \; \sqz(h) \ge  K |\log(\eps)| + K \log(2).
		\end{gather}
	\end{Lem}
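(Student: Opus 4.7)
The passage from Corollary \ref{cor:S'-schottky} to Lemma \ref{lem:schottky} amounts to replacing the varying times $(n_k)$ by a single common time $m$. The plan is to construct each $S_k$ as the image under $\Pi$ of an aligned chain of $S'_j$-pivots whose total length is exactly $m$ for every $k$, leveraging the self-alignment \eqref{item:self-align} to chain different $S'_j$'s together.

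First I apply Corollary \ref{cor:S'-schottky} with parameter $\rho/3$ and a buffered squeeze bound $K' \ge K + 2$, yielding an integer $N_0$, the family $(S'_j)_{j=1}^{N_0}$ at times $(n_j)_{j=1}^{N_0}$, and constants $\alpha', \eps \in (0,1)$. The self-alignment says that the digraph $G$ on $\{1,\ldots,N_0\}$ whose edges are the pairs $(j,j')$ with $S'_j \AA^\eps S'_{j'}$ has minimum in- and out-degree at least $(1-\rho/3)N_0$, so for $\rho$ small enough $G$ is Hamilton-connected by a Meyniel/Ghouila-Houri-style criterion for directed graphs. In particular, for every ordered pair $(k,k')$ with $k \ne k'$ there is a Hamiltonian path $k = j_0^{(k,k')} \to j_1^{(k,k')} \to \cdots \to j_{N_0 - 1}^{(k,k')} = k'$ in $G$. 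I then set $N := N_0(N_0 - 1)$, $m := \sum_{j=1}^{N_0} n_j$, re-index the Schottky sets by ordered pairs $(k,k')$ with $k\ne k'$, and define
\[
S_{(k,k')} := \{s_0 s_1 \cdots s_{N_0 - 1} : s_i \in S'_{j_i^{(k,k')}} \text{ for all } i\}.
\]
By independence of the blocks, $\nu^{*m}(S_{(k,k')}) \ge (\alpha')^{N_0} =: \alpha > 0$. Because consecutive pivots along the Hamiltonian path are $G$-edges, every tuple $(s_0,\dots,s_{N_0-1})$ forms an $\AA^\eps$-aligned chain, so Lemma \ref{lem:c-chain} yields $\sqz(s_0 \cdots s_{N_0-1}) \ge N_0 K'(|\log\eps|+\log 2) - 2(N_0-1)(|\log\eps|+\log 2) \ge K|\log\eps| + K\log 2$ for $K'$ large enough. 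Moreover by \eqref{eqn:limbd-chaine} together with its transposed version, the top singular direction of such a product is $O(\eps^{K'-1})$-close to the top singular direction of $s_0 \in S'_k$, which is close to $u_k$, and the top co-singular direction is $O(\eps^{K'-1})$-close to that of $s_{N_0-1} \in S'_{k'}$, which is close to $w_{k'}$ (where $\pi_j = u_j w_j$ are the rank-one pivots from the proof of Corollary \ref{cor:S'-schottky}).

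For the Schottky condition \eqref{item:Schottky}, Lemma \ref{lem:product-is-lipschitz} combined with the closeness above shows that $S_{(k,k')} \AA^\eps h$ reduces (modulo an error $O(\eps^{K'-1})$ absorbable into $\eps$ for $K'$ large enough) to the single-vector condition $\|w_{k'} h\| \ge \eps \|w_{k'}\|\|h\|$, which depends only on $k'$. By \eqref{item:align} of Corollary \ref{cor:S'-schottky} applied in the dual direction, for every $h$ at least $(1-\rho/3)N_0$ values of $k'$ satisfy this condition; multiplying by the $N_0 - 1$ free choices of $k \ne k'$ gives $(N_0 - 1)(1-\rho/3) N_0 \ge (1-\rho) N$ pairs. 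The symmetric bound for $h \AA^\eps S_{(k,k')}$ follows by the analogous argument with $u_k$ in place of $w_{k'}$. The principal obstacle is securing Hamilton-connectedness of $G$ uniformly in $\rho$, which is handled by choosing $\rho$ small in the first step and absorbing the slack into the counting margin $(1-\rho/3)$ beating $(1-\rho)$; the secondary technical point is that the squeeze must accumulate rather than erode across the $N_0$-block chain, which is ensured by taking $K'$ sufficiently larger than $K$.
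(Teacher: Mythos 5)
Your approach is genuinely different from the paper's, and the strategy is sound in spirit: you replace the paper's repetition-to-common-multiple trick with Hamiltonian paths of constant total length $m = \sum_j n_j$. However, there is a real gap in the graph-theoretic step.

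The criteria you cite (Meyniel, Ghouila-Houri) are sufficient conditions for the \emph{existence of a Hamiltonian cycle} in a digraph; they do not give Hamilton-connectedness (the existence of a Hamiltonian $k$-to-$k'$ path for \emph{every} ordered pair $k\neq k'$). Hamilton-connectedness is a strictly stronger property and the standard degree conditions for it are not the ones you name. There are theorems in this direction (e.g.\ Overbeck-Larisch and later Thomassen for oriented graphs), but they come with their own minimum-order and degree hypotheses that must be checked explicitly against the $N_0$ and $(1-\rho/3)$ produced by Corollary~\ref{cor:S'-schottky}, including the annoyance that the count in \eqref{item:self-align} includes the loop $k=j$, so the genuine semi-degree is $(1-\rho/3)N_0 - 1$. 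As written, the Hamilton-connectedness claim is asserted rather than established, and it is the load-bearing step that makes the common time $m$ work.

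The paper's proof sidesteps all digraph machinery: for each $k$ it selects a short aligned cycle $S'_k \AA^\eps S'_{i_k} \AA^\eps S'_{j_k} \AA^\eps S'_k$ (such a choice exists by a direct count $(1-2\rho)N(1-3\rho)N>0$ for $\rho<1/3$), repeats the two halves $p_k$ and $q_k$ times where $p_k(n_k+n_{i_k})=m'$ and $q_k(n_k+n_{j_k})=m''$ are least common multiples, and sets $m=m'+m''$. This gives a common time for free, requires no global path structure, and keeps each $S_k$ beginning and ending with $S'_k$, so the left and right alignment profiles of $S_k$ are both governed by the single index $k$; the Schottky count then falls out directly from \eqref{item:align} via Lemma~\ref{lem:zouli-alignemont} without the rank-one approximation error bookkeeping you need. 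To repair your proof, either prove Hamilton-connectedness carefully for this class of dense digraphs (with the precise degree bound), or adopt the LCM-of-short-cycles device, which is both shorter and requires no graph theory.
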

	
	\begin{proof}
		Without loss of generality, we may assume that $\rho < \frac{1}{3}$
		Let $N \in\NN$, let $\alpha', \eps \in (0,1)$, let $(n_k)$ and let $(S'_k)$ be as in Corollary \ref{cor:S'-schottky}. To all index $k\in\{1, \dots, N\}$, we associate two indices index $i_k, j_k \in \{1, \dots, N\}$ such that $S'_k \AA^\eps S'_{i_k}$ and $S'_{i_k} \AA^\eps S'_{j_k}$ and $S'_{j_k} \AA^\eps S'_{k}$. By \eqref{item:self-align}, such indices $i_k,j_k$ exist because:
		\begin{equation*}
			\#\{(i,j) \,| \,S'_{i}\AA^\eps S'_k \AA^\eps S'_{i}\AA^\eps S'_{j} \AA^\eps S'_{k}\AA^\eps S'_{j}\}\ge (1 - 2\rho) N (1-3\rho) N > 0.
		\end{equation*}
		Hence the set of all possible values for $i_k,j_k$ is non-empty. Now let $m'$ be the smallest common multiple of $\{n_k + n_{i_k}\,|\, 1 \le k \le N\}$, let $m''$ be the smallest common multiple of $\{n_k + n_{j_k}\,|\, 1 \le k \le N\}$ and let $m = m' + m''$. Let $k \in \{1, \dots, N\}$. We define $p_k := \frac{m'}{n_k + n_{i_k}}$ and $q_k := \frac{m''}{n_k + n_{j_k}}$ and:
		\begin{equation*}
			S_k := (S'_k \cdot S'_{i_k})^{\cdot p_k}\cdot(S'_{j_k} \cdot S'_k)^{\cdot q_k} = \Pi\left( (S'_k \times S'_{j_k})^{p_k}\times(S'_{j_k} \times S'_k)^{q_k}\right).
		\end{equation*}
		Then we have $\nu^{*m}(S_k) \ge \left(\nu^{n_k}(S'_k)\nu^{n_{i_k}}(S'_{i_k})\right)^{p_k}\left(\nu^{n_{i_k}}(S'_{i_k})\nu^{n_{j_k}}(S'_{k})\right)^{q_k} \ge \alpha'^{2 p_k + 2 q_k} \ge \alpha'^{2m} =: \alpha$.
		
		Now let $h \in \mathrm{End}(E) \setminus \{0\}$ and let $k\in\{1, \dots, N\}$. Assume that $h \AA^{2\eps} S'_k$, then by Lemma \ref{lem:zouli-alignemont}, we have $h \AA^{\eps} S_k$. If we instead assume that $S'_k \AA^{2\eps} h$, then by the same argument applied to the transpose, we have $S_k \AA^\eps h$. Lemma \ref{lem:zouli-alignemont} also implies that $\min\sqz(S_k) \ge \min\sqz(S'_k)$ for all $k$.
	\end{proof}
	
	\begin{Cor}\label{cor:schottky}
		Let $E$ be a Euclidean space, let $\Gamma \in  \{\mathrm{End}(E), \mathrm{GL}(E)\}$ and let $\nu$ be a strongly irreducible and proximal probability measure on $\Gamma$, let $0 <\rho < 1$ and let $K \ge 8$. There exist an integer $m$, two constants $\alpha, \eps \in (0,1)$ and a probability measure $\tilde\nu_s$ on $\Gamma^m$ such that:
		\begin{enumerate}
			\item The measure $\Pi_*\tilde\nu_s$ is $\rho$-Schottky for $\AA^\eps$.
			\item The measure $\tilde\nu_s$ is absolutely continuous with respect to $\nu^{\otimes m}$ in the sense that $\alpha \tilde\nu_s \le \nu^{\otimes m}$.
			\item We have $\sqz_*\Pi_*\tilde\nu_s\left[K |\log(\eps)| + K \log(2),+\infty\right] = 1$ \ie for all $(h_1, \dots, h_m)$ in the support of $\tilde\nu_s$, we have $\sqz(h_1\cdots h_m) \ge K |\log(\eps)| + K \log(2)$.
			\item The support of $\tilde\nu_s$ is compact in $\Gamma^m$.
		\end{enumerate}
	\end{Cor}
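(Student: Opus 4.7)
The plan is to repackage Lemma \ref{lem:schottky} by lifting the Schottky sets $S_k \subset \Gamma$ back to $\Gamma^m$ through the product map $\Pi$, then averaging their normalized restrictions of $\nu^{\otimes m}$. Concretely, apply Lemma \ref{lem:schottky} with the given $\rho$ and $K$ to obtain an integer $N$, constants $\alpha_0, \eps \in (0,1)$, an integer $m$, and measurable sets $(S_k)_{1\le k \le N} \subset \Gamma$ with $\nu^{*m}(S_k) \ge \alpha_0$, the Schottky alignment \eqref{item:Schottky}, and $\sqz(h) \ge K|\log(\eps)| + K\log(2)$ on $\bigcup S_k$.

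Next, I would promote each $S_k$ to a compact subset of $\Gamma^m$. Since $\Gamma^m$ is Polish (being a Borel subset of a Euclidean space to a power) and $\nu^{\otimes m}$ is a Radon measure, inner regularity applied to the measurable set $\Pi^{-1}(S_k)$ (which has measure $\nu^{*m}(S_k) \ge \alpha_0$) produces a compact set $T_k \subset \Pi^{-1}(S_k)$ with $\nu^{\otimes m}(T_k) \ge \alpha_0/2$. Then define
\begin{equation*}
    \tilde\nu_s \;:=\; \frac{1}{N}\sum_{k = 1}^{N} \frac{\mathds{1}_{T_k}}{\nu^{\otimes m}(T_k)} \cdot \nu^{\otimes m}
\end{equation*}
and set $\alpha := \alpha_0/2$.

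Verifying the four items is then routine. For the absolute continuity bound, the Radon--Nikodym density of $\tilde\nu_s$ with respect to $\nu^{\otimes m}$ is $\frac{1}{N}\sum_k \frac{\mathds{1}_{T_k}}{\nu^{\otimes m}(T_k)} \le \frac{1}{N} \cdot \frac{\sum_k \mathds{1}_{T_k}}{\alpha_0/2} \le \frac{1}{\alpha}$, so $\alpha\tilde\nu_s \le \nu^{\otimes m}$. The support of $\tilde\nu_s$ lies in $\bigcup_{k=1}^{N} T_k$, a finite union of compact sets, hence compact; moreover for $\tilde h \in T_k$ one has $\Pi(\tilde h) \in S_k$, so $\sqz(\Pi(\tilde h)) \ge K|\log(\eps)| + K\log(2)$, giving items (3) and (4).

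For the Schottky property of $\Pi_* \tilde\nu_s$: fix any $h \in \Gamma$. By \eqref{item:Schottky} from Lemma \ref{lem:schottky}, the set $I_h := \{k \,|\, S_k \AA^\eps h\}$ has cardinality at least $(1-\rho)N$; for every $k \in I_h$, the set $T_k \subset \Pi^{-1}(S_k)$ is entirely contained in $\Pi^{-1}(\{g \,|\, g \AA^\eps h\})$, so it contributes a weight of exactly $1/N$ to $\Pi_*\tilde\nu_s\{g \,|\, g \AA^\eps h\}$. Summing over $k \in I_h$ yields $\Pi_*\tilde\nu_s\{g \,|\, g \AA^\eps h\} \ge (1-\rho)$, and the symmetric inequality for right alignment follows identically. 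Nothing in the argument is delicate; the only small obstacle is the tightness step producing the compact $T_k$, which relies only on the fact that $\Gamma \subset \mathrm{End}(E)$ is Polish.
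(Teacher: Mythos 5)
Your proof is correct and follows essentially the same route as the paper: both apply Lemma~\ref{lem:schottky} to obtain the family $(S_k)$ and then construct $\tilde\nu_s$ as a uniform average of normalized restrictions of $\nu^{\otimes m}$ to sets lying over the $S_k$. The one place you genuinely diverge is the compactness step. The paper first builds $\tilde\nu_s$ on the full preimages $\Pi^{-1}(S_k)$, observes the resulting measure may not have compact support, and then truncates to a compact $\mathbf{C}$ of high $\tilde\nu_s$-measure; this degrades the Schottky parameter from $\rho$ to $\rho+\beta$, so the argument implicitly has to be run starting from a smaller $\rho'<\rho$ to land on the target $\rho$. Your preprocessing via inner regularity — shrinking each $\Pi^{-1}(S_k)$ to a compact $T_k$ with $\nu^{\otimes m}(T_k)\ge\alpha_0/2$ \emph{before} averaging — avoids that loss entirely: since $T_k\subset\Pi^{-1}(S_k)$, the alignment condition for every $k$ in $I_h$ is inherited by all of $T_k$, so the Schottky bound is exactly $1-\rho$ and no adjustment of $\rho$ is needed. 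This is a modest but genuine simplification of the paper's compactness handling; the rest of the verification (the density bound $\alpha\tilde\nu_s\le\nu^{\otimes m}$, the $\sqz$ lower bound, compactness of $\bigcup T_k$) is correct as you wrote it.
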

	
	\begin{proof}
		Let $m,N\in\NN$, $\alpha,\eps\in(0,1)$ and $(S_k)_{1 \le k\le N}$ be as in Lemma \ref{lem:schottky}. 
		Define $f : \Gamma^m \to \RR_{\ge 0}$ as:
		\begin{equation*}
			f := \sum_{k = 1}^{N}\frac{\mathds{1}_{S_k} \circ \Pi}{\nu^{*m}(S_k)}.
		\end{equation*}
		Then $f \le \frac{N}{\alpha}$ because we assumed that $\nu^{*m}(S_k) \ge \alpha$ for all $k$. 
		Moreover, we can check that $\int_{\Gamma^m} f d\nu^{\otimes m} = N$ and for all $k \in\{1, \dots, N\}$ we have $\int_{\Pi^{-1}(S_k)} f d\nu^{\otimes m} \ge 1$. Let:
		\begin{equation*}
			\tilde\nu_s := \frac{f \nu^{\otimes m}}{\int_{\Gamma^m} f d\nu^{\otimes m}}\quad\text{and}\quad \nu_s := \Pi_*\tilde{\nu}_s.
		\end{equation*} 
		Then $\frac{\alpha}{N} \nu_s \le \nu^{*m}$ by definition. 
		Moreover, for all $I \subset \{1, \dots, N\}$, we have $\nu_s\left(\bigcup_{i\in I}S_i\right)\ge \frac{\# I}{N}$.
		Hence $\nu_s$ is $\rho$-Schottky by \eqref{item:Schottky}. 
		Moreover $\nu_s$ is supported on $\bigcup S_k$ and $\sqz(S_k)\subset \left[K |\log(\eps)| + K \log(2),+\infty\right]$ for all $k$ so $\sqz_*\nu_s\left[K |\log(\eps)| + K \log(2),+\infty\right] = 1$. 
		
		Now we only need to show that we can moreover assume that $\tilde\nu_s$ has compact support.
		Let $\beta \in (0,1)$. 
		There is a compact $\mathbf{C} \subset \Gamma^m$ such that $\tilde\nu_s({\mathbf{C}}) > 1-\beta$. 
		Let $\tilde{\nu}^{\mathbf{C}}_s := \frac{\mathds{1}_{\mathbf{C}}\tilde\nu_s}{\tilde\nu_s(K)}$. 
		Let $\nu_s^{\mathbf{C}} := \Pi_*\tilde{\nu}_s^{\mathbf{C}}$. 
		Then $\nu_s^{\mathbf{C}}\left(\bigcup_{i\in I}S_i\right)\ge \frac{1}{N} - \beta$ for all $I \subset \{1,\dots, N\}$. 
		Hence $\nu_s^{\mathbf{C}}$ is $(\rho + \beta)$-Schottky. 
		Moreover ${\alpha}{(1-\beta)} \tilde\nu_s \le \nu^{\otimes m}$. This is true for all $0 < \beta < \frac{1}{3}  - \rho$ so $\rho + \beta$ can take any value in $(0,1)$ and we always have ${\alpha}{(1-\beta)} > 0$.
	\end{proof}

	\section{Pivoting technique}\label{sec:pivot}

	\subsection{Statement of the result and motivation}

	In this section, we denote by $\Gamma$ a measurable semi-group that we endow with a binary relation $\AA$ and a subset $S \subset \Gamma$. The idea is to think of $\Gamma$ as $\mathrm{End}(E)$ or $\mathrm{GL}(E)$, think of $\AA$ as $\AA^\eps$ and think of $S$ as a compact subset of $\Gamma$ such that $\min\sqz(S) \ge K |\log(\eps)| + K \log(2)$.
	 
	We denote by $\widetilde\Gamma$ the associated semi-group of words. We define the semi-binary relation $\widetilde{\AA}^S \subset \Gamma \times \widetilde\Gamma$ recursively. Let $g \in\Gamma$ and let $\tilde\gamma \in \widetilde{\Gamma}$. We write $g \widetilde{\AA}^S \tilde\gamma$ if $g \in S$ and one of the following conditions holds:
	\begin{itemize}
		\item There exist words $\tilde{g}_0, \tilde{g}_1, \tilde{g}_2 \in\widetilde \Gamma$ such that $\tilde\gamma = \tilde{g}_0 \odot \tilde{g}_1 \odot \tilde{g}_2$, and $\Pi(\tilde{g}_1) \in S$ and $\Pi(\tilde{g}_0) \AA \Pi(\tilde{g}_1) \AA \Pi(\tilde{g}_2)$ and $g \AA \Pi(\tilde\gamma)$.
		\item There exist words $\tilde{g}_0, \tilde{g}_1, \tilde{g}_2 \in \widetilde\Gamma$ such that $\tilde\gamma = \tilde{g}_0 \odot \tilde{g}_1 \odot \tilde{g}_2$, and $\Pi(\tilde{g}_1) \in S$ and $g \widetilde{\AA}^S \tilde{g}_0$ and $\Pi(\tilde{g}_0)\AA \Pi(\tilde{g}_1) \AA \Pi(\tilde{g}_2)$.
	\end{itemize}
	
	Given $0 \le k < n \in\NN$, we write $\chi_k^n : \Gamma^{\{0, \dots,n-1\}} \to \Gamma\,;\, (\gamma_0, \dots, \gamma_{n-1})\mapsto \gamma_k$ for the $k$-th coordinate projection, in the same way, we define $\chi_k^\infty: \Gamma^{\NN} \to \gamma$. 
	We sometimes omit the total length $l \in\NN\cup \{\infty\}$ and write $\chi_k$ instead of $\chi_k^l$.
	Given $\gamma\in\tilde\Gamma$, and $k < L(\tilde\gamma)$, we call $\chi_k(\tilde\gamma)$ the $k$-th character of $\tilde{\gamma}$.
	
	\begin{Th}[Pivotal extraction]\label{th:pivot-extract}
		Let $\Gamma$ be a measurable semi-group, let $\AA$ be a binary relation on $\Gamma$ and let $S \subset\Gamma$ be measurable. Let $\nu$ be a probability measure on $\Gamma$, let $0< \alpha <1$, let $0< \rho <\frac{1}{5}$ and let $m \in\NN$. Let $\tilde\nu_s$ be a probability measure on $\Gamma^m$ such that $\alpha\tilde\nu_s\le \nu^{\otimes m}$ and let $\nu_s := \Pi_*\tilde\nu_s$. Assume that $\nu_s$ is supported on $S$ and $\rho$-Schottky for $\AA$. Then there exist constants $C, \beta > 0$ that only depend on $(\alpha, \rho, m)$ and an extraction $\tilde\mu$ of $\nu^{\otimes\NN}$ such that for $(\tilde{g}_k)_{k\in\NN}\sim \tilde\mu$, for $(\gamma_n) = \bigodot_{m =0}^\infty \tilde{g}_k \sim \nu^{\otimes\NN}$, all the following conditions hold:
		\begin{enumerate}
			\item For all $k \in\NN$, we have $\Pi\left(\tilde{g}_{2k}\right) \AA \Pi(\tilde{g}_{2k +1}) \widetilde{\AA}^S \tilde{g}_{2k+2}$ almost surely.\label{item:atilde}
			\item For all $k \in\NN$, we have $L (\tilde{g}_{2k+1}) = m$ almost surely and the conditional distribution of $\tilde{g}_{2k +1}$ relatively to $(\tilde{g}_j)_{j \neq 2k+1}$ is almost surely bounded above by $\frac{\tilde\nu_s}{1-2\rho}$ \ie for all measurable $A \subset \Gamma^m$, we have almost surely:
			\begin{equation}\label{eq:tjr-scho}
				\PP\left(\tilde{g}_{2k +1} \in A \,\middle|\,(\tilde{g}_j)_{j \neq 2k+1}\right) \le \frac{\tilde\nu_s(A)}{1-2\rho}
			\end{equation}
			\item For all $k \in\NN$, we have almost surely:
			\begin{equation}\label{eq:mmt-exp}
				\forall l \in\NN,\;\PP(L (\tilde{g}_{k}) > l\,|\,(\tilde{g}_j)_{j \neq k}) \le C \exp(-\beta l).
			\end{equation} 
			\item For all $n \in\NN$, and for all measurable $A \subset \Gamma \setminus \bigcup_{k = 0}^{m-1}\chi_k^m(\mathbf{supp}(\tilde\nu_s))$, we have:
			\begin{equation}\label{eq:densite}
				\PP\left(\gamma_n\in A \,\middle|\,(L(\tilde{g}_{2k}))_{k\in\NN}\right) \le \frac{\nu(A)}{1-\alpha}.
			\end{equation}
		\end{enumerate}
	\end{Th}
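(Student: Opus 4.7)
The plan is to enrich the i.i.d.\ sequence $(\gamma_n)$ by an auxiliary coin-flipping process and then adapt the pivoting algorithm of~\cite[Section~2]{pivot}. Using the assumption $\alpha \tilde\nu_s \le \nu^{\otimes m}$, decompose $\nu^{\otimes m} = \alpha \tilde\nu_s + (1-\alpha)\tilde\nu_\perp$, where $\tilde\nu_\perp$ is a probability measure on $\Gamma^m$. Group the sequence into length-$m$ blocks $B_k := (\gamma_{km}, \dots, \gamma_{(k+1)m-1})$ and introduce independent Bernoulli marks $X_k \in \{0,1\}$ with $\PP(X_k = 1) = \alpha$; conditionally on $X_k = 1$ draw $B_k \sim \tilde\nu_s$, otherwise draw $B_k \sim \tilde\nu_\perp$. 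This does not change the joint law of $(B_k)$, hence of $(\gamma_n)$, but furnishes the extra randomness needed for the construction.

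Next I would define a set $P = \{q_1 < q_2 < \dots\}$ of \emph{stable pivots} among the Schottky candidates $T := \{k : X_k = 1\}$ by the standard pivotal algorithm. Processing $T$ in increasing order, maintain a provisional list of pivots forming a chain in the sense of $\widetilde\AA^S$, and when a new candidate $t \in T$ arrives, test whether $\Pi(B_p) \AA \Pi(B_{p+1}\odot\cdots\odot B_{t-1}) \AA \Pi(B_t)$, where $p$ is the current top pivot; if the test succeeds, append $t$, otherwise pop $p$ and retry with the previous pivot. The $\rho$-Schottky hypothesis on $\nu_s$ gives probability at least $1-\rho$ of each test succeeding on either side, so failure sequences have geometric tails controlled by $\rho$ and the candidate gaps have geometric tails controlled by $\alpha$. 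A candidate is stable if it remains in the list indefinitely; by the usual lookahead argument this is a stopping condition measurable up to an exponentially integrable delay. Set $\tilde g_{2k+1} := B_{q_{k+1}}$, a block of length exactly $m$, and $\tilde g_{2k}$ to be the concatenation of all blocks lying strictly between $q_k$ and $q_{k+1}$ (with $\tilde g_0$ the prefix before $q_1$).

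Condition~\eqref{item:atilde} is built into the construction: the recursive definition of $\widetilde\AA^S$ encodes exactly "every subsequent pivot is aligned with the preceding chain," which the algorithm enforces. Condition~\eqref{eq:tjr-scho} follows from the Markovian nature of the rule. Conditionally on all blocks except $B_{q_{k+1}}$, the law of $B_{q_{k+1}}$ is $\tilde\nu_s$ restricted to the two-sided alignment event, which has $\nu_s$-probability at least $1 - 2\rho$ by the Schottky hypothesis applied to the left neighbor (the previous pivot together with the intermediate blocks) and the right neighbor (the next pivot together with the intermediate blocks); hence the conditional density against $\tilde\nu_s$ is at most $(1-2\rho)^{-1}$.

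Condition~\eqref{eq:mmt-exp} reduces to bounding the number of original blocks between consecutive stable pivots, which is dominated by a sum of two layers of geometric random variables (non-Schottky blocks between Schottky candidates, with parameter $\alpha$; and rejected Schottky candidates between accepted pivots, with parameter $\ge 1-2\rho$), yielding uniform constants $C,\beta$ depending only on $(\alpha,\rho,m)$. Condition~\eqref{eq:densite} is a direct consequence of the decomposition: if $A$ avoids all coordinate projections $\chi_k^m(\mathbf{supp}(\tilde\nu_s))$, then $\gamma_n \in A$ forces its enclosing block to be non-Schottky, whose law is $\tilde\nu_\perp \le (1-\alpha)^{-1}\nu^{\otimes m}$, and each coordinate marginal is dominated by $(1-\alpha)^{-1}\nu$. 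The main technical obstacle is setting up the pivot-selection rule so that stability is determined in a locally measurable way compatible with conditioning on all the other blocks; this is exactly the role of the toy model of Section~\ref{toy-model}, but here one must additionally carry through the bookkeeping of block lengths and the density comparison with $\nu$ required for~\eqref{eq:densite}.
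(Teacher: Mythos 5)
Your high-level plan (Bernoulli marking to factor out Schottky blocks, a stack-based backtracking pivot algorithm, and the two-layer geometric tail estimate) matches the paper's skeleton. But there is a genuine gap in the proof of condition \eqref{eq:densite}, and it is not a minor bookkeeping matter. You condition on $(L(\tilde g_{2k}))_{k\in\NN}$ and claim the non-Schottky blocks retain law $\tilde\nu_\perp$; however, the pivot structure (which determines those lengths) is decided by alignment tests involving the \emph{values} of the non-Schottky blocks, so conditioning on it biases their conditional law in a way your argument does not control. The paper sidesteps this by introducing auxiliary i.i.d.\ uniform weights $\tau_j$ and the penalty functions $P_{\nu_s},P'_{\nu_s}$ (Definition~\ref{def:pivot}): the acceptance event $\tau_j < P_{\nu_s}(f,g,h)$ has conditional probability \emph{exactly} $1-2\rho$ for every value of the non-Schottky products $f,h$, which makes the entire pivot-selection process independent of $(\widetilde\gamma^w_{2k})_{k\in\NN}$, and this independence is exactly what the proof of \eqref{eq:densite} uses. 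Without some analogue of this randomization, "direct consequence of the decomposition" does not go through.

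A second, related issue is the backtracking analysis. You test $\Pi(B_p)\AA\Pi(\text{intermediate})\AA\Pi(B_t)$ and invoke "probability at least $1-\rho$ per test," but $B_p$ has already been accepted and therefore already conditioned on its own earlier alignment event; re-testing it against newly merged material is \emph{not} a fresh Schottky test, and the acceptance probability is $\ge\tfrac{1-3\rho}{1-2\rho}$, not $\ge 1-\rho$. This is why the paper introduces the second penalty $P'_{\nu_s}$ with the $(1-3\rho)$ numerator, and why the hypothesis requires $\rho<\tfrac15$ (so that the effective backtracking probability $\tfrac{\rho}{1-2\rho}$ stays strictly below $\tfrac13$ and the geometric series converges with the right margin). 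Your sketch elides the distinction between fresh and conditioned Schottky blocks, and so the claimed geometric tails for the backtrack length are not justified as stated. Conditions \eqref{item:atilde}, \eqref{eq:tjr-scho} and \eqref{eq:mmt-exp} would survive once the penalty mechanism is in place, but without it the argument for \eqref{eq:densite} — and strictly speaking also for \eqref{eq:mmt-exp} with constants depending only on $(\alpha,\rho,m)$ — is incomplete.
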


 	Sections \ref{schottky} and \ref{sec:algo-pivot} are devoted to the proof of Theorem \ref{th:pivot-extract}.
 	In section \ref{schottky}, we construct a preliminary extraction that gives us a sequence of independent random matrices alternating between an unknown distribution and the Schottky distribution $\nu_s$ of Theorem \ref{th:pivot-extract}.
 	Then from this ping-pong extraction, we construct an extraction for which the unknown words now have a large squeeze coefficient.
 	We do not claim that the words in this preliminary extraction are aligned.
 	
 	In section \ref{sec:algo-pivot}, we implement the pivoting technique to construct an extraction which is aligned.
 	This means that we look at the sequence of words that we have constructed in section \ref{schottky} from past to future.
 	All oddly indexed words are candidate pivotal times. 
 	The pivoting technique is an inductive way to eliminate pivotal times so that the word indexed by each selected pivotal time is aligned with its neighbours \ie aligned on the left with the product of everything until the previous selected pivotal time and aligned on the right with the product of everything until the next candidate or selected pivotal time.
 	
 	We move forward and select the current candidate pivotal time when the $\tilde\nu_s$-distributed word is aligned with both its neighbours.
 	Otherwise we eliminate the candidate pivotal time and move backwards \ie concatenate everything together and look at the last candidate pivotal time.
 	We then use a version of \eqref{eq:tjr-scho} which holds all over the inductive construction to show that the probability of backtracking each step is at most $\frac{\rho}{1-2\rho}$ and we get an exponential control over the size of the backtrack.
 	The issue is that the algorithm does not guarantee proper alignment but alignment in the sense of $\widetilde{\AA}^S$. 
 	Indeed the selected pivotal time that guarantees us the alignment satisfies three alignment conditions, hence \eqref{eq:tjr-scho} does not hold any more for this time, therefore we have to discard it. 
 	Then by induction we show that the previous candidate pivotal time is $\widetilde{\AA}^S$-aligned with the concatenation of all the words we have just concatenated together.
 	Hence the inductive and non-symmetric definition of $\widetilde{\AA}^S$.
 	
 	Note that in concrete applications the alignment $\widetilde{\AA}^S$ implies genuine alignment as testified by the next two results.
 	
 	\begin{Rem}[Rigidity of the alignment in the toy model]
 		Let $\Gamma = \langle a,b,c\,|\,a^2=b^2=c^2=\mathbf{1}\rangle$. Let $\AA := \{(g,h);\,|g \cdot h|= |g|+|h|\}$ and let $S := \Gamma\setminus\{\mathbf{1}\}$. Let $\gamma \widetilde{\AA}^S \tilde{g}$ and let $g = \Pi(\tilde{g})$. Then we have $\gamma \AA g$ and $g \in S$.
 	\end{Rem}

	\begin{Prop}[Rigidity of the alignment of matrices]\label{prop:pivot-is-aligned}
		Let $ E$ be a Euclidean vector space. Let $\Gamma = \mathrm{End}(E)$, let $\eps\in(0,1)$. Let $\AA \subset \AA^{\eps}$ and let $S \subset \{\gamma \in\Gamma\,|\,\sqz(\gamma)\ge 8|\log(\eps)|+ 10\log(2)\}$ be measurable. Let $(\tilde{\gamma}_n)_{n \in\NN} \in \widetilde\Gamma^{\NN}$ and let $({\gamma}_n)_{n \in\NN} := (\Pi(\tilde{\gamma}_n))_{n \in\NN}$. Assume that for all $n \in\NN$, we have:
		\begin{equation*}
			\gamma_{2n+1}\in S \quad\text{and}\quad
			\gamma_{2n} \AA \gamma_{2n+1} \widetilde{\AA}^S \tilde\gamma_{2n+2}.
		\end{equation*}
		Then we have $\gamma_{2n+1}\AA^\frac{\eps}{2} \gamma_{2n+2}$ and $\sqz(\gamma_{2n+2}) \ge 4|\log(\eps)| + 7 \log(2)$ for all $n \in\NN$ and $\gamma_i\cdots\gamma_{j-1}\AA^{\frac{\eps}{4}}\gamma_j\cdots\gamma_{k-1}$ for all $0\le i \le j \le k \in\NN$.
	\end{Prop}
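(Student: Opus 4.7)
The plan is to reduce the proposition to a single sub-lemma about the auxiliary relation $\widetilde{\AA}^S$, and then to read off the three conclusions from that sub-lemma together with the local-to-global results of Section~\ref{kak}. The sub-lemma to establish is that, under the hypotheses of the proposition, whenever $g \in S$ and $g\,\widetilde{\AA}^S\,\tilde\gamma$ one has $g\AA^{\eps/2}\Pi(\tilde\gamma)$ and $\sqz(\Pi(\tilde\gamma)) \ge 4|\log(\eps)| + 7\log(2)$.

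To prove the sub-lemma, we fully unfold the recursive definition of $\widetilde{\AA}^S$. Each invocation of the recursive clause splits the current word as $\tilde{g}_0\odot\tilde{g}_1\odot\tilde{g}_2$ with $\Pi(\tilde{g}_1)\in S$ and $\Pi(\tilde{g}_0)\AA\Pi(\tilde{g}_1)\AA\Pi(\tilde{g}_2)$, and descends into $\tilde{g}_0$; the recursion terminates at some finite depth $k\ge 1$ with the base case, which gives $g\AA\Pi(\tilde{a}_{k-1})$, where $\tilde{a}_i$ denotes the left sub-word appearing at depth $i$ (so $\tilde{a}_0=\tilde\gamma$). Writing $A_i:=\Pi(\tilde{a}_i)$, $S_i:=\Pi(\tilde{s}_i)$, $B_i:=\Pi(\tilde{b}_i)$, the unfolding produces the telescoping identity $A_{i-1}=A_i S_i B_i$ together with the level-wise alignment $A_i\AA S_i\AA B_i$ and $S_i\in S$ at every level. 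The squeeze bound on $\Pi(\tilde\gamma)$ then falls out of Lemma~\ref{lem:triple-ali} applied at the outermost level: $\sqz(A_0)=\sqz(A_1 S_1 B_1)\ge \sqz(S_1)-4|\log(\eps)|-2\log(2)\ge 4|\log(\eps)|+8\log(2)$.

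The nontrivial step is the alignment $g\AA^{\eps/2}\Pi(\tilde\gamma)$, which we extract by feeding the linearized sequence $(g,A_{k-1},S_{k-1},B_{k-1},S_{k-2},B_{k-2},\ldots,S_1,B_1)$ into Lemma~\ref{lem:Atilde}. The essential bookkeeping observation is that the partial product $\gamma_0\cdots\gamma_{2i}$ along that sequence equals $A_{k-1-i}$, so the alignment hypothesis $(\gamma_0\cdots\gamma_{2i})\AA^\eps\gamma_{2i+1}\AA^\eps\gamma_{2i+2}$ of Lemma~\ref{lem:Atilde} reduces to the level-$(k-1-i)$ relation $A_{k-1-i}\AA S_{k-1-i}\AA B_{k-1-i}$; the squeeze bound on $\gamma_0=A_{k-1}=A_k S_k B_k$ comes from Lemma~\ref{lem:triple-ali} exactly as in the previous paragraph, the squeeze bound on each pivot $\gamma_{2i+1}=S_{k-1-i}$ is immediate from $S_{k-1-i}\in S$, and $\gamma_{-1}\AA^\eps\gamma_0$ is just the base-case condition $g\AA A_{k-1}$. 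The depth-one case $k=1$ is handled separately and trivially, since the base case already supplies $g\AA\Pi(\tilde\gamma)$ directly.

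Once the sub-lemma is available, the first two conclusions follow by applying it to each pair $\gamma_{2n+1}\,\widetilde{\AA}^S\,\tilde\gamma_{2n+2}$. Combining with the given $\gamma_{2n}\AA\gamma_{2n+1}$ (which upgrades to $\gamma_{2n}\AA^\eps\gamma_{2n+1}$ since $\AA\subset\AA^\eps$), we obtain the uniform alignment $\gamma_m\AA^{\eps/2}\gamma_{m+1}$ for every $m$ and the uniform squeeze bound $\sqz(\gamma_m)\ge 2|\log(\eps/2)|+4\log(2)$ for every $m\ge 1$. The third conclusion then follows from Lemma~\ref{lem:alipart} applied, with $\eps$ replaced by $\eps/2$, to the finite subsequence $\gamma_i,\ldots,\gamma_{k-1}$ with the cut placed at $j$; the corner cases $i=j$ or $j=k$ are trivial because one of the two products is then an empty product. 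The main obstacle throughout is precisely the bookkeeping in the sub-lemma: threading the nested recursion of $\widetilde{\AA}^S$ into a single linear chain of matrices that matches exactly the hypothesis shape of Lemma~\ref{lem:Atilde}.
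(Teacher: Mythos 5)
Your proposal is correct and follows essentially the same route as the paper: both reduce the statement to a sub-claim about $\widetilde{\AA}^S$-chains (the paper packages the unfolding as an inductive claim producing a family $h_0,\dots,h_{2l}$, you linearize the recursion directly into the sequence $(g, A_{k-1}, S_{k-1}, B_{k-1}, \dots, S_1, B_1)$, which is the same data), then feed that sequence into Lemma~\ref{lem:Atilde} after extracting the squeeze bounds from Lemma~\ref{lem:triple-ali}, and finally obtain the partial-product alignment from Lemma~\ref{lem:alipart}. The bookkeeping identity $\gamma_0\cdots\gamma_{2i}=A_{k-1-i}$ is verified correctly and the corner cases ($k=1$, and $i=j$ or $j=k$ in the last step) are handled appropriately.
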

	
	\begin{proof}
		Let $n \in\NN$. We want to show that $\gamma_{2n+1}\AA^\frac{\eps}{2} \gamma_{2n+2}$ and $\sqz(\gamma_{2n+2}) \ge 4|\log(\eps)| + 7 \log(2)$. 
		Let $\tilde{g}_0, \tilde{g}_1, \tilde{g}_2 \in\widetilde\Gamma$ be such that $\tilde\gamma_{2} = \tilde{g}_0 \odot \tilde{g}_1 \odot \tilde{g}_2$ and for all $i\in\{0,1,2\}$, let $g_i = \Pi(\tilde{g}_i)$. 
		Assume that $g_0 \AA g_1 \AA g_2$ and $g_1\in S$.
		Then by Lemma \ref{lem:triple-ali}, we have $\sqz(\gamma_{2k+2}) \ge 4|\log(\eps)| + 7\log(2)$.
		If we assume that $\gamma_{2n+1}\AA (g_0 g_1 g_2)$, then trivially $\gamma_{2n+1} \AA^\frac{\eps}{2} \gamma_{2n+2}$. 
		Otherwise, we assume that $\gamma_{2n+1} \widetilde{\AA}^S  \tilde{g}_0$. 
		
		We claim that for all $\gamma \widetilde{\AA}^S  \tilde{g}$, there is an integer $l \le L(\tilde{g})$ and a family $h_0,\dots, h_{2l} \in \mathrm{End}(E)$ such that $h_0\cdots h_{2l} = g$, and $\gamma \AA h_0$ and $\sqz(h_0) \ge 4|\log(\eps)| + 7\log(2)$ and for all $0\le i < l$, we have $h_0\cdots h_{2i}\AA h_{2i+1}\AA h_{2i+2}$ and $h_{2i+1}\in S$. 
		We prove the claim by induction on the length of $\tilde{g}$. Consider a decomposition $\tilde{g} = \tilde{g}_0 \odot \tilde{g}_1 \odot \tilde{g}_2$ with $g_1 \in S$. Since $g_1 \in S$, the word $\tilde{g}_1$ has positive length, therefore $L(\tilde{g}_0) < L(\tilde{g})$. 
		If $\gamma \AA g$, then we simply set $l := 0$ and $h:= g$. Note that if $L(\tilde{g})=1$, then $\gamma \AA g$. 
		If we do not have $\gamma \AA g$, then we are in the second case of the  definition of $\widetilde{\AA}^S$ and therefore we assume that $\gamma \widetilde{\AA}^S  \tilde{g}_0$. 
		By induction hypothesis there is an integer $l' \le L(\tilde{g}_0)$ and a family $h_0,\dots, h_{2l'}$ such that $h_0\cdots h_{2l'} = g_0$, and $\gamma \AA h_0$ and $\sqz(h_0) \ge 4|\log(\eps)| + 7\log(2)$ and for all $0\le i < l'$, we have $h_0\cdots h_{2i}\AA h_{2i+1}\AA h_{2i+2}$ and $h_{2i+1}\in S$. Let $l := l' + 1 \le L(\tilde{g})$, let $h_{2l-1} := g_1$ and let $h_{2l} := g_2$. Then the family $(h_0,\dots,h_{2l})$ satisfies the claim.
		
		We have constructed a family $h_0,\dots, h_{2l}$ such that $h_0\cdots h_{2l} = \gamma_{2n+2}$, and $\gamma_{2n+1}\AA^\eps h_0$ and $\sqz(h_0) \ge 4|\log(\eps)| + 5|\log(2)|$ and for all $0\le i <l$, we have $h_0\cdots h_{2i}\AA^\eps h_{2i+1}\AA^\eps h_{2i+2}$ and $\sqz(h_{2i+1})\ge 4|\log(\eps)| + 7\log(2)$. 
		Then by lemma \ref{lem:Atilde}, we have $\gamma_{2n+1}\AA^\frac{\eps}{2} \gamma_{2n+2}$.
		
		Let $0\le i \le j \le k \in\NN$, we have $\gamma_i\AA^\frac{\eps}{2} \dots \AA^\frac{\eps}{2} \gamma_{k-1}$ and $\sqz(\gamma_n)\ge 4|\log(\eps)| + 7\log(2) \ge 2|\log(\eps/2)| + 3\log(2)$ so by Lemma \ref{lem:alipart}, we have $\gamma_i\cdots\gamma_{j-1}\AA^{\frac{\eps}{4}}\gamma_j\cdots\gamma_{k-1}$.
	\end{proof}

	\subsection{Construction of the ping-pong extraction}\label{schottky}

	Let $0 < \alpha < 1$. 
	We write $\mathcal{G}_\alpha$ for the geometric probability measure of scale factor $\alpha$ defined by $\mathcal{G}_\alpha\{k\} := {\alpha^k}{(1-\alpha)}$ for all $k \in\NN$. 
	Note that $\mathcal{G}_\alpha$ has a finite exponential moment.
	We write $\mathcal{B}_\alpha$ for the Bernoulli measure of parameter $\alpha$, \ie the probability measure which gives mass $\alpha$ to $1$ and mass $1-\alpha$ to $0$.
	
	Given $\nu$ a probability distribution over a measurable semi-group, given $\eta$ a probability distribution on $\NN$ and given $(w,(\gamma_k))\sim \eta \otimes\nu^{\otimes\NN}$, we write $\nu^{*\eta}$ for the distribution of $\gamma_0\cdots\gamma_{w-1}$ and $\nu^{\otimes\eta}$ for the distribution of $(\gamma_0,\dots,\gamma_{w-1})$. 
	When $\tilde\nu$ is defined on a semi-group of words, we write $\tilde\nu^{\odot\eta}$ instead of $\tilde\nu^{*\eta}$.
	
	Given $\eta$ be a probability distribution on $\ZZ$, given $m \in\ZZ$ and given $w \sim \eta$, we write $m +_* \eta $ for the distribution of $m+w$, and we write $m \times_*\eta$ for the distribution of $m\times w$.
	Given $\tilde\kappa$ a probability distribution on $\widetilde\Gamma$, we write $\tilde\kappa^{\odot\NN}$ for the distribution $\bigodot^\infty_*\tilde\kappa^{\otimes\NN}$, which is defined on $\Gamma^\NN$.
	The distribution $\tilde\kappa^{\otimes\NN}$ is defined on $\tilde\Gamma^\NN$ and $\tilde\kappa^{\odot\NN}$ is defined on $\Gamma^\NN$.
	
	The following lemma comes from basic probability theory, we give a complete proof as a warm up.
	
	\begin{Lem}\label{lem:ping-pong-pas-sqz}
		Let $\Gamma$ be a measurable semi-group. 
		Let $\nu$ be a probability measure on $\Gamma$, let $0 < \alpha < 1$, and let $m \in \NN$. 
		Let $\tilde\nu_s$ be a probability measure on $\Gamma^m$ such that $\alpha \nu_s \le \nu^{\otimes m}$. 
		Let $\tilde\kappa : = \left(\frac{\nu^{\otimes m} - \alpha \tilde{\nu}_s}{1 - \alpha}\right)^{\odot \mathcal{G}_{1-\alpha}}$. 
		Then $(\tilde\kappa \otimes \tilde\nu_s)^{\otimes\NN}$ is an extraction of $\nu^{\otimes \NN}$. 
		Moreover, for all $A \subset \Gamma$, and for $\tilde g \sim \tilde\kappa$, we have:
		\begin{equation}\label{eq:densite-ping-pong}
			\forall l \in m\NN,\forall  0\le k < l,\;\PP\left(\chi_k^{l}(\tilde{g})\in A\,\middle|\,L(\tilde{g}) = l\right) \le \frac{\nu(A)}{1-\alpha}.
		\end{equation} 
	\end{Lem}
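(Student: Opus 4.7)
The plan is to exploit the hypothesis $\alpha \tilde\nu_s \le \nu^{\otimes m}$ to get a convex decomposition $\nu^{\otimes m} = \alpha \tilde\nu_s + (1-\alpha) \tilde\nu'$, where $\tilde\nu' := (\nu^{\otimes m} - \alpha \tilde\nu_s)/(1-\alpha)$ is a probability measure on $\Gamma^m$. The overall idea is to view $\nu^{\otimes \NN}$ as grouping the letters into blocks of $m$, and to couple each block with a Bernoulli$(\alpha)$ label recording whether it was drawn from $\tilde\nu_s$ (a ``success'') or from $\tilde\nu'$ (a ``failure'').

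Concretely, I would first enlarge the probability space carrying $(\gamma_n) \sim \nu^{\otimes \NN}$ with an iid Bernoulli$(\alpha)$ sequence $(\epsilon_j)_{j\in\NN}$ such that, conditionally on $\epsilon_j = 1$, the $j$-th block $\tilde b_j := (\gamma_{jm}, \ldots, \gamma_{jm+m-1})$ has distribution $\tilde\nu_s$, and conditionally on $\epsilon_j = 0$ has distribution $\tilde\nu'$; the pairs $(\epsilon_j, \tilde b_j)$ are then iid. Then, using the elementary excursion decomposition of a Bernoulli process, the inter-arrival times of $1$'s in $(\epsilon_j)$ are iid with distribution $1 + \mathcal{G}_{1-\alpha}$, so if I let $\tilde A_i$ denote the concatenation of the failure-blocks preceding the $i$-th success and $\tilde B_i$ the $i$-th success-block itself, the strong Markov property yields that $((\tilde A_i, \tilde B_i))_{i\in\NN}$ is iid with $\tilde A_i \sim \tilde\kappa$ and $\tilde B_i \sim \tilde\nu_s$ independent. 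By construction $\bigodot_i (\tilde A_i \odot \tilde B_i) = (\gamma_n)_{n\in\NN}$, which gives $\bigodot^\infty_* (\tilde\kappa \otimes \tilde\nu_s)^{\otimes \NN} = \nu^{\otimes \NN}$. The exponential moment required by Definition \ref{def:extract} is immediate, since $L(\tilde\nu_s) = m$ is constant and $L(\tilde\kappa) = m W$ with $W \sim \mathcal{G}_{1-\alpha}$ geometric.

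For the density bound \eqref{eq:densite-ping-pong}, I would observe that conditioning on $L(\tilde g) = l = m w$ amounts to conditioning on $W = w$, under which $\tilde g = \tilde g_1 \odot \cdots \odot \tilde g_w$ is a concatenation of $w$ independent $\tilde\nu'$-samples. Writing $k = q m + r$ with $0 \le r < m$, the variable $\chi_k^l(\tilde g)$ is the $r$-th coordinate of the $(q+1)$-th such sample, hence distributed as $(\chi_r^m)_* \tilde\nu' = (\nu - \alpha (\chi_r^m)_* \tilde\nu_s)/(1-\alpha)$, using that $(\chi_r^m)_* \nu^{\otimes m} = \nu$. Since $(\chi_r^m)_* \tilde\nu_s$ is a non-negative measure, this marginal is bounded above by $\nu/(1-\alpha)$, which gives the desired inequality. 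The only step requiring any care in the writeup is the clean identification of $((\tilde A_i, \tilde B_i))_i$ as iid with the stated marginals, which is standard but deserves to be spelled out as it is really the content of the ping-pong extraction.
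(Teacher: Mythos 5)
Your proposal is correct and takes essentially the same approach as the paper: both decompose $\nu^{\otimes m} = \alpha\tilde\nu_s + (1-\alpha)\tilde\nu'$, couple the $m$-blocks with iid Bernoulli$(\alpha)$ labels, use the geometric waiting time between successes to identify $\tilde\kappa = \tilde\nu'^{\odot\mathcal{G}_{1-\alpha}}$, and prove the density bound by observing that each coordinate marginal of $\tilde\nu'$ is dominated by $\nu/(1-\alpha)$.
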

	
	\begin{proof}
		Let $(x_n)\sim \left(\alpha\delta_1^{\otimes m} + (1-\alpha)\delta_0^{\otimes m}\right)^{\odot\NN}$ \ie $x_{km +r} = x_{km}$ for all $k\in\NN$ and all $0\le r < m$ and $(x_{km})_{k \in\NN}\sim\mathcal{B}_\alpha^{\otimes\NN}$. 
		Let $(g_n) \sim \tilde\nu_s^{\odot\NN}$ and let $(h_n) \sim \left(\frac{\nu^{\otimes m} -\alpha \tilde\nu_s}{1-\alpha}\right)^{\odot\NN}$. 
		Assume that $x, g, h$ are independent. 
		Let $(\gamma_n)_{n\in\NN} := \left( g_n^{x_n} h_n^{1-x_n} \right)_{n\in\NN}$ \ie $\gamma_n = g_n$ for all $n\in\NN$ such that $x_n = 1$ and $\gamma_n = h_n$ for all $n\in\NN$ such that $x_n = 0$.
		Then the sequence $((\gamma_{km},\dots, \gamma_{(k+1)m-1}))_{k\in\NN}$ is i.i.d. because the random sequence $\left((x_{km}, \dots, x_{(k+1)m-1}, g_{km}, \dots, g_{(k+1)m-1}, h_{km}, \dots, h_{(k+1)m-1})\right)_{k\in\NN}$ is. 
		Moreover, for all $k \in\NN$, the distribution of $\left(\gamma_{km}, \dots, \gamma_{(k+1)m-1}\right)$ is $\alpha \tilde\nu_s + (1-\alpha) \frac{\nu^{\otimes m} -\alpha \tilde\nu_s}{1-\alpha} = \nu^{\otimes m}$. 
		Hence $(\gamma_n) \sim \nu^{\otimes\NN}$. 
		
		Now let $(w'_j)_{j\in\NN}$ be a random sequence of integers such that for all $j \in \NN_{\ge 1}$, the integer $\overline{w}'_j$ is almost surely the $j$-th smallest element of $\{k \ge 1\,|\,x_{km - 1}= 1\}$. 
		With that definition, $(w'_k-1)_{k\in\NN}$ is the sequence of number of failures between consecutive successes of a Bernoulli process of parameter $\alpha$ so $w'_j \sim (1+_*\mathcal{G}_{1-\alpha})^{\otimes\NN}$. 
		Let $(w_k)$ be the random sequence of integers such that $w_{2k+1} = m$ and $w_{2k} = m '(w'_k-1)$ for all $k\in\NN$. 
		Then for all $k\in\NN$, we have $\widetilde\gamma^w_{2k+1} = \widetilde{g}^w_{2k+1}$ and $\widetilde\gamma^w_{2k} = {\widetilde{h}}^w_{2k}$.
		Moreover, the sequences $w$, $g$ and $h$ are independent so $\widetilde\gamma^w_{2k}$ is independent of $\widetilde\gamma^w_{2k+1}$ for all $k$ and $(\widetilde\gamma^w_{2k},\widetilde\gamma^w_{2k+1})$ is i.i.d. 
		
		Given $j,k\in\NN$ the event $(j = \overline{w}'_{k+1})$, implies that $\widetilde{\gamma}^w_{2k+1} = (g_{mj-m}, \dots, g_{mj-1})$.
		Moreover, the value of $(g_{mj-m}, \dots, g_{mj-1})$ is independent of $(x_n)$ so it is independent of $\overline{w}'_{k+1}$.
		Therefore, we have $\widetilde{\gamma}^w_{2k+1} \sim \tilde\nu_s$ conditionally to the event $(j = \overline{w}'_{k+1})$ so $\widetilde{\gamma}^w_{2k+1} \sim \tilde\nu_s$. 
		Given $k\in\NN$ and given $j := \overline{w}'_{k}$, we have $\widetilde{\gamma}^w_{2k} = (h_{jm}, \dots, h_{(j+{w}'_k-1)m-1})$ and ${w}'_k - 1 \sim \mathcal{G}_{1 - \alpha}$ and it is independent of the random sequence $h$ so $\widetilde{\gamma}^w_{2k} \sim \left(\frac{\nu^{\otimes m}-\alpha \tilde{\nu}_s}{1-\alpha}\right)^{\odot \mathcal{G}_{1-\alpha}}$.
		
		We have shown that the distribution of $(\widetilde\gamma^w_k)$ is $(\tilde\kappa \otimes \tilde\nu_s)^{\otimes\NN}$. 
		We also proved in the previous paragraph that $(\gamma_n) \sim \nu^{\otimes\NN}$. 
		We also note that  the $w_k$'s all have a bounded exponential moment.
		Therefore $(\tilde\kappa \otimes \tilde\nu_s)^{\otimes\NN}$ is an extraction of $\nu^{\otimes \NN}$ in the sense of Definition \ref{def:extract}. 
		
		Now we show \eqref{eq:densite-ping-pong}.
		Note that $\frac{\nu^{\otimes m}-\alpha \tilde{\nu}_s}{1-\alpha}\le \frac{\nu^{\otimes m}}{1-\alpha}$ so for all $j <m$, we have $(\chi_{j}^m)_*\left( \frac{\nu^{\otimes m} - \alpha \tilde{\nu}_s}{1 - \alpha} \right) \le \frac{\nu}{1-\alpha}$. 
		Moreover, given $\tilde{g}\sim \tilde\kappa$, given $l \in \NN$ and given $k < lm$, the distribution of $\tilde{g}$ conditioned to $\left( L(\tilde{g}) = lm \right)$ is $\left(\frac{\nu^{\otimes m} - \alpha \tilde{\nu}_s}{1-\alpha}\right)^{\otimes l}$.
		Therefore, the conditional distribution of $\chi^l_{k}(\tilde{g})$ is $\left(\chi_{k- m\lfloor\frac{k}{m}\rfloor}^m\right)_*\left(\frac{\nu^{\otimes m} - \alpha \tilde{\nu}_s}{1 - \alpha}\right) \le \frac{\nu}{1 - \alpha}$.
	\end{proof}
	
	\begin{Lem}\label{lem:ping-pong}
		Let $\Gamma$ be a measurable semi-group, let $\AA$ be a binary relation on $\Gamma$ and let $S \subset\Gamma$ be measurable. Let $\nu$ be a probability measure on $\Gamma$, let $0< \alpha <1$, let $0< \rho <\frac{1}{5}$ and let $m \in\NN$. Let $\tilde\nu_s$ be a probability measure on $\Gamma^m$ such that $\alpha\nu_s\le \nu^{\otimes m}$ and let $\nu_s := \Pi_*\tilde\nu_s$. Assume that $\nu_s$ is supported on $S$ and $\rho$-Schottky for $\AA$. Then there exists a distribution $\tilde\kappa$ on $\widetilde{\Gamma}$ such that $(\tilde\kappa\otimes\tilde\nu_s)^{\otimes\NN}$ is an extraction of $\nu$ and $L_*\tilde\kappa\{k\} = m+_*m\times_*(1+_*\mathcal{G}(1-\alpha))^{*(1+_*\mathcal{G}(2\rho))}$. Moreover, for $\tilde\kappa$ almost all $\tilde{g}\in\widetilde\Gamma$, there exist $\tilde{g}_1,\tilde{g}_2,\tilde{g}_3 \in\widetilde\Gamma$ such that $\tilde{g} = \tilde{g}_1 \odot \tilde{g}_2 \odot \tilde{g}_3$ and $\Pi(\tilde{g}_2) \in S$ and $\Pi(\tilde{g}_1) \AA \Pi(\tilde{g}_2) \AA \Pi(\tilde{g}_3)$ and for all measurable $A \subset \Gamma \setminus \bigcup_{k = 0}^{m-1}\chi_k^m(\mathbf{supp}(\tilde\nu_s))$, we have almost surely:
		\begin{equation}\label{eq:densite-sqz}
			\forall l \in m\NN_{\ge 1},\forall  0\le n < l,\;\PP\left(\chi_n^{l}(\tilde{g})\in A\,\middle|\,L(\tilde{g}) = l\right) \le \frac{\nu(A)}{1-\alpha}.
		\end{equation} 
	\end{Lem}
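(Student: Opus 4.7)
The plan is to iterate the ping-pong extraction of Lemma \ref{lem:ping-pong-pas-sqz} and then apply the $\rho$-Schottky property of $\nu_s$ to locate, within each output block, an internal Schottky letter whose product is aligned with its surrounding context.

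I first apply Lemma \ref{lem:ping-pong-pas-sqz} to obtain an i.i.d.\ sequence of pairs $(\tilde h_j,\tilde s_j)_{j\ge 0}$ with $\tilde h_j\sim\tilde\kappa_0:=\bigl(\tfrac{\nu^{\otimes m}-\alpha\tilde\nu_s}{1-\alpha}\bigr)^{\odot\mathcal{G}_{1-\alpha}}$ and $\tilde s_j\sim\tilde\nu_s$, all mutually independent, whose left-to-right concatenation is distributed as $\nu^{\otimes\NN}$ and for which the intra-block estimate \eqref{eq:densite-ping-pong} holds. Then I set
\[
K:=\min\bigl\{j\ge 0\,\bigm|\,\Pi(\tilde h_0\odot\tilde s_0\odot\cdots\odot\tilde h_j)\AA\Pi(\tilde s_j)\text{ and }\Pi(\tilde s_j)\AA\Pi(\tilde h_{j+1})\bigr\},
\]
define $\tilde g:=\tilde h_0\odot\tilde s_0\odot\cdots\odot\tilde h_K\odot\tilde s_K\odot\tilde h_{K+1}$, and declare $\tilde\kappa$ to be the law of $\tilde g$. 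The decomposition $\tilde g_1:=\tilde h_0\odot\tilde s_0\odot\cdots\odot\tilde h_K$, $\tilde g_2:=\tilde s_K$, $\tilde g_3:=\tilde h_{K+1}$ then satisfies $\tilde g=\tilde g_1\odot\tilde g_2\odot\tilde g_3$, $\Pi(\tilde g_1)\AA\Pi(\tilde g_2)\AA\Pi(\tilde g_3)$ by the very definition of $K$, and $\Pi(\tilde g_2)\in S$ by the support hypothesis on $\tilde\nu_s$.

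Because $\tilde s_j$ is independent of $(\tilde h_i)_{i\le j+1}$ and of $(\tilde s_i)_{i<j}$, and because $\nu_s=\Pi_*\tilde\nu_s$ is $\rho$-Schottky for $\AA$, each of the two alignment constraints in the definition of $K$ holds with conditional probability at least $1-\rho$ given the past; a union bound then gives a per-step conditional success probability at least $1-2\rho$, so $\PP(K\ge j)\le(2\rho)^j$ and $K$ is a.s.\ finite. After outputting $(\tilde g,\tilde s_{K+1})$, I restart the algorithm on the i.i.d.\ tail $(\tilde h_{K+2+i},\tilde s_{K+2+i})_{i\ge 0}$; by the renewal structure of the ping-pong, the resulting sequence $(\tilde K_j,\tilde U_j)_{j\ge 0}$ is i.i.d.\ with common law $\tilde\kappa\otimes\tilde\nu_s$, and its concatenation coincides termwise with the original ping-pong concatenation, hence is distributed as $\nu^{\otimes\NN}$. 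This is exactly the extraction property of Definition~\ref{def:extract}.

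It remains to recover the stated length law and the density bound \eqref{eq:densite-sqz}. Writing $L(\tilde h_j)=mZ_j$ with $Z_j\sim\mathcal{G}_{1-\alpha}$ independent of $K$, one has $L(\tilde g)=m(K+1)+m\sum_{i=0}^{K+1}Z_i$; inserting an auxiliary Bernoulli rejection step that lowers the per-iteration success probability from the possibly larger value $\ge 1-2\rho$ to exactly $1-2\rho$ turns $K$ into an honest $\mathcal{G}_{2\rho}$ variable and makes $L_*\tilde\kappa$ equal to the stated convolution of geometrics. For \eqref{eq:densite-sqz}, any position whose value is queried in a set $A$ disjoint from $\bigcup_k\chi_k^m(\mathbf{supp}(\tilde\nu_s))$ must land inside some non-Schottky block $\tilde h_i$, and conditioning on $L(\tilde g)=l$ fixes only the tuple of block lengths without biasing the contents of any individual block, so the bound $\nu(A)/(1-\alpha)$ provided by \eqref{eq:densite-ping-pong} transfers directly. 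The main obstacle I foresee is making the length distribution match the stated formula exactly rather than merely in stochastic domination: the auxiliary rejection has to be designed so that the selected pivotal letter retains the marginal $\tilde\nu_s$ and so that $\tilde g$ remains a measurable function of the original ping-pong input, which requires a careful accounting of the conditional Schottky masses at each step.
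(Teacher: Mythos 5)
Your strategy matches the paper's: iterate the ping-pong extraction of Lemma~\ref{lem:ping-pong-pas-sqz}, stop at the first aligned Schottky block, and output the prefix. But you have flagged the length-law issue as an "obstacle" rather than closed it, and the device you need there is not cosmetic---it is load-bearing for three of the lemma's four conclusions, not just the length formula.

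With your raw stopping time $K$ the per-step success probability is $\ge 1-2\rho$ (not $=$), so $K$ is only stochastically dominated by $\mathcal{G}_{2\rho}$; worse, the event $\{K=k\}$ depends on the products $\Pi(\tilde h_0\odot\cdots\odot\tilde h_j)$, hence on the lengths $L(\tilde h_j)$, so $K$ is \emph{correlated} with the non-Schottky block lengths. This breaks the factorization of $L(\tilde g)$ into a random number of i.i.d.\ geometrics, breaks the "conditioning on $L(\tilde g)=l$ fixes block lengths without biasing contents" step in your proof of \eqref{eq:densite-sqz}, and muddies the renewal argument for the extraction property. The paper's fix is the penalty function
\[
P_{\nu_s}(f,g,h)=\frac{\mathds{1}_{f\AA g\AA h}}{\nu_s\{\gamma\,|\,f\AA\gamma\AA h\}}(1-2\rho),
\]
coupled to an auxiliary uniform $\tau_k\in[0,1]$: one accepts at step $k$ iff $\tau_k<P_{\nu_s}(\overline\gamma^w_{2k+1},\gamma^w_{2k+1},\gamma^w_{2k+2})$. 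Acceptance forces the alignment (the indicator must be $1$), the Schottky bound $\nu_s\{\cdots\}\ge 1-2\rho$ keeps $P_{\nu_s}\le 1$, and crucially $\EE(P_{\nu_s}(f,\gamma,h))=1-2\rho$ for $\gamma\sim\nu_s$ and any $f,h$, so that \emph{conditionally on the whole non-Schottky sub-sequence $(\widetilde\gamma^w_{2k})_k$}, the acceptance probability is identically $1-2\rho$. That is what makes the stopping time exactly geometric \emph{and} independent of $(\widetilde\gamma^w_{2k})_k$, and that independence is what you are implicitly invoking in both the length computation and the density-bound transfer.

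Note also that your stated design criterion---"the selected pivotal letter retains the marginal $\tilde\nu_s$"---is actually \emph{not} achieved and not needed: after conditioning on acceptance, $\widetilde\gamma^w_{2K+1}$ has the $\nu_s$-restriction to $\{\gamma\,|\,f\AA\gamma\AA h\}$, not $\nu_s$. What must be preserved is not the Schottky marginal but the independence of the stopping time from the non-Schottky blocks. As written, the proof has a genuine gap: the auxiliary randomization is the crux, and it is named but not constructed.
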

	
	\begin{proof}
		Let $\tilde\kappa_0 : = \left(\frac{\nu^{\otimes m}-\alpha \tilde{\nu}_s}{1-\alpha}\right)^{\odot \mathcal{G}_{1-\alpha}}$ be as in Lemma \ref{lem:ping-pong-pas-sqz}. Let $(\gamma_n) \sim\nu^{\otimes\NN}$ and let $(w_n)_{n\in\NN}$ be a random sequence of integers on the same probability space such that $(\widetilde\gamma^w_k) \sim \left(\tilde\kappa_0 \otimes \tilde\nu_s\right)^{\otimes\NN}$. Let $\mathcal{U}_{[0,1]}$ be the uniform probability measure on the interval $[0,1]$ and let $(\tau_j)_{j\in\NN}\sim\mathcal{U}_{[0,1]}^{\otimes \NN}$. Assume that the random sequences $\widetilde\gamma^w$ and $\tau$ are independent. We define the penalty function:
		\begin{equation*}
			P_{\nu_s} :\, \Gamma^3 \longrightarrow [0,1];\, (f,g,h) \longmapsto \frac{\mathds{1}_{f\AA g\AA h}}{\nu_s\{\gamma\in\Gamma\,|\,f\AA\gamma\AA h\}}(1-2\rho).
		\end{equation*}
		We check that $P_{\nu_s} \le 1$ because $\nu_s$ is $\rho$-Schottky for $\AA$.
		Note also that for $\gamma\sim \nu_s$ and for all non random $g,h \in\Gamma$, one has $\EE(P_{\nu_s} (f, \gamma, h)) = 1-2\rho$. 
		Hence, for all $k\in\NN$, for all random $f,h\in\Gamma$ that are independent of $(\tau_k, \gamma^w_{2k+1})$, we have $\PP\left(\tau_k < P_{\nu_s}\left(f,\gamma^w_{2k+1},h\right)\right)= 1- 2\rho$ by definition of $P_{\nu_s}$. 
		Moreover $\tau_k < P_{\nu_s}\left(f,\gamma^w_{2k+1},h\right) \Rightarrow f \AA \gamma^w_{2k+1} \AA h$. Now we define:
		\begin{equation*}
			k_{\gamma, w}^\tau := \min \left\{k \in \NN\,|\,\tau_k < P_{\nu_s}\left(\overline{\gamma}^w_{2k+1},\gamma^w_{2k+1},\gamma^w_{2k+2}\right) \right\}.
		\end{equation*}
		Then for all $k \in\NN$, we have:
		\begin{equation*}
			\PP\left(k_{\gamma, w}^\tau = k\,\middle|\,(\widetilde\gamma^w_{2k'})_{k'\in\NN}, k_{\gamma, w}^\tau \ge k\right) = \EE\left(P_{\nu_s}\left(\overline{\gamma}^w_{2k+1},\gamma^w_{2k+1},\gamma^w_{2k+2}\right) \,\middle|\,(\widetilde\gamma^w_{2k'})_{k'\in\NN}\right) = 1 - 2\rho.
		\end{equation*}
		Therefore $k_{\gamma, w}^\tau \sim \mathcal{G}_{(2\rho)}$ and $k_{\gamma, w}^\tau$ is independent of $(\widetilde\gamma^w_{2k})_{k\in\NN}$ .
		Let $j_{\gamma, w}^\tau := \overline{w}_{2k_{\gamma, w}+3}$. 
		Then $j_{\gamma, w}^\tau \sim m +_* m \times_* (1 +_* \mathcal{G}(1 - \alpha))^{* \left(1 +_* \mathcal{G}(2\rho) \right)}$ so $j_{\gamma, w}^\tau$ has finite exponential moment by Lemma \ref{lem:sumexp}.
		Let $\tilde\kappa$ be the distribution law of $\tilde{g}^\tau_{\gamma,w} : = (\gamma_0,\cdots,\gamma_{j_{\gamma, w}^\tau - 1})$. 
		It follows from the definition that $\tilde{g}_{\gamma,w}^\tau = (\bigodot_{i =0}^{2k}\widetilde\gamma^w_{i})\odot\widetilde\gamma^w_{2k+1}\odot \widetilde\gamma^w_{2k+2}$ and $\Pi(\bigodot_{i =0}^{2k}\widetilde\gamma^w_{i})\AA \Pi (\widetilde\gamma^w_{2k+1})\AA \Pi(\widetilde\gamma^w_{2k+2})$ for $k = k_{\gamma,w}^\tau$. 
		Moreover $\Pi (\widetilde\gamma^w_{2k+1}) \sim \nu_s$ so $\Pi (\widetilde\gamma^w_{2k+1})\in S$ almost surely.
		
		Note also that $k_{\gamma,w}^\tau$ is constructed as a stopping time for the sequence $(\tau_k, \gamma^w_{2k+1})_{k\in\NN}$ and it is independent of $(\gamma^w_{2k})_{k\in\NN}$, so the conditional distribution of the random sequence $\left(\widetilde\gamma^w_{k+2k_{\gamma, w} +3}\right)_{k\in\NN}$ knowing $\tilde{g}_{\gamma,w}$ is $\left(\tilde\nu_s \otimes \tilde\kappa_0\right)^{\otimes\NN}$. Hence, we have $\tilde\kappa \odot \tilde\nu_s \odot \left(\tilde\kappa_0 \odot \tilde\nu_s\right)^{\odot\NN} = \left(\tilde\kappa_0 \odot \tilde\nu_s\right)^{\odot\NN}$ so $\left(\tilde\kappa \odot \tilde\nu_s\right)^{\odot\NN} = \left(\tilde\kappa_0 \odot \tilde\nu_s\right)^{\odot\NN} = \nu^{\otimes\NN}$.
		
		Now let $A \subset \Gamma \setminus \bigcup_{k = 0}^{m-1} \chi_k^m(\mathbf{supp}(\tilde\nu_s))$ be measurable and let $l \in m \NN_{\ge 1}$. Let $q \in\NN$ and let $x_0, \dots, x_{q+1} \in m\NN$ be such that $q m + \sum_{i = 0}^{q+1} x_i = l$. 
		Now we work on the sub-probability space $(\Omega', \PP')$, (where $\PP'$ is short for $\PP^{(l, q, (x_i))}$), defined as $\Omega' := (j_{\gamma,w}^\tau = l)\cap (k_{\gamma,w}^\tau = q) \cap \bigcap_{i = 0}^{q+1} (w_{2i} = x_i)$ and $\PP' := \frac{\PP}{\PP(\Omega')}$. 
		Let $n < l$. We claim that $\PP'(\gamma_n \in A) \le \frac{1}{1-\alpha} \nu(A)$.
		If there exists an integer $k \le q$ such that $\overline{w}_{2k+1} \le n <  \overline{w}_{2k+2}$, then $\PP'(\gamma_n \in A) = 0$.
		Otherwise, let $k \le q + 1$ be such that $\overline{w}_{2k} \le n <  \overline{w}_{2k+1}$, then we have $\PP'$ almost surely $\gamma_n = \chi^{x_k}_{n-\overline{w}_{2k}} (\widetilde\gamma^w_{2k})$ and $k_{\gamma,w}$ is independent of $(\widetilde\gamma^w_{2j})_{j\in\NN}$, so the distribution of $\gamma_n$ for $\PP'$ is bounded by $\frac{\nu}{1-\alpha}$ by \eqref{eq:densite-ping-pong} in Lemma \ref{lem:ping-pong-pas-sqz}. 
		Then we have $\PP\left(\gamma_n \in A\,\middle|\, j_{\gamma,w}^\tau = l\right) \le \max_{q,(x_i)}\PP'(A) \le \frac{1}{1-\alpha} \nu(A)$ and this is true for all $l \in m\NN_{\ge 1}$.
		Therefore, we have \eqref{eq:densite-sqz}.
	\end{proof}

\subsection{Construction of the aligned extraction}\label{sec:algo-pivot}

The following definition describes the pivot algorithm.
Starting from an already merged sequence $(\widetilde\gamma^w_n)$, we will merge some words recursively.
At each step $j$, we only look at $(\gamma^w_0, \gamma^w_1, \dots, \gamma^w_{2j})$ and merge some of them together.
We will denote by $(p^k_j)_{k\in\NN}$ the sequence of waiting times (or lengths) at step $j$, starting with $p^0 =w$.
We will denote by $2m_j + 1$ the number of words left after merging $(\gamma^w_0, \gamma^w_1, \dots, \gamma^w_{2j})$.
At each step, we make sure that $\left(\widetilde\gamma^{p^{j}}_k\right)_{k \le m_j}$ satisfies Theorem \ref{th:pivot-extract}.
It means that every oddly indexed block is a single oddly indexed word and that its distribution relatively to the merging process is still a Schottky distribution.
The merging process consists in backtracking when the right alignment conditions are satisfied.

\begin{Def}[Weighted Pivot algorithm]\label{def:pivot}
	Let $\Gamma$ be a measurable semi-group endowed with a measurable relation $\AA$. 
	Let $\nu_s$ be a probability distribution on $\Gamma$ that is $\rho$-Schottky for $\AA$.
	We define the $\nu_s$ penalty functions.
	\begin{gather*}
		P_{\nu_s}: \Gamma^3 \longrightarrow [0,1];\,  (f,g,h) \longmapsto \frac{\mathds{1}_{f\AA g\AA h}}{\nu_s\{\gamma\in\Gamma\,|\,f\AA\gamma\AA h\}}(1-2\rho), \\
		P'_{\nu_s}:  \Gamma^4 \longrightarrow [0,1];\,(f,g,h, h')\longmapsto \frac{\mathds{1}_{f\AA g\AA h}\mathds{1}_{g\AA h'}}{\nu_s\{\gamma\in\Gamma\,|\,f\AA\gamma\AA h\text{ and }\gamma \AA h'\}}(1-3\rho).
	\end{gather*}
	Let $(\gamma_n)\in\Gamma^{\NN}$, let $(w_k)\in\NN_{\ge 1}^\NN$ and let $(\tau_k)\in[0,1]^\NN$ be non-random sequences.
	Let $\left(p_k^j\right)_{j\in\NN, k\in\NN}\in\NN_{\ge 1}^{\NN^2}$.
	Assume that for all $j \in\NN$, there is an even integer $m_j$ such that $\overline{p}_{2m_j+1}^k := \sum_{k = 0}^{2m_j} p^j_k = \overline{w}_{2j +1}$ and let $(m_j)_{j\in\NN}$ be such a sequence.
	Given $j,k \in\NN$, we write $l_k^j := \max\left\{l \le j\,\middle|\,m_l \le k\right\}$. For all $k \in\NN$, we write $l_k := \sup\left\{l \in\NN \,\middle|\,m_l \le k\right\}$ for the time of the last visit in $k$.
	We say that $(p_k^j)$ is the family of length of the pivotal blocks associated to the sequence $(\widetilde\gamma^w_k)$ with weights $(\tau_k)$ if:
	\begin{enumerate}
		\item For all $j \in\NN$, we have $\left(p^j_{k + 2m_j +1}\right)_{k\in\NN} = (w_{k + 2j +1})_{k\in\NN}$ and $\left\{\overline{p}^{j+1}_k\,\middle|\, k \in \NN\right\} \subset \left\{\overline{p}^j_k\,\middle|\, k \in\NN \right\}$. Note that it implies that $m_0 =0$ and that $p^0_k = w_k$ for all $k \in\NN$.
		\item For all $j \in\NN$, we have $\left(p^{j+1}_k\right)_{k\in\NN} = \left(p^j_k\right)_{k\in\NN}$ and $m_{j+1} = m_j + 1$ if and only if:
		\begin{equation*}
			\tau_j < P_{\nu_s}\left(\gamma^{p^j}_{2m_j},\gamma^{p^j}_{2m_j + 1},\gamma^{p^j}_{2m_j + 2}\right).
		\end{equation*}
		\item \label{item:backtrack} For all $j \in\NN$ such that $\left(p^{j+1}_k\right)_{k\in\NN} \neq \left(p^j_k\right)_{k\in\NN}$, we have $\left(p^{j+1}_k\right)_{0\le k < 2m_{j+1}} = \left(p^j_k\right)_{0 \le k < 2m_{j+1}}$ and
		\begin{equation}\label{eq:def-m+1}
			m_{j+1} = \max \left(\left\{k < m_j \,\middle|\,\tau_{l^j_k}< P'_{\nu_s}\left(\gamma^{p^j}_{2k}, \gamma^{p^j}_{2k+1}, \gamma^w_{2l_{k}^j+2},\gamma^{p^j}_{2k+2}\cdots\gamma^{p^j}_{2m_j+2}\right)\right\}\cup\{0\}\right).
		\end{equation}
	\end{enumerate}
	If $p^j_k$ converges to a limit $p_k$ for all $k$, as $j \to +\infty$, then we say that $(p_k)_{k\in\NN}$ is the sequence of pivotal times associated to $\widetilde\gamma^w$ with weights $\tau$.
\end{Def}

Let us illustrate the first steps of the algorithm on an example.
Initially, the letters are grouped into blocks of length $p^0_0 = w_0, p^0_1 =w_1, p^0_2 = w_2, \dots$.
For simplicity, we will take all $w_k$ equal to $1$ in our example.
With our previous construction, this happens when $\nu_s = \nu$, note also that the identity must be aligned with everyone.
The important thing to note is that in that case, all words are in $S$.
That way, for all $0 \le k \le j$, we have $\overline{p}^j_{2k+1} = 2l^j_k+1$.
We will describe the construction for $j \in \{0,1,2,3,4\}$.
For that construction, we only look at the first $11$ words that all have a single letter, which is an element of a semi-group (a semi- group of matrices in our case):
\begin{equation*}
	(\gamma_{0});
	[\gamma_{1}],
	(\gamma_{2}),
	[\gamma_{3}],
	(\gamma_{4}),
	[\gamma_{5}],
	(\gamma_{6}),
	[\gamma_{7}],
	(\gamma_{8}),
	[\gamma_{9}],
	(\gamma_{10}).
\end{equation*}
We mark with brackets (instead of the usual parenthesis for words) the candidate pivotal times, at step $j= 0$ they are all the oddly indexed times.
At all times, the word within brackets will have a single letter.
We mark with a semi column, our position. 
At all time, all the words that are on the left of this semi-column are aligned and the oddly indexed ones are candidate pivotal times, there are $m_j$ of them.
At step $j = 0$, we will add $\gamma_1$ and $\gamma_2$. 
We check whether $\tau_0 < P_{\nu_s} (\gamma_0, \gamma_1, \gamma_2)$, which is a proxy for $\gamma_0 \AA \gamma_1 \AA \gamma_2$ but with a controlled conditional probability, constant and equal to $1 - 2\rho$.

If this condition fails (which is always the case when the above alignment condition si not satisfied), then we merge $(\gamma_0, \gamma_1, \gamma_2)$ into a single word.
Then, there is nothing more to check because there is no candidate $k < m_{1}$ for $m_{1}$ to satisfy \eqref{item:backtrack}.
In this case, $m_1 = 0$, so there are no candidate pivotal times left of the semi-column and the newly merged sequence is the following:
\begin{equation*}
	(\gamma_{0},\gamma_{1},\gamma_{2});
	[\gamma_{3}],
	(\gamma_{4}),
	[\gamma_{5}],
	(\gamma_{6}),
	[\gamma_{7}],
	(\gamma_{8}),
	[\gamma_{9}],
	(\gamma_{10}),\dots
\end{equation*}
Then at step $j = 1$, we check whether $\tau_1 < P_{\nu_s} (\gamma_0\gamma_1\gamma_2, \gamma_3, \gamma_4)$. 
Assume that this condition holds. 
This implies that we have the alignment $\gamma_0\gamma_1\gamma_2 \AA \gamma_3 \AA \gamma_4$.
Then we do not merge any block and move to $m_2 = 1$:
In this case, we have $p^2_0 = 3, p^2_1 = 1, p^2_2 =1, p^2_3 =1, p^2_4= 1, \dots$ and the new sequence is the following:
\begin{equation*}
	(\gamma_{0},\gamma_{1},\gamma_{2}),
	[\gamma_{3}],
	(\gamma_{4});
	[\gamma_{5}],
	(\gamma_{6}),
	[\gamma_{7}],
	(\gamma_{8}),
	[\gamma_{9}],
	(\gamma_{10}),\dots
\end{equation*}
Note that it is useless to specify that $p^2_1 =1$ and $p^2_3 =1$ or that $p^2_{2k +1} = 1$ for all $k \in \NN$, because the oddly indexed blocks are the ones in brackets and they always have length $1$. 
It is also useless to mention that $p^2_4 = 1$, because it is the length of the block $(\gamma_6)$, which we have not yet considered.
At step $j = 2$, we check whether $\tau_2 < P_{\nu_s} (\gamma_4, \gamma_5, \gamma_6)$. Assume that this holds. 
Then $m_3 = 2$ and the new sequence is:
\begin{equation*}
	(\gamma_{0},\gamma_{1},\gamma_{2}),
	[\gamma_{3}],
	(\gamma_{4}),
	[\gamma_{5}],
	(\gamma_{6});
	[\gamma_{7}],
	(\gamma_{8}),
	[\gamma_{9}],
	(\gamma_{10}),\dots
\end{equation*}
By construction, we have $\gamma_{0}\gamma_{1}\gamma_{2} \AA \gamma_3 \AA \gamma_4 \AA \gamma_5 \AA \gamma_6$.
At step $j = 3$, we check whether $\tau_3 < P_{\nu_s} (\gamma_6, \gamma_7, \gamma_8)$.
Assume that this time, that condition fails. 
Then we have to backtrack to the previous candidate pivotal time: $\gamma^{p^3}_{2 m_3 - 1}$, which is simply $\gamma_5$.
In other words, we look at the definition of $m_{j+1}$~\eqref{eq:def-m+1} with $k = 3$ and $l^j_k = 5$. 
We check whether $\tau_2 < P'_{\nu_s}(\gamma_4, \gamma_5, \gamma_6,\gamma_6\gamma_7\gamma_8)$, which is a proxy for $\gamma_5 \AA \gamma_6\gamma_7\gamma_8$ but with a controlled conditional probability constant and equal to $\frac{1-3\rho}{1-2\rho}$.
Indeed, we already know that $\tau_2 < P_{\nu_s}(\gamma_4, \gamma_5, \gamma_6)$ from the previous step of the construction.
Assume that this holds.
By \eqref{item:backtrack}, this means that $m_4 = 1$ and the only candidate pivotal time left is $[\gamma_3]$.
The newly merged sequence becomes:
\begin{equation*}
	(\gamma_{0},\gamma_{1},\gamma_{2}),
	[\gamma_{3}],
	(\gamma_{4},\gamma_{5},\gamma_{6},\gamma_{7},\gamma_{8});
	[\gamma_{9}],
	(\gamma_{10}),\dots
\end{equation*}
Note that $\gamma_3 \AA \gamma_4 \AA \gamma_5 \AA \gamma_6\gamma_7\gamma_8$ and $\gamma_4\in S$, so $\gamma_3 \widetilde\AA^S(\gamma_4)$ and $\gamma_5 \in S$ so $\gamma_3 \widetilde\AA^S(\gamma_4,\gamma_5,\gamma_6,\gamma_7,\gamma_8)$. 
Moreover, we still have $\gamma_0\gamma_1\gamma_3 \AA \gamma_3$.

At the next step ($j=4$), we check whether $\tau_4 < P_{\nu_s} (\gamma_{4}\gamma_{5}\gamma_{6}\gamma_{7}\gamma_{8},\gamma_9,\gamma_{10})$, which is a proxy for $\gamma_{4}\gamma_{5}\gamma_{6}\gamma_{7}\gamma_{8}\AA \gamma_9 \AA \gamma_{10}$.
Assume that this holds.
Then $m_5 = 2$ the newly merged sequence is
\begin{equation*}
	(\gamma_{0},\gamma_{1},\gamma_{2}),
	[\gamma_{3}],
	(\gamma_{4},\gamma_{5},\gamma_{6},\gamma_{7},\gamma_{8}),
	[\gamma_{9}],
	(\gamma_{10});\dots
\end{equation*}

It feels a bit frustrating to lose two pivotal times instead of only one, because we had the alignment $\gamma_4 \AA \gamma_5 \AA \gamma_6\gamma_7\gamma_8$ so why not keep $[\gamma_5]$ as a pivotal time. 
The issue with that is that we do not have any control over $\gamma_6\gamma_7\gamma_8$.
For example, it may be the identity, which is aligned with everyone. 
In that case the alignment condition $\gamma_5 \AA \gamma_6\gamma_7\gamma_8 \AA \gamma_9$ is trivial and does not tell us anything about the product $\gamma_5\gamma_6\gamma_7\gamma_8\gamma_9$, which may again be the identity.
Therefore, we really need to discard this pivotal time in order to be able to use Proposition \ref{prop:pivot-is-aligned}.

The other issue is that even if we somehow get rid of that problem, then $[\gamma_5]$ would not have the same probabilistic behaviour as the other pivotal times.
Indeed, knowing the construction up to step $4$, it satisfies $3$ alignment conditions.
Note also that the first block $(\gamma_0,\gamma_1,\gamma_2)$ has a particular status because we do not have any informations on its structure and only know that its product is aligned with $\gamma_3$.

We need the index $l^j_k$ in \eqref{eq:def-m+1} because if we ever backtrack to $[\gamma_3] = \widetilde\gamma^{p_5}_1$ for example, then the alignment condition on $\gamma_3$ is not with the merged word $(\gamma_{4},\gamma_{5},\gamma_{6},\gamma_{7},\gamma_{8})$ but only with the first sub-word, namely $(\gamma_4)$, so we need to keep track of its index, this is the role of $2l^5_0 + 2 = 4$ and $l^5_0 = 1$ is indeed the last step at which we had $m_j = m_1 =1$.

Let us recap in the next remark basic properties of the algorithm that follow
readily from its definition.

\begin{Rem}\label{rem:descri-piv}
	Let $(\gamma_n)\in\Gamma^{\NN}$, let $(w_k)\in\NN_{\ge 1}^\NN$ and let $(\tau_k)\in[0,1]^\NN$ be non-random sequences.
	Note that, the family of lengths of the pivotal blocks $\left(p^j_k\right)$ in the sense of Definition \ref{def:pivot} is unique and we can construct it by induction.
	Moreover that the map $((\gamma_n),(w_k),(\tau_k))\mapsto\left(p^j_k\right)$ is measurable. 
	Let $\left(p^j_k\right)$ be the family of lengths of the pivotal blocks, and let $(m_j)_{j\in\NN}$ and $(l^j_k)_{j\in\NN, 0\le k \le m_j}$ be as in Definition \ref{def:pivot}.
	By induction, we can easily check that the following facts hold:
	\begin{enumerate}
		\item For all $j\le j'\in\NN$, we have $l^j_{j'} = j$.
		\item For all $j \in\NN$, and for all $0\le k < m_j$, we have $\gamma^{p^{l^j_k}}_{2k} \AA \gamma^{p^{l^j_k}}_{2k +1}\AA \gamma^{p^{l^j_k}}_{2k + 2}$ (because $m_{l^j_k+1} > m_{l^j_k}$ by definition of $l^j_k$).
		\item For all $j \in\NN$, and for all $0\le k < m_j$, we have $\widetilde\gamma^{p^j}_{k'}= \widetilde\gamma^{p^{j'}}_{k'}$ for all $l^j_k \le j' \le j$ and all $0 \le k' \le 2k +1$.
		\item For all $j \in\NN$, we have $\gamma^{p_j}_{2m_j+2} = \gamma^w_{2j+2}$. Hence, for all $k < m_j$, we have $\gamma^{p^{l^j_k}}_{2k + 2} = \gamma^w_{2l_{k}^j+2}$.
		\item For all $j \in\NN$, and for all $0\le k < m_j$, we have $\gamma^{p^j}_{2k}\AA \gamma^{p^j}_{2k+1}\AA \gamma^w_{2l_{k}^j+2}$.
		\item The family $(l^j_k)_{j,k}$ is determined by the data of the sequence $(m_j)_j$.
		\item The family $(p^j_k)_{j,k}$ is determined by the data of the sequences $(m_j)_j$ and $(w_k)_{k}$.
	\end{enumerate}
\end{Rem}

\begin{Lem}
	Let $(\gamma_n)\in\Gamma^{\NN}$, let $(w_k)\in\NN_{\ge 1}^\NN$ and let $(\tau_k)\in[0,1]^\NN$ be non-random sequences. 
	Let $\AA$ be a binary relation on $\Gamma$ and let $S \subset \Gamma$.
	Assume that for all $k \in\NN$, there exist three sub-words $(\tilde{g}_0,\tilde{g}_1,\tilde{g}_2)$ such that $\widetilde\gamma_{2k+2}^w = \tilde{g}_0 \odot \tilde{g}_1 \odot \tilde{g}_2$ and $\Pi(\tilde{g}_0)\AA \Pi(\tilde{g}_1) \AA \Pi(\tilde{g}_2)$ and $\Pi(\tilde{g}_1) \in S$ and $\gamma^w_{2k+1} \in S$.
	Let $\left(p^j_k\right)_{j,k}$ be the family of lengths of the pivotal blocks in the sense of Definition \ref{def:pivot} and let $(m_j)_j$ be as in Definition \ref{def:pivot}.
	For all $j \in\NN$, and for all $0 \le k < m_j$, we have $\gamma^{p^j}_{2k+1}\widetilde{\AA}^S\widetilde\gamma^{p^j}_{2k+2}$.
\end{Lem}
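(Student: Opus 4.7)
The plan is to prove the claim by induction on $j$, splitting the induction step into the two cases of Definition~\ref{def:pivot} according to whether $m_{j+1} = m_j + 1$ (forward move) or $m_{j+1} < m_j$ (backtracking). The base case $j = 0$ is vacuous since $m_0 = 0$.

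In the forward case, the block lengths $p^{j+1}_k$ and the products $\gamma^{p^{j+1}}_{k'}$ for $k' \le 2m_j$ coincide with those at step $j$, so the statement for $k < m_j$ is immediate from the induction hypothesis. For the new index $k = m_j$, one has $\gamma^{p^{j+1}}_{2m_j+1} = \gamma^w_{2j+1}$ and $\widetilde\gamma^{p^{j+1}}_{2m_j+2} = \widetilde\gamma^w_{2j+2}$ by condition~(1) of Definition~\ref{def:pivot}. The defining inequality $\tau_j < P_{\nu_s}(\gamma^{p^j}_{2m_j}, \gamma^{p^j}_{2m_j+1}, \gamma^{p^j}_{2m_j+2})$ of the forward case yields $\gamma^w_{2j+1} \AA \gamma^w_{2j+2}$, and the hypothesis of the lemma provides a decomposition $\widetilde\gamma^w_{2j+2} = \tilde{g}_0 \odot \tilde{g}_1 \odot \tilde{g}_2$ with $\Pi(\tilde{g}_1) \in S$ and $\Pi(\tilde{g}_0) \AA \Pi(\tilde{g}_1) \AA \Pi(\tilde{g}_2)$. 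Together with $\gamma^w_{2j+1} \in S$, this is exactly the first bullet in the definition of $\widetilde\AA^S$.

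In the backtracking case, set $K := m_{j+1} < m_j$. For $k$ with $k+1 < K$, both $\gamma^{p^{j+1}}_{2k+1}$ and $\widetilde\gamma^{p^{j+1}}_{2k+2}$ agree with their values at step $j$ (preservation of the first $2K$ blocks), so the induction hypothesis applies. The only delicate index is $k = K-1$, which is vacuous when $K = 0$; otherwise the block $\widetilde\gamma^{p^{j+1}}_{2K}$ has grown into $\widetilde\gamma^{p^j}_{2K} \odot \widetilde\gamma^{p^j}_{2K+1} \odot \widetilde\gamma^{p^j}_{2K+2} \odot \cdots \odot \widetilde\gamma^{p^j}_{2m_j+2}$, and I apply the second, recursive bullet of $\widetilde\AA^S$ with the decomposition $\tilde{g}_0 := \widetilde\gamma^{p^j}_{2K}$, $\tilde{g}_1 := \widetilde\gamma^{p^j}_{2K+1}$, $\tilde{g}_2 := \widetilde\gamma^{p^j}_{2K+2} \odot \cdots \odot \widetilde\gamma^{p^j}_{2m_j+2}$. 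The recursive clause $\gamma^{p^j}_{2K-1} \widetilde\AA^S \tilde{g}_0$ is precisely the induction hypothesis at $(j, K-1)$ since $K - 1 < m_j$; the alignment $\Pi(\tilde{g}_0) \AA \Pi(\tilde{g}_1)$ follows from Remark~\ref{rem:descri-piv}(5) applied at the index $K < m_j$; and the alignment $\Pi(\tilde{g}_1) \AA \Pi(\tilde{g}_2)$ is exactly the second indicator $\mathds{1}_{g\AA h'}$ appearing in $P'_{\nu_s}$ in the characterization~\eqref{eq:def-m+1} of $m_{j+1} = K$.

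The only bookkeeping step is to check that $\Pi(\tilde{g}_1) = \gamma^{p^j}_{2K+1} \in S$. For this I use Remark~\ref{rem:descri-piv}(3) with $k = K$ and $j' = l^j_K$ to obtain $\widetilde\gamma^{p^j}_{2K+1} = \widetilde\gamma^{p^{l^j_K}}_{2K+1}$; since $m_{l^j_K} = K$ (because $m_{l^j_K + 1} > m_{l^j_K}$), condition~(1) of Definition~\ref{def:pivot} identifies this with the original word $\widetilde\gamma^w_{2l^j_K + 1}$, whose product lies in $S$ by the hypothesis of the lemma. No further estimate is needed: the whole proof is a recursive unwinding of the definition of $\widetilde\AA^S$, with the main conceptual point being the identification of the three factors $\tilde{g}_0, \tilde{g}_1, \tilde{g}_2$ in the newly merged block so that the alignment information produced by the backtracking test $P'_{\nu_s}$ matches the inductive structure of $\widetilde\AA^S$.
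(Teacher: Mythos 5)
Your proof is correct and follows essentially the same induction-on-$j$ strategy as the paper's own proof, splitting on the forward vs.\ backtracking case and at each step applying the first or second bullet of the definition of $\widetilde{\AA}^S$ to the new or merged block. The slightly more explicit bookkeeping you do (naming the decomposition $\tilde{g}_0, \tilde{g}_1, \tilde{g}_2$ and verifying $m_{l^j_K}=K$ to identify $\gamma^{p^j}_{2K+1}$ with an original $\gamma^w_{2l^j_K+1}\in S$) matches the argument the paper gives more tersely, so the two proofs are the same up to presentation.
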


\begin{proof}
	We prove the claim by induction. Assume that for all $j' \le j$, and for all $0 \le k < m_{j'}$, we have $\gamma^{p^{j'}}_{2k+1}\widetilde{\AA}^S\widetilde\gamma^{p^{j'}}_{2k+2}$.
	If $m_{j+1} = m_j +1$, then $\tau_j < P_{\nu_s}\left(\gamma^{p^j}_{2m_j},\gamma^{p^j}_{2m_j + 1},\gamma^{p^j}_{2m_j + 2}\right)$. 
	Therefore $\gamma^{p^j}_{2m_j} \AA \gamma^{p^j}_{2m_j + 1} \AA \gamma^{p^j}_{2m_j + 2}$, so we have $\gamma^{p^j}_{2m_j + 1} \widetilde{\AA} \widetilde\gamma^{p^j}_{2m_j + 2}$. 
	For smaller values of $k$, we use the induction hypothesis.
	If $0 < m_{j+1} < m_j$ we have $\gamma^{p^j}_{2m_{j+1}} \AA \gamma^{p^j}_{2m_{j+1}+1} \AA \gamma^{p^j}_{2k+2}\cdots\gamma^{p^j}_{2m_j+2}$ and by induction hypothesis, we have $\gamma^{p^j}_{2m_{j+1} - 1} \widetilde{\AA}^S \gamma^{p^j}_{2m_{j+1}+1}$ and $\gamma^{p^{j+1}}_{2m_{j+1} - 1} = \gamma^{p^j}_{2m_{j+1} - 1}$ and $\widetilde\gamma^{p^{j+1}}_{2m_{j+1}} = \widetilde\gamma^{p^j}_{2m_{j+1}} \odot \widetilde\gamma^{p^j}_{2m_{j+1}+1} \odot \widetilde\gamma^{p^j}_{2k+2}\cdots\gamma^{p^j}_{2m_j+2}$. 
	Moreover $\gamma^{p^j}_{2m_{j+1}+1}$ is equal to one of the $\gamma^w_{2k+1}$ for some $k \in\NN$, therefore $\gamma^{p^j}_{2m_{j+1}+1} \in S$ by assumption so $\gamma^{p^j}_{2m_{j+1} - 1} \widetilde{\AA}^S \widetilde\gamma^{p^{j+1}}_{2m_{j+1}}$ by definition of $\widetilde{\AA}^S$.	
\end{proof}

\begin{Lem}\label{lem:m-to-infty}
	Let $\Gamma$ be a measurable semi-group endowed with a measurable relation $\AA$. 
	Let $\nu_s$ be a probability distribution on $\Gamma$ that is $\rho$-Schottky for $\AA$ and let $S := \mathbf{supp}(\nu_s)$.
	Let $\left(\widetilde\gamma^w_{n}\right)_{n\in\NN}\in\widetilde\Gamma^\NN$, and let $(\tau_j)_{j\in\NN}\sim\mathcal{U}_{[0,1]}^{\otimes \NN}$ be independent random sequences defined on the same probability space. 
	Assume that $\left(w_{2k+1}\right)_{k\in\NN}$ is almost surely equal to a non-random constant.
	Assume also that the sequences $\left(\widetilde\gamma^w_{2k}\right)_{k\in\NN} $ and $\left(\widetilde\gamma^w_{2k+1}\right)_{k\in\NN}$ are independent.
	Assume that $(\gamma^w_{2k+1})_{k\in\NN} \sim \nu_s^{\otimes\NN}$.
	Let $(p^j_k)$ be the random family of lengths of the pivotal blocks associated to $\widetilde\gamma^w$ with weights $\tau$ and let $(l^j_k)$ and $(m_j)$ be as in Definition \ref{def:pivot}. Then for all $j \in\NN$, and for all $0 \le k \le m_j-1$, we have:
	\begin{align}
		\PP\left(m_{j+1}=m_j +1\,\middle|\,\left(\widetilde\gamma^w_{2k}\right)_{k\in\NN}, (m_{j'})_{j'\le j}\right) & = 1-2\rho, \label{eq:m-plus-un} \\
		\PP\left(m_{j+1} < m_j-k\,\middle|\,\left(\widetilde\gamma^w_{2k}\right)_{k\in\NN}, (m_{j'})_{j'\le j}\right) & = 2\rho\left(\frac{\rho}{1-2\rho}\right)^{k}. \label{eq:m-moins-k}
	\end{align}
\end{Lem}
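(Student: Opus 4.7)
The proof strategy is to condition on a richer $\sigma$-algebra and then invoke the tower property. Set $\mathcal{H}_j := \sigma\bigl((\widetilde\gamma^w_{2k})_{k\in\NN},\,(\gamma^w_{2k+1})_{k<j},\,(\tau_{j'})_{j'<j}\bigr)$, which contains the $\sigma$-algebra from the statement since $(m_{j'})_{j'\le j}$ is recursively determined by these data via Definition~\ref{def:pivot} (cf.\ Remark~\ref{rem:descri-piv}). Under $\mathcal{H}_j$ both $f_j:=\gamma^{p^j}_{2m_j}$ and $h_j:=\gamma^{p^j}_{2m_j+2}=\gamma^w_{2j+2}$ are measurable, while $\tau_j$ and $g_j:=\gamma^w_{2j+1}$ remain independent of $\mathcal{H}_j$ with distributions $\mathcal{U}_{[0,1]}$ and $\nu_s$ respectively. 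Since $\{m_{j+1}=m_j+1\}=\{\tau_j<P_{\nu_s}(f_j,g_j,h_j)\}$, Fubini and the identity $\int P_{\nu_s}(f,\cdot,h)\,d\nu_s=1-2\rho$ (which follows directly from the definition of $P_{\nu_s}$ using the Schottky lower bound $\nu_s\{\gamma:f\AA\gamma\AA h\}\ge 1-2\rho>0$) give $\PP(m_{j+1}=m_j+1\mid\mathcal{H}_j)=1-2\rho$ deterministically, whence \eqref{eq:m-plus-un} follows by the tower property.

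For \eqref{eq:m-moins-k}, I proceed by induction on $k$. The base case $k=0$ is immediate from \eqref{eq:m-plus-un} because $m_{j+1}\in\{m_j+1\}\cup\{0,\ldots,m_j-1\}$, so $\PP(m_{j+1}<m_j\mid\cdots)=2\rho$. The inductive step reduces to showing that $\PP(m_{j+1}<m_j-k-1\mid m_{j+1}<m_j-k,\mathcal{A})=\rho/(1-2\rho)$, and rests on what I will call the \emph{pristine pivot} property: for each $k''<m_j$, the definition of $l^j_{k''}$ as the last time $m$ was $\le k''$, combined with the maximality in \eqref{eq:def-m+1}, ensures that $m$ never dropped back to $\le k''$ between steps $l^j_{k''}$ and $j$; consequently $\tau_{l^j_{k''}}$ was tested exactly once in the history---at its own up-test at step $l^j_{k''}$, where it was constrained to $\tau_{l^j_{k''}}<P_{\nu_s}(f_{k''},g_{k''},h_{k''})$. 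It follows that conditionally on $(\widetilde\gamma^w_{2k})_k$, $(\gamma^w_{2k+1})_k$ and $(m_{j'})_{j'\le j}$, the variables $(\tau_{l^j_{k''}})_{k''<m_j}$ are mutually independent with $\tau_{l^j_{k''}}$ uniform on $[0,P_{\nu_s}(f_{k''},g_{k''},h_{k''})]$, and $\tau_j$ is an independent $\mathcal{U}_{[0,1]}$.

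With this in hand, the extra backtrack check at level $m_j-k-1$ is the single condition $\tau_{l^j_{m_j-k-1}}\ge P'_{\nu_s}(f_{m_j-k-1},g_{m_j-k-1},h_{m_j-k-1},h'_{m_j-k-1}(g_j))$, whose conditional probability equals $(P_{\nu_s}-P'_{\nu_s})^+/P_{\nu_s}$ by the pristine property. Integrating over the conditional distribution of $g_{m_j-k-1}$ (which, from its up-test, is $\nu_s$ restricted to $\{g:f_{m_j-k-1}\AA g\AA h_{m_j-k-1}\}$ and normalised to mass one) and then over $g_j\sim\nu_s$, and using the defining identities $\int P_{\nu_s}\,d\nu_s=1-2\rho$ and $\int P'_{\nu_s}\,d\nu_s=1-3\rho$ baked into the penalty functions, one obtains exactly the factor $\rho/(1-2\rho)$, closing the induction.

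The hardest part will be rigorously justifying the pristine-pivot independence claim: despite the algorithm's history being an entangled function of all the $\tau$'s and $g$'s (through the backtrack formula \eqref{eq:def-m+1}), the conditional joint distribution of the pivotal $\tau$'s given $(m_{j'})_{j'\le j}$ must be shown to factorise as prescribed. I would handle this by a parallel induction on $j$ following the recursion of Definition~\ref{def:pivot}, keeping track of which $\tau$'s have been ``frozen'' by past successful up-tests versus ``spent'' in past backtracks, and exploiting that $l^j_{k''}$ being the last time $m\le k''$ precisely precludes frozen pivotal $\tau$'s from having been subsequently spent in a backtrack.
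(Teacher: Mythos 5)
Your proof is correct and follows essentially the same approach as the paper: both arguments hinge on the same key observation (your "pristine pivot" property, which the paper establishes via its filtration $\mathcal{P}_j$), namely that each pivotal $\tau_{l^j_{k''}}$ is tested exactly once in the history---at its own up-test---so conditionally on the pivot history the family $\bigl(\tau_{l^j_{k''}}\bigr)_{k''<m_j}$ is independent, each uniform on $[0,P_{\nu_s}(\cdot)]$, and $\tau_j$ is a fresh $\mathcal{U}_{[0,1]}$. The only cosmetic differences are the choice of conditioning $\sigma$-algebra ($\mathcal{H}_j$ including all raw data versus the paper's $\mathcal{P}_j$ generated by the block-lengths $(p^{j'}_k)_{j'\le j}$ and even blocks, with the odd pivotal $g$'s integrated out explicitly) and the fact that you defer the rigorous proof of the independence claim, which is also the part the paper treats most tersely (by enumerating the three events characterising $l^j_k=l$ and observing their independence from $(\widetilde\gamma^w_{2l+1},\tau_l)$).
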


\begin{proof}
	First note that for all $f,g,h,h'$, we have $0 \le P'_{\nu_s}(f,g,h, h') \le P_{\nu_s}(f,g,h) \le 1$. Moreover, given a random $\gamma \sim \nu_s$ and given $g, h, h' \in\Gamma$ non-random or independent of $\gamma$, we have $\EE(P_{\nu_s}(f,\gamma,h)) =1-2\rho$ and $\EE(P'_{\nu_s}(f,\gamma,h,h')) =1-3\rho$. 

	Let $j \in \NN$ and let $k < m_j$. We have:
	\begin{equation}
		\tau_{l^j_k}< P_{\nu_s}\left(\gamma^{p^j}_{2k}, \gamma^{p^j}_{2k+1}, \gamma^w_{2l_{k}^j+2}\right).
	\end{equation}
	Indeed, by definition of $l^j_k$, we have $m_{l^j_k+1} > m_{l^j_k}$. Therefore:
	\begin{equation*}
		\tau_{l^j_k} < P_{\nu_s}\left(\gamma^{p^{l^j_k}}_{2k}, \gamma^{p^{l^j_k}}_{2k +1}, \gamma^{p^{l^j_k}}_{2k + 2}\right).
	\end{equation*}
	Moreover, for all $l^j_k < j' \le j$, we have $m_{j'} > k$ so $\left(\widetilde\gamma^{p^j}_{k'}\right)_{k' \le 2k+1}=\left(\widetilde\gamma^{p^{l_k^j}}_{k'}\right)_{k' \le 2k+1}$. Hence, we have $\gamma^{p^{l^j_k}}_{2k} = \gamma^{p^{j}}_{2k}$ and $\gamma^{p^{l^j_k}}_{2k +1} = \gamma^{p^{j}}_{2k +1}$. Moreover, we have $\gamma^{p^{l^j_k}}_{2k + i} = \gamma^{w}_{2l^j_k + i}$ for all $ i \ge 1$. 
	
	Now Let $\gamma$, $w$, $\tau$ be random sequences as in Lemma \ref{lem:m-to-infty}. Given $j \in\NN$, let $\mathcal{P}_j$ be the $\sigma$-algebra generated by $\left(p^{j'}_k\right)_{k\in\NN, j' \le j}$ and $\left(\widetilde{\gamma}^{p^j}_{2k}\right)_{k\in\NN}$. Then $(\mathcal{P}_j)_{j\in\NN}$ is a filtration. 
	
	Let $i, j \in\NN$, we claim that the conditional distribution of $\gamma^{p^j}_{2m_j + 2 i +1}$ relatively to $\mathcal{P}_j$ is almost surely $\nu_s$. 
	We prove the claim by induction. 
	For $j = 0$, we assumed that $\left(\gamma^w_{2k+1}\right)_{k\in\NN} \sim \nu_s^{\otimes\NN}$, and that $\left(\gamma^w_{2k+1}\right)_{k\in\NN}$ is independent of $(\widetilde{\gamma}^w_{2k})_{k\in\NN}$, now since $(w_{2k+1})$ is non-random, the sequence $(\widetilde{\gamma}^w_{2k})_{k\in\NN}$ determines $(p^0_k) = (w_k)$, hence it generates $\mathcal{P}_0$ which proves the claim. 
	Given $j \in\NN$, we have $(\gamma^{p^{j+1}}_{2m_{j+1} + 2 i + 1})_i = (\gamma^{p^j}_{2m_j + 2 i + 3})_{i\in\NN}$ and the construction of $(p^{j+1}_k)_k$ from $(p^{j}_k)_k$ does not depend on $(\gamma^{p^j}_{2m_j + 2 i + 3})_{i\in\NN}$. Therefore, by induction on $j$, we have $(\gamma^{p^{j}}_{2m_{j} + 2 i + 1})_{i\in\NN} \sim \nu_s^{\otimes\NN}$. By the same argument, we show by induction on $j$ that conditionally to $\mathcal{P}_j$, we have:
	\begin{equation}
		\forall j\in\NN,\;\left((\gamma^{p^{j}}_{2m_{j} + 2 i + 1},\tau_{2j+2i+1})\right)_{i\in\NN} \sim \left(\nu_s \otimes \mathcal{U}_{[0,1]}\right)^{\otimes\NN}.
	\end{equation}
	Taking $i = 0$, we have $\PP\left(\tau_{2j+1}< P_{\nu_s}\left(\gamma^{p^j}_{2m_j},\gamma^{p^j}_{2m_j + 1},\gamma^{p^j}_{2m_j + 2}\right)\,\middle|\,\mathcal{P}_j\right) = 1 - 2\rho$. 
	Hence we have \eqref{eq:m-plus-un}. 
	
	Now let $j \in \NN$ be fixed, and let $q : \Omega \to \{0, \dots, m_{j} - 1\}$ be a $\mathcal{P}_j$-measurable random variable.
	Then, we have almost surely $\gamma^{p^j}_{2 q} \AA \gamma^{p^j}_{2 q + 1} \AA \gamma^{w}_{2 l^j_q + 2}$ the conditional distribution of $\gamma^{p^j}_{2 q + 1}$ relatively to $\mathcal{P}_j$ is exactly the rescaled restriction of $\nu_s$ to $\left\{\gamma\in\Gamma\,\middle|\,\gamma^{p^j}_{2 q}\AA\gamma\AA\gamma^{w}_{2 l^j_q + 2}\right\}$.
	It means that for all $A : \Omega \to \mathcal{A}_\Gamma$ which is $\mathcal{P}_j$-measurable, we have:
	\begin{equation}\label{eq:cond-dist}
		\PP\left(\gamma^{p^j}_{2 q + 1}\in A \,\middle|\, \mathcal{P}_j\right) = \frac{\nu_s\left(A \cap \left\{\gamma\in\Gamma\,\middle|\,\gamma^{p^j}_{2 q}\AA\gamma\AA\gamma^{w}_{2 l^j_q + 2}\right\}\right)}{\nu_s\left\{\gamma\in\Gamma\,\middle|\,\gamma^{p^j}_{2 q}\AA\gamma\AA\gamma^{w}_{2 l^j_q + 2}\right\}}.
	\end{equation}
	Let $h' : \Omega \to \Gamma$, be a random variable which is independent of  $\gamma^{p^j}_{2 q + 1}$ relatively to\footnote{We say that two events are independent relatively to a $\sigma$-algebra if the conditional probability of their intersection is almost surely equal to the product of their conditional probability. We say that two random variables are relatively independent if their level sets are. By Bayes formula, it implies that the conditional distribution of one with respect to the other and said $\sigma$-algebra is almost surely equal to its conditional distribution with respect to the $\sigma$-algebra alone.} $\mathcal{P}_j$.
	Then by \eqref{eq:cond-dist}, and because $\mathds{1}_{\gamma^{p^j}_{2 q}\AA \gamma^{p^j}_{2 q + 1} \AA\gamma^{w}_{2 l^j_q + 2}} = 1$ almost surely, we have:
	\begin{align*}
		\EE\left(P'_{\nu_s}\left(\gamma^{p^j}_{2 q},\gamma^{p^j}_{2 q + 1},\gamma^{w}_{2 l^j_q + 2},h'\right)\,\middle|\,\mathcal{P}_j, h'\right) 
		& = \frac{(1-3\rho) \mathds{1}_{\gamma^{p^j}_{2 q + 1}\AA h'}}{\nu_s\left\{\gamma\in\Gamma\,\middle|\,\gamma^{p^j}_{2 q}\AA\gamma\AA\gamma^{w}_{2 l^j_q + 2} \text{ and }\gamma \AA h'\right\}} \\
		& = \frac{(1-3\rho) \PP\left(\gamma^{p^j}_{2 q + 1} \AA h'\,\middle|\,\mathcal{P}_j, h'\right)}{\nu_s\left\{\gamma\in\Gamma\,\middle|\,\gamma^{p^j}_{2 q}\AA\gamma\AA\gamma^{w}_{2 l^j_q + 2} \text{ and }\gamma \AA h'\right\}}\\ 
		& = \frac{1-3\rho}{\nu_s\left\{\gamma\in\Gamma\,\middle|\,\gamma^{p^j}_{2 q}\AA\gamma\AA\gamma^{w}_{2 l^j_q + 2}\right\}} \\ 
		& =  \frac{1-3\rho}{1-2\rho}P_{\nu_s}\left(\gamma^{p^j}_{2 q},\gamma^{p^j}_{2 q + 1},\gamma^{w}_{2 l^j_q + 2}\right).
	\end{align*} 
	Note that even though $\gamma^{p^j}_{2 q + 1}$ is not $\mathcal{P}_j$ measurable, its only role in the computation of $P_{\nu_s}\left(\gamma^{p^j}_{2 q},\gamma^{p^j}_{2 q + 1},\gamma^{w}_{2 l^j_q + 2}\right)$ is trough a $0-1$ indicator function that we know to be equal to $1$.
	So $P_{\nu_s}\left(\gamma^{p^j}_{2 q},\gamma^{p^j}_{2 q + 1},\gamma^{w}_{2 l^j_q + 2}\right)$ is indeed a $\mathcal{P}_j$ measurable quantity.
	Moreover, the conditional distribution of $\tau_{l^j_q}$ with respect to $\mathcal{P}_j$ and $\gamma^{p^j}_{2 q + 1}$ is almost surely uniform in $\left[0 , P_{\nu_s}\left(\gamma^{p^j}_{2 q},\gamma^{p^j}_{2 q + 1},\gamma^{w}_{2 l^j_q + 2}\right)\right]$. Indeed, for all $l \le j$ and for all constant $k \le l$, the conditions to have $l_k^j  = l$ are:
	\begin{enumerate}
		\item We have $m_l = k$. Note that this event only depends on $(\gamma^w_{k'})_{0\le k' \le 2l}$ and $(\tau_{l'})_{0 \le l' < l}$, hence it is independent of $(\widetilde\gamma^w_{2l+1}, \tau_l)$.
		\item For all $l < j' \le j$, we have $m_{j'} > k$. Note that this event only depends on $(\gamma^w_{k'})_{2l+2 \le k' \le 2j+2}$ and $(\tau_{l'})_{l < l' \le j}$, hence it is independent of $(\widetilde\gamma^w_{2l+1}, \tau_l)$.
		\item We have $\tau_l < P_{\nu_s}\left(\gamma^{p^{l}}_{2 m_l},\gamma^{w}_{2 l + 1},\gamma^{w}_{2 l + 2}\right)$. 
		In particular $0 < P_{\nu_s}\left(\gamma^{p^{l}}_{2 m_l},\gamma^{w}_{2 l + 1},\gamma^{w}_{2 l + 2}\right)$, therefore, we have $\gamma^{p^j}_{2 k} \AA \gamma^{p^j}_{2 k + 1} \AA \gamma^{w}_{2 l^j_k + 2}$. 
		Moreover $\tau_l$ and $\widetilde\gamma^w_{2l+1}$ are independent so the distribution of $\gamma^{w}_{2 l + 1}$ knowing $\tau_l < P_{\nu_s}\left(\gamma^{p^{l}}_{2 m_l},\gamma^{w}_{2 l + 1},\gamma^{w}_{2 l + 2}\right)$ is the restriction of $\nu_s$ to $\left\{\gamma\in\Gamma\,\middle|\,\gamma^{p^j}_{2 k}\AA\gamma\AA\gamma^{w}_{2 l^j_k + 2}\right\}$ and $\tau_l$ and $\widetilde\gamma^w_{2l+1}$ are still independent.
	\end{enumerate}
	The above argument implies moreover that the family $\left((\tau_{l^j_k},\gamma^{p^j}_{2k+1})\right)_{0 \le k < m_j}$ is independent with respect to $\mathcal{P}_j$ \ie there is a $\mathcal{P}_j$ measurable family of distribution $(\eta_k, \kappa_k)_{0 \le k < m_j} \in \widetilde{\mathrm{Prob}(\RR) \times \mathrm{Prob}(\RR)}$ such that:
	\begin{equation*}
		\left((\tau_{l^j_k},\gamma^{p^j}_{2k+1})\right)_{0 \le k < m_j} \sim \int_{\Omega}\bigotimes_{k = 0}^{m_j-1} (\eta_k\otimes\kappa_k)d\PP.
	\end{equation*}
	Therefore, for all $j \in\NN$ and for all $k \le m_j$, we have:
	\begin{equation*}
		\PP\left(\tau_{l^j_k}< P'_{\nu_s}\left(\gamma^{p^j}_{2k}, \gamma^{p^j}_{2k+1}, \gamma^w_{2l_{k}^j+2},\gamma^{p_j}_{2k+2}\cdots\gamma^{p_j}_{2m_j+2}\right)\,\middle|\,\mathcal{P}_j,\left((\tau_{l^j_{l'}},\gamma^{p^j}_{2k'+1})\right)_{k < k' < m_j}\right) = \frac{1-3\rho}{1-2\rho}.
	\end{equation*}
	By induction on $k$, we have \eqref{eq:m-moins-k}.
\end{proof}

\begin{proof}[Proof of Theorem \ref{th:pivot-extract}]
	Let $\gamma$ be a measurable semi-group, let $\AA$ be a measurable binary relation $\Gamma$ and let $S \subset \Gamma$ be measurable. Let $0< \alpha < 1$, let $0 < \rho < \frac{1}{5}$ nd let $m \in\NN$. Let $\tilde\nu_s$ be a probability distribution on $\Gamma^m$ and let $\nu_s := \Pi_*\nu_s$. Assume that $\nu_s$ is $\rho$-Schottky for $\AA$ and supported on $S$. Let $\tilde\kappa$ be as in Lemma \ref{lem:ping-pong}, let $(\Omega,\PP)$ be a probability space and let $\left(\left(\widetilde\gamma^w_n\right)_n,(\tau_k)_k\right)\sim\left(\tilde\kappa\otimes\tilde\nu_s\right)^{\otimes\NN}\otimes\mathcal{U}_{[0,1]}^{\otimes\NN}$. Let $\left(p^j_k\right)_{j,k}$ be the random family of lengths of the pivotal blocks associated to $\widetilde\gamma^w$ with weights $\tau$ and let $(m_j)$ and $\left(l^k_j\right)$ be as in Definition \ref{def:pivot}
	
	Let $j \in\NN$. By Lemma \ref{lem:m-to-infty}, we have:
	\begin{align*}
		\EE\left(m_{j+1}\,\middle|\,(m_{j'})_{j'\le j}\right) & = m_j + (1-2\rho) - 2\rho\sum_{k=0}^{m_j-1}\left(\frac{\rho}{1-2\rho}\right) \\
		& = m_j + (1 - 2\rho) - 2 \rho \frac{1-2\rho}{1-3\rho} + 2 \rho \frac{1-2\rho}{1-3\rho}\left(\frac{\rho}{1-2\rho}\right)^{m_j}\\
		& = m_j + (1 - 2\rho) \frac{1-5\rho}{1-3\rho} + 2 \rho \frac{1-2\rho}{1-3\rho}\left(\frac{\rho}{1-2\rho}\right)^{m_j}
	\end{align*}
	Note that $(1 - 2\rho) \frac{1-5\rho}{1-3\rho} > 0$. By Lemma \ref{lem:proba:ldev} applied to $(m_j)_{j\in\NN}$, there are constants $C, \beta >0$ such that $\PP(m_j\le 0)\le C\exp(-\beta j)$ for all $j \in\NN$. Hence $l_0$ is almost surely finite and has finite exponential moment because $\PP(l_0 = j) \le {C}\exp(-\beta j)$ for all $j \in\NN$. Now let $0 \le q \le l \in\NN$ be fixed and let $j \ge l$. We claim that:
	\begin{equation}\label{eq:un}
		\EE\left(m_{j+1}\,\middle|\,(m_{j'})_{j'\le j}, l_q=l\right) \ge \EE\left(m_{j+1}\,\middle|\,(m_{j'})_{j'\le j}\right).
	\end{equation}
	Indeed, if we assume the values of $(m_{j'})_{j'\le j}$ to be fixed and that $l_q^j =l$. Then $l_q =l$ if and only if there is no $j' >j$ such that $m_{j'} = 0$. We claim that:
	\begin{equation}\label{eq:deux}
		\forall k \le k',\;\PP\left(l_q = l_q^j\,\middle|\,(m_{j'})_{j'\le j},m_{j+1} = k\right) \le \PP\left(l_q = l_q^j\,\middle|\,(m_{j'})_{j'\le j},m_{j+1} = k'\right).
	\end{equation}
	Note that \eqref{eq:deux} implies that $\PP\left(m_{j+1} \ge k\,\middle|\,(m_{j'})_{j'\le j}, l_q=l\right) \ge \PP\left(m_{j+1} \ge k\,\middle|\,(m_{j'})_{j'\le j}\right)$ almost surely and for all $k$, hence we have \eqref{eq:un}. 
	Now we prove \eqref{eq:deux}. Let $\eta$ be the probability measure on $\ZZ$ such that $\eta\{1\} = 1-2\rho$ and $\eta\{-k\} = 2\rho\frac{1-3\rho}{1-2\rho}\left(\frac{\rho}{1-2\rho}\right)^{k-1}$ for all $k \ge 1$. Let $(r_j)\sim \eta^{\otimes\NN}$. 
	Let $(r_j)\sim \eta^{\otimes\NN}$ be a random sequence defined on a probability space $(\Omega',\PP')$.
	Define $(m'_j)$ by induction taking $m_0 = 0$ and $m'_{j+1} := \max\{0,m'_j+r_j\}$ for all $j$. 
	With that construction, all the formerly defined random variables are defined on the coupling of $(\Omega',\PP')$ and $(\Omega, \PP)$ relatively to $m' = m$.
	From now on we work on that coupling.
	Then for all $q \le j \in\NN$, we have $l_q = l_q^j$ if and only if $\sum_{k = j}^{j'-1} r_{k} \ge 1+q-m_{j}$ for all $j' > j$. Hence, for all $0< k \le j \in\NN$, we have $l_q = l_q^j$ and $m_{j+1} = k$ if and only if $\sum_{k = j+1}^{j'-1} r_{k} \ge 1-k$ and $m_{j+1} = k$. Moreover, the events $\left( \forall j' > j+1,\sum_{k = j+1}^{j'-1} r_{k} \ge 1+q-k\right)$ and $m_{j+1} = k$ are independent so:
	\begin{equation*}
		\PP\left(l_q = l_q^j\,\middle|\,(m_{j'})_{j'\le j},m_{j+1} = k\right) = \PP\left( \forall j' > j+1,\sum_{k' = j+1}^{j'-1} r_{k'} \ge 1+q-k\right).
	\end{equation*}
	This makes \eqref{eq:deux} obvious. 
	Then by lemma \ref{lem:proba:ldev} applied to $(m_j)$, there exist constants $C, \beta > 0$ such that $\PP(l_{q+1}-l_{q} = j) \le C \exp(-\beta j)$ for all $j$ and for all $q$. Moreover, the distribution of $l_{q+1}-l_q$ does not depend on $q$ and the family $(l_{q+1}-l_q)_{q\in\NN}$ is i.i.d. and independent of $l_0$. Now let $v_ 0 := l_0$ and for all $q \in\NN$, let $v_{2q+2} := 2(l_{q+1}-l_q)-1$ and let $v_{2q+1} = 1$. Let $p = w^v$. Then note that $p^j \underset{j \to +\infty}{\longrightarrow}p$ almost surely for the simple convergence topology. By Lemma \ref{lem:m-to-infty}, the random sequence $(v_q)$ is independent of $(w_k)$ so the sequences $(p_{2k+1})$ and $(p_{2k+2})$ are i.i.d. and independent of each other and of $p_0$. Moreover, by Lemma \ref{lem:sumexp}, each $p_k$ has finite exponential moment. Let $\tilde\mu$ be the distribution of $\widetilde\gamma^p$. We have just proven \eqref{eq:mmt-exp}.
	
	Let $k \in\NN$, we want to show \eqref{eq:tjr-scho} in Theorem \ref{th:pivot}, which states that the conditional distribution of $\widetilde\gamma^p_{2k+1}$ relatively to $\left(\widetilde\gamma^p_{k'}\right)_{k'\neq 2k+1}$ is bounded above by $\frac{\tilde\nu_s}{1-\alpha}$. 
	Let $j \in\NN$. Saying that $l_k= j$ is equivalent to saying that $\tau_j< P_{\nu_s}(\gamma^p_{2k},\gamma^w_{2j+1},\gamma^w_{2j+2})$, that $m_j = k$ and that $m_{j'} > k$ for all $j' > j$. 
	Once we assume that $\tau_j < P_{\nu_s}(\gamma^p_{2k},\gamma^w_{2j+1},\gamma^w_{2j+2})$, the conditions $m_j = k$ and  $m_{j'} > k$ for all $j' > j$ can be expressed in terms of $\left(\widetilde\gamma^w_{k'}\right)_{k'\neq 2j+1}$. Moreover, once we assume that $l_k =j$, the random sequence $(\widetilde\gamma^p_{k'})_{k'\neq 2k+1}$ is the image of the random sequence $\left(\widetilde\gamma^w_{k'}\right)_{k'\neq 2j+1}$ by a measurable function (which is defined on the set $l_k = j$).
	Hence, the distribution of $\widetilde\gamma^p_{2k+1}$ knowing $l_k = j$ and $(w_k)_{k\in\NN}$ and $(\widetilde\gamma^p_{k'})_{k'\neq 2k+1}$ is $\frac{\mathds{1}_{\Pi^{-1}(A')}}{\nu_s(A')}\nu_s$ for $A' : = \{g\in\Gamma\,|\,\gamma^p_{2k}\AA g \AA \gamma^w_{2j+2}\}$. By the Schottky property, we have $\nu_s(A') \ge 1-2\rho$. This proves \eqref{eq:tjr-scho}.
	
	Let $n\in \NN$. Let $q := \max\{k\in\NN\,|\,\overline{w}_{k} \le n\}$ and let $r := n-\overline{w}_{q}$. We claim that the conditional distribution of $\gamma_n$ knowing $(p_{k}^j)_{k,j}$ and knowing that $q$ is even is $\left(\chi_{r}^{w_{q}}\right)_*\widetilde\kappa$. We prove by induction on $j'\in\NN$ that the conditional distribution of $\gamma_n$ knowing $(p_{k}^j)_{k,j \le j'}$ and knowing that $q$ is even is $\left(\chi_{r}^{w_{q}}\right)_*\widetilde\kappa$. For $j' = 0$, this comes from the definition of the random sequence $\gamma^w$. For larger $j' \in\NN$, note that the construction of $(p_{k}^j)_{k,j \le j'+1}$ from $(p_{k}^j)_{k,j \le j'}$ only depends on events that are independent of $(\widetilde{\gamma}^w_{2k})_{k\in\NN}$ and therefore independent of $\gamma_n = \chi_r^{w_q}\left(\widetilde{\gamma}^w_q\right)$. Hence the conditional distribution of $\gamma_n$ knowing $(p_{k}^j)_{k,j \le j'+1}$ and knowing that $q$ is even is the conditional distribution of $\gamma_n$ knowing $(p_{k}^j)_{k,j \le j'}$ and knowing that $q$ is even.
	
	Now let $A \subset \Gamma \setminus \bigcup_{k=0}^{m-1}\chi_k^m(\mathbf{supp}(\tilde\nu_s))$. We have $\PP(\gamma_n \in A\,|\,q\in 2\NN) =0$ and knowing that $q$ is odd, the conditional probability of $(\gamma_n \in A)$ is $\left(\chi_{r}^{w_{q}}\right)_*\widetilde\kappa(A)$, which is bounded above by $\frac{\nu(A)}{1-\alpha}$ by \eqref{eq:densite-sqz}. This proves \eqref{eq:densite} and concludes the proof of Theorem \ref{th:pivot-extract}.
\end{proof}

\subsection{Facts about ping-pong sequences}

Given $(\Omega,\mathcal{A}_\Omega)$, and $(\Gamma,\mathcal{A}_\Gamma)$ two measurable spaces, and $\gamma: \Omega \to \Gamma$ a measurable map, we write $\langle \gamma \rangle_\sigma := \gamma^*\mathcal{A}_\Gamma \subset \mathcal{A}_\Omega$ for the $\sigma$-algebra generated by $\gamma$.

\begin{Def}[Ping-pong sequence]
	Let $\Gamma$ be a semi-group, let $\AA$ be a measurable binary relation on $\Gamma$ and let $\rho\in(0,1)$. Let $N\in\NN\cup\{+\infty\}$ and let $(\gamma_k)_{0\le k < N}$ be a random sequence. We say that $(\gamma_k)$ is $\rho$-ping-pong for $\AA$ if for all $k\in\NN$ such that $0\le 2k+1 < N$, the conditional distribution of $\gamma_{2k+1}$ relatively to $(\gamma_{k'})_{k' \neq 2k+1}$ is almost surely $\rho$-Schottky for $\AA$. Then we say that the distribution of $(\gamma_k)_{0 \le k \le N}$ is $\rho$-ping-pong for $\AA$.
\end{Def}

\begin{Lem}[Pivoting technique]\label{lem:r-piv}
	Let $\Gamma$ be a semi-group, let $\AA$ be a measurable binary relation on $\Gamma$ and let $\rho\in(0,1)$. Let $n\in \NN$ and let $\mu$ be a probability distribution on $\Gamma^{\{0,\dots, 2n\}}$ that is $\rho$-ping-pong for $\AA$. There exists a probability space $(\Omega, \PP)$ a random sequence $(\gamma_k)_{0\le k \le 2n}\sim \mu$ and and a random integer $r\sim\mathcal{G}_{\rho}$ such that $\gamma_{2n-2r-1}\AA(\gamma_{2n-2r}\cdots\gamma_{2n})$ or $n\le r$ and $r$ and $(\gamma_{2k})_{0\le k\le n}$ are independent.
\end{Lem}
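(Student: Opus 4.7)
The plan is to apply the pivoting/penalty-function method of Section \ref{sec:algo-pivot}, but used in a single backward pass rather than iteratively. I would enlarge the probability space so that it carries, in addition to $(\gamma_k)_{0\le k\le 2n}\sim\mu$, an i.i.d.\ sequence $(\tau_j)_{j\in\NN}$ of uniform variables on $[0,1]$ independent of $(\gamma_k)$. Introduce the increasing filtration
\[
\mathcal{B}_r := \sigma\bigl((\gamma_{2k})_{0\le k\le n},\ (\gamma_{2k+1})_{n-r\le k<n}\bigr),\qquad 0\le r\le n,
\]
so that $\mathcal{B}_0$ is generated by the even-indexed coordinates and passing from $\mathcal{B}_r$ to $\mathcal{B}_{r+1}$ adjoins the next odd-indexed coordinate going from right to left, namely $\gamma_{2n-2r-1}$. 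Let $\eta_r$ denote the regular conditional distribution of $\gamma_{2n-2r-1}$ given $\mathcal{B}_r$. Because $\mathcal{B}_r\subset\sigma((\gamma_{k'})_{k'\neq 2n-2r-1})$ and conditional expectation of a Schottky kernel is still Schottky, the $\rho$-ping-pong assumption yields that $\eta_r$ is almost surely $\rho$-Schottky for $\AA$.

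For $0\le r<n$, define the penalty
\[
P_r := \frac{(1-\rho)\,\mathds{1}_{\gamma_{2n-2r-1}\,\AA\,\gamma_{2n-2r}\cdots\gamma_{2n}}}{\eta_r\bigl\{\gamma\in\Gamma\,\big|\,\gamma\,\AA\,\gamma_{2n-2r}\cdots\gamma_{2n}\bigr\}},
\]
and set $P_r := 1-\rho$ for $r\ge n$. The Schottky property forces the denominator to be at least $1-\rho$ almost surely, so $P_r\in[0,1]$ and $P_r$ is $\mathcal{B}_{r+1}$-measurable; moreover, a direct calculation based on the definition of $\eta_r$ gives $\EE(P_r\mid\mathcal{B}_r)=1-\rho$ almost surely, because the product $\gamma_{2n-2r}\cdots\gamma_{2n}$ is $\mathcal{B}_r$-measurable and integrating $\mathds{1}_{\gamma\AA h}$ against $\eta_r$ returns exactly the denominator. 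Now define
\[
r := \min\bigl\{k\ge 0\,\big|\,\tau_k<P_k\bigr\}.
\]

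Three checks finish the proof. First, on $\{r=k<n\}$ one has $P_k>0$, which forces the indicator in the numerator of $P_k$ to equal $1$, so $\gamma_{2n-2r-1}\AA\gamma_{2n-2r}\cdots\gamma_{2n}$. Second, conditionally on the full $\sigma$-algebra $\mathcal{B}_n$, the variables $\tau_j$ are independent uniforms and the $P_j$ are deterministic, so
\[
\PP(r>k\mid\mathcal{B}_0)=\EE\Bigl(\prod_{j=0}^{k}(1-P_j)\,\Big|\,\mathcal{B}_0\Bigr);
\]
a backward tower argument along $\mathcal{B}_0\subset\mathcal{B}_1\subset\cdots\subset\mathcal{B}_{k+1}$, using $\EE(1-P_j\mid\mathcal{B}_j)=\rho$ and the $\mathcal{B}_j$-measurability of $P_0,\dots,P_{j-1}$, peels off a factor $\rho$ at each stage and yields $\PP(r>k\mid\mathcal{B}_0)=\rho^{k+1}$. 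Hence $r\sim\mathcal{G}_\rho$ and its conditional law given $\mathcal{B}_0$ is deterministic, which is exactly the independence of $r$ and $(\gamma_{2k})_{0\le k\le n}$.

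The main subtlety lies in asserting that $\eta_r$ is $\rho$-Schottky \emph{almost surely}, with the quantifier over $h\in\Gamma$ inside the almost-sure statement. For this proof, however, the only random $h$ for which the inequality $\eta_r\{\gamma\AA h\}\ge 1-\rho$ is actually needed is the specific $\mathcal{B}_r$-measurable element $h=\gamma_{2n-2r}\cdots\gamma_{2n}$ that appears in $P_r$, and the required bound follows directly from the ping-pong assumption via one application of the tower property, $\PP(\gamma_{2n-2r-1}\AA h\mid\mathcal{B}_r)=\EE(\PP(\gamma_{2n-2r-1}\AA h\mid(\gamma_{k'})_{k'\neq 2n-2r-1})\mid\mathcal{B}_r)\ge 1-\rho$, thereby bypassing any measurable-selection issue.
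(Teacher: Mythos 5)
Your proof follows the paper's strategy — penalty functions rescaled to have conditional mean $1-\rho$, a stopping time defined via auxiliary uniforms $\tau_j$, and a tower argument to get the geometric law independent of the even coordinates — so it is essentially the same proof. The one substantive difference is where you condition the denominator of $P_r$: the paper conditions on $(\gamma_k)_{k\neq 2n-2j-1}$ (the full complement), while you condition on the smaller $\sigma$-algebra $\mathcal{B}_r$ generated by the even coordinates together with the odd coordinates to the right of position $2n-2r-1$. This is a genuine clean-up. The paper then asserts that each $P_{j'}$ is measurable for $\langle(\gamma_k)_{k\ge 2n-2j'-2}\rangle$, which is not obvious, since the conditional probability in the paper's denominator is a function of all other coordinates, including those with index less than $2n-2j'-2$, and nothing in the ping-pong hypothesis forces it to factor through the later coordinates. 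Your choice of $\mathcal{B}_r$ makes the $\mathcal{B}_{r+1}$-measurability of $P_r$ (hence the $\mathcal{B}_j$-measurability of $P_0,\dots,P_{j-1}$) hold by construction, and the ping-pong bound still transfers through one application of the tower property since $\mathcal{B}_r\subset\sigma((\gamma_{k'})_{k'\neq 2n-2r-1})$. You also correctly flag and dispose of the measurable-selection point: the almost-sure Schottky bound is only needed for the single $\mathcal{B}_r$-measurable test element $\gamma_{2n-2r}\cdots\gamma_{2n}$, so no version of the conditional kernel that is Schottky simultaneously for all $h$ is required. In short, same method, but your filtration makes the measurability claims automatic rather than asserted.
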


\begin{proof}
	Let $\left((\gamma_k)_{0\le k \le 2n},(\tau_j)_{0\le j}\right) \sim \mu\otimes\mathcal{U}_{[0,1]}^{\otimes \NN}$. 
	Given $j \ge n$, we define $P_j := 1-\rho$. Given $0\le j < n$, we define:
	\begin{equation*}
		P_j := \frac{(1-\rho)\mathds{1}_\AA(\gamma_{2n-2j-1}, \gamma_{2n-2j}\cdots\gamma_{2n})}{\PP\left( \gamma_{2n-2j-1} \AA (\gamma_{2n-2j}\cdots\gamma_{2n}) \,\middle|\, (\gamma_{k})_{k\neq 2n-2j-1}\right)}.
	\end{equation*}
	Note that $0\le P_j \le 1$ almost surely because $(\gamma_k)$ is $\rho$-ping pong. Moreover $\EE(P_j\,|\,(\gamma_{k})_{k\neq 2n-2j-1}) = 1-\rho$ almost surely and $P_j$ is independent of $(\tau_{j'})_{j'\in\NN}$. Therefore, we have:
	\begin{equation*}
		\forall j\in\NN,\;\PP\left(\tau_j < P_j\,\middle|\,(\gamma_{k})_{k\neq 2n-2j-1},(\tau_{j'})_{j'\neq j}\right) = 1-\rho.
	\end{equation*}
	Moreover, for all $j' < j\in\NN$, the random variable $P_{j'}$ is measurable for $\langle(\gamma_k)_{k \ge 2n-2j'-2}\rangle$, hence it is measurable for $\langle(\gamma_k)_{k \neq 2n-2j-1}\rangle$. Therefore:
	\begin{equation}\label{eq:estim-r}
		\forall j\in\NN,\;\PP\left(\tau_j < P_j\,\middle|\,(\gamma_{2k})_{0\le k \le n}, \forall j' < j,\tau_{j'} \ge P_{j'}\right) = 1-\rho.
	\end{equation}	
	Let $r := \min\{j\in\NN\,|\,\tau_j < P_j\}$. Assume that $r < n$, then $P_r >0$ so $\gamma_{2n-2r-1}\AA(\gamma_{2n-2r}\cdots\gamma_{2n})$. Then by \eqref{eq:estim-r}, we have $\PP\left(r \ge j\,\middle|\,(\gamma_{2k})_{0\le k \le n}\right) = \rho^j$, almost surely and for all $j$. Hence $r \sim\mathcal{G}_{\rho}$ and $r$ is independent of $(\gamma_{2k})_{0\le k\le n}$.
\end{proof}

Given $N \in\NN$, given $\Gamma$ a semi-group, given $\gamma_0,\dots, \gamma_N$ a finite sequence in $\Gamma$ and given $0\le j < i\le N$, we write $\gamma_{i}\cdots\gamma_j$ for $\gamma_i\cdots\gamma_{N}\gamma_0\cdots\gamma_{j}$.

\begin{Lem}[Cyclical pivoting technique]\label{lem:c-piv}
	Let $\Gamma$ be a semi-group, let $\AA$ be a measurable binary relation on $\Gamma$ and let $\rho\in(0,1)$. Let $n\in \NN$ and let $\mu$ be a probability distribution on $\Gamma^{\{0,\dots, 2n\}}$ that is $\rho$-ping-pong for $\AA$. There exist a probability space $(\Omega, \PP)$, a random sequence $(\gamma_k)_{0\le k \le 2n}\sim \mu$ and an integer $c\sim\mathcal{G}_{2\rho}$ such that $\gamma_{2n-2c-1}\AA(\gamma_{2n-2c}\cdots\gamma_{2c})$ and $(\gamma_{2n-2c-2}\cdots\gamma_{2c})\AA \gamma_{2c+1}$ or $n \le 2c - 1$ and $c$ and $(\gamma_{2k})_{0\le k\le n}$ are independent. 
\end{Lem}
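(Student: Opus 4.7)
The proof parallels that of Lemma~\ref{lem:r-piv}, with the modification that at each pivoting step we test the Schottky property on \emph{two} pivots, one on each side of the cycle. Combining the two pivot constraints via a union bound explains the factor $2\rho$ in the geometric parameter. On an enlarged probability space, couple $(\gamma_k)$ with an i.i.d.\ uniform sequence $(\tau_j)_{j\in\NN}\sim\mathcal{U}_{[0,1]}^{\otimes\NN}$ independent of the $\gamma_k$'s. For $j\in\NN$ with $n\ge 2j$, so that both alignment conditions of the lemma are well-posed, set
\begin{equation*}
A_j:=\bigl\{\gamma_{2n-2j-1}\AA(\gamma_{2n-2j}\cdots\gamma_{2j})\bigr\},\quad B_j:=\bigl\{(\gamma_{2n-2j-2}\cdots\gamma_{2j})\AA\gamma_{2j+1}\bigr\},
\end{equation*}
and $\mathcal{F}_j:=\sigma(\gamma_k:k\notin\{2n-2j-1,2j+1\})$, and define
\begin{equation*}
P_j:=\frac{(1-2\rho)\mathds{1}_{A_j\cap B_j}}{\PP(A_j\cap B_j\mid\mathcal{F}_j)}.
\end{equation*}
For $j$ with $n\le 2j-1$ set $P_j:=1-2\rho$, a placeholder that ensures $c$ has the full geometric support.

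The key estimate $\PP(A_j\cap B_j\mid\mathcal{F}_j)\ge 1-2\rho$ follows from Schottky applied twice. The product $\gamma_{2n-2j}\cdots\gamma_{2j}$ is $\mathcal{F}_j$-measurable, so the $\rho$-Schottky conditional distribution of $\gamma_{2n-2j-1}$ given all other $\gamma_k$'s yields $\PP(A_j^c\mid\mathcal{F}_j)\le\rho$ after integrating over $\gamma_{2j+1}$. Likewise $\gamma_{2n-2j-2}\cdots\gamma_{2j}$ is $\mathcal{F}_j\vee\sigma(\gamma_{2n-2j-1})$-measurable, and Schottky for $\gamma_{2j+1}$ gives $\PP(B_j^c\mid\mathcal{F}_j)\le\rho$ after integrating over $\gamma_{2n-2j-1}$. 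A union bound closes the estimate; hence $P_j\in[0,1]$ and $\EE(P_j\mid\mathcal{F}_j)=1-2\rho$ almost surely.

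The inductive argument à la Lemma~\ref{lem:r-piv} then requires the measurability fact that, for $j'<j<n/2$, both $A_{j'}$ and $B_{j'}$ only involve $\gamma_k$'s whose indices lie in the ``outer ring'' $[0,2j'+1]\cup[2n-2j'-2,2n]$. Since $j'<j<n/2$ forces $j+j'\le 2j-1<n-1$, one checks that neither $2n-2j-1$ nor $2j+1$ lies in this ring, so $P_{j'}$ is $\mathcal{F}_j$-measurable (the case where $j'$ is beyond the threshold being trivial since $P_{j'}$ is then a constant). Combined with the independence of $\tau_j$ from everything else, this yields as in Lemma~\ref{lem:r-piv}
\begin{equation*}
\PP\bigl(\tau_j<P_j\bigm|(\gamma_{2k})_{0\le k\le n},\,\forall j'<j:\tau_{j'}\ge P_{j'}\bigr)=1-2\rho.
\end{equation*}
Therefore $c:=\min\{j\in\NN:\tau_j<P_j\}$ is geometric $\mathcal{G}_{2\rho}$ and independent of $(\gamma_{2k})_{0\le k\le n}$. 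Below the threshold, $P_c>0$ forces $A_c\cap B_c$ so both alignments hold; beyond the threshold, $n\le 2c-1$ holds by construction.

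The main technical obstacle is the above measurability step: one must carefully verify that the cyclic products appearing in $A_{j'}$ and $B_{j'}$ do not reach into the deep interior of the cycle where the current pivots $\gamma_{2n-2j-1}$ and $\gamma_{2j+1}$ sit. This is ultimately a combinatorial check on overlapping cyclic index ranges, and it works precisely because the algorithm processes candidate pivots from the outside of the cycle inward, so that previously tested pivots lie in an ``outer'' region relative to the current step.
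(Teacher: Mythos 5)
Your proof mirrors the paper's: define $P_j$ as the indicator of the double alignment divided by its conditional probability given $\mathcal{F}_j$, default to $P_j:=1-2\rho$ beyond the cyclic threshold, and set $c:=\min\{j:\tau_j<P_j\}$. Your explicit union-bound verification that $\PP(A_j\cap B_j\mid\mathcal{F}_j)\ge 1-2\rho$ is correct and is only asserted, not spelled out, in the paper.

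The measurability step, however, has a gap. You conclude that $P_{j'}$ is $\mathcal{F}_j$-measurable for $j'<j$ from the fact that $A_{j'}$ and $B_{j'}$ involve only outer-ring indices. That handles the numerator $\mathds{1}_{A_{j'}\cap B_{j'}}$, but $P_{j'}$ also carries the denominator $\PP(A_{j'}\cap B_{j'}\mid\mathcal{F}_{j'})$, which is by construction an $\mathcal{F}_{j'}$-measurable random variable, and $\mathcal{F}_{j'}=\sigma\bigl((\gamma_k)_{k\notin\{2n-2j'-1,\,2j'+1\}}\bigr)$ contains both $\gamma_{2n-2j-1}$ and $\gamma_{2j+1}$ whenever $j\ne j'$. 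Nothing in the ping-pong hypothesis forbids this conditional probability from genuinely depending on those two variables, so $P_{j'}$ need not be $\mathcal{F}_j$-measurable and the inductive step $\PP(\tau_j<P_j\mid(\gamma_{2k}),\,\forall j'<j:\tau_{j'}\ge P_{j'})=1-2\rho$ is not established. This same unjustified measurability assertion appears in the paper's proof of the non-cyclic Lemma~\ref{lem:r-piv}, so it is a shared gap rather than a new error, and it is harmless in the paper's actual applications (where the odd pivot blocks are conditionally independent of one another given the rest). A clean repair is to condition the denominator only on the outer-ring variables of step $j'$: by the tower property this smaller conditional probability is still at least $1-2\rho$, so $P_{j'}\in[0,1]$, and now $P_{j'}$ really is outer-ring measurable as your combinatorial check requires. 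A separate, smaller issue: your test regime $j\le n/2$ matches the stated fallback $n\le 2c-1$ exactly, but for $n$ even it produces the pair $(j,j')=(n/2,\,n/2-1)$, for which $j+j'=n-1$ and your strict inequality $j+j'<n-1$ fails, so the outer-ring check collapses at this boundary; the paper's threshold $j<n/2$ avoids the degenerate pair at the cost of only delivering $2c\ge n$, which is in fact all that is used in the later applications.
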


\begin{proof}
	Let $(\Omega, \PP) := \left(\Gamma^{2n+1}\times[0,1]^{\NN},\mu\otimes\mathcal{U}_{[0,1]}^{\otimes \NN}\right)$. Let $\left((\gamma_k)_{0\le k \le 2n},(\tau_j)_{0\le j}\right) \sim \mu\otimes\mathcal{U}_{[0,1]}^{\otimes \NN}$. Given $j \ge n/2$, we define $P_j := 1-2\rho$. Given $0\le j < n/2$, we define:
	\begin{equation*}
		P_j := \frac{(1-2\rho)\mathds{1}_\AA(\gamma_{2n-2j-1}, \gamma_{2n-2j}\cdots\gamma_{2j})\mathds{1}_\AA(\gamma_{2n-2j-2}\cdots\gamma_{2j},\gamma_{2j+1})}{\PP\left( \gamma_{2n-2j-1} \AA (\gamma_{2n-2j}\cdots\gamma_{2j})\cap (\gamma_{2n-2j-2}\cdots\gamma_{2j})\AA\gamma_{2j+1} \,\middle|\, (\gamma_{k})_{k\notin \{2n-2j-1, 2j +1\}}\right)}.
	\end{equation*}
	Note that $0\le P_j \le 1$ and $\EE(P_j\,|\,(\gamma_{k})_{k\notin \{2n-2j-1, 2j +1\}}) = 1-2\rho$. Let $c := \min\{j\in\NN\,|\,\tau_j < P_j\}$. Then $r\sim\mathcal{G}_{2\rho}$ and $c$ is independent of $(\gamma_{2k})_{0\le k\le n}$.
\end{proof}

\begin{Rem}\label{rem:c-piv}
	Let $\Gamma$ be a semi-group, let $\AA$ be a binary relation on $\Gamma$ and let $S\subset \Gamma$. 
	Let $(\tilde\gamma_k)_{0\le k \le 2n}\in\widetilde{\Gamma}^{2n+1}$ and let $\gamma_k :=\Pi(\tilde\gamma_k)$ for all $0\le k \le 2n$.
	Assume for the sake of the argument that the identity of $\Gamma$ is aligned with everyone and not in $S$.
	Assume that for all $0\le k\le n$, we have: $\gamma_{2k} \AA \gamma_{2k+1} \widetilde{\AA}^S \tilde{\gamma}_{2k+2}$. 
	
	In Lemma \ref{lem:c-piv}, we want to have $\gamma_{2n-2c-2}\cdots\gamma_{2c}\AA \gamma_{2c+1}$ instead of just $\gamma_{2n-2c}\cdots\gamma_{2c}\AA \gamma_{2c+1}$ because we have no control over the product $\gamma_{2n-2c}\cdots\gamma_{2c}$ (it may be the identity for example and the alignment would be meaningless).
	We however have control over the product $\gamma_{2n-2c-2}\cdots\gamma_{2c}$.
	Indeed, if $c < n/2$ is such that $\gamma_{2n-2c-1}\AA(\gamma_{2n-2c}\cdots\gamma_{2c})$, 
	then $\gamma_{2n-2c-3} \widetilde{\AA}^S \tilde\gamma_{2n-2c-2}\odot\cdots\odot\tilde\gamma_{2c}$.
	In concrete cases, we have shown in Proposition \ref{prop:pivot-is-aligned} that this implies a genuine alignment.
\end{Rem}

\subsection{Factorization of the pivotal extraction}

In this section, we prove Theorem \ref{th:pivot}. 
Given $X$ and $Y$ two measurable sets, we write $\chi_{\widetilde\Gamma}: Y \times X \to Y$ and $\chi_X: Y \times X \to X$ for the first and second coordinate projections.

\begin{Lem}[Factorization of the pivotal extraction]\label{lem:pivot-factor}
	Let $\Gamma$ be a measurable semi-group and let $S \subset\Gamma$ be measurable. 
	Let $M\in\NN$ and let $(L_i)_{1\le i \le M}$ and $(R_j)_{1\le j \le M}$ be two non-random families of disjoint measurable subsets of $\Gamma$. 
	Let $A \subset \{1,\dots, M\}^2$ and let $\AA := \bigsqcup_{(i,j)\in A} L_i \times R_j$.
	Let $\nu$ be a probability measure on $\Gamma$, let $0< \alpha < 1$, let $0< \rho <\frac{1}{5}$ and let $m \in\NN$. Let $\tilde\nu_s$ be a probability measure on $\Gamma^m$ such that $\alpha\nu_s\le \nu^{\otimes m}$ and let $\nu_s := \Pi_*\tilde\nu_s$. Assume that $\nu_s$ is supported on $S$ and $\rho$-Schottky for $\AA$. 
	Let $\tilde\mu$ be as in Theorem \ref{th:pivot} and let $(\widetilde\gamma^w) \sim \tilde\mu$. 
	Then there exists a Markov chain $(x_n)$ on $X \subset \{0, \dots, 2M\}$, with $x_0 = 0$, and a family $\left(\tilde\nu'_x\right)_{x\in X}\in\mathrm{Prob}\left( \widetilde\Gamma \times X \right)^X$ such that:
	\begin{enumerate}
		\item For all $n \in\NN$, the pair $(\widetilde\gamma^w_n, x_{n+1})$ has distribution law $\tilde\nu'_{x_n}$ conditionally to $(\widetilde\gamma^w_k)_{k < n}$ and $(x_k)_{k \le n}$.
		\item For all $k \in\NN$, one has $x_{2k+1} \in \{1, \dots, M\}$ and $x_{2k+2} \in \{M+1,\dots, 2M\}$.\label{item:2}
		\item For all $i\in\{1,\dots, M\}\cap X$ and all $j\in\{M+1,\dots, 2M\}\cap X$, one has $\tilde\nu'_i\left(\widetilde\Gamma\times\{j\}\right) > 0$. \label{item:3}
		\item For all $i\in\{1,\dots, M\}\cap X$ and all $j\in\{M+1,\dots, 2M\}\cap X$, the distribution:
		\begin{equation*}
			\nu_{i,j} := \Pi_*(\chi_{\widetilde{\Gamma}})_{*}\left(\frac{\mathds{1}_{(\chi_X=j)}\tilde{\nu}'_i}{(\chi_{X})_* \tilde\nu'_i\{j\}}\right)
		\end{equation*}
		is $\frac{\rho}{1-2\rho}$-Schottky.
	\end{enumerate}
\end{Lem}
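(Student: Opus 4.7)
The plan is to produce the chain $(x_n)$ by reading off, at each step, the $\AA$-alignment class of the underlying product $\Pi(\widetilde\gamma^w_n)$ with respect to the finite partitions $(L_i)$ and $(R_j)$ (which, after adding a garbage class to each family if needed, we may assume partition $\Gamma$, as in Definition \ref{def:discrete}). Let $\ell, r : \Gamma \to \{1, \dots, M\}$ denote the class maps defined by $g \in L_{\ell(g)} \cap R_{r(g)}$, and set $x_0 := 0$ together with, for $k \ge 0$,
\[
x_{2k+1} := \ell\bigl(\Pi(\widetilde\gamma^w_{2k})\bigr), \qquad x_{2k+2} := M + r\bigl(\Pi(\widetilde\gamma^w_{2k+2})\bigr).
\]
Let $X \subset \{0, \dots, 2M\}$ collect the values attained with positive probability, and take $\tilde\nu'_x$ to be the conditional law of $(\widetilde\gamma^w_n, x_{n+1})$ given $x_n = x$. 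Condition \eqref{item:2} then holds by construction.

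The Markov property will follow from the conditional independence structure built into $\tilde\mu$ by the pivot algorithm. For odd $n = 2k+1$, the Schottky word $\widetilde\gamma^w_{2k+1}$ has conditional law dominated by $\tilde\nu_s/(1-2\rho)$ by \eqref{eq:tjr-scho}, supported on words whose product is $\AA$-aligned with both neighbours by \eqref{item:atilde}. The alignment data governing this law is captured exactly by the pair $\bigl(\ell(\Pi(\widetilde\gamma^w_{2k})), r(\Pi(\widetilde\gamma^w_{2k+2}))\bigr) = (x_{2k+1}, x_{2k+2} - M)$, so the joint law of $(\widetilde\gamma^w_{2k+1}, x_{2k+2})$ given the past depends only on $x_{2k+1}$. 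For even $n$, the analogous statement is a direct consequence of \eqref{eq:densite} together with the ping-pong structure of the non-Schottky factors described in Section \ref{schottky}.

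The main computation is the Schottky bound on $\nu_{i,j}$. With the encoding above, $\nu_{i,j}$ is the normalized restriction of $\nu_s$ to the set
\[
B^L_i \cap B^R_{j-M}, \qquad B^L_i := \bigcup_{j' : (i,j') \in A} R_{j'}, \qquad B^R_{j-M} := \bigcup_{i' : (i', j-M) \in A} L_{i'}.
\]
Since $\nu_s$ is $\rho$-Schottky for $\AA$, applying the Schottky inequality to any element of $R_{j-M}$ gives $\nu_s(B^R_{j-M}) \ge 1 - \rho$, and symmetrically $\nu_s(B^L_i) \ge 1 - \rho$; a union bound then yields $\nu_s(B^L_i \cap B^R_{j-M}) \ge 1 - 2\rho$. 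For any $h \in \Gamma$, the set $\{g : g \not\AA h\}$ equals $\Gamma \setminus B^R_{r(h)}$, which has $\nu_s$-measure at most $\rho$, so
\[
\nu_{i,j}\{g : g \not\AA h\} \le \frac{\rho}{\nu_s(B^L_i \cap B^R_{j-M})} \le \frac{\rho}{1-2\rho},
\]
and symmetrically for the left Schottky inequality. The same lower bound $\nu_s(B^L_i \cap B^R_{j-M}) \ge 1 - 2\rho > 0$ yields condition \eqref{item:3}: any pair $(i,j)$ of opposing types in $X$ can be bridged by a Schottky word with positive probability.

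The genuine obstacle lies in formalising the Markov property, because $\tilde\mu$ is produced by the stopping-time construction of Section \ref{sec:algo-pivot}: its dependencies are mediated by the auxiliary uniform variables of the pivot algorithm, and the fact that the state pair $(x_{2k+1}, x_{2k+2})$ captures precisely the past-dependence of $(\widetilde\gamma^w_{2k+1}, x_{2k+2})$ has to be read off from that explicit construction, in particular from the independence of the Schottky factors established through Lemma \ref{lem:ping-pong} and the conditional independence structure used in the proof of Lemma \ref{lem:m-to-infty}.
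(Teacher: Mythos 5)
Your overall strategy (encode $\AA$-classes via the partitions $(L_i)$, $(R_j)$ and read off a Markov chain from the pivot extraction) is the same as the paper's, and your Schottky computation on the restriction set $B^L_i \cap B^R_{j-M}$ is correct. But the specific choice of state $x_{2k+2}$ is wrong, and that choice is exactly the content of the lemma.

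You set $x_{2k+2} := M + r\bigl(\Pi(\widetilde\gamma^w_{2k+2})\bigr)$, i.e.\ the $\phi_R$-class of the \emph{full product} of the even pivot block. The paper instead uses the $\phi_R$-class of the \emph{first ping-pong sub-block} hidden inside $\widetilde\gamma^w_{2k+2}$ (in the paper's internal notation, $\gamma^w_{2l_k+2}$, not $\gamma^p_{2k+2} = \Pi(\widetilde\gamma^p_{2k+2})$). This distinction is not cosmetic: the conditional law of the Schottky factor is, by the proof of Theorem~\ref{th:pivot-extract}, the normalized restriction of $\nu_s$ to $\{g : \gamma^p_{2k} \AA g \AA \gamma^w_{2l_k+2}\}$. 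The right-hand partner here is the ping-pong sub-block, not the whole product. Your sentence ``supported on words whose product is $\AA$-aligned with both neighbours by \eqref{item:atilde}'' misreads that theorem: item \eqref{item:atilde} only gives $\Pi(\tilde g_{2k}) \AA \Pi(\tilde g_{2k+1}) \widetilde{\AA}^S \tilde g_{2k+2}$, and in the abstract measurable-semi-group setting of this lemma $\widetilde{\AA}^S$ does not imply a genuine $\AA$-alignment with the full product (Proposition~\ref{prop:pivot-is-aligned} only rescues this for matrices with the squeeze hypothesis, not for a general finitely-described $\AA$).

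The consequence is that your Schottky argument, while internally sound, computes $\nu_{i,j}$ for a different chain than the one you defined: conditioning on $\phi_R(\gamma^p_{2k+2}) = j-M$ is not the same as conditioning on $\phi_R(\gamma^w_{2l_k+2}) = j-M$, and the former does not obviously yield the clean restriction $\mathds{1}_{B^L_i \cap B^R_{j-M}}\,\nu_s/\nu_s(B^L_i \cap B^R_{j-M})$. And this is where the Markov property---which you explicitly defer---is genuinely load-bearing. The choice of $x_{2k+2}$ is \emph{forced} by the conditional independence structure established in Lemma~\ref{lem:m-to-infty}: given the $\mathcal{P}_j$-measurable non-Schottky data, the Schottky factor $\gamma^p_{2k+1}$ is a clean $\nu_s$-restriction, the pair $(\gamma^p_{2k+1},\tau_{l_k})$ has the product structure (restriction)$\times$(uniform on $[0,P_{\nu_s}]$), and the backtrack checks ($P'_{\nu_s}$) have constant conditional success probability $\tfrac{1-3\rho}{1-2\rho}$ regardless of the left-class of $\gamma^p_{2k+1}$. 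This is what lets the conditional law of $(\widetilde\gamma^p_{k'})_{k'\ge 2k+2}$ given the past and $\tau_{\le l_k}$ depend only on the single $\phi_R$-class of the first ping-pong sub-block, so that state is a true sufficient statistic. With your state, the conditional law of $\widetilde\gamma^p_{2k+2}$ given the past depends on $\phi_L(\gamma^p_{2k+1})$ (through the alignment with $\gamma^w_{2l_k+2}$), which your $x_{2k+2}$ does not encode, so item (1) of the lemma is not established. In short: the computation you did is correct, but it is a corollary of the lemma, not a proof of it; the actual work is in choosing the state so that the chain closes up, and that is where your proposal deviates from the paper and leaves the gap open.
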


Let $(x_n), (\widetilde{\gamma}^w_n)$ be as in Lemma \ref{lem:pivot-factor}
Note that items \eqref{item:2} and \eqref{item:3} imply that the supports of $x_{2k+2}$ and ${x_{2k+3}}$ do not depend on $k$. 
However, the support of $x_1$ may differ from the support of $x_3$. 
With that in mind, for all  $i\in\{1,\dots, M\}\cap X$ and all $j\in\{M+1,\dots, 2M\}\cap X$, the distribution $\nu_{i,j}$ is the distribution of $\gamma^w_k$ knowing that $x_k = i$ and $x_{k+1} = j$ for any $k \in\{1,3,\dots\}$ such that $\PP(x_k = i) > 0$.

\begin{proof}
	Let $\tilde\kappa$ be as in Lemma \ref{lem:ping-pong}. Let $\left((\widetilde\gamma^w_n),(\tau_k)\right)\sim(\tilde\kappa\otimes\tilde\nu_s)^{\otimes\NN}\otimes\mathcal{U}_{[0,1]}^{\otimes\NN}$ and let $(p_k)_{k\in\NN}$ be the associated random sequence of pivotal times and let $(l_k)_{k\in\NN}$ be as in definition \ref{def:pivot}. Let $\phi_L, \phi_R :\Gamma \to \{1,\dots, M\}$ be such that $L_i = \phi_L^{-1}\{i\}$ and $R_i = \phi_R^{-1}\{i\}$ for all $i\in\{1, \dots, M\}$.
	
	We define $x_0 := 0$ and we write $\tilde\nu'_0$ for the distribution of $\left(\widetilde\gamma^p_0, \phi_L(\gamma^p_0)\right)$. Given $k \in\NN$, we define:
	\begin{equation*}
		x_{2k+1} := \phi_L\left(\gamma^p_{2k}\right)\quad\text{and}\quad
		x_{2k + 2} := M+\phi_R\left(\gamma^w_{2l_{k+1}}\right).
	\end{equation*}
	Note that for all $g\in\Gamma$, the set $\{h\in\Gamma\,|\,g\AA h\}$ is determined by $\phi_L(\gamma)$ and the set $\{h\in\Gamma\,|\,h\AA g\}$ is determined by $\phi_R(\gamma)$.
	
	Note also that by construction, for all integer $k$, the conditional distribution of $\left(\widetilde\gamma^p_{k'}\right)_{k' \ge 2k+1}$ relatively to $\left(\widetilde\gamma^p_{k'}\right)_{k'\le 2k}$ and $\left(\tau_{l'}\right)_{l' < l_k}$ only depends on $x_{2k+1}$ and not on $k$. However the distribution of $x_{2k+1}$ itself may depend on $k$. Given $x \in \{1,\dots, M\}$ a possible value for $x_k$, write $\tilde\nu'_{x}$ for the distribution of $\left(\widetilde{\gamma}^w_{2k+1},x_{2k+2}\right)$ knowing $x_{2k+1} = x$.
	Note that by construction ,this distribution does not depend on $k$.
	
	For all integer $k$, the distribution of $\left(\widetilde\gamma^p_{k'}\right)_{k'\ge 2k+2}$ relatively to $\left(\widetilde\gamma^p_{k'}\right)_{k'\le 2k+1}$ and $\left(\tau_{l'}\right)_{l' \le l_k}$ only depends on $x_{2k+2}$ and not on $k$. Given $x \in \{M+1,\dots, 2M\}$ a possible value for $x_k$, write $\tilde\nu'_{x}$ for the distribution of $\left(\widetilde{\gamma}^w_{2k+2},x_{2k+3}\right)$ knowing $x_{2k+2} = x$.
	Again this distribution does not depend on $k$.
	
	Then for all $i \in \{1,\dots, M\} \cap X$ and all $j \in \{M+1,\dots, 2M\} \cap X$, the distribution $\nu_{i,j}$ is the distribution of $\gamma^p_{2k+1}$ knowing that $\phi_L \left(\gamma^p_{2k}\right) = i$ and $M + \phi_R\left(\gamma^w_{2l_{k+1}}\right) = j$. This distribution is bounded above by $\frac{\nu}{1-\alpha}$ by \eqref{eq:tjr-scho} in Theorem \ref{th:pivot-extract}.
\end{proof}

Now we can prove \ref{th:pivot} by taking an extraction.

\begin{proof}[Proof of Theorem \ref{th:pivot}]
	Let $\rho\in(0,1/3)$ and let $\rho' := \frac{\rho}{1+2\rho}\in(0,1/5)$. 
	Let $K \in \NN$  and let $K' = 2K$. 
	Without loss of generality, we assume that $K \ge 8$. 
	Let $m\in\NN$, let $0 < \eps', \alpha< 1$ and let $\tilde\nu_s$ be as in Corollary \ref{cor:schottky} applied to $\nu, \rho', K'$ and let $\nu_s := \Pi\tilde{\nu}_s$. 
	Then $\tilde\nu_s$ is compactly supported, and bounded above by $\frac{\nu^{\otimes m}}{\alpha}$. 
	Moreover $\nu_s$ is $\rho'$-Schottky for $\AA^{\eps'}$ and $\sqz_*\Pi_*\tilde\nu_s\left[K'|\log(\eps')| + K'\log(2)\right]$. Let $\eps := \frac{\eps'}{2}$ and let $\AA^{\eps'}\subset\AA \subset \AA^{\eps}$ be a finitely described binary relation. 
	Then $K'|\log(\eps')| + K'\log(2) = 2K |\log(\eps)| + K \log(2) \ge K |\log(\eps)| + K \log(2)$ and $\nu_s$ is $\rho'$-Schottky for $\AA$.
	
	Let $M \in\NN$, $X\subset\{0,\dots, 2M+1\}$, $\tilde\mu$ and $\left(\tilde\nu'_x\right)_{x\in X}$ be as in Lemma \ref{lem:pivot-factor}. Let $(\widetilde\gamma^p_k) \sim \tilde\mu$ and let $(x_n)$ be the underlying Markov chain. Let $i,j \in X$ be such that $0 < i \le M < j \le 2M$. Let $q_0 := \min \{q\in\NN\,|\,(x_{q},x_{q+1}) = (i,j)\}$, and define by induction $q_{2k+1} = 1$ and:
	\begin{equation*}
		q_{2k+2} := \min \{q \ge \overline{q}_{2k+2}\,|\,(x_{q},x_{q+1}) = (i,j)\} - \overline{q}_{2k+2}
	\end{equation*}
	for all $k$. 
	Let $\tilde\kappa_0$ be the distribution of $\widetilde\gamma^{p^{q}}_0$, let $\tilde\kappa_1$ be the distribution of $\widetilde\gamma^{p^{q}}_1$ and let $\tilde\kappa_2$ be the distribution of $\widetilde\gamma^{p^{q}}_2$. 
	By the factorization property, we have $\left(\widetilde\gamma^{p^{q}}_k\right)\sim\tilde\kappa_0\otimes\left(\tilde\kappa_1\otimes\tilde\kappa_2\right)^{\otimes\NN}$, which proves point \eqref{1}. Then each $q_k$ has bounded exponential moment because it is the hitting time of a finite Markov chain. 
	
	Moreover each $p_k$ has finite exponential moment so $p_k^q$ has finite exponential moment for all $q$, therefore $L_*\tilde\kappa_i$ has finite exponential moment for all $i$, this proves point \eqref{2}.
	
	By Proposition \ref{prop:pivot-is-aligned}, we have $\gamma^{p^q}_{i}\cdots\gamma^{p^q}_{j-1}\AA^\frac{\eps}{4} \gamma^{p^q}_{j}\cdots\gamma^{p^q}_{k-1}$ for all $0\le i\le j\le k$, which proves \eqref{4}.
	
	Note also that $\tilde\kappa_1$ is the restriction of $\tilde\nu_s$ to $\{\gamma\in\Gamma\,|\,L_i \AA \gamma\AA R_j\}$, which has measure at least $1-2\rho$, hence $\kappa_1$ is $\frac{\rho'}{1-2\rho'}$-Schottky for $\AA$, hence it is $\rho$-Schottky for $\AA^\eps$.
	
	Let $i,j \in X$ that do not satisfy $0< i \le M < j \le 2M$ and such that $(\chi_X)_*\tilde\nu'_i \{j\} > 0$. 
	The distribution $\tilde\nu_{i,j} := (\chi_{\Gamma})_*\frac{\mathds{1}_{\tilde\Gamma\times\{j\}}}{\nu'_i(\tilde\Gamma\times\{j\})}\tilde\nu'_i $ is absolutely continuous with respect to the distribution of $(\chi_{2k}^{\infty})_* \tilde\mu$ for $k \in\NN$ such that $\PP(x_{2k} = i) > 0$. Therefore, by \eqref{eq:densite} there is a constant $C$ such that for all $A \subset \Gamma \setminus\bigcup_{k = 0}^{m-1}\chi_k^m\mathbf{supp}(\tilde\nu_s)$, we have:
	\begin{equation}
		\forall k \le l, \forall i\in\{0,2\}, \tilde{\kappa}_i\left(L^{-1}\{l\}\cap (\chi_k^l)^{-1}(A)\right) \le C \nu(A) L_*\tilde\kappa_i\{l\}.
	\end{equation}
	Now assume that $\Gamma =\mathrm{GL}(E)$. The set $\bigcup_{k = 0}^{m-1}\chi_k^m\mathbf{supp}(\tilde\nu_s)$ is compact and $N$ is a continuous function on $\Gamma$. Let $B = \max N(\bigcup_{k = 0}^{m-1}\chi_k^m\mathbf{supp}(\tilde\nu_s))$. Then with the notations of Definitions \ref{def:trunking} and \ref{def:pushup} the distributions $(\zeta_{i,k,l})$ defined in Theorem \ref{th:pivot} are uniformly bounded by $B \wedge \left\lceil C N_{*}\nu\right\rceil$. When $\nu$ is not supported on $\mathrm{GL}(E)$, we have $N_*\nu\{+\infty\} > 0$, therefore $N_*\nu$ dominates any probability distribution and point \eqref{6} is trivial. However $C$ can not be expressed in terms of $(\alpha, \rho, m)$ because we did not give an explicit formula for the distribution of the sequence $(q_k)$.
\end{proof}

\section{Proof of the results}\label{results}
	
	In this section, we use Theorem \ref{th:pivot-extract} and Theorem \ref{th:pivot} together with Lemmas \ref{lem:r-piv} and \ref{lem:c-piv} to prove the results stated in the introduction. 
	Most of the proofs are straightforward application of Theorem \ref{th:pivot-extract} and Lemma \ref{lem:r-piv}, for the probabilistic estimates on the coefficients, on the $\sqz$ coefficient and on the speed of convergence to the invariant measure; and Lemma \ref{lem:c-piv} for the estimates on the spectral radius, on the spectral gap and on the dominant eigenspace. 
	Unexpectedly\footnote{This result is well known in the $\mathrm{L}^1$ case without any algebraic assumption on the support of the measure.}, the trickiest part is to show the almost sure convergence result in Theorem \ref{th:escspeed}, namely that $\sqz(\overline{\gamma}_n)/n \to \sigma(\nu)$.
	This is also the only reason why we need Lemma \ref{lem:pivot-factor}.
	The question whether $\prox(\overline{\gamma}_n)/n$ converges almost surely (or even in probability) without moment conditions remains open.
	
\subsection{Law of large numbers and large deviations inequalities for the singular gap}
	
	In this section, we define the escape speed of a random product of matrices using Theorem \ref{th:pivot} but not the moment estimate \eqref{6}. We will use usual ergodic theory but only for the proof of the almost sure convergence.
	Given $(x_n)$ a random sequence of real numbers and $\sigma \in \RR$, we say that the sequence $(x_n)$ satisfies (exponential) large deviations inequalities below the speed $\sigma$ if for all $\alpha < \sigma$, we have $\limsup\frac{1}{n} \log(\PP(x_n \le \sigma n)) < 0$.
	Note that if the distribution of $(x_n)$ is a Dirac measure then it satisfies large deviations inequalities below the speed $\sigma$ if and only if $\liminf\frac{x_n}{n} \ge \sigma$.
	In Lemma \ref{lem:proba:ldevcompo} in appendix, we show that the notion of large deviations behaves well when taking the sum~\eqref{compo:shift}~\eqref{compo:sum}, maximum~\eqref{compo:max} or minimum~\eqref{compo:min} of finitely many random sequences and also when composing random sequences of integers~\eqref{compo:compo}.

	\begin{Lem}[Escape speed and large deviations inequalities for self-aligned measures]\label{lem:pre-speed}
		Let $E$ be a Euclidean vector space and let $0 < \eps <1$. Let $\kappa$ be a probability distribution on $\mathrm{End}(E)$ and let $(g_k) \sim\kappa^{\otimes \NN}$. Assume that almost surely and for all $0 \le i\le j\le k$, we have $g_i\cdots g_{k-1} \AA^\frac{\eps}{4} g_j\cdots g_{k-1}$. Assume also that almost surely and for all $n \in\NN$, we have $\overline{g}_n \neq 0$. Then there is a limit $\sigma(\kappa)\in[0, +\infty]$ such that $\frac{\sqz(\overline{g}_n)}{n} \to \sigma(\kappa)$ almost surely and:
		\begin{equation}
			\forall \alpha < \sigma(\kappa), \; \exists C, \beta >0,\; \forall n\in\NN,\; \PP\left(\sqz(\overline{g}_n) \le \alpha n\right) \le C \exp(-\beta n).
		\end{equation}
		If we moreover assume that $\EE\left(\sqz_*\kappa\right) > 2|\log(\eps)|+4\log(2)$, then $\sigma(\kappa) > 0$.
	\end{Lem}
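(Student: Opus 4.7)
The plan is to treat $a_n := \sqz(\overline g_n)$ as an almost-super-additive stationary process, obtain the almost-sure limit from the subadditive ergodic theorem, derive the large-deviations bound by iterating along arithmetic progressions, and finally handle the positivity statement by a one-step iteration plus the strong law of large numbers. Setting $C := 2|\log(\eps/4)| = 2|\log\eps| + 4\log 2$ and taking $i=0$, $j=n$, $k=n+m$ in the alignment hypothesis gives $\overline g_n \AA^{\eps/4} (g_n\cdots g_{n+m-1})$, so Lemma~\ref{lem:c-prod}, specifically~\eqref{eqn:lenali}, yields almost surely
\begin{equation*}
a_{n+m} \ge a_n + \sqz(g_n\cdots g_{n+m-1}) - C,
\end{equation*}
where the last summand is independent of $\mathcal{F}_n := \sigma(g_0,\ldots,g_{n-1})$ and has the same distribution as $a_m$. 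Equivalently, the family $X_{m,n} := -\sqz(g_m\cdots g_{n-1}) + C$ is a stationary subadditive process with uniformly bounded positive part $X_{m,n}^+ \le C$. Kingman's subadditive ergodic theorem in the form valid under integrability of the positive part alone (Derriennic's extension) then gives $X_{0,n}/n \to L$ almost surely for a deterministic $L = \lim_n \EE[X_{0,n}]/n \in [-\infty, C]$, ergodicity being inherited from the i.i.d.\ structure of $(g_k)$. Setting $\sigma(\kappa) := -L \in [0,+\infty]$ produces $a_n/n \to \sigma(\kappa)$ almost surely.

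For the large-deviations inequality, iterating super-additivity along multiples of $n_0$ gives
\begin{equation*}
a_{k n_0} \ge \sum_{i=0}^{k-1} a_{n_0}^{(i)} - (k-1)C,
\end{equation*}
where $a_{n_0}^{(i)} := \sqz(g_{i n_0}\cdots g_{(i+1) n_0 - 1})$ are i.i.d.\ nonnegative with law $a_{n_0}$. Given $\alpha < \sigma(\kappa)$, choose $n_0$ large enough that $(\EE[a_{n_0}] - C)/n_0 > \alpha + \delta$ for some $\delta > 0$; this is possible because $\EE[a_n]/n \to \sigma(\kappa)$, with a bounded truncation handling the case $\EE[a_{n_0}]=+\infty$. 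The Chernoff bound $\PP\bigl(\sum_i a_{n_0}^{(i)} \le \lambda k\bigr) \le (\EE[e^{-t a_{n_0}}]\,e^{t\lambda})^k$ for $t>0$, which is exactly the content of Lemma~\ref{lem:proba:ldev} in the appendix, yields $\PP(a_{k n_0} \le \alpha k n_0) \le C_1 e^{-\beta_1 k}$. For an arbitrary $m = k n_0 + r$ with $0 \le r < n_0$, the trivial bound $\sqz(g_{k n_0}\cdots g_{m-1}) \ge 0$ combined with super-additivity gives $a_m \ge a_{k n_0} - C$, so $\PP(a_m \le \alpha m) \le \PP(a_{k n_0} \le \alpha m + C)$ decays exponentially in $m$.

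Under the hypothesis $\EE[\sqz_*\kappa] > 2|\log\eps| + 4\log 2 = C$, iterating super-additivity one factor at a time gives $a_n \ge \sum_{i=0}^{n-1} \sqz(g_i) - (n-1) C$ almost surely. The strong law of large numbers for the i.i.d.\ nonnegative summands $\sqz(g_i)$, valid in $[0,+\infty]$ even when $\EE[\sqz(g_0)] = +\infty$, then yields $\liminf_n a_n/n \ge \EE[\sqz(g_0)] - C > 0$ almost surely, hence $\sigma(\kappa) > 0$. The main delicate point is the almost-sure convergence: the lack of any moment assumption on $\sqz(g_0)$ rules out classical Kingman, and one must exploit the uniform upper bound $X_{m,n} \le C$ to appeal to the bounded-positive-part version of the subadditive ergodic theorem; everything else follows from super-additivity plus standard Cramér-type large deviations for i.i.d.\ nonnegative variables.
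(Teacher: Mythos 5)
Your proof is correct and follows essentially the same route as the paper: super-additivity of $n\mapsto\sqz(\overline g_n)$ up to the constant $C=2|\log\eps|+4\log 2$ from Lemma~\ref{lem:c-prod}, Kingman applied to the shifted process bounded above by $C$ for the almost-sure limit, i.i.d.\ block decomposition plus Lemma~\ref{lem:proba:ldev} for the large deviations, and a one-step iteration plus the strong law for the positivity. Two minor cosmetic remarks: since $X_{0,1}^+\le C$, the classical Kingman hypothesis $\EE[X_{0,1}^+]<\infty$ is already satisfied and the appeal to Derriennic's extension is superfluous; and the paper does the interpolation from multiples of $n_0$ to general $n$ via the composition lemmas~\eqref{compo:sum} and~\eqref{compo:compo} in Lemma~\ref{lem:proba:ldevcompo} rather than directly, but your direct interpolation is equivalent and equally valid.
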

	
	\begin{proof}
		Let $N\in\NN$. Let $\sigma_N := \frac{1}{N} \EE\left(\sqz_*\kappa^{*N}\right) = \frac{1}{N} \EE\left(\sqz(\overline{g}_N)\right)$. For all $k \in \NN$, let $x_k^N := \sqz\left(g_{kN}\cdots g_{(k+1) N - 1}\right)$. Then $(x_n^N)$ is i.i.d. and takes positive values and $\EE(x_n^N) = N\sigma_N$ for all $n$. Then by Corollary \ref{cor:ldev-classique}, the sequence $(\overline{x}^N_k)_{k\in\NN}$ satisfies large deviations inequalities under the speed $N \sigma_N$. Moreover, by \eqref{eqn:lenali} in Lemma \ref{lem:c-prod} applied to $\overline{g}_{kN} \AA^\frac{\eps}{4} g_{kN}\cdots g_{(k+1) N - 1}$ for all $0\le k < \frac{n}{N}$ and then to $\overline{g}_{N \lfloor\frac{n}{N}\rfloor}\AA^\frac{\eps}{4}{g}_{N \lfloor\frac{n}{N}\rfloor} \cdots g_{n-1}$, we have:
		\begin{equation*}
			\forall n \in\NN, \; \sqz(\overline{g}_n) \ge x^N_{\lfloor\frac{n}{N}\rfloor} - n \frac{2|\log(\eps)|+4\log(2)}{N}.
		\end{equation*}
		Hence, by \eqref{compo:sum} applied to $\left(x^N_{\lfloor\frac{n}{N}\rfloor}\right)_n$ summed with $\left(n \frac{2|\log(\eps)|+4\log(2)}{N}\right)_n$ and \eqref{compo:compo} applied to $(x_n)_n$ composed with $\left(\lfloor\frac{n}{N}\rfloor\right)_n$ in Lemma \ref{lem:proba:ldevcompo}, the sequence $\sqz(\overline{g}_n)$ satisfies large deviations inequalities below the speed $\sigma_N-\frac{2|\log(\eps)|+4\log(2)}{N}$. This is true for all $N \in\NN$ so $\sqz(\overline{g}_n)$ satisfies large deviations inequalities below the speed $\sigma(\kappa) := \limsup_{N\in\NN} \sigma_N$.
		
		Let $T : \Gamma^\NN \to \Gamma^\NN;\,(\gamma_k)_{k\in\NN}\mapsto (\gamma_{k+1})_{k\in\NN}$. The transformation $T$ is ergodic for the measure $\mu := \kappa^{\otimes\NN}$. For all $n \in\NN$, let $f_n : (\gamma_k)_{k\in\NN} \mapsto 2|\log(\eps)|+4\log(2)-\sqz(\overline{\gamma}_n)$. Then $f_n$ is bounded above so $\EE_\mu(f_n)\le 2|\log(\eps)|+4\log(2)$ for all $n \in\NN$. Let $m,n$ be integers, then by \eqref{eqn:lenali} in Lemma \ref{lem:c-prod}, we have almost surely for $g \sim \mu$:
		\begin{equation*}
			\sqz(g_0\cdots g_{n+m-1}) \ge \sqz(g_0\cdots g_{n-1}) + \sqz(g_n \cdots g_{n+m-1}) - 2|\log(\eps)| - 4\log(2)
		\end{equation*}
		Hence $f_{n+m}(g) \le f_n(g) + f_m\circ T^n(g)$. So by Kingman's sub-additive ergodic Theorem~\cite{K68}, the sequence $\frac{f_n}{n}$ converges $\mu$-almost everywhere to $\liminf \frac{\EE(f_N*\mu)}{N} = \liminf \frac{2|\log(\eps)|+4\log(2)}{N} - \sigma_N$ and this inferior limit is actually a limit by classical sub-additivity. 
		Therefore $\frac{\sqz(\overline{g}_n)}{n} \to \sigma(\kappa)$ and $\sigma(\kappa) \ge \EE\left(\sqz_*\kappa\right) - 2|\log(\eps)| - 4\log(2)$ almost surely by sub-additivity.
	\end{proof}

	\begin{Th}[Large deviations inequalities for the singular gap]\label{th:ldev-sqz}
		Let $E$ be a Euclidean vector space and let $\nu$ be a strongly irreducible and proximal probability distribution on $\mathrm{End}(E)$. 
		Let $\tilde\kappa_0,\tilde\kappa_1,\tilde\kappa_2$ be as in Theorem \ref{th:pivot} for $\rho = \frac{1}{4}$ and $K = 10$. 
		Let $\tilde\kappa := \tilde\kappa_1 \odot \tilde\kappa_2$ and let $\kappa := \Pi_*\tilde\kappa$.
		Let $\sigma := \sigma(\kappa)/\EE(L_*\tilde\kappa)$. Let $(\gamma_n)\sim\nu^{\otimes\NN}$. Then the random sequence $\left(\sqz(\overline\gamma_n)\right)_{n\in\NN}$ satisfies large deviations inequalities below the speed $\sigma$ in the sense of Definition \ref{def:proba:ldev} \ie
		\begin{equation}\label{eq:ldev-sqz}
			\forall \alpha < \sigma, \; \exists C, \beta >0,\; \forall n\in\NN,\; \PP\left(\sqz(\overline{\gamma}_n) \le \alpha n\right) \le C \exp(-\beta n).
		\end{equation}
	\end{Th}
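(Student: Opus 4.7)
My plan is to reduce the LDP to Lemma~\ref{lem:pre-speed} applied to the self-aligned block sequence produced by Theorem~\ref{th:pivot}, and then to transfer the resulting estimate from the pivotal times to arbitrary times via block-length concentration.

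Let $(\tilde g_n)_{n\ge 0}$ denote the extracted word sequence, so that $\tilde g_0 \sim \tilde\kappa_0$, $\tilde g_{2k+1} \sim \tilde\kappa_1$, $\tilde g_{2k+2} \sim \tilde\kappa_2$; set $T_0 := L(\tilde g_0)$ and $T_k := T_0 + \sum_{j=1}^{k} L(\tilde g_{2j-1}\odot \tilde g_{2j})$ for $k \ge 1$, and write $h_0 := \Pi(\tilde g_0)$, $h_k := \Pi(\tilde g_{2k-1}\odot \tilde g_{2k})$ for $k\ge 1$. Then $(h_k)_{k\ge 1}$ is i.i.d.\ with law $\kappa$ and $\overline\gamma_{T_k} = h_0 h_1\cdots h_k$. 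Point~(4) of Theorem~\ref{th:pivot}, applied to the flattened matrix sequence $(\Pi(\tilde g_n))$, produces $\AA^{\eps/4}$-alignment of every pair of consecutive partial products of $(h_k)_{k\ge 1}$, as well as $h_0 \AA^{\eps/4} h_1\cdots h_k$ for each $k$. Combining point~(3) with the alignment $g_{2k+1}\AA^{\eps/4} g_{2k+2}$ (also from point~(4)) and Lemma~\ref{lem:c-prod}, I find $\sqz(h_k) \ge 8|\log\eps| + 6\log 2 > 2|\log(\eps/4)|+4\log 2$ for small enough $\eps$, so the positivity hypothesis of Lemma~\ref{lem:pre-speed} is met. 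The lemma then yields $\sigma(\kappa) > 0$ together with exponential LDP: for every $\alpha_1 < \sigma(\kappa)$ there are $C,\beta>0$ with $\PP(\sqz(h_1\cdots h_k) \le \alpha_1 k) \le C e^{-\beta k}$.

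Next I will transfer this to the pivotal times $T_k$ and then to the letter index $n$. From $h_0 \AA^{\eps/4} h_1\cdots h_k$ together with Lemma~\ref{lem:c-prod} one gets $\sqz(\overline\gamma_{T_k}) \ge \sqz(h_1\cdots h_k) - 2|\log(\eps/4)|$, so the block-level LDP transfers to $\sqz(\overline\gamma_{T_k})$ at rate $\sigma(\kappa)$ per pivotal step. By point~(2) the block lengths $L(\tilde g_{2j-1}\odot\tilde g_{2j})$, $j\ge 1$, are i.i.d.\ with bounded exponential moment and mean $\EE(L_*\tilde\kappa)$, so $T_k$ satisfies the classical Cram\'er-type LDP around $k\,\EE(L_*\tilde\kappa)$ (Lemma~\ref{lem:proba:ldev}). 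Writing $k_+(n) := \max\{k : T_k \le n\}$ and applying the composition rules of Lemma~\ref{lem:proba:ldevcompo}, I obtain LDP for $\sqz(\overline\gamma_{T_{k_+(n)}})$ below $\sigma := \sigma(\kappa)/\EE(L_*\tilde\kappa)$ in the letter index $n$.

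The remaining and most delicate step will be the extension from the pivotal times to an arbitrary $n$. For $T_k \le n < T_{k+1}$ with $k = k_+(n)$, decompose $\overline\gamma_n = \overline\gamma_{T_k}\cdot B$ and $\overline\gamma_{T_{k+1}} = \overline\gamma_n \cdot C$, so $BC = h_{k+1}$. Submultiplicativity of $\|\cdot\|$ and $\|\cdot\wedge\cdot\|$ combined with $\overline\gamma_{T_k}\AA^{\eps/4}h_{k+1}$ and Lemma~\ref{lem:c-prod} yields
\begin{equation*}
\sqz(\overline\gamma_n) \ge \sqz(\overline\gamma_{T_k}) - 2\log\!\left(\frac{\|B\|\|C\|}{\|h_{k+1}\|}\right) - 2|\log(\eps/4)|.
\end{equation*}
The main obstacle is to bound the anti-alignment loss $\log(\|B\|\|C\|/\|h_{k+1}\|)$ by $o(n)$ outside an exponentially rare set. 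A case analysis on whether $n - T_k \le m$ handles the Schottky sub-block $\tilde g_{2k+1}$ uniformly via its compact support (point~(3)), producing a bounded loss. When $n$ lies inside $\tilde g_{2k+2}$, the alignment $g_{2k+1}\AA^{\eps/4} g_{2k+2}$ absorbs $\|g_{2k+1}\|$ and reduces the loss to the internal anti-alignment of a partition of $g_{2k+2}$; this last contribution is controlled by combining the exponential length tail from point~(2) with point~(6)'s domination of the marginal letter distributions by $N_*\nu$, via a union bound over $k \le O(n)$. Putting together the pivotal LDP, the block-length concentration, and this anti-alignment estimate gives the announced LDP for $\sqz(\overline\gamma_n)$ below $\sigma$.
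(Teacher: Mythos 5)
Your first two reductions are sound and match the paper: applying Lemma~\ref{lem:pre-speed} to the self-aligned block sequence to obtain an LDP below $\sigma(\kappa)$ at pivotal times, then transferring to the letter index $n$ via block-length concentration and Lemma~\ref{lem:proba:ldevcompo}. The problem is your final step, where you interpolate from $T_{k}$ to a general $n$. You decompose $\overline\gamma_n = \overline\gamma_{T_k}\cdot B$, write $BC = h_{k+1}$, and propose to bound the anti-alignment loss $\log(\|B\|\|C\|/\|h_{k+1}\|)$ by $o(n)$ using point~(6) of Theorem~\ref{th:pivot} (domination of marginals by $N_*\nu$) and a union bound over $O(n)$ blocks. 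This cannot work: Theorem~\ref{th:ldev-sqz} makes \emph{no} moment assumption on $\nu$, and $\nu$ is even allowed to live on $\mathrm{End}(E)$ rather than $\mathrm{GL}(E)$. Domination by $N_*\nu$ is vacuous when $N_*\nu$ has heavy or even infinite tails; with, say, $M_1(N_*\nu)=\infty$, the maximum of $O(n)$ letters with $N$-tail $N_*\nu$ is not $o(n)$ almost surely, so your union bound fails. Indeed the paper explicitly notes that point~(6) is used only for Theorem~\ref{th:slln}, never for Theorem~\ref{th:escspeed} or~\ref{th:cvspeed} (hence never for the present LDP).

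The missing idea is that the interpolation step must itself go through the pivoting technique. Rather than trying to control the norm loss inside the partial block $h_{k+1}$, one should apply Lemma~\ref{lem:r-piv} to the $\rho$-ping-pong sequence $\gamma^w_0, \gamma^w_1, \dots, \gamma^w_{2q_n-1}, \gamma_{\overline w_{2q_n}}\cdots\gamma_{n-1}$ (with the partial tail appended as the last element). This produces a geometrically distributed $r_n$, independent of the even-indexed blocks, such that $\gamma^w_{2q_n-2r_n-1}\AA^\eps(\gamma_{\overline w_{2q_n-2r_n}}\cdots\gamma_{n-1})$ outside the event $r_n\ge q_n$. Combined with Lemma~\ref{lem:herali} and Lemma~\ref{lem:c-prod}, this gives coarse alignment of $\overline\gamma_{\overline w_{2q_n-2r_n}}$ with the entire tail, so $\sqz(\overline\gamma_n)$ is bounded below by $\sqz(\gamma^w_1\cdots\gamma^w_{2q_n-2r_n-2})$ up to a bounded additive constant, and the block-level LDP transfers at the cost of an additional geometric backtrack $r_n$ handled by Lemma~\ref{lem:proba:ldevcompo}\eqref{compo:shift}. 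This replaces your anti-alignment estimate entirely, requires no moment control, and is the route the paper takes.
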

	
	\begin{proof}
		Let $0 < \eps \le 1$ be as in Theorem \ref{th:pivot}.
		Let $(\widetilde{\gamma}^w_k) \sim \tilde\kappa_0 \otimes \left(\tilde\kappa_1 \otimes \tilde\kappa_2\right)$. 
		To all integer $n \in\NN$, we associate $q_n =: \max\{q\in\NN\,|\, \overline{w}_{2q} \le n\}$ and a random integer $r_n$ such that $\gamma^w_{2q_n-2r_n-1}\AA^\eps(\gamma_{\overline{w}_{2q_n-2r_n}}\cdots \gamma_{n-1})$ or $r_n \ge q_n$. 
		Note that the conditional distribution of the sequence $\gamma^w_0,\gamma^w_1, \dots, \gamma^w_{2q_n-1},\gamma_{\overline{w}_{2q_n}} \cdots \gamma_{n-1}$ relatively to $q_n$ is $\frac{1}{4}$-ping-pong for all values of $q_n$. 
		Hence, we may assume that $r_n \sim \mathcal{G}_{\frac{1}{4}}$ for all $n$, by Lemma \ref{lem:r-piv}. 
		
		The distribution $L_{*}\tilde\kappa_i$ has finite exponential moment and is supported on $\NN_{\ge 1}$ for all $i$. 
		By Corollary \ref{cor:ldev-classique}, the random sequence $(\overline{w}_{2m}-w_0)_m$ satisfies large deviations inequalities around the speed $\EE(L_*\tilde\kappa) \in (0, +\infty)$.
		Then by \eqref{compo:shift} in Lemma \ref{lem:proba:ldevcompo}, the random sequence $(\overline{w}_{2m})_m$ also does.
		By \eqref{compo:rec} in Lemma \ref{lem:proba:ldevcompo}, the random sequence $(q_n)_n$ satisfies large deviations inequalities around the speed $\EE(L_*\tilde\kappa)^{-1}$ and by \eqref{compo:shift} in Lemma \ref{lem:proba:ldevcompo}, $(q_n-r_n-1)$ also does.
		Then by Lemma \ref{lem:pre-speed}, and by composition~\eqref{compo:compo} in Lemma~\ref{lem:proba:ldevcompo}), the sequence $(\sqz(\gamma^w_1\cdots\gamma^w_{2q_n-2r_n-2}))_n$ satisfies large deviations inequalities below the speed $\sigma$.
		
		Now by \eqref{4} in Theorem \ref{th:pivot}, we have  $\gamma^w_0\AA^\frac{\eps}{4}\gamma^w_1\cdots\gamma^w_{2q_n-2r_n-2}$, so by \eqref{eqn:lenali} in Lemma \ref{lem:c-prod}, the sequence $(\sqz(\gamma^w_0\cdots\gamma^w_{2q_n-2r_n-2}))_n$ satisfies large deviations inequalities below the speed $\sigma$. Moreover, we have:
		\begin{equation*}
			\gamma^w_0\cdots\gamma^w_{2q_n-2r_n-2} \AA^\frac{\eps}{4} \gamma^w_{2q_n-2r_n-1}\AA^\eps(\gamma_{\overline{w}_{2q_n-2r_n}}\cdots \gamma_{n-1})
		\end{equation*}
		and $\sqz(\gamma^w_{2q_n-2r_n-1})\ge K|\log(\eps)| + K\log(2) \ge 2|\log(\eps/2)|+3\log(2)$ so by the transpose of Lemma \ref{lem:herali}, we have:
		\begin{equation*}
			\gamma^w_0 \cdots \gamma^w_{2q_n-2r_n-1} \AA^\frac{\eps}{4} (\gamma_{\overline{w}_{2q_n-2r_n}}\cdots \gamma_{n-1}).
		\end{equation*}
		Hence $(\sqz(\overline{\gamma}_n))_n$ satisfies large deviations inequalities below the speed $\sigma$
	\end{proof}
	
	Note that in Theorem \ref{th:ldev-sqz}, we do not claim that $\frac{\sqz(\gamma_n)}{n} \to \sigma$ almost surely.
	We do however claim that in Theorem \ref{th:escspeed}. 
	The remaining part of this paragraph is dedicated to the proof of that claim. 
	The usual proof using Lyapunov coefficients does not work in our case because $\sigma$ may be finite even when $\nu$ has infinite first moment. 
	Kingman's theorem can not be used either because $\sqz$ is not sub-additive nor super-additive.
	In fact, we will really use the strong irreducibility of $\nu$ to prove it with the following trick.
	
	\begin{Lem}\label{lem:left-ali-tjrs}
		Let $E$ be a Euclidean vector space and let $\nu$ be a strongly irreducible and proximal probability distribution on $\mathrm{End}(E)$. 
		Let $(\gamma_n) \sim \nu^{\otimes\NN}$.
		Let $\eps$ be as in Theorem \ref{th:pivot} for $\rho= \frac{1}{4}$ and $K = 10$. There exist $l_0\in\NN$ and $0 < \beta < 1$ such that:
		\begin{equation}\label{eq:ghi}
			\forall g \in\Gamma,\;\PP\left(\forall n\ge l_0,\; g\AA^{\frac{\eps}{4}}\overline\gamma_n\right) > \beta.
		\end{equation}
	\end{Lem}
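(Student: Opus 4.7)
The plan is to combine the Schottky structure produced by Theorem~\ref{th:pivot} with the regularity of the stationary measure $\xi_\nu^\infty$ (Corollary~\ref{cor:regxi}) to control simultaneously the limit direction $l^\infty$ uniformly in $g$ and the direction of the top singular vector of $\overline\gamma_n$ uniformly in $n\ge l_0$. First apply Theorem~\ref{th:pivot} with $\rho=1/4$ and $K=10$ to obtain the pivotal decomposition $(\tilde\gamma^w_k)$ of $(\gamma_n)$, with its independent blocks and the alignment properties \eqref{4} and~\eqref{5}.

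For a uniform-in-$g$ lower bound on a good region, write $v_1(g)$ for a unit top right singular vector of $g$; the singular value decomposition gives $f_g([u]):=\|gu\|/(\|g\|\|u\|)\ge |\langle v_1(g),u\rangle|/\|u\|$, so
\[\{l\in\mathbf{P}(E): f_g(l)\ge \eps/2\}\supset \mathbf{P}(E)\setminus \mathcal{N}_{\eps/2}\bigl(v_1(g)^\perp\bigr).\]
Corollary~\ref{cor:regxi} applied to the hyperplane $v_1(g)^\perp$ then yields $\PP(f_g(l^\infty)\ge \eps/2)\ge 1-\zeta_\nu^{C,\beta}(|\log(\eps/2)|,+\infty)=:2\beta_0>0$ uniformly in $g\neq 0$, with positivity following from the fact that $\zeta_\nu^{C,\beta}$ is a probability distribution on $[0,+\infty)$.

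For the direction convergence, the aligned pivotal sequence guaranteed by Theorem~\ref{th:pivot}~\eqref{4}, combined with Corollary~\ref{cor:limit-line}, ensures $\dist([u^w_k],l^\infty)\le (8/\eps)\exp(-\sqz(\overline\gamma^w_k))$ at pivotal times $\overline{w}_k$, and $\sqz(\overline\gamma^w_k)$ grows at least linearly by Lemma~\ref{lem:c-chain} together with the bound $\sqz(\gamma^w_{2j+1})\ge K|\log\eps|+K\log 2$ from~\eqref{3}. For a non-pivotal $n$ with $\overline{w}_k\le n<\overline{w}_{k+1}$, writing $\overline\gamma_n=\overline\gamma^w_k B_n$, I use the compact support of $\tilde\kappa_1$ from Corollary~\ref{cor:schottky} together with the bounded exponential moment on block lengths~\eqref{2} to control $\|B_n\|$ and hence, via Lemma~\ref{lem:cont-prop}, bound $\dist([u_n],[u^w_k])$ by a quantity of order $\exp(-\sqz(\overline\gamma^w_k))$ times a tame stochastic factor. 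A Borel--Cantelli / union bound argument then produces $l_0$ large enough that $\PP\bigl(\forall n\ge l_0,\;\dist([u_n],l^\infty)\le \eps/4\bigr)\ge 1-\beta_0$.

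Intersecting the two events (probability at least $\beta_0$) and invoking the Lipschitz estimate of Lemma~\ref{lem:product-is-lipschitz} gives $f_g([u_n])\ge f_g(l^\infty)-\dist([u_n],l^\infty)\ge \eps/4$ for every $n\ge l_0$; since $u_n\in U^1(\overline\gamma_n)$, this forces $\|g\overline\gamma_n\|\ge (\eps/4)\|g\|\|\overline\gamma_n\|$, that is $g\AA^{\eps/4}\overline\gamma_n$, so $\beta:=\beta_0/2$ works. The hard part is the non-pivotal case in the second step: the top singular vector of $\overline\gamma_n$ need not coincide with the pivotal direction, so taming its motion inside a block is where one must exploit both the compactness of the Schottky support and the exponential tail of block lengths.
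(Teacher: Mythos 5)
Your proof has a fundamental scope and positivity problem. Corollary~\ref{cor:regxi} is stated only for $\nu$ supported on $\mathrm{GL}(E)$ (since $N(g)=\log\|g\|+\log\|g^{-1}\|$ is otherwise undefined and $\zeta_\nu^{C,\beta}$ is built from $N_*\nu$), whereas Lemma~\ref{lem:left-ali-tjrs} asserts the bound for any strongly irreducible proximal $\nu$ on $\mathrm{End}(E)$, with no integrability assumption. Even restricting to $\mathrm{GL}(E)$, the positivity $1-\zeta_\nu^{C,\beta}(|\log(\eps/2)|,+\infty)>0$ is not established: $\eps$ is fixed by Theorem~\ref{th:pivot}, the constants $C,\beta$ are fixed by Theorem~\ref{th:slln}, and with no moment hypothesis on $N_*\nu$ the defining series may well exceed $1$, making the trunking saturate at $1$ at the threshold $|\log(\eps/2)|$. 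The fact that $\zeta_\nu^{C,\beta}$ is a probability distribution only shows $\zeta_\nu^{C,\beta}(t,+\infty)\to 0$ as $t\to\infty$, not that it is already strictly below $1$ at the specific $t$ you need. A second gap is the non-pivotal step, which you yourself flag as the hard part: bounding $\|B_n\|$ where $\overline\gamma_n=\overline\gamma^w_k B_n$ gives no control on the alignment $\overline\gamma^w_k\AA^{?}B_n$ required by Lemma~\ref{lem:c-prod}; without alignment the top singular direction of $\overline\gamma_n$ can move arbitrarily far from that of $\overline\gamma^w_k$, and a large norm neither forces nor obstructs alignment. What is actually needed there is the pivoting/backtracking argument of Lemma~\ref{lem:r-piv}, a conditional-probability statement, not a norm estimate.

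The paper avoids both obstructions by a more elementary device. Instead of invoking regularity of $\xi_\nu^\infty$, it prepends an independent buffer block $\gamma_{-m},\dots,\gamma_{-1}\sim\nu^{\otimes m}$. By Corollary~\ref{cor:schottky}, with probability at least $\alpha(1-2\rho)$ --- an explicit constant coming from the Schottky structure, needing no moment on $\nu$ --- this buffer lands in a Schottky set, has large squeeze, and is simultaneously aligned with $g$ on the left and with $\gamma^w_0\gamma^w_1$ on the right. Lemma~\ref{lem:r-piv} then backtracks to a pivotal block aligned with the partial tail $\gamma_{\overline{w}_{2q_n-2r_n}}\cdots\gamma_{n-1}$, and the heredity lemmas (Lemma~\ref{lem:herali}) propagate $g\AA^{\eps/4}\gamma_{-m}\cdots\gamma_{n-1}$ for all $n\ge l_0'$ on an event of probability bounded away from zero uniformly in $g$. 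Since $(\gamma_{k-m})_{k\ge0}\sim\nu^{\otimes\NN}$, this gives the stated bound with $l_0=l_0'+m$. This route is uniform in $g$ for free, works over $\mathrm{End}(E)$, requires no moment hypothesis, and never touches the stationary measure --- sidestepping all of the issues above.
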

	
	\begin{proof}
		We use the notations of the proof of Theorem \ref{th:ldev-sqz}.
		Let $0< \eps \le 1$ and $\tilde\kappa_0,\tilde\kappa_1,\tilde\kappa_2$ be as in Theorem \ref{th:pivot} for $\rho = \frac{1}{4}$ and $K = 10$.
		Let $(\widetilde{\gamma}^w_k) \sim \tilde\kappa_0 \otimes \left(\tilde\kappa_1 \otimes \tilde\kappa_2\right)$.
		For all $n \in\NN$, let $q_n =: \max\{q\in\NN\,|\, \overline{w}_{2q} \le n\}$ and let  $r_n$ be the smallest integer such that $\gamma^w_{2q_n-2r_n-1}\AA^\eps(\gamma_{\overline{w}_{2q_n-2r_n}}\cdots \gamma_{n-1})$ or $r_n \ge q_n$. 
		By Lemma \ref{lem:r-piv}, $r_n$ has a bounded exponential moment that does not depend on $n$. 
		Note that if $r_n < q_n - 1$, then by \eqref{4} in Theorem \ref{th:pivot}, we have:
		\begin{equation*}
			\gamma_0\gamma_1\AA^\frac{\eps}{4} \gamma^w_2\cdots \gamma^w_{2q_n - 2 r_n -2} \AA^\frac{\eps}{4} \gamma^w_{2q_n - 2 r_n - 1} \AA^\eps \gamma_{\overline{w}_{2 q_n - 2r_n}}\cdots \gamma_{n-1}
		\end{equation*}
		so we have $\gamma_0\gamma_1\AA^\frac{\eps}{8} \gamma_{\overline{w}_2}\cdots \gamma_{n-1}$ by Lemma \ref{lem:herali}.
		By Lemma \ref{lem:r-piv}, we may assume that $(q_n-r_n)$ satisfies large deviations inequalities below the speed $\EE(L_*\tilde\kappa)^{-1} > 0$.
		Let $l'_0$ be such that $\PP(\forall n\ge l'_0,\;r_n < q_n) \ge \frac{1}{2}$. 
		Now let $m\in\NN$ and $\alpha >0$ be as in Corollary \ref{cor:schottky} for $\rho = \frac{1}{4}$ and $K = 10$. 
		Let $\gamma_{-m}, \dots, \gamma_{-1}\sim\nu^{\otimes m}$ be independent of $(\widetilde{\gamma}^w_k)$. 
		Then we have:
		\begin{equation}\label{eq:63}
			\PP\left(g \AA^\eps \gamma_{-m}\cdots \gamma_{-1}\AA^\eps \left(\gamma^w_0\gamma^w_1\right) \cap \sqz\left(\gamma_{-m}\cdots \gamma_{-1}\right)\ge K|\log(\eps)| + K\log(2)\right) \ge \alpha (1-2\rho) \ge \frac{\alpha}{3}.
		\end{equation}
		Now if we assume that:
		\begin{equation*}
			g \AA^\eps \gamma_{-m}\cdots \gamma_{-1}\AA^\eps \gamma^w_0\gamma^w_1\AA^\frac{\eps}{8} \gamma_{\overline{w}_2}\cdots \gamma_{n-1}
		\end{equation*}
		Then by Lemma \ref{lem:herali}, we have $g\AA^{\frac{\eps}{4}} \gamma_{-m}\cdots\gamma_{n-1}$. Now note that \eqref{eq:63} holds for the conditional distribution relatively to $(\gamma_n)_{n\ge 0}$. Hence, we have:
		\begin{equation}
			\PP\left(\forall n \ge l'_0,\, g\AA^{\frac{\eps}{4}} \gamma_{-m}\cdots\gamma_{n-1}\right) \ge \frac{\alpha}{6}.
		\end{equation}
		Moreover $(\gamma_{k-m})_{k\ge 0}\sim\nu^{\otimes\NN}$, therefore, we have \eqref{eq:ghi} for $l_0 =l'_0 +m$.
	\end{proof}
	
	Now we will use Lemma \ref{lem:left-ali-tjrs} to deduce the almost sure convergence result in Theorem \ref{th:escspeed} from the almost sure convergence result in Lemma \ref{lem:pre-speed}.
	
	\begin{Lem}[Almost sure convergence]\label{lem:conv}
		Let $E$ be a Euclidean vector space, let $\Gamma= \mathrm{End}(E)$, let $\nu$ be a strongly irreducible and proximal probability distribution on $\Gamma$. 
		Let $0< \eps <1$ and let $\tilde\kappa_0,\tilde\kappa_1,\tilde\kappa_2$ be as in Theorem \ref{th:pivot} for $\rho = \frac{1}{4}$ and $K = 10$. 
		Let $\tilde\kappa := \tilde\kappa_1 \odot \tilde\kappa_2$ and let $\kappa := \Pi_*\tilde\kappa$.
		Let $\sigma := \sigma(\kappa)/\EE(L_*\tilde\kappa)$. Let $(\gamma_n)\sim\nu^{\otimes\NN}$. Then $\frac{\sqz(\gamma_n)}{n} \to \sigma$ almost surely and $\sigma > 0$.
	\end{Lem}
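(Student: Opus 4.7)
The plan has three main parts: first a lower bound via large deviations, then an upper bound using an extracted i.i.d. sequence together with the Markov structure of Lemma \ref{lem:pivot-factor}, and finally a quick argument that $\sigma>0$.

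\textbf{Lower bound.} By Theorem \ref{th:ldev-sqz}, for every $\alpha<\sigma$ there exist constants $C,\beta>0$ such that $\PP(\sqz(\overline\gamma_n)\le\alpha n)\le C\exp(-\beta n)$. This is summable in $n$, so by the Borel--Cantelli lemma the event $\{\sqz(\overline\gamma_n)\le\alpha n\}$ occurs only finitely often almost surely. Applying this to a sequence $\alpha_k\uparrow\sigma$ yields $\liminf_n \sqz(\overline\gamma_n)/n\ge\sigma$ almost surely.

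\textbf{Subsequential upper bound.} Let $(\widetilde\gamma^w_n)\sim\tilde\kappa_0\otimes(\tilde\kappa_1\otimes\tilde\kappa_2)^{\otimes\NN}$, so that $\bigodot^\infty\widetilde\gamma^w$ has distribution $\nu^{\otimes\NN}$. Define the i.i.d. sequence $h_k:=\gamma^w_{2k+1}\gamma^w_{2k+2}\sim\kappa$ for $k\ge 0$. Item \eqref{4} of Theorem \ref{th:pivot} guarantees that the partial products $h_i\cdots h_{j-1}$ and $h_j\cdots h_{k-1}$ are $\AA^{\eps/4}$-aligned for $0\le i<j<k$, and since each $h_k$ contains a Schottky block with strictly positive squeeze, all partial products are non-zero. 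Lemma \ref{lem:pre-speed} therefore applies and gives $\sqz(h_0\cdots h_{q-1})/q\to\sigma(\kappa)$ almost surely. Since the lengths $L(\tilde\kappa_1\odot\tilde\kappa_2)$ are i.i.d. with finite mean (by item \eqref{2}), the strong law of large numbers gives $\overline w_{2q+1}/q\to\EE(L_*\tilde\kappa)$ almost surely, hence $\sqz(h_0\cdots h_{q-1})/\overline w_{2q+1}\to\sigma$ almost surely along this subsequence.

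\textbf{From subsequence to full sequence, and conclusion.} The delicate step is to bridge $\sqz(h_0\cdots h_{q-1})$ and $\sqz(\overline\gamma_n)$ for arbitrary $n$. The alignment $\gamma^w_0\AA^{\eps/4}(h_0\cdots h_{q-1})$ together with Lemma \ref{lem:c-prod} immediately yields the lower comparison $\sqz(\overline\gamma_{\overline w_{2q+1}})\ge \sqz(h_0\cdots h_{q-1})-2|\log(\eps/4)|$. For the matching upper bound, I would invoke the Markov chain factorization of Lemma \ref{lem:pivot-factor}: choose a state $x^*$ recurrent for the chain $(x_n)$ and define renewal times $T_k:=\inf\{n>T_{k-1}:x_n=x^*\}$. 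The cycle words $\bigodot_{n=T_{k-1}}^{T_k-1}\widetilde\gamma^w_n$ are then i.i.d., their products are self-aligned by \eqref{4}, and applying Lemma \ref{lem:pre-speed} to this truly i.i.d. sequence of cycle-products removes the asymmetric role of $\gamma^w_0$. This yields an almost sure limit of $\sqz(\overline\gamma_{\text{renewal times}})/(\text{renewal times})$ that must coincide with $\sigma$. For arbitrary $n$ lying between two consecutive renewal times, the excursion length has finite mean and bounded exponential moment, so the overshoot $n-\overline w_{T_{k(n)}}$ is $o(n)$; combined with the compact support of $\tilde\nu_s$ and the alignment conditions, this bounds $|\sqz(\overline\gamma_n)-\sqz(\overline\gamma_{\text{last renewal}})|=o(n)$ almost surely. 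Together with the lower bound, this establishes $\sqz(\overline\gamma_n)/n\to\sigma$ almost surely. Finally, positivity of $\sigma$ follows from the last assertion of Lemma \ref{lem:pre-speed}: using the alignment bound $\sqz(g_1g_2)\ge\sqz(g_1)+\sqz(g_2)-2|\log(\eps/4)|$ and $\sqz_*\kappa_1\ge K|\log\eps|+K\log 2$ with $K=10$, we obtain $\EE(\sqz_*\kappa)\ge 8|\log\eps|+6\log 2>2|\log\eps|+4\log 2$, so $\sigma(\kappa)>0$ and hence $\sigma=\sigma(\kappa)/\EE(L_*\tilde\kappa)>0$. The hardest part is the Markov-renewal upper bound, where removing the left factor $\gamma^w_0$ (which may be non-invertible when $\Gamma=\mathrm{End}(E)$) forces one to work with a genuine i.i.d. renewal decomposition rather than the original $(\tilde\kappa_1\otimes\tilde\kappa_2)$ product structure.
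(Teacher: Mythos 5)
Your lower bound via Theorem \ref{th:ldev-sqz} and Borel--Cantelli is correct and is the same as the paper's. Your positivity computation ($\EE(\sqz_*\kappa)\ge 8|\log\eps|+6\log 2 > 2|\log\eps|+4\log 2$, feeding into Lemma \ref{lem:pre-speed}) is also fine.

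\textbf{The upper bound is where the gap is, and it is serious.} Your renewal scheme does not close it. The problematic assertion is that $|\sqz(\overline\gamma_n)-\sqz(\overline\gamma_{\text{last renewal}})|=o(n)$ follows from the excursion length being $o(n)$ together with compactness of $\mathbf{supp}(\tilde\nu_s)$. This fails for two reasons. First, only the Schottky words $\widetilde\gamma^w_{2k+1}\sim\tilde\nu_s$ are compactly supported; the words $\widetilde\gamma^w_{2k}\sim\tilde\kappa_0$ or $\tilde\kappa_2$ are not, and there is no moment assumption on $\nu$, so $\sqz(\gamma^w_{2k})$ can have infinite mean. A single huge word inside a short excursion can make $\sqz(\overline\gamma_n)$ jump by an amount that is not $o(n)$. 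Second and more fundamentally, $\sqz$ is not sub-additive (the paper points this out explicitly), so even a short excursion with modest factors does not control the increment: for aligned $g\AA^\eps h$ one gets $\sqz(gh)\ge\sqz(g)+\sqz(h)-2|\log\eps|$ but there is \emph{no} matching inequality $\sqz(gh)\le\sqz(g)+\sqz(h)+C$. Consequently $\sqz(\overline\gamma_n)$ for $n$ strictly between renewal boundaries simply is not bounded above by the value at the previous renewal plus a small error. The renewal decomposition also retains an asymmetric initial block (the portion before the first visit to $x^*$), so it does not actually remove the role of $\gamma^w_0$; and the assertion that the almost-sure limit along renewal times ``must coincide with $\sigma$'' is a priori an identification of two distinct subsequential limits that you never prove.

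\textbf{What the paper does instead.} The paper does not attempt a direct upper bound. It argues by contradiction (assuming $\sigma<+\infty$ and $\PP(\limsup\sqz(\overline\gamma_n)/n>\sigma)>0$) and invokes Lemma \ref{lem:left-ali-tjrs}, which gives, for \emph{every} $g\in\Gamma$, a uniform lower bound $\beta>0$ on the probability that the walk is forever $\frac{\eps}{4}$-aligned with $g$. Applying this on both sides of a block where $\sqz$ is anomalously large and using Lemma \ref{lem:triple-ali}, the anomaly transfers, with probability at least $\beta^2\delta$, to $\sqz(\gamma^w_1\cdots\gamma^w_{2m})$ for suitable random $m$, contradicting the almost-sure limit supplied by Lemma \ref{lem:pre-speed}. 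This mechanism is precisely how the paper sidesteps the failure of sub-additivity, and it is absent from your argument. To repair your proof you would essentially have to import this alignment-probability lemma and the contradiction structure, at which point you are redoing the paper's proof.
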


	\begin{proof}
		Let $(\widetilde{\gamma}^w_k) \sim \tilde\kappa_0 \otimes \left(\tilde\kappa_1 \otimes \tilde\kappa_2\right)^{\otimes \NN}$.
		Then $(\gamma_n)\sim \nu^{\otimes\NN}$ by \eqref{1} in Theorem \ref{th:pivot}.
		By Theorem \ref{th:ldev-sqz}, $\left(\sqz(\gamma_n)\right)_{n\in\NN}$ satisfies large deviations inequalities, below the speed $\sigma$.
		Hence $\liminf \frac{\sqz(\gamma_n)}{n} \ge \sigma$ almost surely.
		Therefore, we only need to show that $\limsup \frac{\sqz(\gamma_n)}{n} \le \sigma$ almost surely.
		Note that $\sigma > 0$ by Lemma \ref{lem:pre-speed}.
		
		Assume by contradiction that $\sigma < +\infty$ and $\PP\left(\limsup \frac{\sqz(\gamma_n)}{n} > \sigma \right) > 0$. 
		Let $\delta > 0$ be such that $\PP\left(\limsup \frac{\sqz(\gamma_n)}{n} \ge (1 + 2\delta) \sigma \right) \ge \delta$. 
		Then for all integer $n_0 \in \NN$, we have:
		\begin{equation}
			\PP\left(\exists n\ge n_0,\; {\sqz(\gamma_n)}\ge n(1 + \delta)\sigma\right) \ge \delta.
		\end{equation}
		Let $l_0\in\NN$ and $0 < \beta < 1$ be as in Lemma \ref{lem:left-ali-tjrs} Assume that they also satisfy \eqref{eq:ghi} for the transpose of $\nu$ (which is strongly irreducible and proximal). 
		Then, for all $g \in \mathrm{End}(E)$, and for all $n \in\NN$, we have:
		\begin{equation}\label{eq:lazob}
			\PP\left(\forall l_0 \le l \le n, \gamma_{n-l}\cdots\gamma_{n-1}\AA^\frac{\eps}{4} g\right) \ge \beta.
		\end{equation}
		Note that \eqref{eq:lazob} also works when $g$ is a random endomorphism which is independent of the word $(\gamma_0, \dots , \gamma_{n-1})$.
		
		Let $a_0 \in\NN$ be the smallest integer such that $\PP(w_0 > a_0) \le \frac{\beta^2\delta}{4}$ and let $a_1 := a_0 + l_0$.
		We use the convention $\min\emptyset = +\infty$.
		Let $m_0 \in \NN$. 
		Let $n_0$ be the smallest integer such that $\PP\left(\overline{w}_{2m_0+1} \ge n_0 + a_1 + l_0\right) \le \frac{\beta^2\delta}{3}$. 
		We define:
		\begin{equation*}
			n_1 := \min\left\{n\ge n_0 \,|\,\sqz(\gamma_{a_1} \cdots \gamma_{a_1 + n})\ge n (1+\delta)\sigma\right\}.
		\end{equation*} 
		Then $\PP(n_1 \neq +\infty) \ge \delta$. 
		Moreover, $n_1$ is a stopping time so the random word $(\gamma_{a_1}, \dots , \gamma_{a_1 + n_1})$ is independent of the random sequence $(\gamma_{a_1 +n_1 + k + 1})_{k \in\NN}$.
		Both are also independent of the word $(\gamma_0,\dots,\gamma_{a_1 - 1})$ by construction.
		Let $(\gamma_n)_{n < 0} \sim \nu^{\otimes \ZZ_-}$ be independent of $(\gamma_n)_{n \ge 0}$.
		Then by Lemma \ref{lem:left-ali-tjrs} applied to $g = \gamma_{a_1} \cdots \gamma_{a_1 + n_1}$ and to the random sequence $(\gamma_{a_1 +n_1 + k + 1})_{k \in\NN}$ on the right ($k \ge a_1 + n_1 + l_0$) and  by \eqref{eq:lazob} on the left ($j \le a_0$), we have the following:
		\begin{equation*}
			\PP\left(n_1 < +\infty \cap \forall j \le a_0,\forall k \ge a_1 + n_1 + l_0,\gamma_j\cdots \gamma_{a_1-1}\AA^{\frac{\eps}{4}}\gamma_{a_1}\cdots \gamma_{a_1+n_1-1} \AA^\frac{\eps}{4} \gamma_{a_1+n_1}\cdots\gamma_{k-1}\right) \ge \beta^2 \delta.
		\end{equation*}
		Let $D := 4|\log(\eps)|+10\log(2)$. Then by Lemma \ref{lem:triple-ali}, we have:
		\begin{equation}\label{eq:gdfgf}
			\PP\left(n_1 < +\infty \cap \forall j \le a_1,\forall k \ge a_1 + n_1 + l_0,\;\sqz(\gamma_j\cdots\gamma_{k-1})\ge n_1(1+\delta)\sigma - D\right) \ge \beta^2\delta.
		\end{equation}
		Now we define the random integer:
		\begin{equation*}
			m_1 := \min\left\{k\in\NN\,|\,\overline{w}_{2k+1} > a_1 + n_1 + l_0\right\}.
		\end{equation*}
		Then by \eqref{eq:gdfgf} with $k = \overline{w}_{2m_1+1}-1 \ge a_1 + n_1 + l_0$ we have:
		\begin{equation*}
			\PP\left(m_1 < +\infty \cap \forall j \le a_1,\;\sqz(\gamma_j\cdots\gamma_{\overline{w}_{2m_1+1}-1})\ge n_1(1+\delta)\sigma - D \right) \ge \beta^2\delta.
		\end{equation*}
		Note also that with probability at least $1-\frac{\beta^2\delta}{3}$, we have $w_0 \le a_1$, therefore:
		\begin{equation*}
			\PP\left(m_1 < +\infty \cap \sqz(\gamma_{{w}_0}\cdots\gamma_{\overline{w}_{2m_1+1}-1})\ge n_1(1+\delta)\sigma - D \right) \ge \beta^2\delta.
		\end{equation*}
		Note also that by minimality of $m_1$, we have $\overline{w}_{2m_1-1}-a_1-l_0 \le n_1$. 
		Moreover, with our notations, we have $\gamma_{{w}_0}\cdots\gamma_{\overline{w}_{2m_1+1}-1} = \gamma^w_1\cdots\gamma^w_{2m_1}$. 
		Hence, we have:
		\begin{equation*}
			\PP\left(m_1 < +\infty \cap \sqz(\gamma_{w_0}\cdots \gamma_{\overline{w}_{2m_1+1}-1})\ge (\overline{w}_{2m_1-1}-a_1-l_0) (1+\delta) \sigma - D\right) \ge  {\beta^2\delta} - \frac{\beta^2\delta}{3}.
		\end{equation*} 
		Moreover, $\PP(m_1 < m_0) \le \frac{\beta^2\delta}{3}$ by construction, so we have:
		\begin{equation*}
			\PP\left( m_0 \le m_1 < +\infty \cap \sqz(\gamma_{w_0}\cdots \gamma_{\overline{w}_{2m_1+1}-1})\ge (\overline{w}_{2m-1}-a_1-l_0) (1+\delta) \sigma - D \right) \ge {\beta^2\delta} - 2 \frac{\beta^2\delta}{3}.
		\end{equation*}
		Hence, by taking $k = m_1$, we have:
		\begin{equation*}
			\PP\left(\exists k \ge m_0,\,\sqz(\gamma_{w_0}\cdots \gamma_{\overline{w}_{2k+1}-1})\ge (\overline{w}_{2k-1}-a_1-l_0) (1+\delta) \sigma - D\right) \ge \frac{\beta^2\delta}{3}.
		\end{equation*}
		The above is true for all $m_0$, therefore:
		\begin{equation*}
			\PP\left( \limsup_{k \to + \infty}\frac{\sqz\left(\gamma^w_1\cdots \gamma^w_{2k}\right)+D}{\overline{w}_{2k-1}-a-1-l_0} \ge (1+\delta)\sigma\right) \ge \frac{\beta^2\delta}{3}.
		\end{equation*}
		Moreover $\overline{w}_{2k-1} \ge 2k-1$ for all $k$ so we can get rid of the constants in the $\limsup$ and we have:
		\begin{equation}\label{eq:limsup}
			\PP\left( \limsup_{k \to + \infty}\frac{\sqz\left(\gamma^w_1\cdots \gamma^w_{2k}\right)}{\overline{w}_{2k-1}} \ge (1+\delta)\sigma\right) \ge \frac{\beta^2\delta}{3}.
		\end{equation}
		
		Now it remains to show that \eqref{eq:limsup} is in contradiction with Lemma \ref{lem:pre-speed}.
		We know that $\frac{\sqz\left(\gamma^w_1\cdots \gamma^w_{2k}\right)}{k} \to \sigma(\kappa) > 0$ almost surely by Lemma \ref{lem:pre-speed}.
		Moreover $\frac{w_1+\cdots + w_{2k - 2}}{k-1} \to \EE(L_*\tilde\kappa) > 0$ almost surely by the law of large numbers, hence $\frac{\overline{w}_{2k - 1}}{k} = \frac{w_0+\cdots + w_{2k - 2}}{k} \to \EE(L_*\tilde\kappa)$. 
		\begin{equation}
			\lim_{k \to +\infty} \frac{\sqz\left(\gamma^w_1\cdots \gamma^w_{2k}\right)}{\overline{w}_{2k-1}} = \frac{\sigma(\kappa)}{\EE(L_*\tilde\kappa)} = \sigma.
		\end{equation} 
		which contradicts \eqref{eq:limsup}. Hence $\limsup \frac{\sqz(\gamma_n)}{n} \le \sigma$ almost surely, which concludes the proof.
	\end{proof}

\subsection{Contraction property}	

	In this paragraph, we prove the following theorem. Given $\nu$ a strongly irreducible and proximal probability measure, we write $\sigma(\nu)$ for the quantity $\sigma$ defined in Lemma \ref{lem:conv}. 
	
	\begin{Def}\label{def:l-infty}
		Let $E$ be a Euclidean vector space, let $\Gamma= \mathrm{End}(E)$. We define the set of contracting sequences:
		\begin{equation*}
			\Omega'(E) := \left\{(\gamma_n)\in\Gamma^\NN\,\middle|\,
			\begin{aligned}
				& \forall n \in\NN,\; \overline\gamma_n \neq \{0\} \quad\text{and}\\
				& \forall k \in\NN,\; \forall \eps \in (0,1), \\ 
				& \limsup_{m,n \to +\infty} \max_{\substack{u \in U^\eps(\gamma_{k} \cdots \gamma_{n}) \setminus\{0\}, \\ u' \in U^\eps(\gamma_{k} \cdots \gamma_{m}) \setminus\{0\}}} \dist([u],[u']) = 0
			\end{aligned}\right\}.
		\end{equation*}
		We define $T := \Gamma^\NN \to \Gamma^\NN$ to be the Bernoulli shift and we define $l^\infty :\Omega'(E) \to \mathbf{P}(E)$ to be the only map such that:
		\begin{equation*}
			\forall (\gamma_n) \in \Omega(E),\; \limsup_{m,n \to +\infty} \max_{u \in U^\eps(\gamma_{0} \cdots \gamma_{n-1}) \setminus\{0\}} \dist([u],l^\infty(\gamma)) = 0.
		\end{equation*}
		Let:
		\begin{equation*}
			\Omega'(E) := \left\{ \gamma \in \Omega(E) \,\middle|\, \forall k\in\NN, \gamma_k l^\infty(T^{k+1}\gamma) = l^\infty(T^k\gamma) \right\}.
		\end{equation*}
	\end{Def}
	
	Note that given $E$ a Euclidean vector space, the space $\Omega(E)$ defined in Definition \ref{def:l-infty} is measurable and $T$-invariant. Moreover $l^\infty$ is $T$-equivariant on $\Omega(E)$ in the sense that $l^\infty(\gamma) = \gamma_0 l^\infty(T\gamma)$.
	
	Note also that $\Omega'(E) \neq \Omega(E)$. For example let $E = \RR^2$, and let $\pi_1$ and $\pi_2$ be the orthogonal projections onto the first and second coordinates. 
	If $\gamma_0 = \pi_1$ and $\gamma_k = \pi_1 +2\pi_2$ for all $k \ge 1$, then $\overline\gamma_n = \pi_1$ for all $n \ge 1$ and $[\gamma_k\cdots \gamma_n] \underset{m \to \infty}{\longrightarrow} [\pi_2]$ for all $k \ge 1$.
	So $l^\infty(\gamma)$ is the first coordinate axis and $\infty(T\gamma)$ is the second coordinate axis.
	Hence $\gamma_0 l^\infty(T\gamma) = [0] \neq l^\infty(\gamma)$ so $\gamma \in \Omega'(E) \setminus \Omega(E)$.
	
	\begin{Th}\label{th:limit-uv}
		Let $E$ be a Euclidean vector space, let $\Gamma= \mathrm{End}(E)$, let $\nu$ be a strongly irreducible and proximal probability distribution on $\Gamma$. Let $\gamma = (\gamma_n)_{n\in\NN}\sim\nu^{\otimes\NN}$. 
		Then $\gamma \in\Omega(E)$ almost surely. 
		Let $\alpha < \sigma(\nu)$. 
		There exist constants $C,\beta > 0$ such that:
		\begin{equation}\label{eq:limit-u}
			\PP\left(\exists u \in U^1(\overline\gamma_n)\setminus\{0\},\; \dist([u], l^\infty(\gamma))\ge \exp(-\alpha n)\right) \le C\exp(-\beta n).
		\end{equation}
		Moreover, for all $v \in E$, we have:
		\begin{equation}\label{eq:limit-v}
			\PP\left(\dist([\overline{\gamma}_n v], l^\infty(\gamma))\ge \exp(-\alpha n)\,|\, \overline{\gamma}_n v \neq 0\right) \le C\exp(-\beta n).
		\end{equation}
	\end{Th}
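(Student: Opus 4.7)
The plan is to use the pivotal extraction of Theorem~\ref{th:pivot} to express $\overline\gamma_n$ as a product of strongly aligned blocks with large squeeze, then appeal to Corollary~\ref{cor:limit-line} to construct $l^\infty$, and finally handle the fact that $n$ is not a pivotal time and that $v$ need not be right-aligned by additional pivoting estimates.

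\medskip

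\emph{Step 1 (extraction and construction of $l^\infty$).} Apply Theorem~\ref{th:pivot} with $\rho = 1/4$ and $K = 10$ to obtain $0 < \eps \le 1$ and distributions $(\tilde\kappa_0, \tilde\kappa_1, \tilde\kappa_2)$ with $\tilde\kappa_0 \odot (\tilde\kappa_1 \odot \tilde\kappa_2)^{\odot\NN} = \nu^{\otimes\NN}$. Let $(\widetilde\gamma^w_k) \sim \tilde\kappa_0 \otimes (\tilde\kappa_1 \otimes \tilde\kappa_2)^{\otimes\NN}$ and set $\gamma^w_k := \Pi(\widetilde\gamma^w_k)$. Items~\eqref{3}--\eqref{5} of Theorem~\ref{th:pivot} combined with Proposition~\ref{prop:pivot-is-aligned} yield $\gamma^w_k \AA^{\eps/4} \gamma^w_{k+1}$ for all $k \ge 0$ and $\sqz(\gamma^w_k) \ge 4|\log(\eps)|+7\log(2)$ for all $k \ge 1$. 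Corollary~\ref{cor:limit-line} (applied with $\eps/4$ in place of $\eps$) then gives a random line $l^\infty \in \mathbf{P}(E)$ such that
\begin{equation}\label{eq:plan-lim-line}
\forall n \ge 0,\ \forall u_n \in U^1(\overline{\gamma^w}_n)\setminus\{0\},\quad \dist([u_n], l^\infty) \le \tfrac{8}{\eps}\exp\bigl(-\sqz(\overline{\gamma^w}_n)\bigr).
\end{equation}
Together with Lemma~\ref{lem:c-chain} (which forces $\sqz(\overline{\gamma^w}_n) \to +\infty$), this shows $\gamma \in \Omega(E) \cap \Omega'(E)$ almost surely.

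\medskip

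\emph{Step 2 (proof of \eqref{eq:limit-u}).} Fix $\alpha < \sigma(\nu)$. For $n \in \NN$, let $N(n)$ denote the index of the last completed pivotal block by time $n$. Following the proof of Theorem~\ref{th:ldev-sqz}, Lemma~\ref{lem:r-piv} applied to the ping-pong sequence augmented by the tail $(\gamma_{\overline w_{2N(n)}}\cdots\gamma_{n-1})$ yields a random integer $r_n$ with $\PP(r_n \ge s) \le 4^{-s}$, independent of the even-indexed blocks, such that on $\{r_n < N(n)\}$,
\begin{equation*}
\gamma^w_{2N(n)-2r_n-1}\ \AA^\eps\ (\gamma_{\overline w_{2N(n)-2r_n}}\cdots\gamma_{n-1}).
\end{equation*}
Lemma~\ref{lem:alipart} combined with the aligned chain from Step~1 gives $\overline{\gamma^w}_{2N(n)-2r_n-1} \AA^{\eps/8} (\gamma_{\overline w_{2N(n)-2r_n-1}}\cdots\gamma_{n-1})$, so by Lemma~\ref{lem:c-prod} any $u \in U^1(\overline\gamma_n)$ satisfies $\dist([u],[u_{\star}])\le \tfrac{16}{\eps}\exp(-\sqz(\overline{\gamma^w}_{2N(n)-2r_n-1}))$ for any $u_\star \in U^1(\overline{\gamma^w}_{2N(n)-2r_n-1})$. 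Combined with \eqref{eq:plan-lim-line} and a triangle inequality, $\dist([u],l^\infty) \le \tfrac{24}{\eps}\exp(-\sqz(\overline{\gamma^w}_{2N(n)-2r_n-1}))$. Theorem~\ref{th:ldev-sqz} together with the exponential tail of $r_n$, the LLN $\overline w_{2N(n)}/n \to 1$, and Lemma~\ref{lem:proba:ldevcompo} then produce \eqref{eq:limit-u}.

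\medskip

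\emph{Step 3 (proof of \eqref{eq:limit-v}).} Fix $v \in E\setminus\{0\}$. For $0 \le K \le N(n)$, let $v_K := (\gamma_{\overline w_K}\cdots\gamma_{n-1})\cdot v$, so that $\overline\gamma_n v = \overline{\gamma^w}_K\,v_K$ and, on $\{\overline\gamma_n v \neq 0\}$, every $v_K$ is nonzero. The vector $v_{2k+2}$ is measurable with respect to $(\widetilde\gamma^w_j)_{j > 2k+1}$. By item~\eqref{2} of Theorem~\ref{th:pivot-extract} and item~\eqref{5} of Theorem~\ref{th:pivot}, the conditional law of $\gamma^w_{2k+1}$ given $(\widetilde\gamma^w_j)_{j\neq 2k+1}$ is bounded above by $\tilde\nu_s/(1-2\rho)$ and is in particular $\tfrac{\rho}{1-2\rho}$-Schottky for $\AA^\eps$, even against rank-one targets such as $v_{2k+2}$ (viewed as an element of $\mathrm{Hom}(\KK,E)$, via the argument of Lemma~\ref{lem:pi-schottky}). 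A backward variant of Lemma~\ref{lem:r-piv} applied to this Schottky data produces a random integer $s_n$ with exponential tails such that on $\{s_n<N(n),\,\overline\gamma_n v \ne 0\}$,
\begin{equation*}
\gamma^w_{2N(n)-2s_n-1}\ \AA^\eps\ v_{2N(n)-2s_n}.
\end{equation*}
Applying Lemma~\ref{lem:herali} (with $\eps/4$ in place of $\eps$, using the alignment $\overline{\gamma^w}_{2N(n)-2s_n-1} \AA^{\eps/4} \gamma^w_{2N(n)-2s_n-1}$ from item~\eqref{4} and the squeeze $\sqz(\gamma^w_{2N(n)-2s_n-1}) \ge 10|\log(\eps)|+10\log(2)$ from item~\eqref{3}) yields $\overline{\gamma^w}_{2N(n)-2s_n-1} \AA^{\eps/8} v_{2N(n)-2s_n-1}$, hence $\overline\gamma_n v = \overline{\gamma^w}_{2N(n)-2s_n-1} v_{2N(n)-2s_n-1} \in U^{\eps/8}\bigl(\overline{\gamma^w}_{2N(n)-2s_n-1}\bigr)$. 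Lemma~\ref{lem:cont-prop} and \eqref{eq:plan-lim-line} give $\dist([\overline\gamma_n v],l^\infty) \le \tfrac{16}{\eps}\exp(-\sqz(\overline{\gamma^w}_{2N(n)-2s_n-1}))$, and large deviations for $\sqz$ together with the exponential tail of $s_n$ conclude.

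\medskip

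\emph{Main obstacle.} The delicate point is Step~3: the event $\{\overline\gamma_n v \neq 0\}$ depends on every pivotal block (including $\gamma^w_{2k+1}$), so the conditional Schottky property at index $2k+1$ must be applied \emph{after} conditioning on the future $(\widetilde\gamma^w_j)_{j>2k+1}$ (which determines $v_{2k+2}$) and yet \emph{before} intersecting with the non-vanishing event; the correct ordering of conditionings, together with the extension of the Schottky property from matrices to rank-one targets and the verification that $s_n$ has exponential tails uniformly in $n$ and $v$, is the technical heart of the argument. Once this is handled, both \eqref{eq:limit-u} and \eqref{eq:limit-v} reduce to the same contraction-of-images mechanism applied at different scales.
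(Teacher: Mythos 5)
Your proposal is correct and follows essentially the same route as the paper: use the pivotal extraction of Theorem~\ref{th:pivot}, build $l^\infty$ via Corollary~\ref{cor:limit-line} from the aligned-block chain, then for each $n$ apply the one-sided pivoting Lemma~\ref{lem:r-piv} to the ping-pong sequence augmented with the right tail (resp.~with $v$ absorbed into the tail, viewed in $\mathrm{Hom}(\KK,E)$), and finish with Lemma~\ref{lem:herali}/\ref{lem:c-prod}, a triangle inequality, large deviations for $\sqz$, and exponential control of the backtrack length.

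The only divergence is one of emphasis. You flag the Schottky-against-rank-one-targets extension and the interaction with the conditioning on $\{\overline\gamma_n v\neq 0\}$ as the technical crux; the paper treats both tersely (the first via the remark after Lemma~\ref{lem:discrete} that alignment relations extend to $\mathrm{Hom}(E,F)\times\mathrm{Hom}(H,E)$, the second by invoking Lemma~\ref{lem:descri-ker} to bound $\PP(\overline\gamma_n v=0)$ away from $1$ uniformly in $n$ and $v\ne 0$, so the conditioning changes constants only). There is no ordering subtlety beyond that: $s_n$ (the paper's $r_n^v$) is constructed as in Lemma~\ref{lem:r-piv} independently of the even blocks, unconditionally, and one then divides the unconditional tail bound by the bounded-below probability of the conditioning event. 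Your version is a slightly more explicit spelling-out of the same argument; nothing is missing.
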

	
	\begin{proof}
		Let $\eps \in (0,1)$ and let $\tilde\kappa_0,\tilde\kappa_1,\tilde\kappa_2$ be as in Theorem \ref{th:pivot} for $\rho = \frac{1}{4}$ and $K = 10$. 
		Let $(\widetilde{\gamma}^w_k) \sim \tilde\kappa_0 \otimes \left(\tilde\kappa_1 \otimes \tilde\kappa_2\right)^{\otimes\NN}$. 
		By Corollary \ref{cor:limit-line}, there is a limit line $l^\infty$ such that:
		\begin{equation*}
			\forall m \in\NN, \forall u \in U^1\left(\overline{\gamma}^w_{m}\right)\setminus\{0\},\; \dist([u], l^\infty) \le \frac{4}{\eps} \exp(-\sqz\left(\overline{\gamma}^w_{m}\right)).
		\end{equation*}
		Then we necessarily have $l^\infty = l^\infty(\gamma)$ whenever $\gamma\in\Omega(E)$.
		To all integer $n \in\NN$, we associate $q_n =: \max\{q\in\NN\,|\, \overline{w}_{2q} \le n\}$ and a random integer $r_n\sim\mathcal{G}_\rho$ such that $\gamma^w_{2q_n-2r_n-1}\AA^\eps(\gamma_{\overline{w}_{2q_n-2r_n}}\cdots \gamma_{n-1})$ or $r_n \ge q_n$.
		Then by Lemma \ref{lem:herali}, if we assume that $r_n < q_n$, then $\overline\gamma^w_{2q_n-2r_n}\AA^\eps(\gamma_{\overline{w}_{2q_n-2r_n}}\cdots \gamma_{n-1})$, hence by Lemma \ref{lem:c-prod}, we have:
		\begin{equation*}
			\forall u \in U^1\left(\overline{\gamma}_n\right)\setminus\{0\},\;\forall u' \in U^1\left(\overline{\gamma}^w_{2q_n-2r_n}\right)\setminus\{0\},\; \dist([u],[u']) \le \frac{4}{\eps} \exp\left(-\sqz\left(\overline{\gamma}^w_{2q_n-2r_n}\right)\right).
		\end{equation*}
		In this case, by triangular inequality:
		\begin{equation*}
			\forall u \in U^1\left(\overline{\gamma}_n\right)\setminus\{0\},\; \dist([u],l^\infty) \le \frac{8}{\eps} \exp\left(-\sqz\left(\overline{\gamma}^w_{2q_n-2r_n}\right)\right).
		\end{equation*}
		By Corollary \ref{cor:curexp}, and by Lemma \ref{lem:sumexp} applied to the sequence $(w_k)$, the random variable $(\overline{w}_{q_n - r_n} - n)$ have a bounded exponential moment that does not depend on $n$. 
		Therefore, by \eqref{compo:shift} Lemma \ref{lem:proba:ldevcompo}, the random sequence $(\overline{w}_{q_n - r_n})$ satisfies large deviation inequalities around the speed $1$. 
		Then $\left(\sqz\left(\overline{\gamma}^w_{2q_n-2r_n}\right)\right)_n$ satisfies large deviations inequalities below the speed $\sigma(\nu)$ by Theorem \ref{th:ldev-sqz} and by \eqref{compo:compo} in Lemma \ref{lem:proba:ldevcompo}. 
		Hence we have \eqref{eq:limit-u}. 
		The above reasoning also works for $T^{k}\gamma$ for all $k$ therefore $\gamma\in\Omega'(E)$ almost surely.
		
		Now to show \eqref{eq:limit-v}, we use the same reasoning.
		We identify $E$ with $\mathrm{Hom}(\KK, E)$.
		Let $v \in E \setminus\{0\}$.
		Note that $U^1\left(\overline{\gamma}_n v\right) = \overline{\gamma}_n v \KK$.
		For all $n \in\NN$, we define a random integer $r_n^v$ such that $\gamma^w_{2q_n-2r_n^v-1}\AA^\eps(\gamma_{\overline{w}_{2q_n-2r_n^v}}\cdots \gamma_{n-1}v)$ or $r_n^v \ge q_n$. 
		Then by the same reasoning as for the proof of \eqref{eq:limit-u}, for all $n$ such that $r_n^v < q_n$ and $\overline\gamma_nv\neq 0$, we have:
		\begin{equation*} 
			\dist([\overline{\gamma}_nv],l^\infty) \le \frac{8}{\eps} \exp\left(-\sqz\left(\overline{\gamma}^w_{2q_n-2r_n^v}\right)\right).
		\end{equation*}
		Note moreover that $\left(\PP(\overline{\gamma}_n v = 0)\right)_{n\in\NN, v\neq 0}$ is bounded above by a constant by Lemma \ref{lem:descri-ker}.
		Hence we have \eqref{eq:limit-v}.
		 
		Let $v \in E \setminus\underline{\ker}(\nu)$.
		The above reasoning implies that $l^\infty = \lim [\overline{\gamma}_n v]$ almost surely so $l^\infty(\gamma) = \gamma_0 l^\infty(T\gamma)$ almost surely. Moreover, for all $k \in \NN$, the random sequence $T^k\gamma$ has distribution $\nu^{\otimes \NN}$, so we also have $l^\infty(T^k\gamma) = \gamma_k l^\infty(T^{k+1}\gamma)$. Therefore $\gamma \in \Omega(E)$ almost surely.
	\end{proof}

	Given $g \in\mathrm{End}(E)$ and $v \in E$, we write $g[v]$ or $[g][v]$ for $[gv]$, this is an element of $\mathbf{P}(E) \sqcup \{[0]\}$. 
	That way we have a measurable (but not everywhere continuous) semi-group action $\mathrm{End}(E) \curvearrowright \mathbf{P}(E) \sqcup \{[0]\}$, and a convolution product $\mathrm{Prob}(\mathrm{End}(E)) \times \mathrm{Prob}(\mathbf{P}(E) \sqcup \{[0]\}) \to \mathrm{Prob}(\mathbf{P}(E) \sqcup \{[0]\})$.
	Note that given $g \in \mathrm{End}(E) \setminus\{0\}$ and $v \in E \setminus \{0\}$, such that $gv \neq 0$, the map $([h], [x]) \mapsto [hx]$ is continuous at $([g], [v])$.
	
	Let un now prove Corollary \ref{cor:ergodic}, which says that given $E$ a Euclidean vector space and $\nu$ a strongly irreducible and proximal probability distribution on $\mathrm{End}(E)$, we have a unique $\nu$-stationary probability measure $\xi_\nu^\infty$ on $\mathbf{P}(E)$. Moreover, for all probability measure $\xi$ on $\mathbf{P}(E) \setminus\underline{\ker}(\nu)$, the sequence $(\nu^{*n}*\xi)_{n\in\NN}$ converges exponentially fast to $\xi_\nu^\infty$ for the dual of the Lipschitz norm.

	\begin{proof}[Proof of Corollary \ref{cor:ergodic}]
		Let $\nu$ be any strongly irreducible and proximal distribution on $\mathrm{End}(E)$.
		Let $\xi_\nu^\infty$ be the distribution of $l^\infty_*(\nu^{\otimes\NN})$.
		Let $({\gamma}_n)\sim\nu^{\otimes\NN}$.
		Let $l := l^\infty(\gamma)$ and let $l' := l^\infty \circ T(\gamma)$. 
		Then by \eqref{eq:limit-v} in Theorem \ref{th:limit-uv} applied to a vector $v \in E \setminus\underline{ker}(\nu)$, we have $l = \gamma_0 l'$ almost surely.
		Moreover, $l$ and $l'$ both have distribution $\xi_\nu^\infty$ and $\gamma_0$ and $l'$ are independent so $\xi_\nu^\infty$ is $\nu$-stationary.
		
		Let $\xi$ be a probability measure on $\mathbf{P}(E)$ that is supported on $\mathbf{P}(E)\setminus\underline{\ker}(\nu)$, let $\lambda \ge 0$ and let $f:\mathbf{P}(E)\to\RR$ be $\lambda$-Lipschitz. 
		Let $(l,(\gamma_n))\sim \xi \otimes\nu^{\otimes\NN}$ and let $l^\infty := l^\infty(\gamma)$.
		Let $0 < \alpha < \sigma(\nu)$ and let $C, \beta > 0$ be as in Theorem \ref{th:limit-uv}. 
		Note also that such $\beta, C$ do not depend on $\xi$.
		Then note that $\mathbf{P}(E)$ has diameter $1$.
		Therefore, for all $\delta\in(0,1)$ and for all $n \in\NN$, we have:
		\begin{equation}\label{eq:maj-lip}
			\left|\EE(f(l^\infty)) - \EE(f(\overline{\gamma}_n l))\right| \le \EE\left(|f(l^\infty)-f(\overline{\gamma}_n l)|\right) \le \lambda\EE(\dist(l^\infty,\overline{\gamma}_nl) \le \lambda \PP(\dist(l^\infty,\overline{\gamma}_nl) \ge \delta) + \lambda \delta.
		\end{equation}
		Moreover, since $l$ is independent of $\gamma$, we know that $\PP(\dist(l^\infty,\overline{\gamma}_n l) \ge \exp(-\alpha n)) \le C \exp(-\beta n)$ for all $n$ by \eqref{eq:limit-v} in Theorem \ref{th:limit-uv}. So by \eqref{eq:maj-lip} applied to all $n\in\NN$, with $\delta = \exp(-\alpha n)$, we have:
		\begin{equation*}
			\forall n\in\NN,\;\left|\EE(f(l^\infty)) - \EE(f(\overline{\gamma}_n l))\right| \le \lambda C \exp(-\beta n) + \lambda \exp(-\alpha n) \le \lambda (C+1) \exp(-\min\{\alpha, \beta\} n).
		\end{equation*}
		To conclude, note that for all $n \in\NN$, the random variable $\overline{\gamma}_n l$ has distribution law $\nu^{*n}*\xi$.
	\end{proof}

\subsection{Asymptotic estimates for the spectral gap and dominant eigenspace}
	
	We use exactly the same strategy as for the proofs of Theorems \ref{th:ldev-sqz} and \ref{th:limit-uv} but we use Lemma \ref{lem:c-piv} instead of Lemma \ref{lem:r-piv}. 
	Note that given $E$ a vector space and $g,h \in\mathrm{End}(E)$, then $\prox(gh) = \prox(hg)$ so $gh$ is proximal if and only if $hg$ is and in this case, $E^+(gh) = g E^+(hg)$.

	Note that given $g \in \mathrm{End}(E)$ a proximal matrix, the constant sequence $\gamma : n \mapsto g$ is in $\Omega(E)$ and $l^\infty(\gamma) = E^+(g)$. 
	In this section we use Lemma \ref{lem:c-piv} applied to the extraction constructed in Theorem \ref{th:pivot-extract}.
	We could use the extraction constructed in Theorem \ref{th:pivot} because we do not care about moments or independence. 
	However need the notion of inductive alignment $\widetilde{\AA}^S$ as defined in the beginning of Section \ref{sec:pivot} to be able to use Lemma \ref{lem:c-piv} as we explained in Remark \ref{rem:c-piv}.
	
	\begin{Th}\label{th:lim-eigen}\label{th:ldevprox}
		Let $\nu$ be a strongly irreducible and proximal probability distribution over $\mathrm{End}(E)$. 
		Let $(\gamma_n)\sim\nu^{\otimes\NN}$ and let $l^\infty := l^\infty(\gamma)$. 
		Then we have:
		\begin{gather}
			\forall\alpha < \sigma(\nu),\;\exists C,\beta > 0,\;\forall n\in\NN,\; \PP\left(\prox(\overline{\gamma}_n)\le\alpha n\right) \le C\exp(-\beta n), \label{eqn:ldevprox}\\ 
			\forall\alpha < \sigma(\nu),\;\exists C,\beta > 0,\;\forall n\in\NN,\; \PP\left(\dist(E^+(\overline{\gamma}_n), l^\infty)\ge \exp(-\alpha n) \right) \le C\exp(-\beta n). \label{eq:lim-eigen}
		\end{gather}
	\end{Th}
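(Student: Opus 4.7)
\textbf{Proof plan for Theorem \ref{th:lim-eigen}.}
The strategy combines the pivotal extraction of Theorem \ref{th:pivot} with the cyclic pivoting technique of Lemma \ref{lem:c-piv}, so as to produce, for every $n$, a random decomposition $\overline{\gamma}_n = A_n B_n$ whose cyclic shift $B_n A_n$ is self-aligned. The advantage is that Lemma \ref{lem:eigen-align} can then be invoked for $B_nA_n$ to extract quantitative spectral information which transfers to $\overline{\gamma}_n = A_n B_n$ via the classical identities $\prox(A_nB_n)=\prox(B_nA_n)$ and $E^+(A_nB_n)=A_n E^+(B_nA_n)$.

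First I would fix $\rho=1/4$, $K=10$ and apply Theorem \ref{th:pivot} together with Proposition \ref{prop:pivot-is-aligned} to obtain an extraction $(\tilde g_k)$ whose concatenation gives $(\gamma_n)\sim \nu^{\otimes \NN}$ and whose (sub-)products satisfy a genuine $\AA^{\eps/4}$ alignment and have uniformly large squeeze. For each $n$ I set $q_n := \max\{q : \overline{w}_{2q+1} \le n\}$ and merge the tail $\gamma_{\overline{w}_{2q_n+1}}\cdots\gamma_{n-1}$ into the last block, producing a length-$(2q_n+1)$ ping-pong sequence. Lemma \ref{lem:c-piv} then yields a cyclic pivot $c_n \sim \mathcal{G}_{1/2}$, independent of the even-indexed blocks, giving with overwhelming probability the double cyclic alignment conditions
\[
 \gamma^w_{2q_n-2c_n-1}\AA^\eps(\gamma^w_{2q_n-2c_n}\cdots\gamma^w_{2c_n})\quad\text{and}\quad (\gamma^w_{2q_n-2c_n-2}\cdots\gamma^w_{2c_n})\AA^\eps \gamma^w_{2c_n+1}.
\]
By Remark \ref{rem:c-piv} and the inductive alignment already present inside the extraction, these upgrade to a genuine alignment in the sense of Lemma \ref{lem:Atilde}. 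Writing $A_n$ and $B_n$ for the left and right halves so that $\overline{\gamma}_n = A_n B_n$, I deduce $B_nA_n \AA^{\eps'} B_nA_n$ for some explicit $\eps'=\eps'(\eps)$, while Lemma \ref{lem:c-chain} applied to the interior aligned chain gives $\sqz(B_nA_n)\ge \alpha n$ with exponential probability, by the very same large deviations argument as in the proof of Theorem \ref{th:ldev-sqz} (composition of the geometric tails for $c_n$ and $q_n$ with the speed estimates of Lemma \ref{lem:pre-speed}).

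With the self-alignment and large squeeze of $B_nA_n$ in hand, Lemma \ref{lem:eigen-align} forces $B_nA_n$ to be proximal, with $\prox(B_nA_n) \ge \sqz(B_nA_n) - O(|\log\eps|)$ and with $E^+(B_nA_n)$ lying within $\frac{2}{\eps'}\exp(-\sqz(B_nA_n))$ of $U^1(B_nA_n)$. Since the nonzero eigenvalues of $B_nA_n$ and $A_nB_n$ coincide, this is exactly the estimate \eqref{eqn:ldevprox} for $\prox(\overline{\gamma}_n)$. For \eqref{eq:lim-eigen} I would form the infinite periodic sequence obtained by concatenating copies of $(\tilde g_{2c_n+1},\dots,\tilde g_{2c_n})$ (and the tail) in cyclic order; by the cyclic alignment together with Proposition \ref{prop:pivot-is-aligned}, this sequence satisfies the hypotheses of Corollary \ref{cor:limit-line} with parameter $\eps'$. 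Its limit line $l^{\infty}_n$ coincides with $E^+(\overline{\gamma}_n)$, because the partial products along this sequence contain the full powers $(A_nB_n)^k$, which converge projectively to $E^+(A_nB_n)$. Applying \eqref{eqn:limit-line} to the first period, I get that $U^1(\overline{\gamma}_n)$ lies within $\frac{2}{\eps'}\exp(-\sqz(\overline{\gamma}_n))$ of $E^+(\overline{\gamma}_n)$, and Theorem \ref{th:ldev-sqz} controls $\sqz(\overline{\gamma}_n)$. Combining with \eqref{eq:limit-u} in Theorem \ref{th:limit-uv} (which places $U^1(\overline{\gamma}_n)$ within $\exp(-\alpha n)$ of $l^\infty$) and the triangle inequality yields \eqref{eq:lim-eigen}.

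The main obstacle, I expect, is step three: verifying that the cyclic alignment supplied by Lemma \ref{lem:c-piv} together with the $\widetilde{\AA}^S$ chain structure promotes to an honest self-alignment $B_nA_n\AA^{\eps'} B_nA_n$ with a controlled constant, and that the periodic chain satisfies the per-letter squeeze assumption required by Corollary \ref{cor:limit-line} at the boundary between periods. This is a combinatorial bookkeeping issue—analogous to what is done in the proof of Proposition \ref{prop:pivot-is-aligned}—and will likely require invoking Lemma \ref{lem:Atilde} (or a mild variant) to handle the wrap-around junction, after which the large deviations estimates on $\sqz$ propagate exactly as in Theorem \ref{th:ldev-sqz}.
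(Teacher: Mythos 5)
Your high-level strategy is the same as the paper's: use the cyclic pivoting lemma (Lemma~\ref{lem:c-piv}) to self-align a cyclic conjugate of $\overline{\gamma}_n$, then invoke Lemma~\ref{lem:eigen-align} and transfer back by conjugation. The proof of \eqref{eqn:ldevprox} is essentially sound, modulo the care required in the large-deviations bookkeeping (the paper uses a double sum over $i\le\delta n$, $j\ge n-\delta n$ to handle the two-sided truncation produced by $c_n$, which your phrase ``the very same argument as in Theorem~\ref{th:ldev-sqz}'' glosses over, since that proof is one-sided).

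The proof of \eqref{eq:lim-eigen}, however, has a genuine gap that is not the one you flag. You propose to build the infinite periodic block sequence $(\tilde g_{2c_n+1},\dots,\tilde g_{2c_n})$ repeated, and feed it into Corollary~\ref{cor:limit-line}. That corollary requires $\sqz(\gamma_{k})\ge 2|\log\eps|+3\log 2$ for \emph{every} $k\ge 1$ in the chain. Your period contains the block $\gamma^w_0$ in its interior, and $\gamma^w_0$ (the $\tilde\kappa_0$ block) has no squeeze control whatsoever --- it could literally be the identity, as the paper itself emphasizes when motivating the definition of $\widetilde{\AA}^S$. So Corollary~\ref{cor:limit-line} does not apply to the periodic chain, and the obstacle is \emph{inside} the period, not at the wrap-around junction, so merging $\widetilde{\AA}^S$-information at the boundary will not repair it. There is also a secondary inaccuracy: the partial products of the periodic sequence starting at block $2c_n+1$ give powers of the cyclic conjugate $h_n$ (your $B_nA_n$), not of $\overline{\gamma}_n = A_nB_n$, so \eqref{eqn:limit-line} applied to the first period bounds $\dist(U^1(h_n),E^+(h_n))$, which is not directly about $U^1(\overline{\gamma}_n)$ or $E^+(\overline{\gamma}_n)$. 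The paper sidesteps both issues by applying Lemma~\ref{lem:eigen-align} directly to $h_n$ (which needs only the \emph{global} squeeze and self-alignment of $h_n$, not per-block control, and these can be derived from the interior aligned chain via Lemmas~\ref{lem:alipart} and~\ref{lem:c-prod}), obtaining that $E^+(h_n)$ lies near $U^{\eps/4}(h_n)$, and then pushing $E^+(h_n)$ forward by $\overline{\gamma}_{a_n}$ using Lemma~\ref{lem:product-is-lipschitz}, Lemma~\ref{lem:c-prod}, and the already-established Corollary~\ref{cor:limit-line} for the ordinary (non-periodic) extraction. You should replace your periodic-chain construction with this push-forward step.
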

	
	\begin{proof}
		Let $0 \le \eps \le 1$ and $\tilde\nu_s$ be as in Corollary \ref{cor:schottky} applied to $\nu$ with $\rho = \frac{1}{4}$ and $K = 10$.
		Let $\tilde\mu$ be as in Theorem \ref{th:pivot-extract} applied to $\nu$ and $\tilde\nu_s$ with $S = \mathbf{supp}(\Pi_*\tilde\nu_s)$ and $\AA = \AA^\eps$. 
		Let $(\widetilde{\gamma}^w_k) \sim \tilde\mu$. 
		To all integer $n \in\NN$, we associate $q_n =: \max\{q\in\NN\,|\, \overline{w}_{2q} \le n\}$ and $c_n$ the smallest integer such that:
		\begin{equation*}
			\gamma^w_{2q_n-2c_n-1}\AA \left(\gamma_{\overline{w}_{2q_n-2c_n}}\cdots\gamma_{n-1}\gamma_0\cdots\gamma_{\overline{w}_{2c_n}}\right)\quad\text{and}\quad \left(\widetilde\gamma_{\overline{w}_{2q_n-2c_n-2}}\cdots\gamma_{n-1}\gamma_0\cdots\gamma_{\overline{w}_{2c_n}}\right) \AA \gamma^w_{2c_n}
		\end{equation*}
		or $2c_n \ge q_n$. 
		By Lemma \ref{lem:c-piv}, $c_n$ has a bounded exponential moment that does not depend on $n$.
		Moreover $\gamma^w_{2q_n-2c_n-3}\widetilde{\AA}^S \widetilde\gamma^w_{2q_n-2c_n-2}$ and $\gamma^w_{2q_n-2c_n-2}\AA \gamma^w_{2q_n-2c_n-1}$. 
		Therefore, we have
		\begin{equation*}
			\gamma^w_{2q_n-2c_n-3}\widetilde{\AA}^S\left(\widetilde\gamma_{\overline{w}_{2q_n-2c_n-2}}, \dots,\gamma_{n-1}, \gamma_0 , \dots, \gamma_{\overline{w}_{2c_n}}\right)
		\end{equation*} 
		By Proposition \ref{prop:pivot-is-aligned}, we have:
		\begin{equation*}
			\gamma^w_{2q_n-2c_n-3} \AA^\frac{\eps}{2} \left(\widetilde\gamma_{\overline{w}_{2q_n-2c_n -2}} \cdots \gamma_{n-1} \gamma_0 \cdots \gamma_{\overline{w}_{2c_n}}\right).
		\end{equation*}
		By definition of $c_n$ and by Theorem \ref{th:pivot-extract} and Proposition \ref{prop:pivot-is-aligned} applied to $(\gamma^w_k)_{2c_n < k < 2q_n -2c_n}$, which satisfies \eqref{item:atilde} in Theorem \ref{th:pivot-extract}, we also have:
		\begin{equation*}
			\left(\widetilde\gamma_{\overline{w}_{2q_n-2c_n -2}} \cdots \gamma_{n-1} \gamma_0 \cdots \gamma_{\overline{w}_{2c_n}}\right) \AA^\eps \gamma^w_{2c_n +1} \AA^\frac{\eps}{2} \gamma^w_{2c_n +2} \AA^\eps \cdots \AA^\frac{\eps}{2} \gamma^w_{2q_n-2c_n-4} \AA^\eps \gamma^w_{2q_n-2c_n-3}
		\end{equation*}
		Moreover, each of these matrices have a squeeze coefficient larger than $4 |\log(\eps)| + 7 |\log(2)|$ by Proposition \ref{prop:pivot-is-aligned} again.
		Let $a_n := \overline{w}_{2q_n-2c_n-3}$ and let $h_n := \gamma_{a_n}\cdots\gamma_{n-1}\gamma_0\cdots \gamma_{a_n-1}$. 
		Then by Lemma \ref{lem:alipart} we have $h_n \AA^\frac{\eps}{4} h_n$ and $\overline{\gamma}_{a_n}\AA^\frac{\eps}{4} h_n$ when $2c_n + 2 \le q_n$. 
		Hence by Lemma \ref{lem:eigen-align}, we have:
		\begin{equation}
			\prox(h_n) \ge \sqz(h_n) - 2|\log(\eps)| - 6\log(2)
		\end{equation}
		Moreover, we have $\left(\gamma^w_{2c_n +1} \cdots \gamma^w_{2q_n-2c_n-3}\right) \AA^\frac{\eps}{4}\left(\gamma_{\overline{w}_{2q_n-2c_n-2}} \cdots \gamma_{n-1} \gamma^w_0 \cdots \gamma^w_{2c_n}\right)$, so by \eqref{eqn:lenali} in Lemma \ref{lem:c-prod}, we have:
		\begin{equation}\label{eq:sqz-hn}
			\sqz(h_n) \ge \sqz\left(\gamma^w_{2c_n +1} \cdots \gamma^w_{2q_n-2c_n-3}\right) - 2|\log(\eps)| - 4\log(2).
		\end{equation}
		
		Now we claim that $\left(\sqz\left(\gamma^w_{2c_n +1} \cdots \gamma^w_{2q_n-2c_n-3}\right)\right)_{n\in\NN}$ satisfies large deviations inequalities below the speed $\sigma(\nu)$. Note that this implies that we have \eqref{eqn:ldevprox} because $\prox(h_n) = \prox(\overline{\gamma}_n)$ by conjugation.
		Let $\alpha < \sigma$ and let $0 < \delta < 1/2$ be such that $\frac{\alpha}{1-2\delta} < \sigma$. By Theorem \ref{th:ldev-sqz}, there are constants $C, \beta > 0$ such that:
		\begin{equation*}
			\forall i < j,\; \PP\left(\sqz(\gamma_i\cdots \gamma_{j-1}) \le (j-i) \frac{\alpha}{1-2\delta}\right) \le C \exp(-\beta (j-i)).
		\end{equation*}
		Write $C' := C (1-\exp(-\beta))^{-2}$ and $\beta' := \beta(1-2\delta) > 0$, then we have:
		\begin{align*}
			\PP\left(\exists i \le \delta n,\, \exists j \ge n- \delta n,\;\sqz(\gamma_i\cdots \gamma_{j-1}) \le (j-i) \frac{\alpha}{1-2\delta}\right) 
			& \le \sum_{i = 0}^{\lfloor \delta n \rfloor}\sum_{j = \lceil n-\delta n \rceil}^{n} C \exp(-\beta (j-i))\\
			& \le \sum_{i = -\infty}^{\lfloor \delta n \rfloor}\sum_{j = \lceil n-\delta n \rceil}^{+\infty} C \exp(-\beta (j-i))\\
			& \le  C' \exp(-\beta' n).
		\end{align*}
		Note that, if we take $i = \overline{w}_{2c_n+1}$ and $j = \overline{w}_{2q_n - 2c_n - 2}$, then $\gamma_i\cdots \gamma_{j-1} = \gamma^w_{2c_n +1} \cdots \gamma^w_{2q_n-2c_n-3}$. Hence we have:
		\begin{multline}\label{eq:dom-sqz-prox}
			\PP\left(\sqz\left(\gamma^w_{2c_n +1} \cdots \gamma^w_{2q_n-2c_n-3}\right) \le \alpha n\right) \le \PP\left(\delta n < \overline{w}_{2c_n+1}\right) + \PP\left(\overline{w}_{2q_n - 2c_n - 2} < (1-\delta) n\right) \\
			+ \PP\left(\exists i \le \delta n,\,\exists j \ge n-\delta n,\;\sqz(\gamma_i\cdots \gamma_{j-1}) \le (j-i) \frac{\alpha}{1-2\delta}\right)
		\end{multline}
		Moreover $\overline{w}_{2c_n+1}$ has finite exponential moment so $\left(\PP\left(\delta n < \overline{w}_{2c_n+1}\right)\right)_{n\in\NN}$ decreases exponentially fast
		Moreover, by \eqref{compo:compo} applied to $(\overline{w}_m)$ and $(q_n)$ and \eqref{compo:shift} applied to $(c_n)$ in Lemma \ref{lem:proba:ldevcompo}, the random sequence $\left(\overline{w}_{2q_n - 2c_n - 3}\right)_{n\in\NN}$ satisfies large deviations inequalities below the speed $1$ so $\left(\PP\left(\overline{w}_{2q_n - 2c_n - 2} < (1-\delta) n\right)\right)_{n\in\NN}$ decreases exponentially fast.
		Hence, by \eqref{eq:dom-sqz-prox}, the sequence $\left(\PP\left(\sqz\left(\gamma^w_{2c_n +1} \cdots \gamma^w_{2q_n-2c_n-3}\right)\le \alpha n\right)\right)_{n\in\NN}$ decreases exponentially fast in $n$, which proves the claim so we have \eqref{eqn:ldevprox}.
		
		Now we prove \eqref{eq:lim-eigen}. By Lemma \ref{lem:eigen-align} applied to $h_n \AA^\eps h_n$, we have:
		\begin{equation}
			\forall u\in U^{\frac{\eps}{4}}(h_n)\setminus\{0\}, \;\dist\left(E^+(h_n), [u]\right) \le \frac{16}{\eps} \exp(-\sqz(h_n)).
		\end{equation}
		Let $e_n$ be a random non-zero vector such that $e_n \in E^+(h_n)$ when $2c_n + 2 \le q_n$.
		By \eqref{eq:sqz-hn} and by the above reasoning, $\left(\sqz(h_n)\right)_n$ satisfies large deviations inequalities below the speed $\sigma(\nu) > 0$. Hence there exist constants $C, \beta > 0$ such that:
		\begin{equation}
			1 - C \exp(-\beta n) \le \PP\left(\forall u\in U^{\frac{\eps}{4}}(h_n)\setminus\{0\} , \;\dist\left(E^+(h_n), [u]\right) \le \frac{\eps}{8}\right) \le \PP\left(\overline{\gamma}_{a_n} \AA^\frac{\eps}{8} e_n\right).
		\end{equation}
		Moreover, By Lemma \ref{lem:herali} and by the above reasoning, we have $\overline{\gamma}_{a_n} \AA^\frac{\eps}{4} h_n$. 
		Hence by lemma \ref{lem:product-is-lipschitz}, we have:
		\begin{equation*}
			\PP\left(\forall u\in U^{\frac{\eps}{4}}(h_n)\setminus\{0\} , \;\dist\left(E^+(h_n), [u]\right) \le \frac{\eps}{8}\right) \le \PP\left(\overline{\gamma}_{a_n} \AA^\frac{\eps}{8} e_n\right)
		\end{equation*}
		Moreover, when $\overline{\gamma}_{a_n} \AA^\frac{\eps}{8} e_n$, then by Lemma \ref{lem:c-prod}, we have:
		\begin{equation*}
			\forall u\in U^{1}(\overline{\gamma}_{a_n})\setminus\{0\}, \; \dist\left([\overline{\gamma}_{a_n}e_n], [u]\right)\le \frac{8}{\eps}\exp(-\sqz(\overline{\gamma}_{a_n})).
		\end{equation*}
		Then by Corollary \ref{cor:limit-line} and by triangular inequality, we have:
		\begin{equation*}
			\dist\left([\overline{\gamma}_{a_n}e_n], l^\infty(\gamma)\right)\le \frac{12}{\eps}\exp(-\sqz(\overline{\gamma}_{a_n})).
		\end{equation*}
		By conjugacy, we have $\overline{\gamma}_{a_n}e_n \KK = E^+(\overline{\gamma}_n)$. Moreover, the random sequence $(a_n)$ satisfies large deviations inequalities below the speed $1$ so by Theorem \ref{th:ldev-sqz} and by \eqref{compo:compo} in Lemma \ref{lem:proba:ldevcompo}, the random sequence $\left(\sqz(\overline{\gamma}_{a_n})\right)_n$ satisfies large deviations inequalities below the speed $\sigma(\nu)$. This proves \eqref{eq:lim-eigen}.
	\end{proof}
	
	\begin{proof}[Proof of Theorems \ref{th:escspeed} and \ref{th:cvspeed}]
		Let $E$ be a Euclidean space.
		Let $\nu$ be a strongly irreducible and proximal probability distribution over $\mathrm{End}(E)$.
		Let $(\gamma_n) \sim \nu^{\otimes\NN}$.
		Let $\sigma(\nu) := \lim \frac{\sqz(\overline{\gamma}_n)}{n}$ be as in Lemma \ref{lem:conv}.
		Then $\sigma(\nu) > 0$ by Theorem \ref{th:ldev-sqz}.
		Then \eqref{eqn:ldsqz} in Theorem \ref{th:escspeed} is a direct consequence of \eqref{eq:ldev-sqz} in Theorem \ref{th:ldev-sqz} and \eqref{eqn:ldevprox} in Theorem \ref{th:ldevprox}. 
		The above proves Theorem \ref{th:escspeed}.
		Moreover \eqref{eqn:erc} is \eqref{eq:limit-v} and \eqref{eqn:lim-E+} is \eqref{eq:lim-eigen} which proves Theorem \ref{th:cvspeed}.
	\end{proof}

\subsection{Limit flag for totally strongly irreducible distributions}\label{sec:flag}

	Now we can give the following corollary which is a reformulation of the former results, written in a way that should remind of Oseledets' multiplicative ergodic Theorem. 
	
	Let $E$ be a Euclidean vector space and let $\nu$ be a probability distribution on $\mathrm{End}(E)$. We say that $\nu$ is totally strongly irreducible if for all $k \in\{1, \dots, \dim(E)-1\}$ the measure $\bigwedge^k_*\nu$ is strongly irreducible. 
	
	Let $\nu$ be a strongly irreducible probability distribution and let $(\gamma_n) \sim \nu^{\otimes\NN}$. If $\nu$ is not proximal, we define $\sigma(\nu) := 0$, note that then by \ref{lem:exiprox}, we have $\frac{\sqz(\overline{\gamma}_n)}{n} \to 0$ almost surely. 
	If $\nu$ is proximal, we define $\sigma(\nu)$ as in Theorem \ref{th:ldev-sqz}.

	\begin{Def}\label{def:Theta}
		Let $\nu$ be a distribution on $\mathrm{End}(E)$ that is totally strongly irreducible. For all $1\le j \le \underline{\rank}(\nu)$, we define $\sigma_j(\nu) := \sigma\left(\bigwedge^j_*\nu\right)$.
		We define:
		\begin{equation}
			\Theta(\nu) := \left\{1\le j \le \underline{\rank}(\nu) \,\middle|\, \sigma_j(\nu)\neq 0\right\}
		\end{equation}
	\end{Def}
	
	Given $h$ a matrix and $j \le \rank(h)$, we define $\sqz_j(h) := \log\left(\frac{\|\bigwedge^{j} h\|^2}{\|\bigwedge^{j-1} h\|\|\bigwedge^{j+1} h\|}\right) = \sqz(\bigwedge^j h)$, for $j > \rank(h)$, we use the convention $\sqz_j(h) = 0$ and for all $j \ge 1$, we define: $\prox_j(h) := \prox\left(\bigwedge^j h\right) = \lim_{n\to + \infty} \frac{\sqz_j(h^n)}{n}$.
	
	\begin{Th}[Convergence of the Cartan projection with large deviations]\label{th:flag}
		Let $E$ be a Euclidean spaces and let $\nu$ be a totally strongly irreducible probability distribution on $\mathrm{End}(E)$ of rank at least $\dim(E)-1$. Let $(\gamma_n)\sim\nu^{\otimes \NN}$. 
		For all $1 \le j \le d-1$ we have almost surely $\frac{\sqz_j(\overline{\gamma}_n)}{n} \to \sigma_j(\nu)$. 
		Moreover, for all $j \in \Theta(\nu)$ and for all $\alpha_j < \sigma_j(\nu)$, there exist constants $C,\beta >0$ such that:
		\begin{gather}\label{eqn:limcartanproj}
			\forall n\in\NN,\; \PP\left(\sqz_j(\overline{\gamma}_n)\le\alpha_j n\right) \le C\exp(-\beta n). \\
			\forall n\in\NN,\; \PP\left(\prox_j(\overline{\gamma}_n) \le \alpha_j n\right) \le C\exp(-\beta n). \label{eqn:limlox}
		\end{gather}
	\end{Th}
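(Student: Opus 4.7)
The plan is to reduce everything to the already established results (Theorems \ref{th:escspeed}, \ref{th:ldev-sqz}, \ref{th:ldevprox}, and Lemmas \ref{lem:conv}, \ref{lem:exiprox}) by applying them to the pushed-forward walk $(\Wedge^j \gamma_k)_{k \in \NN}$ on the exterior power $\Wedge^j E$. The key observation is that for every $j \in \{1, \dots, d-1\}$ and every $n \in \NN$,
\begin{equation*}
	\sqz_j(\overline{\gamma}_n) = \sqz\bigl(\Wedge^j \overline{\gamma}_n\bigr) = \sqz\bigl(\Wedge^j \gamma_0 \cdots \Wedge^j \gamma_{n-1}\bigr),
\end{equation*}
and the analogous identity holds for $\prox_j$. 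The sequence $(\Wedge^j \gamma_k)$ is i.i.d. with distribution $\Wedge^j_* \nu$, which is strongly irreducible by the total strong irreducibility hypothesis on $\nu$. Moreover, since $\underline{\rank}(\nu) \ge d-1 \ge j$, for a typical $\gamma \sim \nu^{*n}$ the rank of $\Wedge^j \gamma$ equals $\binom{\rank(\gamma)}{j} \ge 1$, so $\underline{\rank}(\Wedge^j_* \nu) \ge 1$ and Lemma \ref{lem:exiprox} is available.

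First I would split according to whether $\Wedge^j_* \nu$ is proximal. If $\Wedge^j_* \nu$ is proximal then, because it is also strongly irreducible, Theorem \ref{th:escspeed} (via Lemma \ref{lem:conv} and Theorem \ref{th:ldev-sqz}) yields the almost sure convergence
\begin{equation*}
	\frac{\sqz_j(\overline{\gamma}_n)}{n} = \frac{\sqz\bigl(\overline{(\Wedge^j \gamma)}_n\bigr)}{n} \xrightarrow[n \to \infty]{\text{a.s.}} \sigma(\Wedge^j_* \nu) = \sigma_j(\nu),
\end{equation*}
together with the large deviations inequality \eqref{eqn:limcartanproj} upon applying \eqref{eq:ldev-sqz} from Theorem \ref{th:ldev-sqz} to the distribution $\Wedge^j_* \nu$. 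The companion inequality \eqref{eqn:limlox} for $\prox_j(\overline{\gamma}_n) = \prox\bigl(\overline{(\Wedge^j \gamma)}_n\bigr)$ follows by applying \eqref{eqn:ldevprox} of Theorem \ref{th:ldevprox} to the same distribution. In this proximal case, $\sigma_j(\nu) > 0$ by Theorem \ref{th:escspeed}, hence $j \in \Theta(\nu)$.

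In the complementary case where $\Wedge^j_* \nu$ is not proximal, Lemma \ref{lem:exiprox} (applicable since $\Wedge^j_* \nu$ is irreducible with $\underline{\rank} \ge 1$) gives a deterministic constant $B$ such that $\sqz\bigl(\overline{(\Wedge^j \gamma)}_n\bigr) \le B$ almost surely for every $n$. Hence $\sqz_j(\overline{\gamma}_n)/n \to 0$ almost surely, which matches the convention $\sigma_j(\nu) = 0$ stated just before Definition \ref{def:Theta}. Since $j \notin \Theta(\nu)$ in this case, nothing more is required for the large deviations statements. Combining the two cases yields the almost sure convergence for every $j \in \{1, \dots, d-1\}$ and the large deviations bounds \eqref{eqn:limcartanproj} and \eqref{eqn:limlox} for every $j \in \Theta(\nu)$.

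No step of this reduction is expected to be hard: the main content has already been absorbed into Theorems \ref{th:escspeed} and \ref{th:ldevprox} for an abstract strongly irreducible and proximal distribution, and the only conceptual point is the harmless dichotomy based on whether $\Wedge^j_* \nu$ is proximal, which simply reflects the possibility that the $j$-th Lyapunov gap of $\nu$ vanishes.
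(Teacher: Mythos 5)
Your proof is correct and follows essentially the same route as the paper: reduce to $\Wedge^j_*\nu$, apply Theorems \ref{th:ldev-sqz} and \ref{th:ldevprox} when $\Wedge^j_*\nu$ is proximal, and use the bounded-squeeze characterisation of non-proximality in the degenerate case. Note that you correctly invoke Lemma \ref{lem:exiprox} for the non-proximal case; the paper's own proof cites Lemma \ref{lem:boundary} at that point, which appears to be a misprint for \ref{lem:exiprox}.
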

	
	\begin{proof}
		If we assume that $j \in \Theta(\nu)$, then \eqref{eqn:limcartanproj} is a reformulation of Theorem \ref{th:ldev-sqz} for $\bigwedge^j_*\nu$ and \ref{eqn:limlox} is a reformulation of Theorem \ref{th:ldevprox}. 
		Otherwise $\sigma_j(\nu) = 0$. We know from Lemma \ref{lem:boundary} that there is a constant $B$ such that $\sqz_j(\overline{\gamma}_n) \le B$ almost surely and for all $n$ so $\frac{\sqz_j(\overline{\gamma}_n)}{n} \to 0 = \sigma_j(\nu)$.
	\end{proof}
	
	Let $E$ be a Euclidean space of dimension $d \ge 2$.
	We denote by $\mathrm{Gr}(E)$ the set of vector subspaces of $E$.
	Given $0 \le k \le d$, we denote by $\mathrm{Gr}_k(E)$ the set of subspaces of $E$ that have dimension $k$ \ie $(\mathrm{Gr}_k(E))_{1\le k \le d}$ are the level sets of the function $\dim : \mathrm{Gr}(E) \to \{0,\dots, d\}$.
	Note that for all $k$, the set $\mathrm{Gr}_k(E)$ naturally embeds into $\mathbf{P}(\bigwedge^k E)$ by the map $V \mapsto [v_1 \wedge \cdots \wedge v_k]$ for $v_1 \wedge \cdots \wedge v_k$ any basis of $V$. 
	Note that the image of $\mathrm{Gr}_k(E)$ by this embedding is a compact subset, we will abusively denote by $\mathrm{Gr}_k(E)$ the image of $\mathrm{Gr}_k(E)$ in $\mathbf{P}(\bigwedge^k E)$.
	We write $\dist$ for the distance map on $\mathrm{Gr}_k(E)$ pulled back from the distance on $\mathbf{P}(\bigwedge^k E)$ associated to the Euclidean metric on $\bigwedge^k E$.
	
	We call flag in $E$ a totally ordered set of subspaces of $E$ \ie for all $V, W \in F$, we have $V \subset W$ or $W \subset V$. 
	We write $\mathrm{Fl}(E)$ for the space of flags in $E$. 
	Given $\Theta \subset \{1,\dots, d-1\}$, we denote by $\mathrm{Fl}_{\Theta}(E)$ the space of flags $F \in \mathrm{Fl}(E)$ such that $\dim(F) = \Theta$.
	Given $k \in \Theta \subset \{1,\dots, d-1\}$ and given $F \in \mathrm{Fl}(E)$, we write $F_k$ for the single element of the set $F \cap \mathrm{Gr}_k(E)$.

	Note that $\mathrm{End}(E)$ naturally acts on the left on $\mathrm{Fl}(E)$.
	Moreover, for all $\Theta \subset \{1,\dots, d-1\}$, the group $\mathrm{GL}(E)$ acts continuously on $\mathrm{Fl}_{\Theta}(E)$.
	We remind that we denote by $T$ the Bernoulli shift.
	
	\begin{Def}
		Let $E$ be a Euclidean vector space and let $\Theta \subset \{1, \dots, \dim(E)\}$
		We define:
		\begin{gather*}
			\Omega'_{\Theta}(E) := \bigcap_{k\in\Theta \setminus \{0\}} \left(\left(\Wedge^k\right)^{\otimes\NN}\right)^{-1}\Omega\left(\Wedge^k E\right),\\
			\Omega_\Theta(E) := \left\{\gamma\in\Omega'_{\Theta}(E)\,\middle|\,\forall k \in\Theta,\; l^\infty\circ \left(\Wedge^k\right)^{\otimes\NN} (\gamma) \in\mathrm{Gr}_k(E)\right\}.
		\end{gather*} 
		We also define the $T$-equivariant measurable map:
		\begin{equation*}
			F^\infty_\Theta = (F^\infty_k)_{k\in\Theta}: \Omega_\Theta(E)\longrightarrow \mathrm{Fl}_{\Theta}(E),
		\end{equation*}
		with $F_k^\infty = l^\infty \circ (\Wedge^k)^{\otimes\NN}$ for all $k$.
	\end{Def}
	
	Using the Cartan decomposition on the exterior product, we can show that in fact $\Omega'_{\Theta}(E) = \Omega_{\Theta}(E)$ but that is not the purpose of this article.
	
	\begin{Th}[Convergence to the limit flag]\label{th:oseledets}
		Let $E$ be a Euclidean vector space.
		Let $d := \dim(E)$, assume that $ d \ge 2$.
		Let $\nu$ be a totally strongly irreducible probability distribution on $\mathrm{GL}(E)$. 
		Let $\Theta := \Theta(\nu)$ and let $\gamma \sim \nu^{\otimes\NN}$.
		Then $\gamma \in \Omega_\Theta(E)$ almost surely.
		Let $(\alpha_k)_{k\in\Theta} \in \prod_{k\in\Theta}(0, \sigma_k(\nu))$ be a non-random family of real numbers.
		Then there exist constants $C, \beta > 0$ such that for all non-random flag $F \in\mathrm{Fl}_{\Theta}$, we have:
		\begin{equation}\label{eq:lim-flag}
			\PP\left(\exists k \in\Theta,\;\dist\left(F^\infty_k(\gamma),\overline\gamma_n F_k\right)\ge \exp(-\alpha_k n) \right) \le C \exp(-\beta n).
		\end{equation}
	\end{Th}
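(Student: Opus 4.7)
The plan is to apply Theorem \ref{th:cvspeed} to each exterior power $\Wedge^k_* \nu$ for $k \in \Theta(\nu)$ and then take a union bound. First I would verify that for every $k \in \Theta$ the hypotheses of Theorem \ref{th:cvspeed} are satisfied by $\Wedge^k_* \nu$ on the Euclidean space $\Wedge^k E$: strong irreducibility holds by the total strong irreducibility of $\nu$, and proximality follows from $\sigma_k(\nu) = \sigma(\Wedge^k_* \nu) > 0$ together with the characterisation in Lemma \ref{lem:exiprox} (an unbounded $\sqz$ forces a non-empty rank-one boundary). The support of $\nu$ in $\mathrm{GL}(E)$ moreover ensures that each $\Wedge^k \overline{\gamma}_n$ lies in $\mathrm{GL}(\Wedge^k E)$ almost surely, so the conditioning $\overline{\gamma}_n v \neq 0$ in \eqref{eqn:erc} is automatic.

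Next, for a non-random flag $F \in \mathrm{Fl}_\Theta(E)$ and each $k \in \Theta$, I would pick a non-zero decomposable vector $v^F_k \in \Wedge^k E$ representing $F_k$ under the Plücker embedding. Applying \eqref{eqn:erc} of Theorem \ref{th:cvspeed} to $\Wedge^k_* \nu$, to the vector $v^F_k$, and to any $\alpha_k < \sigma_k(\nu)$, one obtains constants $C_k, \beta_k > 0$, independent of $v^F_k$ and hence of $F$, such that
\begin{equation*}
	\PP\left(\dist\left(\bigl[\Wedge^k \overline{\gamma}_n\, v^F_k\bigr],\, l^\infty_k(\gamma)\right) \ge \exp(-\alpha_k n)\right) \le C_k \exp(-\beta_k n),
\end{equation*}
where $l^\infty_k(\gamma) := l^\infty \circ (\Wedge^k)^{\otimes\NN}(\gamma) \in \mathbf{P}(\Wedge^k E)$. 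Under the Plücker embedding this says precisely that $\dist(\overline{\gamma}_n F_k,\, F^\infty_k(\gamma)) \ge \exp(-\alpha_k n)$ with the same probability bound.

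The step that requires care is showing $\gamma \in \Omega_\Theta(E)$ almost surely, so that $F^\infty_\Theta(\gamma)$ is a genuine flag rather than an arbitrary tuple of lines in exterior powers. I would fix any flag $F^0 \in \mathrm{Fl}_\Theta(E)$ and apply the Borel--Cantelli lemma to the estimate above: almost surely $\overline{\gamma}_n F^0_k \to l^\infty_k(\gamma)$ in $\mathbf{P}(\Wedge^k E)$ for every $k \in \Theta$. Since the Plücker variety $\mathrm{Gr}_k(E) \subset \mathbf{P}(\Wedge^k E)$ is closed, the limit $l^\infty_k(\gamma)$ lies in $\mathrm{Gr}_k(E)$, which gives $\gamma \in \Omega_\Theta(E)$. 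Likewise the incidence variety $\{(V,W) \in \mathrm{Gr}_j(E) \times \mathrm{Gr}_k(E) \mid V \subset W\}$ is closed, so passing to the limit in $\overline{\gamma}_n F^0_j \subset \overline{\gamma}_n F^0_k$ yields $F^\infty_j(\gamma) \subset F^\infty_k(\gamma)$ for all $j < k$ in $\Theta$. Hence $F^\infty_\Theta(\gamma)$ is almost surely an honest flag in $\mathrm{Fl}_\Theta(E)$.

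Finally, to obtain \eqref{eq:lim-flag} with constants uniform in $F$, I would apply a union bound over the finite set $\Theta$:
\begin{equation*}
	\PP\left(\exists k \in \Theta,\; \dist(F^\infty_k(\gamma), \overline{\gamma}_n F_k) \ge \exp(-\alpha_k n)\right) \le \sum_{k \in \Theta} C_k \exp(-\beta_k n) \le C \exp(-\beta n)
\end{equation*}
with $C := \sum_{k\in\Theta} C_k$ and $\beta := \min_{k\in\Theta} \beta_k$. Since the $C_k, \beta_k$ were produced by Theorem \ref{th:cvspeed} independently of $v^F_k$, the constants $C, \beta$ do not depend on the flag $F$. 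The principal difficulty is the second step above (verifying $\gamma \in \Omega_\Theta$ and the flag compatibility), which is handled by the closedness of the Plücker and incidence varieties; every other ingredient is a direct transcription of Theorem \ref{th:cvspeed} through the exterior power functor.
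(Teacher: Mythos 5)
Your proof is correct and follows the same route as the paper: apply the contraction result (Theorem \ref{th:cvspeed}, which is the same as Theorem \ref{th:limit-uv}) to each exterior power $\Wedge^k_*\nu$ for $k\in\Theta$, use closedness of the Plücker variety $\mathrm{Gr}_k(E)$ to see that the limit line is decomposable, and union-bound over the finite set $\Theta$. In fact your argument is slightly more complete than the paper's: the observation that the closedness of the incidence variety $\{(V,W)\mid V\subset W\}$ forces the limits $F^\infty_j(\gamma)\subset F^\infty_k(\gamma)$ to be nested (so that $F^\infty_\Theta(\gamma)$ is genuinely a flag) is left implicit in the paper's proof, which only checks membership in $\mathrm{Gr}_k(E)$.
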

	
	\begin{proof}
		This is a reformulation of Theorem \ref{th:limit-uv} in terms of flags.		
		Let $\nu$ be a totally strongly irreducible probability measure on $\mathrm{GL}(E)$ and let $k\in \Theta(\nu)$. 
		Then $\bigwedge^k_*\nu$ is strongly irreducible and proximal.
		Let $(\gamma_n) \sim \nu^{\otimes \NN}$. 
		By Theorem \ref{th:limit-uv} applied to $(\bigwedge^k\gamma_n)_{n\in\NN}$, we have $(\bigwedge^k)^{\otimes\NN}(\gamma)\in \Omega\left(\Wedge^k E\right)$. 
		Then we claim that $l^\infty\circ(\bigwedge^k)^{\otimes\NN}(\gamma) \in \mathrm{Gr}_k(E)$ almost surely.
		Hence $ \gamma \in \Omega'_{\Theta}(E)$ almost surely.
		
		Let $(v_1, \dots, v_k)\in E^k$ be a non-random free family. 
		By Theorem \ref{th:limit-uv}, we have : $\bigwedge^k \overline\gamma_n(v_1, \dots, v_k) \underset{n \to +\infty}{\longrightarrow} l^\infty\circ(\bigwedge^k)^{\otimes\NN}(\gamma)$ almost surely. Moreover, for all $n \in\NN$, we have $\bigwedge^k \overline\gamma_n[v_1\wedge \cdots \wedge v_k] = [\overline\gamma_n v_1 \wedge \cdots \wedge \overline\gamma_n v_k] \in \mathrm{Gr}_k(E)$, and $\mathrm{Gr}_k(E)$ is closed in $\mathbf{P}(\bigwedge^k E)$.  Hence $l^\infty\circ(\bigwedge^k)^{\otimes\NN}(\gamma) \in\mathrm{Gr}_k(E)$.
		Hence $ \gamma \in \Omega_{\Theta}(E)$ almost surely.
		To conclude, note that \eqref{eq:lim-flag} is simply the reformulation of \eqref{eq:limit-v} for all the measures $\bigwedge^k_*\nu$ with $k \in \Theta$.
	\end{proof}
	
	Note that in the case of $\nu$ an totally strongly irreducible probability distribution on $\mathrm{SL}(E)$, one can actually take the pivotal extraction to be aligned in all Cartan projections. With a correct adaptation of the works of~\cite{CFFT22}, one should be able to prove the following.
	
	\begin{Conj}[Poisson boundary]\label{conj:poisson}
		Let $E$ be a Euclidean space od dimension $d \ge 3$.
		Let $\nu$ be an totally strongly irreducible probability distribution supported on a discrete subgroup of $\mathrm{SL}(E)$. Assume that $\nu$ has finite entropy then the Poisson boundary of $\nu$ is isomorphic to $\mathrm{Fl}_{\Theta(\nu)}(E)$ endowed with the $\nu$-stationary probability distribution $F^\infty_*\nu^{\otimes\NN}$.
	\end{Conj}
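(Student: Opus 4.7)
The plan is to verify Kaimanovich's strip criterion for the pair $(\mathrm{Fl}_{\Theta(\nu)}(E), F^\infty_*\nu^{\otimes\NN})$, which requires the data of a $\nu$-boundary, a $\check\nu$-boundary (where $\check\nu(g) := \nu(g^{-1})$), and a measurable $\Gamma$-equivariant strip map $S:\mathrm{Fl}_\Theta(E)\times\mathrm{Fl}_\Theta(E)\to 2^\Gamma$ whose values have sub-exponential growth in word length. The boundaries are given for free: applying Theorem \ref{th:oseledets} to $\nu$ produces a $\nu$-boundary $(\mathrm{Fl}_{\Theta(\nu)}(E), F^\infty_*\nu^{\otimes\NN})$, and applying it to $\check\nu$ (which has the same set of Lyapunov indices by transposition) produces a $\check\nu$-boundary on the dual flag variety, which we identify with $\mathrm{Fl}_{\Theta(\nu)}(E)$ via orthogonality.

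For the strip map, given a transverse pair $(F^+, F^-)$ of flags with index set $\Theta$, define $S(F^+, F^-)$ to be the set of $g \in \Gamma$ whose Cartan decomposition is "aligned" with $(F^+, F^-)$ up to a fixed error: for each $k \in \Theta$, the $k$-plane spanned by the top $k$ singular vectors of $g$ is $\eps$-close to $F_k^+$ and the one spanned by the top $k$ singular vectors of $g^{-1}$ is $\eps$-close to $F_k^-$. The point of the pivoting technique, applied simultaneously to $\bigwedge^k_*\nu$ for $k \in \Theta$ (which is possible because total strong irreducibility makes each of these measures strongly irreducible and proximal), is that at each pivotal time of the aligned extraction, the trajectory $\overline{\gamma}_n$ lies in $S(F^+(\gamma), F^-(\gamma))$ for the forward and backward limit flags.

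The key estimate to establish is that for $\xi^+\otimes\xi^-$-almost every pair of flags and every $\varepsilon > 0$, one has $|S(F^+,F^-)\cap B_n| = \exp(o(n))$ where $B_n$ is the ball of radius $n$ in a word-length metric coming from the support of $\nu$. This will follow from two ingredients: first, Theorem \ref{th:flag} gives large deviations for the Cartan projection, so the strip can be replaced by the set of group elements with a prescribed Cartan projection direction; second, finite entropy of $\nu$ together with the Shannon-McMillan-Breiman theorem controls the exponential growth of the preimage in $\Gamma$. Combining these as in the classical linear boundary arguments of Kaimanovich and of CFFT, but in the pivotal framework, should yield the sub-exponential bound.

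The main obstacle is precisely the synchronization of the pivoting across all exterior powers: Theorem \ref{th:pivot} gives an aligned extraction for $\nu$ acting on $E$, but to control the Cartan projection in every index $k \in \Theta$ one needs an extraction where the same pivotal times work on $\bigwedge^k_*\nu$ simultaneously. One expects this to be achievable by building a common Schottky semi-group whose elements have a large singular gap in each relevant exterior power (a strengthening of Lemma \ref{lem:schottky}); the strong irreducibility assumption on each $\bigwedge^k_*\nu$ is exactly what allows one to find such joint Schottky elements via a generalization of Lemma \ref{lem:pi-schottky}. Once this is in hand, the remainder of the argument should run along the lines of CFFT22, with the Cartan projection replacing the Busemann cocycle on the hyperbolic space.
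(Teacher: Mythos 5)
This statement is labelled as a Conjecture in the paper, and the paper does not prove it; the author only offers a two-sentence heuristic (``one can actually take the pivotal extraction to be aligned in all Cartan projections\dots with a correct adaptation of the works of CFFT22, one should be able to prove the following'') plus the remark that the $d=2$ case follows from \cite{CFFT22} by a case analysis of discrete subgroups of $\mathrm{SL}_2(\RR)$. So there is no ``paper's own proof'' to compare against; what you have written is a more detailed version of the same speculative outline.

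Your sketch is consistent with the route the paper envisions, and you correctly identify the two ingredients (strip criterion with a Cartan-aligned strip map; Shannon--McMillan--Breiman for the entropy bound) that the $d=2$/hyperbolic case in \cite{CFFT22} rests on. But you should be aware that your proposal does not constitute a proof, and you have in fact flagged the genuine obstacle yourself: the pivoting construction of Theorem \ref{th:pivot} produces alignment only with respect to $\AA^\eps$ on $\mathrm{End}(E)$, not simultaneously in every exterior power $\Wedge^k E$ for $k\in\Theta(\nu)$. Making the Schottky set and pivotal times synchronous across all $k$ is not a formal consequence of applying Lemma \ref{lem:pi-schottky} to each $\Wedge^k_*\nu$ separately: the cardinality bound $\#I\le d-1$ and the compactness argument there are carried out for a single representation, and a joint version requires controlling the position of the partial flags of the candidate Schottky elements in all Grassmannians at once, as well as a version of Lemma \ref{lem:Atilde} and Proposition \ref{prop:pivot-is-aligned} that propagates inductive alignment $\widetilde{\AA}^S$ in every $\Wedge^k$. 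Beyond that, even granting the synchronized extraction, the sub-exponential strip estimate is not a routine translation of Theorem \ref{th:flag}: the large-deviations bound controls $\sqz_k(\overline\gamma_n)$ and the distance from $\overline\gamma_n F$ to $F^\infty_k$, which bounds the Cartan \emph{direction} of $\overline\gamma_n$, but the strip criterion needs to count \emph{all} $g$ in a word-length ball whose Cartan direction is $\eps$-aligned with both limit flags, and turning one into the other requires a counting/volume argument in $\Gamma$ with respect to a Haar or Patterson--Sullivan-type measure, which is exactly what \cite{CFFT22} supplies in the hyperbolic setting and which has no off-the-shelf analogue here. In short: the outline is plausible and matches the author's stated expectation, but the synchronization lemma and the strip-counting step are genuine open gaps, which is why the paper leaves this as a conjecture.
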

	
	It follows directly from~\cite{CFFT22} when $E = \RR^2$ because any discrete subgroup in $\mathrm{SL}(\RR^2)$ is either non-elementary hyperbolic and therefore we can apply~\cite{CFFT22}, or virtually cyclic and therefore not strongly irreducible or finite.

	\subsection{Law of large numbers for the coefficients and for the spectral radius}\label{sec:lln}
	
	We remind that in Corollary \ref{cor:schottky}, we have shown that given $\nu$ a strongly irreducible and proximal probability distribution on $\mathrm{GL}(E)$ and given $0 < \rho < \frac{1}{5}$, there is an integer $m\in\NN$, a constant $0< \eps <1$ and a probability distribution $\tilde\nu_s$ that is absolutely continuous with respect to $\nu^{\otimes m}$, has compact support and whose product is $\rho$-Schottky for $\AA^\eps$.
	
	We use the notations of Definitions \ref{def:trunking} and \ref{def:corconv} for the trunking $\lceil \cdot \rceil$ and the coarse convolution $\cdot^{\uparrow k}$.
	We formulate Theorems \ref{th:llnc} and \ref{th:llnr} in the following technical way to explicit the dependency of the constants $C$ and $\beta$ in Theorem \ref{th:slln} in terms of $\nu$. 
	In particular, we can note that the lower bound of all $\beta$, so that Theorem \ref{th:slln} is satisfied (for $C = \beta^{-1}$), is given by a function of $\nu$ that is lower semi-continuous for the weak-$*$ topology on the space of strongly irreducible and proximal probability distributions on $\mathrm{GL}(E)$.

	\begin{Th}[Strong law of large numbers for the coefficients]\label{th:llnc}
		Let $E$ be a Euclidean vector space and let $\nu$ be a probability measure on $\mathrm{GL}(E)$. Let $(\gamma_n)\sim\nu^{\otimes\NN}$. 
		Let $\alpha, \eps \in (0,1)$, let $\rho \in (0,1/5)$ and let $\tilde\nu_s$ be a compactly supported probability distribution on $\mathrm{GL}(E)^m$ such that $\Pi_*\tilde\nu_s$ is $\rho$-Schottky for $\AA^\eps$ and supported on the set $\{\sqz \ge 10 |\log(\eps/2)|\}$ and $\alpha \tilde\nu_s \le \nu^{\otimes m}$. Let $B := \max_{k} \max N\circ\chi_k^m(\mathbf{supp}(\tilde\nu_s))$. 
		Then there exist constants $C,\beta > 0$ that depend only on $\alpha, \rho, m$ and such that:
		\begin{multline}\label{eqn:llnc}
			\forall f \in E^*\setminus\{0\},\;\forall v \in E\setminus\{0\}, \; \forall n\in\NN, \; \forall t\ge 0,\;
			\PP\left(\log\frac{\|f\|\|\overline{\gamma}_n\|\|v\|}{| f \overline{\gamma}_{n} v|} > t\right) \\
			\le C\exp(-\beta n) + \sum_{k=1}^{\infty}  C\exp\left(-\beta k\right) \left\lceil\frac{B \vee N_*\nu}{1-\alpha}\right\rceil^{\uparrow k} \left({t - 2|\log(\eps)| - 3\log(2)},+\infty\right). 
		\end{multline} 
	\end{Th}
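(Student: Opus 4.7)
The plan is to combine the pivotal extraction of Theorem \ref{th:pivot-extract} with one-sided ping-pong pivoting applied to both $f$ on the left and $v$ on the right, in the same spirit as the proofs of Theorems \ref{th:ldev-sqz} and \ref{th:cvspeed}. First I would invoke Theorem \ref{th:pivot-extract} with the given $\tilde{\nu}_s$ to extract $\nu^{\otimes\NN}$ as a random aligned sequence $(\widetilde{\gamma}^w_k)_{k\in\NN}\sim\tilde{\mu}$; by Proposition \ref{prop:pivot-is-aligned}, for every $0\le i\le j\le k$ the product $\gamma^w_i\cdots\gamma^w_{j-1}$ is $\AA^{\eps/4}$-aligned with $\gamma^w_j\cdots\gamma^w_{k-1}$, and each Schottky character $\gamma^w_{2k+1}$ has $\sqz\ge 10|\log\eps|+10\log 2$. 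Set $q_n:=\max\{q:\overline{w}_{2q+1}\le n\}$. Then I would apply Lemma \ref{lem:r-piv} in a two-sided manner: using the Schottky property of $\Pi_*\tilde{\nu}_s$, for every realization of the even-indexed words and every prefix the conditional probability that the current Schottky word is $\AA^\eps$-aligned with the accumulated left product (carrying $f$) and with the right-neighborhood (carrying $v$) is at least $1-\rho$, so the smallest failure index $r_L$ on the left and $r_R$ on the right are geometrically distributed (parameter $\rho$) and independent of the even-indexed sequence.

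On the event $r_L+r_R+2\le q_n$, the chain
\begin{equation*}
f,\ \gamma^w_0\cdots\gamma^w_{2r_L},\ \gamma^w_{2r_L+1},\ \gamma^w_{2r_L+2}\cdots\gamma^w_{2q_n-2r_R-2},\ \gamma^w_{2q_n-2r_R-1},\ \gamma^w_{2q_n-2r_R}\cdots\gamma_{n-1},\ v
\end{equation*}
is $\AA^{\eps/2}$-aligned by Lemmas \ref{lem:herali} and \ref{lem:alipart}, and the squeeze coefficients of the Schottky links exceed $2|\log(\eps/2)|+3\log 2$, so Lemma \ref{lem:c-chain} gives $|f\overline{\gamma}_n v|\ge (\eps/2)^{O(1)}\|f\|\,\|M\|\,\|v\|$, where $M:=\gamma^w_{2r_L+1}\cdots\gamma^w_{2q_n-2r_R-1}$. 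Writing $L, R$ for the trimmed prefix and suffix and using sub-multiplicativity $\|\overline{\gamma}_n\|\le\|L\|\|M\|\|R\|$, this yields
\begin{equation*}
\log\frac{\|f\|\|\overline{\gamma}_n\|\|v\|}{|f\overline{\gamma}_n v|}\le \log\|L\|+\log\|R\|+2|\log\eps|+3\log 2
\end{equation*}
on the aligned event.

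To turn this into the stated tail bound I would dominate $\log\|L\|+\log\|R\|$ by the sum of $N$-values of the characters making up $\widetilde{\gamma}^w_0,\ldots,\widetilde{\gamma}^w_{2r_L+1}$ and $\widetilde{\gamma}^w_{2q_n-2r_R},\ldots,\widetilde{\gamma}^w_{2q_n}$, together with the leftover $\gamma_{\overline w_{2q_n+1}}\cdots\gamma_{n-1}$. The Schottky characters contribute at most $B$ each by hypothesis, and by point \eqref{6} of Theorem \ref{th:pivot} the $N$-values of characters of $\tilde{\kappa}_0,\tilde{\kappa}_2$ are dominated by $N_*\nu/(1-\alpha)$ (the factor $1/(1-\alpha)$ coming from $\alpha\tilde{\nu}_s\le\nu^{\otimes m}$ in Theorem \ref{th:pivot-extract}). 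Since $r_L,r_R$ are geometric and the word lengths $L(\widetilde{\gamma}^w_{2k})$ have bounded exponential moment by \eqref{2} of Theorem \ref{th:pivot}, the total number of trimmed characters is a geometric sum of exponentially-tailed integers, itself exponential-tailed. The coarse convolution lemmas from the appendix (in particular Lemmas \ref{lem:descrisum} and \ref{lem:sumexp}) then repackage the tail of $\log\|L\|+\log\|R\|$ into the stated form $\sum_{k=1}^{\infty}C\exp(-\beta k)\lceil (B\vee N_*\nu)/(1-\alpha)\rceil^{\uparrow k}$, after absorbing the additive constant $2|\log\eps|+3\log 2$. The complementary event $q_n\le r_L+r_R+2$ contributes $C\exp(-\beta n)$: both $q_n$ concentrates around $n/\EE(L_*(\tilde{\kappa}_1\odot\tilde{\kappa}_2))$ by Corollary \ref{cor:ldev-classique} and $r_L+r_R$ has uniform exponential tails by Lemma \ref{lem:r-piv}.

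The main obstacle is the last step: carefully separating the Schottky characters (uniformly bounded by $B$) from the $\tilde{\kappa}_0,\tilde{\kappa}_2$ characters (dominated by $N_*\nu/(1-\alpha)$) inside $N(L)+N(R)$, and invoking the coarse-convolution machinery of Section \ref{anex:prob} to convert a geometric-length sum of independent, geometric-tail-dominated $N$-values into the precise expression $\sum_k C\exp(-\beta k)\lceil\cdot\rceil^{\uparrow k}$ appearing in \eqref{eqn:llnc}. Everything else is a transparent repetition of the ping-pong arguments already developed for $\sqz(\overline{\gamma}_n)$.
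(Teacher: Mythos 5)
Your overall architecture is correct and essentially matches the paper's: extract via Theorem \ref{th:pivot-extract}, align via Proposition \ref{prop:pivot-is-aligned}, pivot once from the right for $v$ and once from the left for $f$ to produce geometric trimming lengths, and then feed the trimmed prefix and suffix into Lemma \ref{lem:descrisum}. However, your displayed inequality
\begin{equation*}
\log\frac{\|f\|\|\overline{\gamma}_n\|\|v\|}{|f\overline{\gamma}_n v|}\le \log\|L\|+\log\|R\|+2|\log\eps|+3\log 2
\end{equation*}
is not correct, and this matters. After the alignment chain you only obtain $|f\overline\gamma_n v|\ge \frac{\eps^2}{8}\|fL\|\,\|M\|\,\|Rv\|$ and $\|\overline\gamma_n\|\le\|L\|\,\|M\|\,\|R\|$, which gives
$\log\frac{\|f\|\|\overline\gamma_n\|\|v\|}{|f\overline\gamma_n v|}\le \bigl(\log\|f\|+\log\|L\|-\log\|fL\|\bigr)+\bigl(\log\|R\|+\log\|v\|-\log\|Rv\|\bigr)+2|\log\eps|+3\log 2$.
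The two parenthesized distortion terms are bounded by $N(L)$ and $N(R)$ respectively, because $\|f\|\le\|fL\|\,\|L^{-1}\|$ yields $\log\|f\|+\log\|L\|-\log\|fL\|\le\log\|L\|+\log\|L^{-1}\|=N(L)$, and $N$ is sub-additive. But $\log\|L\|$ alone is not comparable to either side: take $L$ a small scalar multiple of an isometry, then $\log\|L\|$ is very negative while the distortion is $0$, and conversely for a large scalar multiple $\log\|L\|$ exceeds $N(L)=0$. For the same reason your next sentence — ``dominate $\log\|L\|+\log\|R\|$ by the sum of $N$-values of the characters'' — is also false: $\log\|L\|\le\sum_k N(\gamma_k)$ can fail whenever some $\|\gamma_k^{-1}\|<1$. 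The correct chain replaces $\log\|L\|+\log\|R\|$ by $N(L)+N(R)$ everywhere, after which $N(L)\le\sum N(\gamma_k)$ does hold and the coarse-convolution machinery of Lemma \ref{lem:descrisum} applies exactly as you describe.

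A secondary point: you invoke item \eqref{6} of Theorem \ref{th:pivot} for the domination of the character distributions, but that theorem's constants are not explicitly controlled in terms of $(\alpha,\rho,m)$ (the paper itself remarks that the hitting-time law of the Markov chain is not expressed in those parameters). To get the claimed dependence of $C,\beta$ on $(\alpha,\rho,m)$ only, you should instead work directly with \eqref{eq:densite} from Theorem \ref{th:pivot-extract}, which is what the paper does.
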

	
	\begin{proof}
		Let $0 < \eps \le 1$ and $\tilde\nu_s$ be as in Corollary \ref{cor:schottky} for $K = 10$ and $\rho = \frac{1}{4}$.
		Let $\tilde\mu$ be as in Theorem \ref{th:pivot-extract} for $\Gamma := \mathrm{GL}(E)$, for $\AA = \AA^\eps$ and $S = \{\sqz \ge 10 |\log(\eps/2)|\}$. 
		Let $(\widetilde\gamma^w)\sim \tilde\mu$. 
		By Proposition \ref{prop:pivot-is-aligned}, we have $\gamma^w_{2k} \AA^\eps \gamma^w_{2k+1}\AA^\frac{\eps}{2} \gamma^w_{2k+2}$ for all $k \in\NN$.
		
		Let $n \in\NN$ be fixed. 
		Let $q_n := \max\{q\in\NN\,|\,\overline{w}_{2q} \le n\}$. 
		Then by \eqref{eq:tjr-scho} in Theorem \ref{th:pivot-extract}, the sequence $f\gamma^w_0,\gamma^w_1, \dots, \gamma^w_{2q_n - 1},\gamma_{\overline{w}_{2q_n}} \cdots\gamma_{n-1} v$ is $\rho$-ping-pong. 
		By Lemma \ref{lem:r-piv} applied to that sequence, we construct a random integer $r_n \sim \mathcal{G}_\rho$ that is independent of $\left(\widetilde\gamma^w_{2k}\right)_{k\in\NN}$ and such that $r_n \ge q_n$ or:
		\begin{equation*}
			\gamma^w_{2q_n - 2r_n - 1}\AA^\eps \left(\gamma_{\overline{w}_{2q_n -2r_n}} \cdots \gamma_{n-1} v\right).
		\end{equation*}
		By the transpose of Lemma \ref{lem:r-piv}, we construct a random integer $l \sim \mathcal{G}_\rho$ that is independent of $\left(\widetilde\gamma^w_{2k}\right)_{k\in\NN}$ and such that :
		\begin{equation*}
			\left(f \gamma^w_0\cdots \gamma^w_{2l}\right) \AA^\eps \gamma^w_{2l+1}.
		\end{equation*}
		Note that if $l + r_n < q_n$, then:
		\begin{equation*}
			\left(f \gamma^w_0\cdots \gamma^w_{2l}\right) \AA^\eps \gamma^w_{2l + 1}\AA^\frac{\eps}{2}\cdots \AA^\eps\gamma^w_{2q_n - 2r - 1}\AA^\eps \left(\gamma_{\overline{w}_{2q_n - 2r_n}} \cdots \gamma_{n-1} v\right).
		\end{equation*}
		Hence, by lemma \ref{lem:zouli-alignemont}, we have:
		\begin{equation*}
			\left(f \gamma^w_0\cdots \gamma^w_{2l}\right) \AA^\frac{\eps}{2} \left(\gamma^w_{2l+1}\cdots\gamma^w_{2q_n-2r-1}\right) \AA^\frac{\eps}{2} \left(\gamma_{\overline{w}_{2q_n -2r}} \cdots \gamma_{n-1} v\right).
		\end{equation*}
		Hence by Lemma \ref{lem:triple-ali}:
		\begin{equation*}
			|f\overline\gamma_n v|\ge \frac{\eps^2}{8}\left\|f \gamma^w_0\cdots \gamma^w_{2l}\right\| \left\| \gamma^w_{2l+1}\cdots\gamma^w_{2q_n-2r_n-1}\right\| \left\|\gamma_{\overline{w}_{2q_n -2r_n}} \cdots \gamma_{n-1} v\right\|.
		\end{equation*}
		Moreover by sub-multiplicativity:
		\begin{equation*}
			\|\overline\gamma_n\|\le \left\|\gamma^w_0\cdots \gamma^w_{2l}\right\| \left\| \gamma^w_{2l+1}\cdots\gamma^w_{2q_n-2r_n-1}\right\| \left\| \gamma_{\overline{w}_{2q_n -2r_n}} \cdots \gamma_{n-1}\right\|.
		\end{equation*}
		By definition of $N$, and by sub-additivity, we have:
		\begin{align*}
			\log\left\|\gamma^w_0\cdots \gamma^w_{2l}\right\| + \log\|f\| - \log\left\|f \gamma^w_0\cdots \gamma^w_{2l}\right\| 
			& \le \log\left(\left\|\gamma^w_0\right\| \cdots \left\|\gamma^w_{2l}\right\|\right) - \log\left(\left\|{\gamma^w_0}^{-1}\right\| \cdots \left\|{\gamma^w_{2l}}^{-1}\right\|\right) \\
			& \le N(\gamma^w_0\cdots \gamma^w_{2l}) \le \sum_{k=0}^{\overline{w}_{2l+1}-1} N(\gamma_k)
		\end{align*}
		and by the same argument:
		\begin{equation*}
			\log \left\|\gamma_{\overline{w}_{2q_n -2r_n}} \cdots \gamma_{n-1}\right\| + \log\|v\| - \log \left\|\gamma_{\overline{w}_{2q_n -2r_n}} \cdots \gamma_{n-1} v\right\|  \le \sum_{k = \overline{w}_{2q_n - 2 r_n}}^{n-1} N(\gamma_k).
		\end{equation*}
		Therefore:
		\begin{equation*}
			\log\|\overline\gamma_n\| - \log |f\overline\gamma_n v| \le \sum_{k=0}^{\overline{w}_{2l+1}-1} N(\gamma_k) + \sum_{k = \overline{w}_{2q_n - r_n}}^{n-1} N(\gamma_k) + 2|\log(\eps)| + 3\log(2).
		\end{equation*}
		Moreover, for all $t \ge B$, and for all $k \in\NN$, by \eqref{eq:densite} in Theorem \ref{th:pivot-extract} with $A = \{N > t\}$, one has:
		\begin{equation*}
			\PP(N(\gamma_k) > t\,|\,(w_j)_{j\in\NN}) \le \frac{N_*\nu(t,+\infty)}{1-\alpha}.
		\end{equation*}
		Note that, $q_n$ only depends on $(w_j)_{j\in\NN}$, moreover $l$ and $r_n$ are independent of $\left(\widetilde{\gamma}^w_{2j}\right)_{j\in\NN}$.
		Note also that when the sequence $(w_j)_{j\in\NN}$ is so that the index $k$ appears in an oddly indexed group (\ie there exists $j \in\NN$ such that $\overline{w}_{2j+1} \le k < \overline{w}_{2j+2}$), then $N(\gamma_k) \le B$ so $\PP(N(\gamma_k) > t\,|\,(w_j)_{j\in\NN}) = 0$ on that set and trivially, we also have $\PP(N(\gamma_k) > t\,|\,(w_j)_{j\in\NN}, l, r_n) = 0$ on that set and for all values of $l$ and $r_n$.
		Therefore, we have:
		\begin{equation*}
			\PP(N(\gamma_k) > t\,|\,l, q_n, r_n) \le \frac{N_*\nu(t,+\infty)}{1-\alpha}.
		\end{equation*}
		By Corollary \ref{cor:curexp}, $n -\overline{w}_{2q_n}$ has a bounded exponential moment, with a bound that depends only on $(\alpha, \rho, m)$ and not on $n$.
		By Lemma \ref{lem:sumexp}, the random variables $n - \overline{w}_{2q_n - r_n}$ and $\overline{w}_{2l+1}$ both have bounded exponential moment, with a bound that depends only on $(\alpha, \rho, m)$. 
		In other words, there are constants $C, \beta > 0$ that only depend on $(\alpha, \rho, m)$ by construction and such that:
		\begin{equation*}
			\forall k\in\NN,\; \PP\left(n - \overline{w}_{2q_n - 2r_n} + \overline{w}_{2l+1} = k \cap r_n \le q_n\right) \le C \exp(-\beta k).
		\end{equation*}
		Moreover by Lemma \ref{lem:sumexp}, the random variable $\overline{w}_{2l+2r_n}$ also has bounded exponential moment so we may also assume that:
		\begin{equation*}
			\PP(l+r_n \ge q_n) = \PP\left(n \le \overline{w}_{2l+2r_n}\right) \le C \exp(-\beta n).
		\end{equation*}
		Hence, by Lemma \ref{lem:descrisum}, we have for all $t \ge 0$:
		\begin{equation*}
			\PP\left(\log\frac{\|f\|\|\overline{\gamma}_n\|\|v\|}{| f \overline{\gamma}_{n} v|} > t\,\middle|\,l+r_n < q_n\right) 
			\le\sum_{k=1}^{n}  C\exp\left(-\beta k\right) \left\lceil\frac{B \vee N_*\nu}{1-\alpha}\right\rceil^{\uparrow k} \left({t - 2|\log(\eps)| - 3\log(2)},+\infty\right).
		\end{equation*}
		This proves \eqref{eqn:llnc}.
	\end{proof}

	\begin{Th}[Strong law of large numbers for the spectral gap]\label{th:llnr}
		Let $E$ be a Euclidean vector space and let $\nu$ be a probability measure on $\mathrm{GL}(E)$.
		Let $(\gamma_n)\sim\nu^{\otimes\NN}$. 
		Let $\alpha, \eps \in (0,1)$, let $\rho \in (0,1/5)$ and let $\tilde\nu_s$ be a compactly supported probability distribution on $\mathrm{GL}(E)^m$ such that $\Pi_*\tilde\nu_s$ is $\rho$-Schottky for $\AA^\eps$ and supported on the set $\{\sqz \ge 10 |\log(\eps/2)|\}$ and $\alpha \tilde\nu_s \le \nu^{\otimes m}$. Let $B := \max_{k} \max N\circ\chi_k^m(\mathbf{supp}(\tilde\nu_s))$. 
		Then there exist constants $C,\beta > 0$ that depend only on $\alpha, \rho, m$ and such that:
		\begin{multline}\label{eqn:llnr}
			\forall n\in\NN, \; \forall t\ge 0,\;
			\PP\left(\log\frac{\|\overline{\gamma}_n\|}{\rho_1(\overline{\gamma}_{n})} > t\right) \\
			\le \sum_{k=1}^{\infty}  C\exp\left(-\beta k\right) \left\lceil\frac{B \vee N_*\nu}{1-\alpha}\right\rceil^{\uparrow k} \left({t - 2|\log(\eps)| - 5\log(2)},+\infty\right). 
		\end{multline} 
	\end{Th}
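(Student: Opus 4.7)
\smallskip

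\textbf{Plan.} The argument follows the template of Theorem~\ref{th:llnc}, but substitutes the cyclical pivoting Lemma~\ref{lem:c-piv} for the one-sided Lemma~\ref{lem:r-piv}, in order to produce a cyclic rotation of $\overline{\gamma}_n$ whose spectral radius is comparable to its norm. Apply Theorem~\ref{th:pivot-extract} to the given data $(\tilde\nu_s,\alpha,\rho,m)$ and the finitely-described alignment $\AA \subset \AA^{\eps}$ to obtain a random sequence $(\widetilde\gamma^w_k)\sim \tilde\mu$ satisfying \eqref{item:atilde}--\eqref{eq:densite}; in particular Proposition~\ref{prop:pivot-is-aligned} yields the chain $\gamma^w_{2k} \AA^{\eps} \gamma^w_{2k+1} \AA^{\eps/2} \gamma^w_{2k+2}$. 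Fix $n$ and set $q_n := \max\{q\,|\,\overline{w}_{2q}\le n\}$. By \eqref{eq:tjr-scho}, the finite sequence $(\gamma^w_0,\gamma^w_1,\dots,\gamma^w_{2q_n-1},\gamma_{\overline{w}_{2q_n}}\cdots\gamma_{n-1})$ is conditionally $\rho'$-ping-pong for $\rho' := \rho/(1-2\rho) < 1/3$, so Lemma~\ref{lem:c-piv} produces a random integer $c_n\sim \mathcal{G}_{2\rho'}$ with bounded exponential moment such that (outside the exponentially rare event $\{2c_n + 3 \ge q_n\}$) both cyclical alignment conditions hold at positions $2c_n$ and $2q_n-2c_n$.

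\smallskip

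\textbf{Cyclic rotation and self-alignment.} Mimicking the manipulation in the proof of Theorem~\ref{th:ldevprox}, define $a_n := \overline{w}_{2q_n-2c_n-3}$ and the cyclic rotation $h_n := \gamma_{a_n}\cdots \gamma_{n-1}\gamma_0\cdots \gamma_{a_n-1}$. Combining Remark~\ref{rem:c-piv} with the $\widetilde{\AA}^S$-alignment inherited from $\tilde\mu$ and with Proposition~\ref{prop:pivot-is-aligned}, one obtains a cyclic chain
\[
(\gamma_{\overline{w}_{2q_n-2c_n-2}}\cdots\gamma_{n-1}\gamma_0\cdots\gamma_{\overline{w}_{2c_n}}) \AA^{\eps} \gamma^w_{2c_n+1} \AA^{\eps/2} \cdots \AA^{\eps/2} \gamma^w_{2q_n-2c_n-3}
\]
which, after applying Lemma~\ref{lem:alipart}, yields $h_n \AA^{\eps/4} h_n$ together with $\sqz(h_n) \ge \sqz(\gamma^w_{2c_n+1}\cdots\gamma^w_{2q_n-2c_n-3}) - 2|\log\eps| - 4\log 2$. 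Since Theorem~\ref{th:ldev-sqz} gives that this middle $\sqz$ grows at linear speed $\sigma(\nu) > 0$ with exponential large deviations from below, outside an event of probability $\le C\exp(-\beta n)$ we have $\sqz(h_n) \ge 2|\log(\eps/4)| + 4\log 2$, and then Lemma~\ref{lem:eigen-align} entails $\rho_1(h_n) \ge \frac{\eps}{8}\|h_n\|$.

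\smallskip

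\textbf{Extracting the moment bound.} The geometric point is that $h_n$ is conjugate to $\overline{\gamma}_n$ through the short factor $h' := \gamma_{a_n}\cdots\gamma_{n-1}$: indeed $h_n = h' \,\overline{\gamma}_n\, h'^{-1}$. Thus $\rho_1(\overline{\gamma}_n) = \rho_1(h_n)$ and $\|\overline{\gamma}_n\| \le e^{N(h')}\|h_n\|$, giving on the good event
\[
\log\frac{\|\overline{\gamma}_n\|}{\rho_1(\overline{\gamma}_n)} \le N(h') + |\log(\eps/8)| \le \sum_{k = a_n}^{n-1} N(\gamma_k) + |\log\eps| + 3\log 2
\]
by sub-additivity of $N$. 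The number of summands $n - a_n = (n - \overline{w}_{2q_n}) + (w_{2q_n-2c_n-3} + \cdots + w_{2q_n-1})$ has a bounded exponential moment, by Corollary~\ref{cor:curexp} for the first piece and Lemma~\ref{lem:sumexp} applied to the geometric tail of $c_n$ together with the exponential moment of each $w_k$ from \eqref{eq:mmt-exp}. By \eqref{eq:densite} each $N(\gamma_k)$ whose index falls in a free block has conditional distribution dominated by $N_*\nu/(1-\alpha)$, and for $k$ in a Schottky block we have $N(\gamma_k)\le B$; the uniform domination by $\lceil B \vee N_*\nu\rceil/(1-\alpha)$ follows. Lemma~\ref{lem:descrisum} then turns the sum $\sum_{k=a_n}^{n-1} N(\gamma_k)$ into the coarse-convolution tail appearing on the right-hand side of \eqref{eqn:llnr}, and the exponentially small bad event is absorbed into the same sum (as in the proof of Theorem~\ref{th:llnc}), giving the advertised bound with constants $C,\beta$ depending only on $(\alpha,\rho,m)$.

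\smallskip

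The main obstacle is bookkeeping the sequence of alignment constants to ensure $h_n \AA^{\eps/4} h_n$ with $\sqz(h_n)$ large enough to invoke Lemma~\ref{lem:eigen-align}; this cyclic self-alignment is what forces the use of the inductively defined $\widetilde{\AA}^S$ in Theorem~\ref{th:pivot-extract} (as emphasized in Remark~\ref{rem:c-piv}), because a naive cyclical closure would leave us with an alignment of the form $\cdots \AA \gamma^w_{2c_n+1}$ that need not reduce to genuine $\AA^\eps$-alignment without the $\widetilde{\AA}^S$-propagation provided by Proposition~\ref{prop:pivot-is-aligned}.
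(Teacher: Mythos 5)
Your approach is the same in substance as the paper's: apply Theorem~\ref{th:pivot-extract}, run Lemma~\ref{lem:c-piv} on the ping-pong prefix of length $n$, cyclically rotate to a self-aligned $h_n$, invoke Lemma~\ref{lem:eigen-align}~\eqref{eq:ev-sv}, and conclude via Lemma~\ref{lem:descrisum}. The variant you use---writing $\overline{\gamma}_n = (h')^{-1} h_n h'$ with $h' = \gamma_{a_n}\cdots\gamma_{n-1}$ and putting all of the $N$-cost on the right tail---is legitimate and in fact yields a smaller additive shift ($|\log\eps|+3\log 2$ rather than $2|\log\eps|+5\log 2$); the paper instead bounds $\|A\|\,\|B\|/\|BA\|\le e^{N(A)+N(B)}$ with $A$ a left prefix and $B$ a right suffix, which produces the larger shift but a more symmetric index range. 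One small inefficiency: you call on Theorem~\ref{th:ldev-sqz} to guarantee $\sqz(h_n)$ is large, but this is automatic on the good event because the pivotal block $\gamma^w_{2c_n+1}$ already satisfies $\sqz\ge 10|\log(\eps/2)|$ and this propagates through the aligned chain, so no extra $C\exp(-\beta n)$ event need be introduced at this step.

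The genuine gap is the final sentence. You write that ``the exponentially small bad event is absorbed into the same sum (as in the proof of Theorem~\ref{th:llnc}),'' but the proof of Theorem~\ref{th:llnc} does \emph{not} absorb the bad event: it conditions on the good event and then adds the probability $\PP(l+r_n\ge q_n)$ as the separate summand $C\exp(-\beta n)$ that appears explicitly in \eqref{eqn:llnc}. The right-hand side of \eqref{eqn:llnr} contains no such term, and it cannot dominate a free $C\exp(-\beta n)$ uniformly in $t$ (for fixed $n$ the sum $\sum_k C\exp(-\beta k)\lceil\cdot\rceil^{\uparrow k}(t-\cdot,+\infty)$ tends to $0$ as $t\to+\infty$, while $C\exp(-\beta n)$ does not). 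What is actually needed, and what the paper does via the convention $\overline{w}_{-k}=0$, is to observe that on the bad event $\{2c_n+3\ge q_n\}$ the deterministic bound $\log\frac{\|\overline{\gamma}_n\|}{\rho_1(\overline{\gamma}_n)}\le N(\overline{\gamma}_n)\le\sum_{k=0}^{n-1}N(\gamma_k)$ still holds, so that the pointwise inequality ``ratio $\le$ (sum over a random index set) $+$ shift'' is valid on the \emph{whole} space, with a random number of summands equal to $n-a_n$ on the good event and $n$ on the bad event; since the bad event has probability $\le C\exp(-\beta n)$, this combined count still has a bounded exponential moment, and Lemma~\ref{lem:descrisum} then applies cleanly with no leftover term. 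Your argument is fixable by spelling this out, but as written the bad-event step is not justified and the reference to Theorem~\ref{th:llnc} points in the wrong direction.
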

	
	\begin{proof}
		Let $0 < \eps \le 1$ and $\tilde\nu_s$ be as in Corollary \ref{cor:schottky} for $K = 10$ and $\rho = \frac{1}{4}$.
		Let $\tilde\mu$ be as in Theorem \ref{th:pivot-extract} for $\Gamma := \mathrm{GL}(E)$, for $\AA = \AA^\eps$ and $S = \{\sqz \ge 10 |\log(\eps/2)|\}$. 
		Let $(\widetilde\gamma^w)\sim \tilde\mu$. 
		Let $n \in\NN$.
		Let $q_n : = \max\{k\in\NN\,|\,\overline{w}_{2k} \le n\}$. Let $c_n$ be as in Lemma \ref{lem:c-piv} applied to the pivotal sequence $(\gamma^w_0, \gamma^w_1, \dots, \gamma^w_{2q_n - 1},\gamma_{\overline{w}_{2q_n}}\cdots\gamma_{n-1})$. 
		Then $c_n$ has a finite exponential moment and is independent of $(\gamma^w_0, \gamma^w_2, \dots, \gamma^w_{2q_n - 2},\gamma_{\overline{w}_{2q_n}}\cdots\gamma_{n-1})$.
		By the same argument as in the proof of Theorem \ref{th:ldevprox}, using \eqref{eq:ev-sv} in Lemma \ref{lem:eigen-align}, we have:
		\begin{equation}
			\rho_1(\overline\gamma_n) \ge \|\gamma_{\overline{w}_{2q_n-2c_n-2}}\cdots \gamma_{n-1}\gamma_0 \gamma{\overline{w}_{2c_n+1}-1}\|\|\gamma^w_{2c_n + 1}\cdots\gamma^w_{2q_n-2c_n-3}\|\frac{\eps^2}{32}
		\end{equation}
		Hence, with the same reasoning as in the proof of Theorem \ref{th:llnc}, we have:
		\begin{equation}\label{eq:norm-rayon}
			\log\|\overline\gamma_n\|-\log(\rho_1(\overline{\gamma}_n)) \le \sum_{k = \overline{w}_{2q_n-2c_n-2}}^{n-1} N(\gamma_k) + \sum_{k = 0}^{\overline{w}_{2c+1}-1} N(\gamma_k) + 2|\log(\eps)| + 5\log(2)
		\end{equation}
		However \eqref{eq:norm-rayon} holds without conditions on $c_n$ (with the convention $\overline{w}_{-k} = 0$ for all $k \in \NN$), because:
		\begin{equation}
			\log\|\overline\gamma_n\|-\log(\rho_1(\overline{\gamma}_n)) \le \sum_{k = 0}^{n-1} N(\gamma_k).
		\end{equation}
		Moreover, for all $ t\ge B$, and for all $k \in\NN$, one has:
		\begin{equation*}
			\PP(N(\gamma_k) > t\,|\,c_n, q_n) \le \frac{N_*\nu(t,+\infty)}{1-\alpha}.
		\end{equation*}
		Moreover by Lemma \ref{lem:sumexp} and Theorem \ref{th:pivot-extract}; $\overline{w}_{2c_n+1} + n- \overline{w}_{2q_n-2c_n-2}$ has a bounded exponential moment with a bound that depends only on $(\alpha, \rho, m)$. 
	\end{proof}
	
	Theorems \ref{th:llnc} and \ref{th:llnr}
	
	\begin{proof}[Proof of Theorem~\ref{th:slln}]
		Let $E$ be a Euclidean vector space, let $\nu$ be a strongly irreducible and proximal probability measure on $\mathrm{GL}(E)$ and let $(\gamma_n) \sim \nu^{\otimes \NN}$.
		Let $\rho = \frac{1}{4}$ and $K = 10$ and let $\alpha, \eps \in (0,1)$ and $m\in\NN$ be as in Corollary \ref{cor:schottky}. 
		Let $B, C, \beta > 0$ be as in Theorem \ref{th:llnc}. 
		Up to taking the minimal $\beta$ and the maximal $C$, we may assume that $B, C, \beta > 0$ also satisfy the conclusions of \ref{th:llnc}.
		Let $D := 2|\log(\eps)| + 5\log(2)$.
		Note that if $N_* \nu = \delta_0$, then $N_* \nu^*m = \delta_0$ for all $k$ by sub-additivity of $N$. 
		Moreover $\prox \le N$ on $\mathrm{GL}(E)$ so $N_* \nu \neq \delta_0$ by proximality of $\nu$.
		Let $C_0, \beta_0$ be as in Lemma \ref{lem:simplification} for $\eta = N_* \nu$.
		Then for all $t \ge 0$, we have:
		\begin{equation*}
			\sum_{k=1}^{\infty}  C\exp\left(-\beta k\right) \left\lceil\frac{B \vee N_*\nu}{1-\alpha}\right\rceil^{\uparrow k} \left({t - 2|\log(\eps)| - 5\log(2)},+\infty\right) \le \sum_{k=1}^{\infty}  C_0\exp\left(-\beta_0 k\right) N_*\nu(t, + \infty)
		\end{equation*}
		Therefore \eqref{eqn:llnc} implies \eqref{eq:dom-coef} and \eqref{eqn:llnr} implies \eqref{eq:dom-radius}.
	\end{proof}

	\begin{proof}[Proof of Corollary~\ref{cor:regxi}]
		Let $V$ be a proper subspace of $E$, let $0< r \le 1$. Let $v \in E \setminus\{0\}$ and let $f\in E^*\setminus\{0\}$ be such that $f(V) = \{0\}$. Let $(\gamma_n)\sim \nu^{\otimes\NN}$ and let $l^\infty = l^\infty(\gamma)$. We claim that:
		\begin{equation}\label{eq:liminf-pp}
			\PP(l^\infty \in \mathcal{N}_r(V)) \le \liminf_{n\to +\infty} \PP\left(|f \overline\gamma_n v| < r \|f\|\|\overline{\gamma}_n\|\|v\|\right)
		\end{equation}
		Indeed, if $l^\infty \in \mathcal{N}_r(V)$, then by Theorem \ref{th:limit-uv}, we construct a random integer $n_0$ which has finite exponential moment and such that $[\overline\gamma_n v] \in \mathcal{N}_r(V)$ for all $n \ge n_0$. Note also that if $[\overline\gamma_n v] \in \mathcal{N}_r(V)$, then by Lemma \ref{lem:product-is-lipschitz} $|f \overline\gamma_n v| < r \|f\|\|\overline{\gamma}_n\|\|v\|$. Hence, for all $n \in\NN$, we have:
		\begin{equation*}
			\PP\left(|f \overline\gamma_n v| < r \|f\|\|\overline{\gamma}_n\|\|v\|\right) \ge \PP(l^\infty \in \mathcal{N}_r(V)) - \PP(n \le n_0).
		\end{equation*}
		Moreover $\PP(n \le n_0) \underset{n \to +\infty}{\longrightarrow} 0$, hence we have \eqref{eq:liminf-pp}. So by Theorem \ref{th:llnc}, we have \eqref{eq:regxi}.
	\end{proof}
	
	We said in the introduction that \ref{cor:regxi} is an amelioration of a result by Benoist and Quint in~\cite[Proposition~4.5]{CLT16}. 
	We show that the polynomial regularity of the invariant measure in Corollary \ref{cor:regxi} is actually optimal.
	Let $E = \RR^d$ and let $\nu := \nu_A * \nu_K$ where $\nu_K$ is the Haar measure on the (compact) group of isometries $\mathrm{O}(E)$ and $A$ is the distribution of the matrix
	\begin{equation*}
		M := \begin{pmatrix}
			\exp(T) & 0 & \cdots & 0 \\
			0 & 1 & \ddots & \vdots \\
			\vdots & \ddots & \ddots & 0 \\
			0 & \cdots & 0 & 1
		\end{pmatrix},
	\end{equation*}
	Where $T$ is any non-negative, real valued random variable. 
	Then $\nu$ is strongly irreducible and proximal and it actually has full support in $\mathrm{PGL}(E)$. 
	Then write $\xi^\infty_\nu$ for the invariant distribution on $\mathbf{P}(E)$ and write $e_1$ for the first base vector. 
	Then we have $\xi^\infty := \nu_A * \xi_K$ for $\xi_K$ the Lebesgue measure on $\mathbf{P}(E)$. Indeed $\nu_K * \xi = \xi_K$ for all $\xi$, by property of the Haar measure. As a consequence, we have for all $r\ge 0$:
	\begin{equation}\label{eq:contrex}
		 \frac{1}{d}\PP(T \ge \log(d) - \log(r)) \le \xi^\infty_\nu(\mathcal{B}(e_1,r)).
	\end{equation}
	Indeed a random variable of distribution law $\xi_K$ has first coordinate larger than $1/d$ with probability at least $1/d$. 
	Note also that $T = N_*\nu$ and \eqref{eq:contrex} implies that the distribution $\log_*\dist(e_1,\cdot)_*\xi_\infty^\nu$ is at least in the same polynomial integrability class as $T$.

	Another interesting question that is asked in~\cite[p~231]{livreBQ} is whether Corollary \ref{cor:lim-coef} still works if we drop the proximality assumption and replace it by a total strong irreducibility assumption. Indeed theorem \ref{th:oseledets} tells us that if we take a distribution $\nu$ and write $p(\nu)$ its proximality rank \ie $p(\nu) := \min \Theta(\nu)$ for $\Theta(\nu)$ as in Definition \ref{def:Theta}, then we can construct a random limit space of dimension $p(\nu)$. 
	Then with the same trick as in the proof of \ref{th:llnc}, we can show that the coefficient $w \overline\gamma_n u$ is up to an exponentially small error the product of a linear form $w'$ and a vector $u'$ whose norms are controlled in law by the same $\zeta_\nu^{C, \beta}$. 
	However, the fact that the kernel of $w'$ cuts orthogonally the $p(\nu)$-dimensional limit space that contains $u'$ does not give a lower bound on the product $\frac{|w'u'|}{\|w'\|\|u'\|}$. 
	
	For example in dimension $2$, we can take $\nu$ to be the law of a random rotation of angle $2^{-k}\pi$ with probability $\exp(-\exp(\exp(k)))$ for all $k \in\NN$ and the identity otherwise. 
	Then the random walk $(\overline{\gamma}_n)$ is recurrent so if we take $w$ and $u$ such that $w u =0$, then we almost surely have $ |w \overline{\gamma}_n u | = 0$ for infinitely many times $n\in\NN$.
	
	The question remains open if we consider the conditional distribution of $\log\frac{|w \overline{\gamma}_n u |}{\|\overline{\gamma}_n\|}$ with respect to the event $(w \overline{\gamma}_n u \neq 0)$ or simply assume that $w \overline{\gamma}_n u \neq 0$ almost surely and for all $n \in\NN$.

	\appendix

	\section{Probabilistic tools}\label{anex:prob}
	
	In this appendix, we will give the proofs of some classical results about sums of independent random variables. 
	We will use the following notations. By probability space, we mean a space $\Omega$ endowed with a $\sigma$-algebra $\mathcal{A}$, that is isomorphic to the Borel algebra of a compact metric space, and a probability measure $\PP$ that has no atoms. We call events or measurable subsets of $\Omega$ the elements of $\mathcal{A}$.
	
	Let $(\Omega, \mathcal{A}, \PP)$ be such a probability space. Given $\mathcal{B}$ a subalgebra of $\mathcal{A}$, and $\phi$ an $\mathcal{A}$-measurable, real valued function such that $\EE(\phi)$ is well defined (meaning that the positive part or the negative part of $\phi$ has finite expectation), we define $\EE(\phi\,|\,\mathcal{B})$ as the equivalence class of all $\mathcal{B}$-measurable random variables $\psi$ taking values in $\RR\cup\{\EE(\phi)\}$ such that for all $B\in\mathcal{B}$, we have $\EE(\mathds{1}_B \phi) = \EE(\mathds{1}_B \psi)$ where $\mathds{1}_B$ is the indicator function of $B$. If $\mathcal{B}$ is the $\sigma$-algebra generated by a measurable map $\gamma : \Omega \to \Gamma$, we also write $\EE(\phi\,|\,\gamma)$ for $\EE(\phi\,|\,\mathcal{B})$. 
	
	We call filtration over $(\Omega, \mathcal{A})$ a nested sequence of sub-$\sigma$-algebras of $\mathcal{A}$ \ie a sequence  $(\mathcal{F}_k)_{k\in\NN}$ such that for all $k$, we have $\mathcal{F}_k \subset \mathcal{F}_{k+1}$. 
	
	We will write $\NN := \NN_{\ge 0}$ for the set of non-negative integers and given a sequence $(w_m)\in\RR^\NN$, we define the sequence of partial sums as $(\overline{w}_m)_{m\in\NN} := (w_0 +\cdots +w_{m-1})_{m\in \NN}$.
	
	\subsection{About exponential large deviations inequalities}
	
	In this section, we show that all the probabilistic constructions that we use in the main body of the article preserve the property of having a finite exponential moment. 
	Note however that a product of two random variables that have a finite exponential moment may not have a finite exponential moment.
	
	\begin{Lem}[Distribution of the current step]\label{lem:curstep}
		Let $(\Omega, \PP)$ be a probability space and let $(\mathcal{F}_k)_{k \in\NN}$ be a filtration on $\Omega$. Let ${(w_k)}_{k\in\NN}$ be a random sequence of positive integers such that $w_k$ is $\mathcal{F}_{k+1}$-measurable for all $k$. For all $n \ge 0$, we define $r_n := \max\{r \in\NN\,|\,\overline{w}_{r_n} \le n\}$. 
		Let $h : \NN \to \RR_{\ge 0}$ be a function. Assume that:
		\begin{equation*}
			\forall t \in \NN,\; \forall k\ge 0,\;\PP\left(w_k = t\,\middle|\,\mathcal{F}_{k}\right)\le h{(t)}.
		\end{equation*}
		Then we have:
		\begin{equation*}
			\forall t\in\NN,\; \forall n\in\NN,\; \PP\left(w_{r_n} = t\right)\le t h{(t)}.
		\end{equation*}
	\end{Lem}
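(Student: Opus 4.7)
The plan is to decompose the event $(w_{r_n}=t)$ according to the value of $r_n$, then use the measurability of $\overline{w}_k$ with respect to $\mathcal{F}_k$ to pull out the conditional bound on $w_k$.

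First, I would observe that since each $w_j\ge 1$, the partial sum $\overline{w}_k$ is strictly increasing in $k$, so the events $\{\overline{w}_k = s\}_{k\in\NN}$ are pairwise disjoint for fixed $s$. Moreover, since $w_j$ is $\mathcal{F}_{j+1}$-measurable and the filtration is nested, the sum $\overline{w}_k = w_0+\cdots+w_{k-1}$ is $\mathcal{F}_k$-measurable. Next, I would rewrite the event $(r_n = k, w_k = t)$ as the event that $\overline{w}_k \le n$ and $\overline{w}_k + w_k > n$ together with $w_k = t$, which is equivalent to $\overline{w}_k \in \{\max(0,n-t+1),\dots,n\}$ and $w_k = t$.

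Then I would compute, using the disjointness of $(r_n=k)$ over $k$:
\begin{equation*}
\PP(w_{r_n}=t) = \sum_{k\ge 0} \PP(r_n = k,\, w_k = t) = \sum_{k\ge 0}\sum_{s=\max(0,n-t+1)}^{n} \PP(\overline{w}_k = s,\, w_k = t).
\end{equation*}
Since $\{\overline{w}_k = s\}\in\mathcal{F}_k$, I condition on $\mathcal{F}_k$ and use the hypothesis $\PP(w_k = t\mid\mathcal{F}_k)\le h(t)$ to bound each summand by $h(t)\PP(\overline{w}_k = s)$. Swapping the order of summation and exploiting the disjointness of $\{\overline{w}_k = s\}$ across $k$, one gets $\sum_{k\ge 0}\PP(\overline{w}_k=s)\le 1$ for every $s$. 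Finally, the range of $s$ contains at most $t$ integers, so
\begin{equation*}
\PP(w_{r_n}=t) \le h(t)\sum_{s=\max(0,n-t+1)}^{n} 1 \le t\, h(t),
\end{equation*}
which is the claimed bound.

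There is essentially no obstacle here; this is the standard inspection-paradox estimate (the size-biasing factor $t$ reflects that long blocks are more likely to be selected by a time-$n$ probe). The only subtle point is justifying $\sum_k\PP(\overline{w}_k=s)\le 1$, which relies on the strict monotonicity of $(\overline{w}_k)$ guaranteed by the assumption $w_k\ge 1$.
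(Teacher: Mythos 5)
Your proof is correct and follows essentially the same route as the paper's: decompose by the value of $r_n$, rewrite $(r_n = k,\, w_k = t)$ as a constraint on $\overline{w}_k$ lying in a window of at most $t$ integers, condition on $\mathcal{F}_k$ to extract the factor $h(t)$, swap the order of summation, and use the almost-sure injectivity of $k \mapsto \overline{w}_k$ (guaranteed by $w_k \ge 1$) to bound $\sum_k \PP(\overline{w}_k = s) \le 1$ for each fixed $s$.
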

	
	\begin{proof}
		Let $t$ and $n$ be two non-negative integers. We have
		\begin{align*}
			\PP{({w}_{r_n} = t)} & = \sum_{r=0}^{+\infty}\PP{((r=r_n) \cap ({w}_{r}=t))}\\
			& = \sum_{r=0}^{+\infty}\PP{((n-t \le \overline{w}_r < n)\cap (w_r=t))}\\
			& = \sum_{r=0}^{+\infty}\sum_{u = 0}^{t}\PP{(\overline{w}_r = n-u)} \PP {(w_r = t \,|\, \overline{w}_r = n-u)}.
		\end{align*}
		Probabilities are non negative so the order of summation does not matter. hence:
		\begin{equation*}
			\PP{({w}_{r_n}=t)}  = \sum_{u = 0}^{t} \sum_{r=0}^\infty\PP{(\overline{w}_r = n-u)} \PP{(w_r = t \,|\, \overline{w}_r = n-u)}
		\end{equation*}
		For all $u\in\NN$ and all $r \in \NN$, the event $(\overline{w}_r = n-u)$ is in $\mathcal{F}_{r}$. So by hypothesis, we have $\PP{(w_r = t \,|\, \overline{w}_r = n-u)} \le h(t)$ for all $t$ and all $r, u \in\NN$ such that $\PP(\overline{w}_r = n-u) > 0$.
		Moreover, for all $u \in \NN$, we have:
		\begin{equation*}
			\sum_{r=0}^\infty\PP(\overline{w}_r = n-u) = \EE(\#\{r\in\NN\,|\,\overline{w}_r = n-u\}) \le 1,
		\end{equation*}
		because the map $r \mapsto \overline{w}_r$ is almost surely injective.
		Therefore, we have $\PP{({w}_{r_n}=t)} \le t h(t)$, which proves the claim.
	\end{proof}
	
	\begin{Cor}\label{cor:curexp}
		Let ${(w_k)}_{k\in\NN}$ be a random sequence of positive integers and ${(\mathcal{F}_k)}$ be a filtration such that $w_k$ is $\mathcal{F}_{k+1}$-measurable for all $k$.
		Assume that for some $C, \beta > 0$, we have:
		\begin{equation*}
			\forall t\in \NN, \forall k\ge 0,\PP\left(w_k \ge t\,\middle|\,\mathcal{F}_k\right)\le C \exp(-\beta t).
		\end{equation*}
		For all $n \in \NN$, define the random integer $r_n := \max\{r \in\NN\,|\,\overline{w}_{r_n} \le n\}$. 
		Then:
		\begin{equation*}
			\forall t\ge 0,\; \forall n\ge 0,\; \PP\left(w_{r_n} \ge t\,\middle|\,\mathcal{F}_k\right) \le \frac{C{(1+t\beta)}}{\beta^2}\exp{(-\beta t)}.
		\end{equation*}
	\end{Cor}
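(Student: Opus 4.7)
The plan is to apply Lemma \ref{lem:curstep} to the sequence $(w_k)$ with the bounding function $h(s) := C\exp(-\beta s)$. The hypothesis $\PP(w_k \ge s \mid \mathcal{F}_k) \le C \exp(-\beta s)$ implies in particular $\PP(w_k = s \mid \mathcal{F}_k) \le h(s)$ for every integer $s \ge 0$ and every $k \in \NN$, which is exactly the assumption needed in Lemma \ref{lem:curstep}. Applying that lemma yields the pointwise tail estimate
\begin{equation*}
    \forall s \in \NN,\;\forall n \in \NN,\; \PP(w_{r_n} = s) \le s\, C \exp(-\beta s).
\end{equation*}

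The second step is then a purely deterministic summation. For any real $t \ge 0$, I would write
\begin{equation*}
    \PP(w_{r_n} \ge t) \le \sum_{s \ge \lceil t \rceil} s\, C \exp(-\beta s)
\end{equation*}
and estimate the geometric-type sum $\sum_{s \ge t} s \exp(-\beta s)$. The cleanest way I see is to refine the proof of Lemma \ref{lem:curstep} directly: decomposing $\PP(w_{r_n} \ge t) = \sum_{r} \sum_{m \le n} \PP(\overline{w}_r = m)\,\PP(w_r \ge \max(t, n-m+1) \mid \mathcal{F}_r)$, using the hypothesis to bound the inner conditional probability by $C \exp(-\beta \max(t, n-m+1))$, and then splitting the sum on $m$ according to whether $n - m < t$ or not. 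The first range contributes at most $tC\exp(-\beta t)$ (using $\sum_r \PP(\overline{w}_r = m) \le 1$ since $r \mapsto \overline{w}_r$ is strictly increasing), and the second range is a genuine geometric series in $\exp(-\beta)$, which sums to a quantity of order $\frac{C}{\beta}\exp(-\beta t)$ after applying $\frac{\exp(-\beta)}{1-\exp(-\beta)} \le \frac{1}{\beta}$. Combining these two pieces and loosening the constant gives the announced bound $\frac{C(1 + t\beta)}{\beta^2}\exp(-\beta t)$.

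The only slightly delicate point is the summation step. It is not hard per se but requires being careful about the split between "$m$ close to $n$" (where the constraint $r = r_n$ forces $w_r$ to be of order $t$) and "$m$ far from $n$" (where the constraint $r = r_n$ forces $w_r$ to be of order $n-m$); confusing the two ranges would lose the $tC\exp(-\beta t)$ linear-in-$t$ term. No probability theory beyond Lemma \ref{lem:curstep} and elementary estimates on geometric sums is needed, and the extension from integer $t$ to real $t$ is immediate by the monotonicity $\PP(w_{r_n} \ge t) \le \PP(w_{r_n} \ge \lfloor t \rfloor)$ combined with absorbing the resulting $e^\beta$ factor into the final constant.
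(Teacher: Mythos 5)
Your plan is essentially the paper's proof: apply Lemma \ref{lem:curstep} with $h(s) := Ce^{-\beta s}$ to obtain $\PP(w_{r_n}=s) \le sCe^{-\beta s}$, then sum over $s \ge t$; the paper simply pulls out $e^{-\beta t}$, compares with $\sum_{v\ge 0}(t+v)e^{-\beta v}$, and invokes $1-e^{-\beta}\le\beta$. The refined decomposition you then sketch, rebuilding the renewal argument with the tail event $\{w_r \ge \max(t,\,n-m+1)\}$ spliced in, is correct but superfluous: it re-derives Lemma \ref{lem:curstep} from scratch for no added precision, and the worry that the direct summation is delicate is unfounded.

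One caution, which applies equally to the paper's own last line: the quantity actually produced (roughly $Ce^{-\beta t}(t + \beta^{-1})$ from your split, and $Ce^{-\beta t}\bigl((1-e^{-\beta})^{-2} + t(1-e^{-\beta})^{-1}\bigr)$ from the direct sum) is not dominated by the announced $\frac{C(1+t\beta)}{\beta^2}e^{-\beta t}$ for all $\beta > 0$. The convexity inequality $1-e^{-\beta}\le\beta$ makes $(1-e^{-\beta})^{-1}$ \emph{larger} than $\beta^{-1}$, not smaller, so the paper's final substitution goes the wrong direction, and for $\beta>1$ the stated constant appears too small. This has no downstream effect — the corollary is only ever used to conclude that $w_{r_n}$ has a uniform polynomial-times-exponential tail, and any such constant suffices — but if you assert the precise prefactor $\frac{C(1+t\beta)}{\beta^2}$ you should not claim to have verified it.
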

	
	\begin{proof}
		Define $h(t) := C\exp(-\beta)t$ for all $t \in \NN$. Then we have:
		\begin{equation*}
			\forall t\in \NN, \forall k\ge 0,\PP\left(w_k = t\,\middle|\,\mathcal{F}_k\right)\le h(t).
		\end{equation*} 
		So by Lemma \ref{lem:curstep}, for all $t,u\in\NN$, we have $\PP\left(w_{r_n}=u\right) \le uh(u)$. Now let $n \in \NN$ and $t \in \NN$. We have :
		\begin{align*}
			\PP\left(w_{r_n} \ge t\right) & = \sum_{u = t}^{\infty} \PP\left(w_{r_n}=u\right) \le \sum_{u = t}^{\infty} uh{(u)}\\
			& \le \sum_{u = t}^{\infty} uC\exp{(-\beta u)}\\
			& \le C \exp(-\beta t) \left(\sum_{u = 0}^{\infty} u \exp{(-\beta u)} + t\sum_{u = 0}^{\infty} \exp{(-\beta u)}\right)\\
			& \le C \exp(-\beta t) \left( \left(\frac{1}{1-\exp(-\beta)}\right)^2 + \frac{t}{1-\exp(-\beta)}\right).
		\end{align*}
		Moreover $\beta \ge 1-\exp(-\beta)$ by convexity. So we have $\PP\left(w_{r_n} \ge t\right)\le \frac{C{(1+t\beta)}}{\beta^2}\exp{(-\beta t)}$.
	\end{proof}

	We now give a nice formulation of standard large deviations inequalities for sums of random variables.

	\begin{Lem}[Sum of random variables that have finite exponential moment]\label{lem:sumexp}
		Let $(\Omega,\PP)$ be a probability space endowed with a filtration ${(\mathcal{F}_n)}_{n\in\NN}$. 
		Let $w$ be an $\NN$-valued random variable. Let $(x_n)$ be a random sequence of non-negative real numbers. 
		Assume that $x_n$ is  ${(\mathcal{F}_{n+1})}$-measurable for all $n$. Let $C,\beta>0$ be non-random constants. Assume that:
		\begin{gather}
			\EE(\exp(\beta w)) \le C\label{eqn:ema-w}\\
			\forall n\in\NN,\EE{(\exp{(\beta x_n)}\;|\;\mathcal{F}_{n})}\le C.\label{eqn:ema}
		\end{gather}
		Then the random variable $\overline{x}_w := \sum_{k=0}^{w-1}x_k$ has a finite exponential moment in the sense that:
		\begin{equation}\label{eqn:sumexp}
			\exists C', \beta' >0, \; \forall t \ge 0,\;\PP\left(\overline{x}_w \ge t\right)\le C'\exp(-\beta' t).
		\end{equation}
	\end{Lem}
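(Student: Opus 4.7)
The plan is to estimate $\EE(\exp(\beta'\overline{x}_w))$ for a sufficiently small $\beta' \in (0,\beta)$, then conclude by Markov's inequality. First I would establish, by induction on $n$, the deterministic bound
\begin{equation*}
\EE(\exp(\beta\overline{x}_n)) \le C^n \quad \text{for all } n\in\NN.
\end{equation*}
This uses the tower property: writing $\overline{x}_n = \overline{x}_{n-1} + x_{n-1}$ and noting that $\overline{x}_{n-1}$ is $\mathcal{F}_{n-1}$-measurable (because each $x_k$ is $\mathcal{F}_{k+1}$- hence $\mathcal{F}_{n-1}$-measurable for $k < n-1$), we get $\EE(\exp(\beta\overline{x}_n)\mid\mathcal{F}_{n-1}) = \exp(\beta\overline{x}_{n-1})\EE(\exp(\beta x_{n-1})\mid\mathcal{F}_{n-1}) \le C\exp(\beta\overline{x}_{n-1})$, and integration gives the induction step.

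Next, since $w$ need not be a stopping time, I would avoid Wald-type identities and instead split the expectation as $\EE(\exp(\beta'\overline{x}_w)) = \sum_{n\in\NN}\EE(\mathds{1}_{w=n}\exp(\beta'\overline{x}_n))$ and apply Hölder's inequality with conjugate exponents $q = \beta/\beta'$ and $p = \beta/(\beta-\beta')$ (so that $q\beta' = \beta$). Each summand is then bounded by $\PP(w=n)^{(\beta-\beta')/\beta} \EE(\exp(\beta\overline{x}_n))^{\beta'/\beta} \le \PP(w=n)^{(\beta-\beta')/\beta} C^{n\beta'/\beta}$. Markov applied to \eqref{eqn:ema-w} gives $\PP(w=n) \le \PP(w\ge n) \le C\exp(-\beta n)$, so each term is at most $C^{(\beta-\beta')/\beta}\exp\bigl(n(\tfrac{\beta'}{\beta}\log C - (\beta-\beta'))\bigr)$.

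For the resulting geometric series to converge I would choose $\beta'$ small enough that $\tfrac{\beta'}{\beta}\log C < \beta - \beta'$, for instance $\beta' < \beta^2/(\beta + |\log C| + 1)$. This yields a finite constant $C'' := \EE(\exp(\beta'\overline{x}_w))$, and Markov's inequality then gives $\PP(\overline{x}_w \ge t) \le C''\exp(-\beta' t)$ for all $t \ge 0$, which is exactly \eqref{eqn:sumexp} with $C' = C''$ and exponent $\beta'$. The only delicate point is the quantitative calibration of $\beta'$ against $\beta$ and $C$; the rest of the argument is a routine tower/Hölder combination and is what lets us bypass any stopping-time hypothesis on $w$.
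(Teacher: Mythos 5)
Your proof is correct. You share with the paper the central ingredient: the tower-property induction giving $\EE(\exp(\beta\overline{x}_n))\le C^n$, which is exactly the paper's bound on $z_j=\exp(\beta\overline{x}_j)$. Where you diverge is in how you convert that growth estimate plus the tail decay of $w$ into the desired tail bound on $\overline{x}_w$. The paper splits the event $\{\overline{x}_w\ge t\}\subset\{w\ge\eps t\}\cup\{\overline{x}_{\lfloor\eps t\rfloor}\ge t\}$ (using that the $x_n$ are non-negative so $\overline{x}_w\le\overline{x}_{\lfloor\eps t\rfloor}$ on $\{w<\eps t\}$), applies Markov to each piece separately, and then picks $\eps$ small enough so that $\beta-\eps\log C>0$. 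You instead bound the exponential moment $\EE(\exp(\beta'\overline{x}_w))$ directly by summing over the values of $w$ and applying H\"older with exponents calibrated so that $q\beta'=\beta$, then finish with a single application of Markov. The two are different packagings of the same numerical trade-off (choose a damping parameter small enough that the geometric growth $C^{n}$ is beaten by the geometric decay $\exp(-\beta n)$ of $\PP(w\ge n)$), and neither needs $w$ to be a stopping time. Your H\"older route has the modest advantage of producing a bound on the exponential moment itself, not just the tail; the paper's event-splitting route is more elementary and avoids H\"older entirely. Your calibration $\beta'(\log C+\beta)<\beta^2$ is the same inequality the paper writes as $\eps<\beta/\log C$ after the substitution $\eps\leftrightarrow\beta'/\beta$, so the quantitative content matches. (As you implicitly use, $C\ge 1$ holds automatically since $\exp(\beta x_n)\ge 1$, so $\log C\ge 0$ and the H\"older exponents are well-defined.)
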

	
	\begin{proof}
		First note that without loss of generality, we may assume that $C >1$.
		Fix $0 < \eps < \frac{\beta}{\log(C)}$. For every $j \in \NN$, write $\overline{x}_j:=\sum_{k=0}^{j-1}x_k$. Then for all non-random $t \ge 0$, one has:
		\begin{equation}
			\PP{(\overline{x}_w \ge t)} \le \PP{(w\ge t\eps)}+\PP\left(\overline{x}_{\lfloor t\eps\rfloor}\ge t\right).
		\end{equation}
		For all $j \in \NN$, we define $z_j:=\exp{(\beta \overline{x}_j)}$. We claim that $\EE{(z_j)}\le C^j$. Indeed, $z_0=1$ and for all $j \ge 0$, the random variable $z_{j}$ is $\mathcal{F}_j$ measurable. Now by looking at the conditional expectation and using \eqref{eqn:ema}, we have: 
		\begin{align*}
			\EE{(z_{j+1})} & = \EE{\left(\EE{(z_{j+1}|\mathcal{F}_{j})}\right)}\\
			&= \EE{\left(z_{j}\EE{(\exp{(\beta x_j)}|\mathcal{F}_{j})}\right)}\\
			&\le C\EE{(z_{j})}.
		\end{align*}
		This proves the claim. By Markov's inequality, we have $\PP{(z_j\ge\exp{(\beta t)})}\le\frac{C^j}{\exp{(\beta t)}}$ for all $j \in\NN$ and for all $t > 0$. Let $t > 0$. By the above argument and because the $x_n$'s are non-negative, we have:
		\begin{equation*}
			\PP\left(\overline{x}_{\lfloor t\eps\rfloor}\ge t\right) \le C^{\lfloor \eps t \rfloor}\exp(-\beta t) \le  C^{\eps t}\exp(-\beta t) =  \exp(-(\beta-\eps\log(C))t).
		\end{equation*}
		By Markov's inequality, applied to \eqref{eqn:ema-w}, we have $\PP{(w\ge t\eps)}\le C\exp{(-\beta t)}$. So if we write $\beta':= \beta-\eps\log{(C)}>0$ and $C':=C+1$, then we have $\PP{(Y\ge t)}\le C'\exp{(-\beta' t)}$, which proves \eqref{eqn:sumexp}.
	\end{proof}
	
	\begin{Lem}[Exponential moments approximate the expectation]\label{lem:proba:aprox}
		Let $M,\sigma,C,\beta>0$. 
		For all $\alpha<\sigma$, there is a constant $\beta_\alpha > 0$, that depends on $(M,\sigma,C,\beta,\alpha)$, such that for all random variable $x$, that satisfies $\EE{(\min\{x,M\})}\ge\sigma$ and $\EE(\exp(-\beta x))\le C$, we have:
		\begin{equation*}
			\EE{(\exp{(-\beta_\alpha x)})}\le\exp{(-\beta_\alpha \alpha)}.
		\end{equation*}
	\end{Lem}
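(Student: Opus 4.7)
The strategy is a Chernoff-type argument applied to the truncated variable $y := \min\{x,M\}$. Since $y \le x$ and $\beta_\alpha > 0$, we have $\EE(\exp(-\beta_\alpha x)) \le \EE(\exp(-\beta_\alpha y))$, so it suffices to control the Laplace transform $\phi(t) := \EE(\exp(-ty))$. By hypothesis $y \le M$, $\EE(y) \ge \sigma$, and splitting on $\{x \le M\}$ versus $\{x > M\}$ gives $\EE(\exp(-\beta y)) \le C + \exp(-\beta M) =: C'$. The first moment $\EE(y)$ is finite, with $\EE(\max(-y,0)) \le C'/\beta$ via the elementary inequality $\beta u \le \exp(\beta u)$ on $u \ge 0$.

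Next I would perform a second-order Taylor expansion: write $\phi(t) = 1 - t\EE(y) + \EE(h_t(y))$, where $h_t(u) := e^{-tu}-1+tu \ge 0$. The key pointwise bounds are $h_t(u) \le (tu)^2/2$ for $u \ge 0$ (via $e^{-v} \le 1 - v + v^2/2$ on $v \ge 0$) and $h_t(u) \le (tu)^2 e^{-tu}/2$ for $u \le 0$ (via the integral form of Taylor remainder). On the event $\{y \ge 0\}$ the bound $y \le M$ gives $\EE(h_t(y)\mathds{1}_{y \ge 0}) \le (tM)^2/2$. On $\{y < 0\}$, apply the elementary inequality $u^2 \le (16/\beta^2 e^2) \exp(\beta u/2)$ for $u \ge 0$ with $u = |y|$ to obtain $y^2 e^{-ty} \le (16/\beta^2 e^2) e^{-\beta y}$ whenever $t \le \beta/2$; integrating yields $\EE(h_t(y)\mathds{1}_{y < 0}) \le 8 C' t^2/(\beta^2 e^2)$.

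Combining these estimates I get $\phi(t) \le 1 - t\sigma + K t^2$ for all $t \in (0, \beta/2]$, with the explicit constant $K := M^2/2 + 8 C'/(\beta^2 e^2)$ depending only on $(M, \sigma, C, \beta)$. Choosing $\beta_\alpha := \min\{\beta/2,\, (\sigma-\alpha)/K\}$ forces $\phi(\beta_\alpha) \le 1 - \alpha \beta_\alpha$, and the tangent inequality $1 - \alpha \beta_\alpha \le \exp(-\alpha \beta_\alpha)$ closes the argument.

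The only delicate point is the bound on $\EE(h_t(y) \mathds{1}_{y < 0})$: one must control $\EE(y^2 e^{-ty})$ on the set where $y$ is negative using only the hypothesis $\EE(\exp(-\beta y)) \le C$, which does not directly provide a second moment. The trick is to sacrifice a factor $e^{\beta|y|/2}$ from the exponential moment in order to absorb the polynomial $y^2$, which is why the admissible range $t \le \beta/2$ arises naturally. Everything else is mechanical manipulation of Taylor-type inequalities.
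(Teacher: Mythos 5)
Your proof is correct, and it takes a genuinely different route from the paper's. The paper splits according to a moving cut-off $m_{\beta'}$: it estimates $\EE(\exp(-\beta' x)\mathds{1}_{x\le m})$ by the tail-integral form of Markov's inequality, bounds the contribution from $x\in[m,M]$ by a chord (convexity) argument on the doubly truncated variable $x' := \max\{m,\min\{x,M\}\}$, and then tunes both $m$ and $\beta'$ so that the sum of the two pieces is $\le 1 - \alpha\beta'$. You instead truncate once to $y := \min\{x,M\}$, Taylor-expand the Laplace transform $\phi(t) = 1 - t\EE(y) + \EE(h_t(y))$, and control the second-order remainder $h_t(y)$ separately on $\{y\ge 0\}$ (via $y\le M$) and on $\{y<0\}$ (by sacrificing half of the exponential moment to absorb $y^2$). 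This is the classical Bernstein-type MGF argument; it yields the explicit and clean conclusion $\phi(t) \le 1 - t\sigma + Kt^2$ on $(0,\beta/2]$, with $K$ depending only on $(M,\sigma,C,\beta)$, and a closed-form $\beta_\alpha = \min\{\beta/2,(\sigma-\alpha)/K\}$. The paper's two-parameter optimization (over $m$ and $\beta'$) is somewhat more ad hoc, but both arrive at the same final inequality $\phi(\beta_\alpha)\le 1-\alpha\beta_\alpha\le \exp(-\alpha\beta_\alpha)$. One thing worth making fully explicit in your writeup: the Taylor identity $\phi(t)=1-t\EE(y)+\EE(h_t(y))$ requires both $\EE(y)$ and $\EE(h_t(y))$ to be finite, which you do verify ($\EE(\max(-y,0))\le C'/\beta$ and the two remainder bounds), but it is the kind of step a referee will ask about, so it is good that you addressed it. Also note that the lemma allows $\alpha\le 0$; your argument still works there since $1-v\le e^{-v}$ holds for all real $v$, not just $v\ge 0$.
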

	
	\begin{proof}
		Let $x$ be a random variable such that $\EE(\exp(- \beta x)) \le C$. Then $\PP{(x\le t)}\le C\exp{(\beta t)}$ for all $t\in\RR$. For all $0 < \beta' < \beta$ and for all $m\in\RR$ we have:
		\begin{align*}
			\EE{(\exp{(-\beta'x)}\mathds{1}{(x\le m)})} & = \int_{\exp{(-\beta' m)}}^{+\infty}\PP{(\exp{(-\beta'x)}\ge t)}\mathrm{d}t\\
			&\le\int_{\exp{(-\beta' m)}}^{+\infty} C t^{-\frac{\beta}{\beta'}}\mathrm{d}t\\
			& \le C \frac{\beta'}{\beta-\beta'}\exp{({(\beta-\beta')}m)} =: F{(m,\beta')}
		\end{align*}
		To all $0< \beta \le \frac{\beta'}{2}$, we associate the number $m_{\beta'} := \min\left\{0, 2\beta^{-1}\log\left(\frac{\beta'\beta}{2C}\right)\right\}$.
		Note that for all $0< \beta \le \frac{\beta'}{2}$, we have $\beta'-\beta \ge \beta/2$ and $m_\beta \le 0$, hence $(\beta-\beta')m_{\beta'} \le \log\left(\frac{\beta'\beta}{2C}\right)$ and $C \frac{\beta'}{\beta-\beta'} \le \frac{2C \beta'}{\beta}$. 
		Therefore $F(m_{\beta'}, \beta') \le \beta'^2$ for all  $0< \beta \le \frac{\beta'}{2}$, hence $\frac{F(m_{\beta'}, \beta')}{\beta'} \underset{\beta'\to 0}{\longrightarrow} 0$.
		
		Now assume moreover that $\EE{(\min\{x,M\})}\ge\sigma$ and let $m \le M$. Write $x' := \max\{m, \min\{x, M\}\}$. Then $ m \le x' \le M$. 
		Moreover, by convexity, we have for all $m \le y \le M$:
		\begin{align*}
			\exp{(-\beta'y)} & \le \frac{y-m}{M-m}\exp{(-\beta' M)}+\frac{M-y}{M-m}\exp{(-\beta'm)} \\
			& \le \frac{M\exp{(-\beta' m)}-m\exp{(-\beta'M)}}{M-m}
			- \frac{\exp(-\beta' m)-\exp(-\beta'M)}{M-m} y.
		\end{align*}
		Note moreover that $\EE(x') \ge \sigma$ and $\frac{\exp(-\beta' m)-\exp(-\beta'M)}{M-m} \ge 0$. Hence:
		\begin{multline*}
			\EE(\exp(-\beta x')) \le \frac{M\exp{(-\beta' m)}-m\exp{(-\beta'M)}}{M-m}
			- \frac{\exp(-\beta' m)-\exp(-\beta'M)}{M-m}\sigma=:L{(m,\beta')}.
		\end{multline*}
		Moreover, $\beta'm_{\beta'}\underset{\beta'\to 0}{\longrightarrow} 0$ and $\beta'M\underset{\beta'\to 0}{\longrightarrow} 0$ so $\frac{M\exp{(-\beta' m_{\beta'})}-m_{\beta'}\exp{(-\beta'M)}}{M-m_{\beta'}} \underset{\beta'\to 0}{\longrightarrow} 1$. Moreover $\exp$ is derivable at $0$ so we have:
		\begin{equation*}
			\frac{\exp(-\beta' m_{\beta'})-\exp(-\beta'M)}{\beta'(M-m_{\beta'})} \underset{\beta'\to 0}{\longrightarrow} 1.
		\end{equation*}
		Hence we have $\frac{L(m_{\beta'},\beta') -1}{\sigma\beta'} \underset{\beta'\to 0}{\longrightarrow} 1$. 
		
		By the previous arguments, we have $\frac{L(m_{\beta'},\beta') + F(m_{\beta'},\beta') - 1}{\beta' \sigma}\underset{\beta'\to 0}{\longrightarrow} 1$.
		Then for all $\alpha < \sigma$, there exists $0 < \beta' \le \frac{\beta}{2}$ such that $\frac{L(m_{\beta_\alpha},\beta_\alpha) + F(m_{\beta_\alpha},\beta_\alpha) - 1}{\beta' \sigma} \ge \frac{\alpha}{\sigma}$ and therefore $L(m_{\beta'},\beta') + F(m_{\beta'},\beta') \ge 1-\alpha\beta'$, let $\beta_\alpha$ be the maximal such $\beta'$.
		Then for all $\alpha < \sigma$, we have $L(m_{\beta_\alpha},\beta_\alpha) + F(m_{\beta_\alpha},\beta_\alpha) \ge 1-\alpha\beta_\alpha$ by continuity of $F$, $m$, and $L$.
		Let $\alpha < \sigma$.
		Then for all random variable $x$ that satisfies the hypothesis of Lemma \ref{lem:proba:aprox}, we get:
		\begin{align*}
			\EE{(\exp{(-\beta_\alpha x)})} & \le L{(m_{\beta_\alpha},\beta')}+F{(m_{\beta_\alpha},\beta_\alpha)}\\
			&\le 1 -\beta_\alpha\alpha \le \exp{(-\beta_\alpha\alpha)}.\qedhere
		\end{align*}
	\end{proof}

	\begin{Lem}[Classical large deviations inequalities from below]\label{lem:proba:ldev}
		Let $(\Omega,\PP)$ be a probability space endowed with a filtration ${(\mathcal{F}_n)}_{n\in\NN}$. Let $(x_n)_{n\in\NN}$ be a random sequence of real numbers such that $x_n$ is $\mathcal{F}_{n+1}$-measurable for all $n \in \NN$. Let $C,\beta > 0$. Assume that for all $n\in\NN$, we have $\EE{(\exp{(-\beta x_n)}\,|\,\mathcal{F}_n)}\le C$. Let ${(\sigma_M)}_{M \in \NN}$ be a non-random, real valued, non-decreasing sequence such that $\EE{(\min\{x_n,M\}\,|\,\mathcal{F}_n)}\ge\sigma_M$ for all $M,n\in\NN$. Write $\sigma := \lim_{t\to +\infty}\sigma_t$. Then we have:
		\begin{equation*}
			\forall \alpha<\sigma,\;\exists \beta_{\alpha}>0,\;\forall n\in\NN,\;\PP{(\overline x_n\le\alpha n)}\le \exp{(-\beta_\alpha n)}.
		\end{equation*}
	\end{Lem}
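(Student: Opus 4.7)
The plan is to apply the standard Chernoff exponential-tilting argument, with Lemma \ref{lem:proba:aprox} providing the uniform conditional exponential-moment bound that makes the induction possible.

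First, fix $\alpha < \sigma$ and pick an intermediate value $\alpha < \alpha' < \sigma$. Since $\sigma_M \to \sigma$, choose $M \in \NN$ with $\sigma_M > \alpha'$. By hypothesis, every conditional distribution of $x_n$ given $\mathcal{F}_n$ satisfies both $\EE(\min\{x_n,M\}\,|\,\mathcal{F}_n) \ge \sigma_M$ and $\EE(\exp(-\beta x_n)\,|\,\mathcal{F}_n) \le C$. So I can apply Lemma \ref{lem:proba:aprox} (uniformly, since the constants $M,\sigma_M,C,\beta$ do not depend on $n$) to obtain a single $\beta' = \beta_{\alpha'} > 0$ such that
\begin{equation*}
    \forall n \in \NN,\; \EE\left(\exp(-\beta' x_n)\,\middle|\,\mathcal{F}_n\right) \le \exp(-\beta'\alpha').
\end{equation*}

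Second, by the tower property and induction on $n$, using that $\overline{x}_{n+1} = \overline{x}_n + x_n$ with $\overline{x}_n$ being $\mathcal{F}_n$-measurable:
\begin{equation*}
    \EE\left(\exp(-\beta' \overline{x}_{n+1})\right) = \EE\left(\exp(-\beta' \overline{x}_n)\, \EE(\exp(-\beta' x_n)\,|\,\mathcal{F}_n)\right) \le \exp(-\beta'\alpha')\, \EE\left(\exp(-\beta' \overline{x}_n)\right),
\end{equation*}
which yields $\EE(\exp(-\beta' \overline{x}_n)) \le \exp(-\beta'\alpha' n)$ for all $n$.

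Finally, Markov's inequality applied to the non-negative random variable $\exp(-\beta' \overline{x}_n)$ gives
\begin{equation*}
    \PP(\overline{x}_n \le \alpha n) = \PP\left(\exp(-\beta' \overline{x}_n) \ge \exp(-\beta'\alpha n)\right) \le \exp(\beta'\alpha n)\,\EE(\exp(-\beta' \overline{x}_n)) \le \exp(-\beta'(\alpha'-\alpha) n),
\end{equation*}
so the conclusion holds with $\beta_\alpha := \beta'(\alpha'-\alpha) > 0$. There is no real obstacle here; the only subtle point is to ensure Lemma \ref{lem:proba:aprox} is applied uniformly in $n$, which follows because the hypotheses of the present lemma are uniform conditional estimates, and because we buffer between $\alpha$ and $\sigma$ via the intermediate $\alpha'$ to absorb the $\exp(\beta'\alpha n)$ factor coming from Markov.
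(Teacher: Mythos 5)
Your proof is correct and follows essentially the same route as the paper: invoke Lemma \ref{lem:proba:aprox} conditionally (with the uniformity justified exactly as you note) to get a tilted conditional exponential moment bound, iterate via the tower property, and finish with Markov using the buffer $\alpha' - \alpha$. The paper introduces a second intermediate level $\alpha''$ where you use $\sigma_M$ itself, but this is a cosmetic difference, not a different argument.
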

	
	\begin{proof}
		Let $\alpha< \sigma$ and let $\alpha<\alpha'<\alpha''<\sigma$. Let $M\in\NN$ be such that $\EE{(\min\{x_n,M\}|\mathcal{F}_n)}\ge\alpha''$ for all $n\in\NN$. Then by Lemma \ref{lem:proba:aprox}, there is a constant $\beta'>0$ such that for all $n\in\NN$, we have:
		\begin{equation*}
			\EE(\exp{(-\beta' x_n)}\,|\,\mathcal{F}_n)\le \exp(-\beta'\alpha').
		\end{equation*}
		Then by induction on $n$, we have $\EE{(\exp{(-\beta' \overline{x}_n)})}\le\exp{(-n\beta'\alpha')}$ for all $n\in\NN$. Then by Markov's inequality, we get $\PP{(\overline x_n\le\alpha n)}\le\exp{(-n\beta'{(\alpha'-\alpha)})}$, which proves the claim for $\beta_\alpha := \beta' (\alpha'-\alpha)$. 
	\end{proof}
	
	Note that the existence of the sequence $(\sigma_M)$ such that $\EE{(\min\{x_n,M\}\,|\,\mathcal{F}_n)}\ge\sigma_M$ for all $M,n\in\NN$ is satisfied when $(x_n)$ is i.i.d. and $\EE(x_0) \ge \sigma$. 
	However, if we only assumed that $\EE{(x_n\,|\,\mathcal{F}_n)} \ge 1$ for all $n$, then we may assume that for all $n$, $x_n$ takes value $n^2$ with probability $n^{-2}$ and $0$ otherwise and in this case $\PP(\forall n\in\NN,\,\overline{x}_n = 0) = \prod_{n = 1}^{\infty} (1-n^{-2}) > 0$.
	
	\begin{Cor}\label{cor:ldev-classique}
		Let $(x_n)_n$ be a random independent sequence of real numbers. 
		Assume that there exists $\beta > 0$ such that $\EE(\exp(-\beta x_0)) < +\infty$.
		Then the random sequence $(\overline{x_n})_{n\in\NN}$ satisfies large deviations inequalities below the speed $\EE(x_0)$.
	\end{Cor}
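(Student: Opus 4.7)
My plan is to deduce this corollary as a direct specialization of Lemma \ref{lem:proba:ldev} in the i.i.d. case. Concretely, I would endow the probability space with the natural filtration $\mathcal{F}_n := \sigma(x_0, \dots, x_{n-1})$, so that $x_n$ is $\mathcal{F}_{n+1}$-measurable. Since the sequence is i.i.d., the conditional expectations collapse to ordinary expectations: for every $n\in\NN$ one has $\EE(\exp(-\beta x_n)\,|\,\mathcal{F}_n) = \EE(\exp(-\beta x_0)) =: C < +\infty$, which is exactly the first hypothesis of Lemma \ref{lem:proba:ldev}.

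For the second hypothesis, I would set $\sigma_M := \EE(\min\{x_0, M\})$, which by independence satisfies $\EE(\min\{x_n, M\}\,|\,\mathcal{F}_n) = \sigma_M$ for every $n$, and is non-decreasing in $M$. The only mild point to verify is that $\sigma_M \to \EE(x_0)$ as $M\to+\infty$: this follows from the monotone convergence theorem applied to the non-negative non-decreasing sequence $\min\{x_0,M\} + x_0^-$, provided $\EE(x_0^-) < +\infty$. This finiteness is itself a consequence of the exponential moment hypothesis, using $\beta t \le \exp(\beta t)$ for $t \ge 0$ to obtain $\beta\, \EE(x_0^-\,\mathbf{1}_{x_0<0}) \le \EE(\exp(-\beta x_0)) < +\infty$.

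With these two hypotheses in place, Lemma \ref{lem:proba:ldev} immediately yields that for every $\alpha < \EE(x_0)$ there exists $\beta_\alpha > 0$ such that $\PP(\overline{x}_n \le \alpha n) \le \exp(-\beta_\alpha n)$ for all $n$, which is exactly the large deviations inequality below the speed $\EE(x_0)$ in the sense of Definition~\ref{def:proba:ldev}. There is no real obstacle here; the whole content lies in matching the bookkeeping of Lemma \ref{lem:proba:ldev} in the i.i.d. setting, the only delicate micro-step being the justification that $\lim_M \sigma_M = \EE(x_0)$ (treating separately the case $\EE(x_0) = +\infty$, where $\sigma_M\to +\infty$ and the conclusion holds trivially for every finite speed).
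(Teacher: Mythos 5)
Your proof is correct and takes essentially the same route as the paper: endow the space with the cylinder filtration, set $\sigma_M := \EE(\min\{x_0,M\})$, and invoke Lemma~\ref{lem:proba:ldev}. The only difference is that you spell out why $\sigma_M \to \EE(x_0)$ (monotone convergence applied to $\min\{x_0,M\}+x_0^-$, with $\EE(x_0^-)<+\infty$ extracted from the exponential moment via $\beta t \le e^{\beta t}$) and cover the $\EE(x_0)=+\infty$ case explicitly; the paper states this convergence without justification, so your additions are sound fill-ins rather than a divergence in method.
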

	
	\begin{proof}
		Let $\beta >0$ be such that $\EE(\exp(-\beta x_0)) < +\infty$ and let $C = \EE(\exp(-\beta x_0))$.
		For all $M \in\NN$, let $\sigma_M := \EE(\min\{x_0,M\})$.
		Let $(\mathcal{F}_n)$ be the cylinder filtration associated to $(x_n)$. 
		Then $\sigma_M \to \EE(x_0)$ so by Lemma \ref{lem:proba:ldev}, the random sequence $(\overline{x_n})_{n\in\NN}$ satisfies large deviations inequalities below the speed $\EE(x_0)$.
	\end{proof}

	\begin{Def}[Large deviations inequalities]\label{def:proba:ldev}
		Let $(x_n)_{n\in\NN}$ be a random sequence of real numbers and let $\sigma\in\RR\cup\{+\infty\}$ be a constant. We say that $(x_n)_{n\in\NN}$ satisfies large deviations inequalities below the speed $\sigma$ if we have:
		\begin{equation*}
			\forall \alpha<\sigma,\; \exists C,\beta>0,\; \forall n\in\NN,\;\PP{(x_n\le\alpha n)}\le C \exp{(-\beta n)}.
		\end{equation*}
		We say that $(x_n)_{n\in\NN}$ satisfies large deviations inequalities above the speed $-\sigma$ if $(-x_n)_{n\in\NN}$ satisfies large deviations inequalities below the speed $\sigma$.
	\end{Def}
	
	\begin{Rem}\label{rem:ldevconv}
		Let ${(x_n)}$ be a random sequence of real numbers and let $\sigma\in\RR\cup\{+\infty\}$. Assume that $(x_n)$ satisfies large deviations inequalities below the speed $\sigma$. Then by Borel Cantelli's Lemma, we have almost surely $\liminf \frac{x_n}{n} \ge \alpha$ for all $\alpha < \sigma$ so $\liminf \frac{x_n}{n} \ge \sigma$ almost surely. If we moreover assume that $\sigma$ is finite and if $(x_n)$ satisfies large deviations inequalities above the same speed $\sigma$, then $\lim \frac{x_n}{n} = \sigma$ almost surely.
	\end{Rem}
	
	\begin{Rem}[Convenient reformulation of Definition \ref{def:proba:ldev}]\label{rem:convenient-reformulation}
		We call decreasing exponential function a function of type $n \mapsto C \exp(-\beta n)$ with $C > 0$ and $\beta > 0$. 
		
		Note that saying that a random sequence $(x_n)$ satisfies large deviations inequalities below a speed $\sigma$ means that for all $\alpha < \sigma$, the function $n \mapsto \PP(x_n < \alpha n)$ is bounded above by a decreasing exponential function.
		It is equivalent to saying that for all $\alpha < \sigma$, the function $n \mapsto \PP(\exists m \ge n,\; x_{m} < \alpha n)$ is bounded above by a decreasing exponential function.
		
		Note also that a sum of finitely many decreasing exponential functions is bounded above by a decreasing exponential function. 
	\end{Rem}
	
	Now we show that random sequences that satisfy large deviations inequalities behave well under some compositions. 
	
	\begin{Lem}\label{lem:proba:ldevcompo}
		Let $(\Omega, \PP)$ be a probability space.
		Let $\sigma, \sigma'\in\RR\cup\{+\infty\}$. 
		Let ${(x_n)}_{n\in\NN}$ and ${(x'_n)}_{n\in\NN}$ be two random sequences of real numbers that satisfy large deviations inequalities below the speeds $\sigma$ and $\sigma'$ respectively. Let ${(y_n)_{n\in\NN}}$ be a random sequence of real numbers. Let $C_y, \beta_y > 0$. Assume that $\PP(y_n \le -t) \le C_y \exp(-\beta_y t)$ for all $n \in\NN$ and all $t \ge 0$. Let $(k_n)_{n\in\NN}$ be a random non-decreasing sequence of non-negative integers and let $\kappa \in (0, + \infty)$. Then:
		\begin{enumerate}
			\item\label{compo:shift} The shifted sequence ${(x_n + y_n)}_{n\in\NN}$ satisfies large deviations inequalities below the speed $\sigma$.
			\item\label{compo:min} The minimum ${(\min\{x_n,x'_n\})}_{n\in\NN}$ satisfies large deviations inequalities below the speed $\min\{\sigma,\sigma'\}$.
			\item\label{compo:max} The maximum ${(\max\{x_n,x'_n\})}_{n\in\NN}$ satisfies large deviations inequalities below the speed $\max\{\sigma,\sigma'\}$.
			\item\label{compo:sum} For all $\lambda,\lambda'\ge 0$, the sum ${(\lambda x_n+ \lambda' x'_n)}_{n\in\NN}$ satisfies large deviations inequalities below the speed $\lambda\sigma + \lambda'\sigma'$.
			\item\label{compo:compo} Assume that ${(k_n)}_{n\in\NN}$ satisfies large deviations inequalities below the speed $\kappa$. Then the composition ${(x_{k_n})}_{n\in\NN}$ satisfies large deviations inequalities below the speed $\kappa\sigma$.
			\item\label{compo:rec} Let ${(r_m)}_{m\in\NN}$ be the reciprocal function of $(k_n)_{n\in\NN}$, defined by $r_m := \max\{n\in\NN\,|\,k_n\le m\}$ for all $m\in\NN$. Assume that ${(k_n)}_{n\in\NN}$ satisfies large deviations inequalities below the speed $\kappa$. Then $(r_m)_{m\in\NN}$ satisfies large deviations inequalities above the speed $\kappa^{-1}$.
		\end{enumerate}
	\end{Lem}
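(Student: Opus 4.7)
The plan is to treat the six items in turn by elementary union-bound arguments, exploiting the fact noted in Remark \ref{rem:convenient-reformulation} that a finite sum of decreasing exponential functions is dominated by a single one, so union bounds preserve the ``large deviations below a speed'' property. In each case the strategy is: given $\alpha$ strictly below the target speed, pick auxiliary thresholds that leave room for slack and decompose the bad event into a finite union of events each of which is either directly controlled by the hypothesis or reduces to it.

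For \eqref{compo:shift}, I would pick $\alpha<\alpha'<\sigma$ and write $\{x_n+y_n\le \alpha n\}\subset\{x_n\le\alpha' n\}\cup\{y_n\le-(\alpha'-\alpha)n\}$; the first term is small by hypothesis on $(x_n)$ and the second by the exponential tail on $(y_n)$. Items \eqref{compo:min} and \eqref{compo:max} are immediate: $\{\min\{x_n,x'_n\}\le\alpha n\}$ equals a union and $\{\max\{x_n,x'_n\}\le\alpha n\}$ an intersection of the corresponding events, so for $\alpha$ below the minimum (resp.\ maximum) of $\sigma,\sigma'$ we bound by a sum (resp.\ by the smaller of the two bounds). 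For \eqref{compo:sum}, given $\alpha<\lambda\sigma+\lambda'\sigma'$, I would choose $\alpha_1<\sigma$ and $\alpha'_1<\sigma'$ with $\lambda\alpha_1+\lambda'\alpha'_1>\alpha$ so that $\{\lambda x_n+\lambda' x'_n\le\alpha n\}\subset\{x_n\le\alpha_1 n\}\cup\{x'_n\le\alpha'_1 n\}$.

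The two items requiring slightly more care are the composition \eqref{compo:compo} and the reciprocal \eqref{compo:rec}. For \eqref{compo:compo}, given $\alpha<\kappa\sigma$, I would choose $\kappa'<\kappa$ with $\alpha/\kappa'<\sigma$, and split
\[
\{x_{k_n}\le\alpha n\}\subset\{k_n\le\kappa' n\}\cup\bigl\{\exists m\ge\kappa' n,\; x_m\le(\alpha/\kappa')m\bigr\}.
\]
The first set is exponentially small by the hypothesis on $(k_n)$, and using the convenient reformulation of Remark \ref{rem:convenient-reformulation}, the second is exponentially small in $\kappa' n$ hence in $n$, since $\alpha/\kappa'<\sigma$. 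For \eqref{compo:rec}, given $\alpha>\kappa^{-1}$, I would use the duality $\{r_m\ge\alpha m\}=\{k_{\lceil\alpha m\rceil}\le m\}$, then pick $\kappa'<\kappa$ with $\alpha\kappa'>1$ so that $m<\kappa'\lceil\alpha m\rceil$ for all sufficiently large $m$, reducing the probability to $\PP(k_{\lceil\alpha m\rceil}<\kappa'\lceil\alpha m\rceil)$, which is exponentially small in $m$ by hypothesis on $(k_n)$.

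I do not anticipate a substantial obstacle: each step is bookkeeping. The only mildly subtle point is the composition \eqref{compo:compo}, where one must invoke the uniform version of the large deviations property (``$\exists m\ge n$, $x_m\le\alpha' m$'' from Remark \ref{rem:convenient-reformulation}) rather than the pointwise one, because the random index $k_n$ can be arbitrarily large.
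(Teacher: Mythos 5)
Your proof is correct and follows essentially the same union-bound strategy as the paper for all six items: pick intermediate thresholds leaving slack, decompose the bad event into a finite union, and use Remark~\ref{rem:convenient-reformulation} to conclude. The one place where your route differs is item~\eqref{compo:rec}: instead of the paper's nested-existential bound $\PP(r_{m_0}\ge\alpha^{-1}m_0)\le\PP(\exists m\ge m_0,\exists n,\,(n\ge\alpha^{-1}m)\wedge(k_n\le m))\le\PP(\exists n\ge\alpha^{-1}m_0,\,k_n\le\alpha n)$, you exploit the monotonicity of $(k_n)$ to get the exact duality $\{r_m\ge\alpha m\}=\{k_{\lceil\alpha m\rceil}\le m\}$, which reduces the tail estimate to a single application of the hypothesis on $(k_n)$ and is a little cleaner. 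One small imprecision in item~\eqref{compo:compo}: the quantity you actually need to bound is $\PP\bigl(\exists m\ge N,\;x_m\le\alpha' m\bigr)$, which follows directly from the union bound $\sum_{m\ge N}C\exp(-\beta m)$; this is not literally what Remark~\ref{rem:convenient-reformulation} states (there the threshold is $\alpha n$ uniformly, not $\alpha m$), so you should just write the union bound rather than cite the remark.
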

	
	\begin{proof}
		We first prove \eqref{compo:shift}. Let $\alpha < \alpha' < \sigma$. By assumption, there are two constants $C_x, \beta_x > 0$ such that $\PP(x_n\le\alpha' n) \le C_x\exp(-\beta_x n)$.
		 Write $\beta:=\min\{\beta_x,\beta_y{(\alpha' - \alpha)}\}$ and $C := C_x +C_y$. Then we have:
		\begin{align*}
			\forall n\in\NN,\;\PP{(x_n + y_n \le \alpha n)} & \le \PP{(x_n \le \alpha' n)}+\PP{(y\le{(\alpha - \alpha')}n)}\\
			& \le C_x\exp{(-\beta_x n)} + C_y\exp{(-\beta_y(\alpha'-\alpha) n)}\\
			& \le C\exp\left(-\beta n\right).
		\end{align*}
		In other words the function $n \mapsto \PP{(y_n+x_n\le\alpha n)}$ is bounded by the sum of the functions $n \mapsto \PP{(x_n\le\alpha' n)}$ and $n \mapsto \PP{(y\le{(\alpha-\alpha')}n)}$ which are themselves bounded by decreasing exponential functions so their sum also is by Remark \ref{rem:convenient-reformulation}.
		
		Now to prove \eqref{compo:min} assume that $\sigma \le \sigma'$. This is not restrictive since $(\sigma, (x_n)_{n\in\NN})$ and $(\sigma',(x'_n)_{n\in\NN})$ play symmetric roles in Lemma \ref{lem:proba:ldevcompo}. Then for all $\alpha < \sigma$, we have:
		\begin{equation*}
			\forall n\in\NN, \; \PP(\min\{x_n, x'_n\} \le \alpha n) \le \PP(x_n \le \alpha n) + \PP(x'_n \le \alpha n).
		\end{equation*}
		Both terms of the sum on the right are bounded above by decreasing exponential functions of $n$ so $\PP{(\min\{x_n,x'_n\} \le \alpha n)}$ is bounded above by a decreasing exponential function of $n$. 
		
		To prove \eqref{compo:max}, we again assume that $\sigma \le \sigma'$. Then for all $\alpha < \sigma'$, we have $\PP{(\max\{x_n,x'_n\}\le\alpha n)}\le\PP{(x'_n\le\alpha n)}$ and by assumption $\PP(x'_n\le\alpha n)$ is bounded above by a decreasing exponential function of $n$.
		
		Now we prove \eqref{compo:sum}, let $\alpha_+ < \lambda \sigma + \lambda'\sigma'$. Let $\alpha <\sigma$ and $\alpha'<\sigma'$ be such that $\alpha_+ = \lambda\alpha + \lambda'\alpha'$.
		Such $\alpha, \alpha'$ always exist.
		Now note that:
		\begin{equation*}
			\PP{(\lambda x_n+\lambda'x'_n\le\alpha_+ n)} \le \PP{(x_n\le\alpha n)}+\PP{(x'_n\le\alpha' n)}
		\end{equation*}
		an both terms are bounded by decreasing exponential functions, which proves \eqref{compo:sum}.
		
		Now we prove \eqref{compo:compo}. Let $\alpha < \kappa\sigma$ and let $\alpha'<\sigma$ and $\alpha''<\kappa$ be such that $\alpha = \alpha'\alpha''$. 
		Now let $C_x,\beta_x>0$ be such that $\PP{(x_n\le\alpha' n)} \le C_x \exp{(-\beta_x n)}$ for all $n\in \NN$ and let $C_k,\beta_k$ be such that $\PP{(k_n\le \alpha'' n)}\le C_k\exp{(-\beta_k n)}$ for all $n\in\NN$. 
		Such $C_x, \beta_x, C_k,\beta_k$ exist by assumption. 
		For all $n\in\NN$, we have:
		\begin{align*}
			\PP{(x_{k_n}\le\alpha n)} & \le \PP{(x_{k_n}\le\alpha' k_n \cap k_n\ge\alpha'' n)}+\PP{(k_n\le\alpha'' n)}\\
			& \le \sum_{k\ge \alpha'' n}\PP{(x_{k}\le\alpha' k)}+\PP{(k_n\le\alpha'' n)}\\
			& \le \sum_{k\ge \alpha'' n}{C_x}\exp{(-\beta_x k)}+C_k\exp{(-\beta_k n)}\\
			& \le \frac{C_x}{\beta_x}\exp{(-\beta_x \alpha'' n)}+C_k\exp{(-\beta_k n)}\\
			& \le \left(\frac{C_x}{\beta_x}+C_k\right)\exp\left(-\min\{\beta_x\alpha'',\beta_k\} m_0\right),
		\end{align*}
		Which proves \eqref{compo:compo}.
		
		To prove \eqref{compo:rec}, we use a similar method. Let $\alpha<\kappa$ and $C,\beta>0$ be such that $\PP{(k_n\le\alpha n)}\le C\exp{(-\beta n)}$ for all $n\in\NN$. Such $C, \beta$ exist for all $\alpha < \kappa$. Then for all $m_0\in\NN$, we have:
		\begin{align*}
			\PP{(r_{m_0}\ge \alpha^{-1} m_0)} & \le\PP{(\exists m\ge m_0,\; r_m\ge\alpha^{-1} m)}\\
			& \le\PP{(\exists m\ge m_0,\; \exists n\in\NN,\; {(n\ge\alpha^{-1} m)}\wedge {(k_n\le m)})}\\
			&\le \PP{(\exists n\ge \alpha^{-1}m_0,\; k_n\le \alpha n)}\\
			&\le \frac{C}{\beta}\exp{(-\beta\alpha^{-1} n)}.
		\end{align*}
		Now note that for all $\alpha'>\kappa^{-1}$, we have $\alpha^{-1}<\kappa$. The above reasoning tells us that for all $\alpha'>\kappa^{-1}$, we have constants $C', \beta'> 0$ such that $\PP(r_m \ge \alpha'_m) \le C'\exp(-\beta m')$ for all $m$ (namely $C' = \frac{C}{\beta}$ and $\beta' = \beta \alpha^{-1}$ with $C, \beta$ as above for $\alpha := \alpha'^{-1}$).
	\end{proof}

	\subsection{About moments} 
	
	In this section, we prove useful results about sums of random variables that have a finite polynomial moment.
	
	\begin{Rem}
		Note that a probability distribution $\eta$ on $\RR_{\ge 0}$ is characterized by the right-continuous and non-increasing map $t \mapsto \eta(t, +\infty)$.
	\end{Rem}
	
	\begin{Def}[$\mathrm{L}^p$-integrability]\label{def:lpnorm}
		Let $p\in{(0,+\infty)}$ and let $\eta$ be a probability distribution on $\RR_{\ge 0}$. We define the strong $\mathrm{L}^p$ moment of $\eta$ as:
		\begin{equation}\tag{strong-L$^p$}\label{eqn:strlp}
			M_p{(\eta)} := \int_{0}^{+\infty}t^{p-1}\eta(t,+\infty)dt.
		\end{equation}
		We define the weak $\mathrm{L}^p$ moment of $\eta$ as:
		\begin{equation}\tag{weak-L$^p$}\label{eqn:wiklp}
			W_p{(\eta)} := \sup_{t\ge 0}t^p\eta\left(t,+\infty\right) < +\infty.
		\end{equation}
		We say that $\eta$ is strongly $\mathrm{L}^p$ if $M_p(\eta) < + \infty$ and we say that $\eta$ is weakly $\mathrm{L}^p$ if $W_p(\eta) < + \infty$.
	\end{Def}

	\begin{Def}[Trunking]\label{def:trunking}
		Let $\eta$ be a non-negative measure on $\RR_{\ge 0}$ that has finite total mass \ie a non-negative multiple of a probability distribution. We call trunking of $\eta$ the distribution $\lceil\eta\rceil$ characterized by:
		\begin{equation*}
			\forall t\ge 0,\; \lceil\eta\rceil{(t,+\infty)}=\min\{1,\eta'{(t,+\infty)}\}.
		\end{equation*}
		Note that if $\eta$ has total mass less that one, then $\lceil\eta\rceil = \eta + (1 - \eta{(\RR_{\ge 0})}) \delta_0$.
	\end{Def}
	
	\begin{Def}[Push-up]\label{def:pushup}
		Let $\eta$ be a probability distribution on $\RR\ge 0$ and let $B \ge 0$ be a constant. We define the push-up of $\eta$ by $B$ as the probability distribution $B \vee \eta$ on $\RR_{\ge B}$ characterized by:
		\begin{equation}
			\forall t \ge B, \; (B \vee \eta)(t, +\infty) = \eta(t, +\infty).
		\end{equation}
		In other words, for any random variable $x \sim \eta$, we have $\max\{x,B\} \sim B \vee \eta$.
	\end{Def}

	\begin{Def}[Coarse convolution]\label{def:corconv}
		Let $\eta$ be a probability distribution on $\RR_{\ge 0}$ and let $k\ge 1$ be an integer. We define the coarse convolution $\eta^{\uparrow k}$ as:
		\begin{equation*}
			\forall t \ge 0,\; \eta^{\uparrow k}{(t,+\infty)} := \min\left\{1, k \eta\left(\frac{t}{k}, +\infty\right)\right\}.
		\end{equation*}
	\end{Def}
	
	\begin{Lem}\label{lem:corconv}
		Let $k\ge 1$ be an integer, let $\eta$ be a probability distribution on $\RR_{\ge 0}$ and let $x_1,\dots,x_k$ be random variables such that:
		\begin{equation}
			\forall t \ge 0,\;\forall i\in\{1,\dots,k\},\; \PP{(x_i >t)} \le \eta{(t,+\infty)}.
		\end{equation}
		Then we have:
		\begin{equation}
			\forall t\ge 0,\;\PP{(x_1 + \dots + x_k > t)} \le \eta^{\uparrow k}{(t,+\infty)}.
		\end{equation}
	\end{Lem}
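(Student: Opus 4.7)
The statement is a clean union-bound estimate, so the plan is elementary and the only real choice is how cleanly to package it. First, I would observe the deterministic pigeonhole fact: if $x_1 + \dots + x_k > t$ and all $x_i \ge 0$, then necessarily $\max_i x_i > t/k$ (otherwise the sum would be at most $k \cdot t/k = t$). This gives the inclusion of events
\[
\{x_1 + \dots + x_k > t\} \subset \bigcup_{i=1}^k \{x_i > t/k\}.
\]

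Then I would apply the union bound and the hypothesis $\PP(x_i > s) \le \eta(s, +\infty)$ with $s = t/k$:
\[
\PP(x_1 + \dots + x_k > t) \le \sum_{i=1}^k \PP(x_i > t/k) \le k\, \eta(t/k, +\infty).
\]
Since probabilities are trivially bounded by $1$, we may replace the right-hand side by $\min\{1, k\,\eta(t/k, +\infty)\}$, which is precisely $\eta^{\uparrow k}(t, +\infty)$ by Definition \ref{def:corconv}.

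There is no real obstacle here: the lemma is essentially a restatement of the union bound combined with the pigeonhole principle, and the definition of $\eta^{\uparrow k}$ was tailored to make exactly this estimate hold. The only thing to be a little careful about is that the $x_i$'s are not assumed independent, which is why a union bound (rather than a convolution computation) is the natural tool; this is also why the bound involves the coarser factor $k$ instead of what one would obtain for a genuine $k$-fold convolution.
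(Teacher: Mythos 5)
Your proof is correct and is the same argument the paper uses: pigeonhole (if the sum exceeds $t$ then some term exceeds $t/k$), followed by the union bound, followed by capping at $1$ to match Definition~\ref{def:corconv}. The only cosmetic remark is that the non-negativity of the $x_i$'s is not actually needed for the pigeonhole step (if $\max_i x_i \le t/k$ then $\sum x_i \le t$ regardless of sign), though it does hold in the paper's applications.
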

	
	\begin{proof}
		Let $t\ge 0$. We have:
		\begin{align*}
			\PP{(x_1+\cdots+ x_k >t)} & \le \PP\left(\exists i\in\{1,\dots,k\},\; x_i > \frac{t}{k}\right) \\
			& \le k \eta\left(\frac{t}{k},+\infty\right).\qedhere
		\end{align*}
	\end{proof}
	
	\begin{Lem}\label{lem:convol}
		Let $\eta$ be a probability distribution on $\RR_{\ge 0}$ and let $k \in \NN_{\ge 1}$. We have:
		\begin{align}
			W_p{(\eta^{\uparrow k})} & \le k^{p+1} W_p{(\eta)} \label{kW} \\
			M_p{(\eta^{\uparrow k})} & \le k^{p+1} M_p{(\eta)} \label{kM}
		\end{align}
	\end{Lem}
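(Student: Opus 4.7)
The plan is to prove both inequalities by a direct computation using the definitional bound $\eta^{\uparrow k}(t,+\infty) \le k\,\eta(t/k,+\infty)$, which follows immediately from Definition \ref{def:corconv} (dropping the $\min$ with $1$ can only increase the right-hand side). The trick in both cases is the same: perform the change of variables $s = t/k$, which introduces a factor of $k^p$ from rescaling $t^p$ (or $t^{p-1}$ together with $dt$), combining with the prefactor $k$ to yield the advertised $k^{p+1}$.

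For the weak moment \eqref{kW}, I would write
\begin{equation*}
W_p(\eta^{\uparrow k}) = \sup_{t\ge 0} t^p \eta^{\uparrow k}(t,+\infty) \le \sup_{t\ge 0} t^p \cdot k\,\eta(t/k,+\infty),
\end{equation*}
and then substitute $s = t/k$ to get $\sup_{s\ge 0} (ks)^p \cdot k\,\eta(s,+\infty) = k^{p+1} W_p(\eta)$.

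For the strong moment \eqref{kM}, I would similarly bound
\begin{equation*}
M_p(\eta^{\uparrow k}) = \int_0^{+\infty} t^{p-1}\eta^{\uparrow k}(t,+\infty)\,dt \le k\int_0^{+\infty} t^{p-1}\eta(t/k,+\infty)\,dt,
\end{equation*}
and apply the change of variables $s = t/k$, $dt = k\,ds$, giving $k\int_0^{+\infty}(ks)^{p-1}\eta(s,+\infty)\,k\,ds = k^{p+1}M_p(\eta)$.

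There is no real obstacle here; both bounds follow from elementary scaling once one notes that the trunking (the $\min$ with $1$) only helps and can be discarded for the purpose of the upper bound. The only care needed is to confirm that the inequality $\eta^{\uparrow k}(t,+\infty) \le k\,\eta(t/k,+\infty)$ is literally valid for every $t \ge 0$, which is immediate from the definition.
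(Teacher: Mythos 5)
Your proof is correct and is essentially identical to the paper's: both drop the $\min$ with $1$ to get the bound $\eta^{\uparrow k}(t,+\infty) \le k\,\eta(t/k,+\infty)$ and then substitute $t = ks$ to extract the factor $k^{p+1}$ in each of the two moments.
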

	
	\begin{proof}
		For the weak moment, we have:
		\begin{align*}
			W_p{(\eta^{\uparrow k})} & = \max t^{p}\eta^{\uparrow k} {(t,+\infty)} \\
			& \le \max t^{p} k \eta\left(\frac{t}{k},+\infty\right) \\
			& \le \max (kt')^{p} k \eta\left(t',+\infty\right) \\
			& \le k^{p+1} W_p{(\eta)}.
		\end{align*}
		This proves \eqref{kW}.
		For the strong moment, by integration by parts, we have:
		\begin{align*}
			M_p{(\eta^{\uparrow k})} & = \int_{0}^{+\infty}t^{p-1}\eta^{\uparrow k} {(t,+\infty)}dt \\
			& \le \int_{0}^{+\infty} t^{p-1} k \eta\left(\frac{t}{k},+\infty\right)dt.
		\end{align*}
		Then by the linear change of integration variable $t = ku$, we have:
		\begin{equation*}
			\int t^{p-1} k \eta\left(\frac{t}{k},+\infty\right)dt = \int u^{p-1} k^{p+1} \eta\left(u,+\infty\right)du = k^{p+1}M_p(\eta).
		\end{equation*}
		This proves \eqref{kM}.
	\end{proof}

	\begin{Lem}\label{lem:descrisum}
		Let $n$ be a random integer and let $x_1,\dots, x_n$ be non-negative real random variables. Let $B,C_1$ be such that:
		\begin{equation}\label{beemovie}
			\forall t\ge B,\; \forall k\in\NN,\; \forall m \le k,\; \PP{(x_m\ge t\,|\, n = k)}\le C_1 \eta{(t)}.
		\end{equation}
		Let $C_2,\beta>0$ be such that:
		\begin{equation}
			\forall k\in\NN,\;\PP{(n=k)}\le C_2\exp{(-\beta k)}.
		\end{equation}
		Then for $C := C_1 C_2$, we have:
		\begin{equation*}
			\forall t\ge 0,\; \PP{(x_1 + \cdots + x_n > t)} \le \sum_{k=0}^\infty C\exp{(-\beta k)}{(B \vee \eta)}^{\uparrow k}.
		\end{equation*}
	\end{Lem}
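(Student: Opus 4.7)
The approach is to condition on the value of $n$ and combine a union bound with the definition of the push-up and coarse convolution. First, by the law of total probability, write
\begin{equation*}
    \PP(x_1 + \cdots + x_n > t) = \sum_{k=0}^{+\infty} \PP(n = k)\,\PP(x_1 + \cdots + x_k > t \,|\, n = k),
\end{equation*}
so it suffices to produce, for each fixed $k$, the estimate $\PP(x_1 + \cdots + x_k > t \,|\, n=k) \le C_1 (B \vee \eta)^{\uparrow k}(t,+\infty)$; multiplying by $C_2 \exp(-\beta k)$ and summing yields the claim with $C = C_1 C_2$.

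For the conditional bound, note that $x_1 + \cdots + x_k > t$ forces at least one $x_m$ to exceed $t/k$ (pigeonhole). Combining the union bound with the trivial bound $1$:
\begin{equation*}
    \PP\!\left(x_1 + \cdots + x_k > t \,\middle|\, n = k\right) \le \min\!\left\{1, \sum_{m=1}^{k} \PP\!\left(x_m > t/k \,\middle|\, n = k\right)\right\}.
\end{equation*}
By hypothesis \eqref{beemovie}, when $t/k \ge B$ each term in the sum is bounded by $C_1 \eta(t/k,+\infty) = C_1 (B \vee \eta)(t/k,+\infty)$; when $t/k < B$ we instead use the trivial bound together with the identity $(B \vee \eta)(t/k,+\infty) = 1$. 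In both regimes we get
\begin{equation*}
    \PP\!\left(x_1 + \cdots + x_k > t \,\middle|\, n = k\right) \le \min\!\left\{1,\, k C_1 (B \vee \eta)(t/k,+\infty)\right\}.
\end{equation*}

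To convert this into $C_1 (B \vee \eta)^{\uparrow k}(t,+\infty) = C_1 \min\{1, k (B \vee \eta)(t/k,+\infty)\}$, I will assume without loss of generality that $C_1 \ge 1$; this is harmless because enlarging $C_1$ only weakens the hypothesis \eqref{beemovie}, while the conclusion is monotone in $C_1$ through the product $C = C_1 C_2$. Under $C_1 \ge 1$ the elementary inequality $\min\{1, C_1 A\} \le C_1 \min\{1, A\}$ holds for every $A \ge 0$ (checked separately on $A \le 1/C_1$, $1/C_1 < A \le 1$, and $A > 1$), which gives exactly the desired conditional bound.

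Once the conditional estimate is in place, the proof is finished by a single application of total probability together with the assumed exponential decay of $\PP(n=k)$. There is no genuine obstacle here; the only minor delicacy is the case $C_1 < 1$, handled by the WLOG reduction above, and the truncation at $1$ in $(B \vee \eta)^{\uparrow k}$, which is precisely what makes the comparison work.
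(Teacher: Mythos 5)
Your proof follows the same route as the paper's: condition on $n=k$, bound $\PP(x_1+\cdots+x_k > t\mid n=k)$ by a union bound after splitting $t$ into $k$ equal pieces (the paper packages this step as Lemma \ref{lem:corconv} combined with Definition \ref{def:pushup}), then sum against $\PP(n=k)\le C_2\exp(-\beta k)$. The decomposition and the final summation are correct and identical in substance to the paper's.

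However, your ``without loss of generality $C_1 \ge 1$'' step does not actually work as a reduction, and it is worth noting that both your proof and the paper's implicitly require $C_1 \ge 1$. Enlarging $C_1$ weakens the hypothesis \eqref{beemovie}, but it \emph{also} weakens the conclusion (the bound scales with $C = C_1C_2$), so proving the claim for $\max\{C_1,1\}$ does not yield the claim for $C_1<1$. And indeed the statement is false for small $C_1$: take $\eta = \delta_0$, $B=1$, $\beta=1$, $C_2 = \exp(2)$, $n\equiv 2$ and $x_1=x_2=0.99$ almost surely. Then \eqref{beemovie} holds for every $C_1>0$ (since $\eta(t,+\infty)=0$ for $t\ge B$), the left-hand side at $t=1$ equals $1$, but the right-hand side is $C_1\sum_{k\ge 2}\exp(2-k)=C_1/(1-\exp(-1))$, which is $<1$ as soon as $C_1<1-\exp(-1)$. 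So the lemma genuinely needs $C_1\ge 1$ (which holds in every application in the paper, where $C_1 = 1/(1-\alpha)$); relatedly, your intermediate conditional bound $\min\{1,kC_1(B\vee\eta)(t/k,+\infty)\}$ is already not an upper bound for the conditional probability in the regime $t/k<B$, $kC_1<1$. Your inequality $\min\{1,C_1A\}\le C_1\min\{1,A\}$ for $C_1\ge 1$, and the union bound together with the trivial bound when $t/k<B$, are exactly the right observations; it is only the WLOG wrapper around them that should be replaced by an explicit hypothesis $C_1\ge 1$.
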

	
	\begin{proof}
		First note that \eqref{beemovie} with Definition \ref{def:pushup} and Lemma \ref{lem:corconv} implies that for all $k\in\NN$, we have:
		\begin{equation*}
			\forall t\ge 0,\, \PP{(x_1 + \cdots + x_k > t\,|\,n=k)} \le C_1 {(B \vee \eta)}^{\uparrow k}{(t,+\infty)}.
		\end{equation*}
		We do the computation, for all $t\ge 0$:
		\begin{align*}
			\PP{(x_1 + \cdots + x_n > t)} & = \sum_{k=0}^\infty\PP{( n = k )}\PP{( x_1 + \cdots + x_k > t \,|\, n = k )}\\
			& \le \sum_{k=0}^\infty C_2 \exp{(-\beta k)} C_1 {(B \vee \eta)}^{\uparrow k}{(t,+\infty)}.\qedhere
		\end{align*}
	\end{proof}
	
	\begin{Lem}\label{lem:simplification}
		Let $\eta$ be a non-trivial probability distribution on $\RR_{\ge 0}$ \ie $\eta \neq \delta_0$. Then, for all $B, C, D, \beta>0$, there are constants $C_0,\beta_0$ such that for all $t > 0$, we have:
		\begin{equation*}
			\sum_{k=0}^\infty C\exp{(-\beta k)}{(B \vee \eta)}^{\uparrow k}{(t - D,+\infty)} \le \sum_{k=0}^\infty C_0\exp{(-\beta_0 k)}\eta{(t/k,+\infty)}.
		\end{equation*}
	\end{Lem}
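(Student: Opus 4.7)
The plan is to split the sum on the left-hand side into a ``main'' part where the trunking in the definition of $(B\vee\eta)^{\uparrow k}$ is inactive and a ``tail'' part, then absorb the shift by $D$, the push-up by $B$, the combinatorial factor $k$, and the reindexing into the exponential. The non-triviality of $\eta$ is used only at the very end to dominate the tail.

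\textbf{Setup.} Since $\eta \neq \delta_0$, pick $t_0 > 0$ such that $\alpha := \eta(t_0,+\infty) > 0$. Note $\eta(t/k,+\infty) \ge \alpha$ whenever $k \ge t/t_0$. Also, observe that if $k \ge 1$ and $(t-D)/k \ge B$, then by Definition \ref{def:pushup} and Definition \ref{def:corconv},
\[
(B\vee\eta)^{\uparrow k}(t-D,+\infty) \le k\,\eta\!\left(\frac{t-D}{k},+\infty\right),
\]
while in all other cases the trivial bound $(B\vee\eta)^{\uparrow k}(t-D,+\infty)\le 1$ applies.

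\textbf{Main part.} First I would treat the regime $t \ge 2D$, so that $t-D \ge t/2$. Set $k^* := \lfloor (t-D)/B\rfloor$. For $1 \le k \le k^*$, the previous inequality gives
\[
C\exp(-\beta k)(B\vee\eta)^{\uparrow k}(t-D,+\infty) \le C k\exp(-\beta k)\,\eta\!\left(\frac{t}{2k},+\infty\right),
\]
using monotonicity of $\eta(\cdot,+\infty)$ and $(t-D)/k \ge t/(2k)$. Reindexing with $k' = 2k$ and using that $k'\exp(-\beta k'/4)$ is bounded (so $k\exp(-\beta k) \le C'\exp(-\tfrac{\beta}{4}\cdot 2k) = C'\exp(-\tfrac{\beta}{2}k')$ for some $C'$), the sum over $k \le k^*$ is dominated by
\[
\sum_{k'=1}^{\infty} C_1\exp(-\beta_1 k')\,\eta(t/k',+\infty),
\]
for suitable $C_1,\beta_1 > 0$ depending only on $C$ and $\beta$.

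\textbf{Tail part.} For $k > k^*$ the trivial bound yields
\[
\sum_{k>k^*} C\exp(-\beta k) \le \frac{C}{1-\exp(-\beta)}\exp(-\beta(t-D)/B) =: C_2 \exp(-\gamma t)
\]
with $\gamma := \beta/B$ (absorbing $\exp(\beta D/B)$ into $C_2$). On the other hand, choosing $\beta_0 < \min\{\beta_1,\gamma t_0/2\}$ and summing only over $k \ge t/t_0$ in the target expression gives
\[
\sum_{k=0}^\infty C_0\exp(-\beta_0 k)\,\eta(t/k,+\infty) \ge \alpha C_0 \sum_{k \ge t/t_0}\exp(-\beta_0 k) \ge \alpha C_0'\exp(-\beta_0 t/t_0),
\]
which dominates $C_2\exp(-\gamma t)$ once $C_0$ is chosen large enough (since $\beta_0/t_0 < \gamma$). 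Thus the tail is absorbed into the right-hand side.

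\textbf{Small $t$.} When $0 < t < 2D$, the left-hand side is bounded by the $t$-independent constant $C/(1-\exp(-\beta))$; by the same non-triviality estimate, the right-hand side is bounded below by $\alpha C_0'\exp(-\beta_0\cdot 2D/t_0)$, so choosing $C_0$ large enough absorbs this uniformly. Combining all three regimes with $C_0 := \max\{C_1, C_0'\}$ (up to a further inflation) and $\beta_0$ as above proves the inequality. The only mildly delicate step is the last one, balancing the two exponential rates $\gamma = \beta/B$ and $\beta_0/t_0$ so that the tail is genuinely swallowed by the non-trivial mass of $\eta$; this is precisely why the hypothesis $\eta \neq \delta_0$ is needed.
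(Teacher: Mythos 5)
Your proof is correct and follows essentially the same route as the paper's: bound the coarse convolution by $k\,\eta(\cdot)$, absorb the factor $k$ and the reindexing $k'=2k$ into a smaller exponential rate, and control the tail (where the trivial bound $\le 1$ is used) by exploiting $\eta\neq\delta_0$ to lower-bound the right-hand side; the paper merely absorbs $D$ once and for all via $B':=B+D$ rather than splitting off a separate $t<2D$ case. One small arithmetic slip: with $k'=2k$ you have $\exp(-\tfrac{\beta}{4}\cdot 2k)=\exp(-\tfrac{\beta}{2}k)=\exp(-\tfrac{\beta}{4}k')$, not $\exp(-\tfrac{\beta}{2}k')$, so the rate you get is $\beta_1=\beta/4$ rather than $\beta/2$; this is harmless since any positive rate suffices.
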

	
	\begin{proof}
		Note that ${(B \vee \eta)}^{\uparrow k}{(t - D,+\infty)} \le k \eta{(t/k-B - D,+\infty)}$ for all $t, B, D$ and for all $ k > 0 $. 
		Let $B' = B + D$.
		Note that for all $\beta''<\beta$, we have $\lim_k k\exp{({(\beta''-\beta)} k)} = 0$ and $\exp{({(\beta-\beta'')} k)} \ge 1$ so for $C''$ large enough, we have $k\exp{(-\beta k)}\le C'' \exp{(-\beta'' k)}$ for all $k$. Take such a $\beta''>0$ and such a $C''$. Now we re-index the sum by taking $k'= 2 k$ and write $\beta':=\beta/2$ and $C'= C C''$, then we have:
		\begin{align*}
			\sum_{k=0}^\infty C\exp{(-\beta k)}{(B \vee \eta)}^{\uparrow k}{(t - D,+\infty)} & \le \sum_{k=0}^\infty C C''\exp{(-\beta'' k)}\eta{(t/k-B',+\infty)} \\
			& \le  \sum_{k'=0}^\infty C'\exp{(-\beta' k')}\eta{(2t / k'-B',+\infty)} \\
			& \le \sum_{k'= 0 }^{\lceil t/B' \rceil - 1} C'\exp{(-\beta' k')}\eta{(t / k',+\infty)} + \sum_{k'= \lceil t/B' \rceil}^\infty C'\exp{(-\beta' k')} \\
			& \le \sum_{k'= 0 }^{+ \infty} C'\exp{(-\beta' k')}\eta{(t / k',+\infty)} + \frac{C'}{\beta'} \exp{(-\beta' t/ B')}.
		\end{align*}
		Now we use the fact that $\eta \neq \delta_0$ and take $a>0$ such that $\eta{(a,+\infty)}>0$. Then for all $t > 0$ and all $C_0 \ge 0$ and all $\beta_0 > 0 $, we have:
		\begin{equation*}
			\sum_{k= 0 }^{+ \infty} C_0\exp{(-\beta_0 k)}\eta{(t / k,+\infty)} \ge C_0 \exp(-\beta_0\lceil t / a \rceil) \eta\left(\frac{t}{\lceil t / a \rceil},+\infty\right) \ge C_0 \exp{(-\beta_0(t / a +1))} \eta{(a,+\infty)}.
		\end{equation*}
		Let $0 < \beta_0 \le \beta'$ be small enough, so that $-\beta_0\lceil t / a \rceil \le \beta' t/B' + \log(2)$ for all $t \ge 0$. Then for all $C_0 > 0$, we have:
		\begin{equation*}
			\frac{C'}{\beta'} \exp{(-\beta' t/ B')}  \le \left(\frac{C'}{\beta'C_0\eta(a,+\infty)}\right)\sum_{k = 0 }^{+ \infty} C_0\exp{(-\beta_0 k)}\eta{(t / k, +\infty)}
		\end{equation*}
		Let $0 < C_0$ be large enough so that $\frac{C'}{C_0}+\frac{C'}{\beta'C_0\eta(a,+\infty)} \le 1$. Then we have:
		\begin{align*}
			\sum_{k'= 0 }^{+ \infty} C'\exp{(-\beta' k')}\eta{(t / k',+\infty)} + \frac{C'}{\beta'} \exp{(-\beta' t/ B')} & \le \left(\frac{C'}{C_0}+\frac{C'}{\beta'C_0\eta(a,+\infty)}\right)\sum_{k = 0 }^{+ \infty} C_0\exp{(-\beta_0 k)}\eta{(t / k, +\infty)}\\
			& \le \sum_{k = 0 }^{+ \infty} C_0\exp{(-\beta_0 k)}\eta{(t / k, +\infty)}
		\end{align*}
		Hence:
		\begin{gather*}
			\sum_{k=0}^\infty C\exp{(-\beta k)}{(B \vee \eta)}^{\uparrow k}{(t - D,+\infty)}  \le \sum_{k = 0 }^{+ \infty} C_0\exp{(-\beta_0 k)}\eta{(t / k, +\infty)}. 
		\end{gather*}
	\end{proof}
	
	\begin{Lem}\label{lem:sumlp}
		Let $\eta$ and $\kappa$ be probability distributions on $\RR_{\ge 0}$. Let $C,\beta>0$ be constants. Assume that for all $t > 0$, we have
		\begin{equation*}
			\kappa(t, +\infty) \le \sum_{k=0}^\infty C\exp{(-\beta k)}\eta(t/k, +\infty).
		\end{equation*}
		Let $p\in\RR_{>0}$. Assume that $\eta$ is strongly or weakly $\mathrm{L}^p$, then $\kappa$ also is and we have:
		\begin{align*}
			M_p(\kappa) & \le M_p(\eta) \sum_{k=0}^\infty C\exp{(-\beta k)} k^p \\
			W_p(\kappa) & \le W_p(\eta) \sum_{k=0}^\infty C\exp{(-\beta k)} k^p.
		\end{align*} 
	\end{Lem}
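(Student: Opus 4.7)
The plan is to handle the two moment bounds separately, both as very direct computations that start from the domination hypothesis
\[
\kappa(t,+\infty) \le \sum_{k=1}^{\infty} C\exp(-\beta k)\,\eta(t/k,+\infty)
\]
(with the $k=0$ term being $0$ by the convention $k^p = 0$ when $p>0$, or equivalently by starting the sum at $k=1$). The key algebraic identity in both cases is the rescaling $t^p \eta(t/k,+\infty) = k^p (t/k)^p \eta(t/k,+\infty)$, which converts a weak-$\mathrm{L}^p$ bound on $\eta$ evaluated at $t/k$ into a factor $k^p W_p(\eta)$, and after change of variables converts a strong-$\mathrm{L}^p$ integral of $\eta(\cdot/k,+\infty)$ into $k^p M_p(\eta)$.

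For the weak moment, I would multiply both sides of the domination by $t^p$, push $t^p$ into each summand, and rewrite
\[
t^p \eta(t/k,+\infty) = k^p\,(t/k)^p\,\eta(t/k,+\infty) \le k^p W_p(\eta).
\]
Taking the supremum over $t\ge 0$ yields $W_p(\kappa) \le W_p(\eta)\sum_{k=1}^{\infty}C\exp(-\beta k)\,k^p$.

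For the strong moment, I would write $M_p(\kappa) = \int_0^\infty t^{p-1}\kappa(t,+\infty)\,dt$, apply the domination, and use Tonelli (all terms are non-negative) to exchange the sum and the integral:
\[
M_p(\kappa) \le \sum_{k=1}^{\infty} C\exp(-\beta k)\int_0^\infty t^{p-1}\eta(t/k,+\infty)\,dt.
\]
The change of variable $u = t/k$ gives $\int_0^\infty t^{p-1}\eta(t/k,+\infty)\,dt = k^p\int_0^\infty u^{p-1}\eta(u,+\infty)\,du = k^p M_p(\eta)$, producing the stated bound.

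There is no real obstacle: the only thing to observe is that $\sum_{k=1}^\infty C\exp(-\beta k)\,k^p$ is finite (the exponential factor dominates the polynomial), so both right-hand sides are finite whenever $\eta$ is respectively weakly or strongly $\mathrm{L}^p$, which is exactly the assertion that $\kappa$ inherits the same integrability class as $\eta$.
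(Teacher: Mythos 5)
Your proof is correct and follows the same route as the paper: in both cases one exchanges the sum with the supremum (respectively the integral, by Tonelli) and then performs the rescaling $u = t/k$ to produce the factor $k^p$. The only minor addition you make is explicitly noting that the $k=0$ term is vacuous, a point the paper leaves implicit; otherwise the two proofs coincide.
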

	
	\begin{proof}		
		Let $p >0$. We claim that $M_p{(\kappa)}\le  \sum_{k=0}^\infty C\exp{(-\beta k)} k^p M_p{(\eta)}$, which is finite when $M_p(\eta)$ is. To prove that claim, we simply compute the moments, using the fact that all the quantities we look at are non-negative:
		\begin{align*}
			M_p(\kappa) & = \int_{0}^{\infty} t^{p-1} \kappa(t,+\infty) dt \\
			& \le \int_{0}^{\infty} t^{p-1} \sum_{k=0}^\infty C\exp{(-\beta k)}\eta(t/k, +\infty) dt\\
			& \le \sum_{k=0}^\infty C\exp{(-\beta k)}\int_{0}^{\infty} t^{p-1} \eta(t/k, +\infty) dt \\
			& \le \sum_{k=0}^\infty C\exp{(-\beta k)}\int_{0}^{\infty} (ku)^{p-1} \eta(u, +\infty) k du \\
			& \le \sum_{k=0}^\infty C\exp{(-\beta k)} k^p M_p(\eta).
		\end{align*}
		This proves the claim.
		We now claim that $W_p{(\kappa)}\le  \sum_{k=0}^\infty C\exp{(-\beta k)} k^p W_p{(\eta)}$. For that claim, we do the same computation:
		\begin{align*}
			W_p(\kappa) & = \sup_{t > 0} t^{p} \kappa(t,+\infty)\\
			& \le \sup_{t > 0}\left( t^{p} \sum_{k=0}^\infty C\exp{(-\beta k)}\eta(t/k, +\infty)\right)\\
			& \le \sum_{k=0}^\infty C\exp{(-\beta k)}\sup_{t > 0} t^{p} \eta(t/k, +\infty) \\
			& \le \sum_{k=0}^\infty C\exp{(-\beta k)}\sup_{t > 0} (ku)^{p} \eta(u, +\infty) \\
			& \le \sum_{k=0}^\infty C\exp{(-\beta k)} k^p W_p(\eta).
		\end{align*}
		This proves the claim, which concludes the proof of Lemma \ref{lem:sumlp}.
	\end{proof}

	\bibliographystyle{alpha}
	\bibliography{biblio.bib}

\begin{thebibliography}{CFFT22}

\bibitem[AMS95]{AMS-esmigroup}
H.~Abels, G.~A. Margulis, and G.~A. Soifer.
\newblock Semigroups containing proximal linear maps.
\newblock {\em Israel Journal of Mathematics}, 91(1):1--30, 1995.

\bibitem[Aou11]{RA2011}
Richard Aoun.
\newblock {Random subgroups of linear groups are free}.
\newblock {\em Duke Mathematical Journal}, 160(1):117 -- 173, 2011.

\bibitem[Aou20]{aoun2020central}
Richard Aoun.
\newblock The central limit theorem for eigenvalues, 2020.

\bibitem[AS21]{aoun2021law}
Richard Aoun and Cagri Sert.
\newblock Law of large numbers for the spectral radius of random matrix
  products, 2021.

\bibitem[Bel54]{Bellman}
Richard Bellman.
\newblock {Limit theorems for non-commutative operations. I.}
\newblock {\em Duke Mathematical Journal}, 21(3):491 -- 500, 1954.

\bibitem[BL85]{livrebl}
Philippe Bougerol and Jean Lacroix.
\newblock {\em Products of random matrices with applications to
  {Schr{\"o}dinger} operators}, volume~8 of {\em Prog. Probab. Stat.}
\newblock Birkh{\"a}user, Boston, MA, 1985.

\bibitem[BMSS20]{bmss20}
Adrien Boulanger, Pierre Mathieu, Cagri Sert, and Alessandro Sisto.
\newblock large deviations for random walks on gromov-hyperbolic spaces, 2020.

\bibitem[BQ16a]{livreBQ}
Yves Benoist and Jean-Fran\c{c}ois Quint.
\newblock {\em Random walks on reductive groups}, volume~62 of {\em Ergebnisse
  der Mathematik und ihrer Grenzgebiete. 3. Folge. A Series of Modern Surveys
  in Mathematics [Results in Mathematics and Related Areas. 3rd Series. A
  Series of Modern Surveys in Mathematics]}.
\newblock Springer, Cham, 2016.

\bibitem[BQ16b]{CLT16}
Yves Benoist and Jean-Fran{\c{c}}ois Quint.
\newblock {Central limit theorem for linear groups}.
\newblock {\em The Annals of Probability}, 44(2):1308 -- 1340, 2016.

\bibitem[BQ16c]{Benoist2016CentralLT}
Yves Benoist and Jean-François Quint.
\newblock Central limit theorem on hyperbolic groups.
\newblock {\em Izvestiya: Mathematics}, 80:3 -- 23, 2016.

\bibitem[Can]{Cannon1984}
James~W. Cannon.
\newblock The combinatorial structure of cocompact discrete hyperbolic groups.
\newblock 16:123--148.

\bibitem[CDJ16]{cuny2016limit}
Christophe Cuny, Jerome Dedecker, and Christophe Jan.
\newblock Limit theorems for the left random walk on gld (r), 2016.

\bibitem[CDM17]{cuny2017komlos}
Christophe Cuny, Jérôme Dedecker, and Florence Merlevède.
\newblock On the koml\'os, major and tusn\'ady strong approximation for some
  classes of random iterates, 2017.

\bibitem[CDM23]{cuny2023limit}
C~Cuny, J~Dedecker, and F~Merlevède.
\newblock Limit theorems for iid products of positive matrices, 2023.

\bibitem[CFFT22]{CFFT22}
Kunal Chawla, Behrang Forghani, Joshua Frisch, and Giulio Tiozzo.
\newblock The poisson boundary of hyperbolic groups without moment conditions,
  2022.

\bibitem[Cho22]{choi}
Inhyeok. Choi.
\newblock Limit laws on outer space, teichmüller space, and $cat(0)$ spaces.
\newblock {\em Preprint}, 2022.

\bibitem[FK60]{porm}
H.~Furstenberg and H.~Kesten.
\newblock Products of random matrices.
\newblock {\em The Annals of Mathematical Statistics}, 31(2):457--469, 1960.

\bibitem[Fur73]{furstenberg1973boundary}
Harry Furstenberg.
\newblock Boundary theory and stochastic processes on homogeneous spaces.
\newblock {\em Harmonic analysis on homogeneous spaces}, 26:193--229, 1973.

\bibitem[GM89]{Goldsheid1989LyapunovIO}
Ilya~Ya. Goldsheid and G.~A. Margulis.
\newblock Lyapunov indices of a product of random matrices.
\newblock {\em Russian Mathematical Surveys}, 44:11--71, 1989.

\bibitem[Gou22]{pivot}
Sébastien Gouëzel.
\newblock Exponential bounds for random walks on hyperbolic spaces without
  moment conditions.
\newblock {\em Tunisian Journal of Mathematics}, 4(4):635–671, December 2022.

\bibitem[GQX20]{grama2020zeroone}
Ion Grama, Jean-François Quint, and Hui Xiao.
\newblock A zero-one law for invariant measures and a local limit theorem for
  coefficients of random walks on the general linear group, 2020.

\bibitem[GR86]{GR86}
Y.~Guivarc'h and A.~Raugi.
\newblock Products of random matrices: convergence theorems.
\newblock In {\em Random matrices and their applications ({B}runswick, {M}aine,
  1984)}, volume~50 of {\em Contemp. Math.}, pages 31--54. Amer. Math. Soc.,
  Providence, RI, 1986.

\bibitem[GR89]{GR89}
Yves Guivarc'h and Albert Raugi.
\newblock Propri{\'e}t{\'e}s de contraction d'un semi-groupe de matrices
  inversibles. coefficients de liapunoff d'un produit de matrices
  al{\'e}atoires ind{\'e}pendantes.
\newblock {\em Israel Journal of Mathematics}, 65(2):165--196, 1989.

\bibitem[Hen97]{HH97}
H.~Hennion.
\newblock {Limit theorems for products of positive random matrices}.
\newblock {\em The Annals of Probability}, 25(4):1545 -- 1587, 1997.

\bibitem[Kin68]{K68}
J.~F.~C. Kingman.
\newblock The ergodic theory of subadditive stochastic processes.
\newblock {\em J. Roy. Statist. Soc. Ser. B}, 30:499--510, 1968.

\bibitem[KS84]{KS87_semigroup}
Harry Kesten and Frank Spitzer.
\newblock Convergence in distribution of products of random matrices.
\newblock {\em Zeitschrift f{\"u}r Wahrscheinlichkeitstheorie und Verwandte
  Gebiete}, 67(4):363--386, 1984.

\bibitem[LP82]{lepage82}
Emile Le~Page.
\newblock Theoremes limites pour les produits de matrices aleatoires.
\newblock In Herbert Heyer, editor, {\em Probability Measures on Groups}, pages
  258--303, Berlin, Heidelberg, 1982. Springer Berlin Heidelberg.

\bibitem[MS20]{devin}
P.~Mathieu and A.~Sisto.
\newblock Deviation inequalities for random walks.
\newblock {\em Duke Mathematical Journal}, 169(5), Apr 2020.

\bibitem[Muk87]{AM87}
Arunava Mukherjea.
\newblock Convergence in distribution of products of random matrices: A
  semigroup approach.
\newblock {\em Transactions of the American Mathematical Society},
  303(1):395--411, 1987.

\bibitem[Ser18]{sert2018large}
Cagri Sert.
\newblock Large deviation principle for random matrix products, 2018.

\bibitem[XGL21]{xiao2021limit}
Hui Xiao, Ion Grama, and Quansheng Liu.
\newblock Limit theorems for the coefficients of random walks on the general
  linear group, 2021.

\bibitem[XGL22]{xiao2022edgeworth}
Hui Xiao, Ion Grama, and Quansheng Liu.
\newblock Edgeworth expansion and large deviations for the coefficients of
  products of positive random matrices, 2022.

\end{thebibliography}

\end{document}